\documentclass[12pt,a4paper]{amsart}

\usepackage{latexsym}
\usepackage{amssymb}
\usepackage{amsmath}
\usepackage{amsthm}
\usepackage{mathrsfs}
\usepackage{lunadiagrams-KM-4}
\usepackage{color}

\makeatletter
\def\@settitle{\begin{flushleft}%
  \baselineskip14\p@\relax
    \normalfont\Large\bf
  \@title
  \end{flushleft}%
}
\def\section{\@startsection{section}{1}%
  \z@{.7\linespacing\@plus\linespacing}{.5\linespacing}%
  {\normalfont\bf}}

\def\@setauthors{%
  \begingroup
  \def\thanks{\protect\thanks@warning}%
  \trivlist
  \footnotesize \@topsep30\p@\relax
  \advance\@topsep by -\baselineskip
  \item\relax
  \author@andify\authors
  \def\\{\protect\linebreak}%
  \larger\sc\authors%
  \ifx\@empty\contribs
  \else
    ,\penalty-3 \space \@setcontribs
    \@closetoccontribs
  \fi
  \endtrivlist
  \endgroup
}

\makeatother

\usepackage{exscale}
\usepackage[english]{babel}
\usepackage{verbatim}
\usepackage{fullpage}
\usepackage{fix-cm}
\usepackage{enumitem}
\usepackage[colorlinks, breaklinks, allcolors=blue]{hyperref}

\swapnumbers
\theoremstyle{plain}
\newtheorem{theorem}{Theorem}[section]
\newtheorem{lemma}[theorem]{Lemma}
\newtheorem{proposition}[theorem]{Proposition}
\newtheorem{corollary}[theorem]{Corollary}
\newtheorem{conjecture}[theorem]{Conjecture}

\theoremstyle{definition}
\newtheorem{definition}[theorem]{Definition}
\newtheorem{remark}[theorem]{Remark}
\newtheorem{example}[theorem]{Example}

\newcommand{\CC}{\mathbb{C}}
\newcommand{\ZZ}{\mathbb{Z}}
\newcommand{\QQ}{\mathbb{Q}}
\newcommand{\RR}{\mathbb{R}}
\newcommand{\PP}{\mathbb{P}}

\newcommand{\Ga}{\mathbb{G}_\text a}
\newcommand{\Gm}{\mathbb{G}_\text m}

\newcommand{\fu}{\mathfrak{u}}
\newcommand{\fp}{\mathfrak{p}}
\newcommand{\fn}{\mathfrak{n}}
\newcommand{\fii}{\mathfrak{i}}

\DeclareMathOperator{\Hom}{Hom}
\DeclareMathOperator{\codim}{codim}

\DeclareMathOperator{\Span}{span}

\DeclareMathOperator{\Diag}{diag}
\DeclareMathOperator{\Exp}{Exp}
\DeclareMathOperator{\Ad}{Ad}
\DeclareMathOperator{\Divf}{div}
\DeclareMathOperator{\Ord}{ord}
\DeclareMathOperator{\Rank}{rk}
\newcommand{\Chars}{{\mathcal X}}
\newcommand{\Div}{{\mathcal D}}
\newcommand{\Fan}{{\mathcal F}}

\newcommand{\Col}{\Delta}
\newcommand{\Colemb}{{\mathcal D}}
\newcommand{\Cone}{{\mathcal C}}
\newcommand{\A}{{\mathbf A}}
\newcommand{\SL}{\mathrm{SL}}
\newcommand{\GL}{\mathrm{GL}}
\newcommand{\Sp}{\mathrm{Sp}}

\newcommand{\kmG}{{\mathcal G}}
\newcommand{\kmB}{{\mathcal B}}
\newcommand{\kmT}{{\mathcal T}}
\newcommand{\kmP}{{\mathcal P}}
\newcommand{\kmQ}{{\mathcal Q}}

\newcommand{\kmU}{{\mathcal U}}
\newcommand{\kmH}{{\mathcal H}}
\newcommand{\kmK}{{\mathcal K}}

\usepackage{parskip}

\usepackage{microtype}

\begin{document}
\title{Spherical subgroups of Kac-Moody groups and transitive actions on spherical varieties}
\author{Guido Pezzini}
\address{Dipartimento di Matematica ``G.\ Castelnuovo'', Universit\`a degli Studi di Roma ``La Sapienza'', Piazzale A.\ Moro 5, 00185 Roma, Italy}
\email{pezzini@mat.uniroma1.it}
\subjclass[2000]{14M27, 14M17, 14J50, 20G44}

\begin{abstract}
We define and study {\em spherical subgroups of finite type} of a Kac-Moody group. In analogy with the standard theory of spherical varieties, we introduce a combinatorial object associated with such a subgroup, its {\em homogeneous spherical datum}, and we prove that it satisfies the same axioms as in the finite-dimensional case. Our main tool is a study of varieties that are spherical under the action of a connected reductive group $L$, and come equipped with a transitive action of a group containing $L$ as a Levi subgroup.
\end{abstract}

\maketitle

\section{Introduction}
Let $G$ be a connected reductive complex algebraic group. A subgroup $H$ of $G$ is {\em spherical} if a Borel subgroup of $G$ has a dense orbit on $G/H$; in this case $G/H$ is called a {\em spherical} homogeneous space. Over the last 30 years the theory of spherical subgroups has been widely developed, as has the general theory of {\em spherical varieties}, i.e.\ normal $G$-varieties where a Borel subgroup of $G$ has a dense orbit.  This theory has strong interactions with the representation theory of reductive groups, with the geometry of flag varieties and symmetric spaces, and also with other themes such as Hamiltonian actions of compact Lie groups on symplectic manifolds.

The present paper introduces a new research project, aimed at bringing this theory to the infinite-dimensional setting of Kac-Moody groups. The first task is to choose a generalization of the definition of spherical subgroups, which in the classical case can be given in several different ways. Here we proceed as follows: we say that a subgroup $\kmH$ of a Kac-Moody group $\kmG$ is {\em spherical of finite type} if is contained in a parabolic subgroup $\kmP\subseteq\kmG$ of negative sign such that $\kmP$ has a finite-dimensional Levi subgroup $L$, the group $\kmP$ acts on $\kmP/\kmH$ via a finite-dimensional quotient $P$, and $\kmP/\kmH$ is a (finite-dimensional) spherical variety under the action of $L$ (see Sections~\ref{s:KM} and \ref{s:KMsph} and Definition~\ref{def:spherical} for details).

These assumptions are particularly effective because the intermediate subgroup $\kmP$ provides a direct connection with the standard theory of spherical varieties. Our point of view is that the ``symmetries'' of $\kmG/\kmH$ given by the action of $\kmG$ are essentially already encoded in the finite-dimensional variety $\kmP/\kmH$ viewed as a spherical $L$-variety equipped with the additional automorphisms given by the unipotent radical of $P$.

Our main results are the definition of a combinatorial object called the {\em homogeneous spherical datum} of $\kmG/\kmH$ (see Definition~\ref{def:hsd}), generalizing the one given in \cite{Lu01} in the finite-dimensional setting, and the proof that this datum satisfies the same combinatorial properties as in the finite-dimensional case (see Theorem~\ref{thm:consistent}). We recall that these objects classify finite-dimensional spherical homogeneous spaces (see the papers \cite{Lu01,Lo09,Bra09,BP14,BP16}, and also \cite{Cu09} for a different approach).

We also show that the homogeneous spherical datum is independent of the choice of $\kmP$, and invariant under conjugation of $\kmH$ in $\kmG$ (see Theorems~\ref{thm:indep} and \ref{thm:conjugation}). These properties are not as obvious as in the classical theory.

We point out that a more encompassing study of the ``big'' homogeneous space $\kmG/\kmH$ is possible, but goes beyond the scope of the present work. For example using the ind-variety structure of $\kmG/\kmP$ it is possible to give a natural structure of ind-variety to $\kmG/\kmH$. In addition, such subgroups $\kmH$ arise as stabilizers of lines in the restricted duals of integrable $\kmG$-modules in category $\mathcal O$, a feature that is not expected to hold for other generalizations of spherical subgroups (in particular it does not hold in general for symmetric subgroups).

This contributes to the relevance of spherical subgroups of finite type and opens the way to a conjectural close relationship between the geometry of $\kmG/\kmH$ and the representations of $\kmG$, generalizing many aspects of the finite-dimensional theory. Here we do not pursue these lines, nor the relationship with other possible generalizations of the notion of spherical subgroups. They will be the subject of a forthcoming paper.

This research project is motivated by various recent developments in the literature on infinite-dimensional groups, in particular on topics that, in the classical case, are closely related to spherical varieties.

First of all the representation theory of Kac-Moody groups and the geometry of their flag manifolds are well-established central themes. Symmetric subgroups of affine Kac-Moody groups have been classified (see e.g.\ \cite{Ro94}, \cite{HG12} and references therein), the geometry of the corresponding symmetric spaces and of certain compactifications are the subject of current investigation (see \cite{Fr11}, \cite{So12}). Research on the space of loops on a spherical variety  has led to a link between spherical varieties and Langlands duality (see \cite{GN10}). Finally, the techniques used for solving Delzant's conjecture on multiplicity free Hamiltonian manifolds (see \cite{Kn11}), where affine smooth spherical varieties have a central role, can be applied also to multiplicity free Hamiltonian actions of loop groups (see \cite{Kn16}).

We also mention that our results on $\kmP/\kmH$ intersect the study of the automorphism groups of spherical varieties, which is an ongoing research area (see \cite{LP14}, \cite{Pe12}). In particular, we expect that our results will provide new insight on the theory of {\em log-homogeneous varieties}, introduced by M.\ Brion in \cite{Br07}. For example, Corollary~\ref{cor:Ltoroidaln} is analogous to \cite[Theorem~3.2.1]{Br07}, and a paper on the relationship between log-homogeneous varieties and {\em $P$-toroidal} varieties defined here in Section~\ref{s:emb} is in preparation.

In the first part of the paper, Sections~\ref{s:invariantsfinite}--\ref{s:examplesfinite}, we develop the basic tools to study $\kmP/\kmH$. Here we work in even greater generality: we start with a connected linear algebraic group $P$ with Levi subgroup $L$, and we consider a homogeneous space $P/H$ with the only assumption that it is a spherical $L$-variety. In this generality we define some invariants associated with the action of $P$ on $P/H$, the most relevant being its {\em spherical roots} (see Definition~\ref{def:sphroots}), and we establish relationships between the combinatorics of these objects and the geometry of $P$-equivariant compactifications of $P/H$.

In the second part of the paper, Kac-Moody groups enter into the picture: we start now with $\kmH\subseteq\kmP\subseteq\kmG$ as above, and identify $\kmP/\kmH$ with a homogeneous space $P/H$ where $P$ is a finite-dimensional quotient of $\kmP$. We discuss the invariants of $P/H$ in this case and derive for them further combinatorial properties essentially from the structure of the roots of $\kmP$ and the Weyl group of $\kmG$. In Section~\ref{s:hsd} we collect the invariants we have defined and introduce the homogeneous spherical datum of $\kmG/\kmH$, showing that it satisfies the same axioms Luna had introduced in \cite{Lu01} in the finite-dimensional case. In Sections~\ref{s:conjugation} we prove invariance of the homogeneous spherical datum under conjugation of $\kmH$ in $\kmG$, in Section~\ref{s:conj} we discuss possible further developments, and finally Section~\ref{s:examples} is devoted to some examples.

\section*{Acknowledgments}
I am very grateful to Friedrich Knop for his help and support, for many suggestions and for sharing his ideas on the subject of this paper and on possible further developments. I thank Paolo Bravi, Michel Brion, Domingo Luna, and Bart Van Steirteghem  for stimulating discussions and suggestions, and a referee for corrections and useful remarks which led to several improvements (in particular they suggested the statement of part (\ref{cor:Ltoroidaln:equiv}) of Corollary~\ref{cor:Ltoroidaln}). This research has been supported by the DFG Schwerpunktprogramm 1388 -- Darstellungstheorie.

\section{Notation and basic assumptions}
In this paper all varieties and algebraic groups are defined over the field $\CC$ of complex numbers, varieties are always assumed to be irreducible, and subgroups of algebraic groups are assumed to be closed.

If $G$ is an affine algebraic group then we denote by $\Chars(G)$ the group of its characters, by $G^u$ its unipotent radical, by $G^\circ$ the connected component containing the neutral element $e\in G$. If $K$ is a subgroup of $G$ and $g\in G$, we set ${}^gK= gKg^{-1}$. If $V$ is a $G$-module then $V^{(G)}$ denotes the set of common eigenvectors of all elements of $G$ (in particular $0\notin V^{(G)}$), and if $\chi\in\Chars(G)$ then we set
\[
V^{(G)}_\chi = \{ v\in V\smallsetminus \{0\} \;|\; gv = \chi(g)v \;\forall g\in G \}
\]
Unless otherwise stated, we denote the Lie algebra of a group with the corresponding lower case fraktur letter.

If $X$ is a variety, by a {\em discrete valuation} on $\CC(X)$ we mean a map $v\colon\CC(X)\smallsetminus\{0\}\to \QQ$ such that $v(fg) = v(f)+v(g)$ for all $f,g\in \CC(X)\smallsetminus\{0\}$, such that $v(f+g)\geq \min\{v(f),v(g)\}$ whenever $f,g,f+g\in \CC(X)\smallsetminus\{0\}$, such that $v(f)=0$ if $f$ is constant, and such that the image of $v$ is a discrete (additive) subgroup of $\QQ$. If $G$ is a group and $X$ a $G$-variety, we denote by $\Div(X)^G$ the set of $G$-stable prime divisors of $X$. 

We denote by $P$ a connected affine algebraic group, we choose a Levi subgroup $L$ of $P$, a Borel subgroup $B$ of $L$ and a maximal torus $T$ of $B$. We denote by $S_P$ the simple roots of $L$ with respect to these choices, and we denote by $B_-$ the Borel subgroup of $L$ opposite to $B$ with respect to $T$.

For any character $\alpha$ of $T$ we denote by $\fp_\alpha$ the associated root space of $\fp$ if $\alpha$ is a root of $P$, and we set $\fp_\alpha=0$ otherwise. The sum
\[
\bigoplus_{n>0} \fp_{n\alpha}
\]
is a nilpotent Lie subalgebra of $\fp$, and we denote by $U_\alpha=\Exp(\bigoplus_{n>0} \fp_{n\alpha})$ the corresponding unipotent subgroup of $P$.

We come to our crucial assumption: unless otherwise stated, in the whole paper $H$ denotes a subgroup of $P$ such that $P/H$ is a {\em spherical} $L$-variety, i.e.\ it has a dense $B$-orbit.

\section{Invariants of spherical varieties}\label{s:invariantsfinite}

We begin by recalling some notions and notations of the standard theory of spherical varieties. With any $B$-variety $X$ one associates the lattice
\[
\Xi_B(X) = \left\{ \lambda\in\Chars(T) \;\middle\vert\; \CC(X)^{(B)}_\lambda \neq \varnothing \right\},
\]
whose rank is by definition the {\em rank} of $X$, and the vector space
\[
N_B(X) =  \Hom_\ZZ(\Xi_B(X),\QQ).
\]
Any discrete valuation $v$ of $\CC(X)$ can be restricted to the subset $\CC(X)^{(B)}$. If $X$ has an open $B$-orbit then this yields a well-defined element $\rho(v)$ of $N_B(X)$, by identifying $\Xi_B(X)$ with the multiplicative group $\CC(X)^{(B)}$ modulo the constant functions. If $X$ is normal then this applies in particular to the valuation associated with any $B$-stable prime divisor $D$ of $X$, and in this case we denote simply by $\rho(D)$ (or $\rho_X(D)$) the corresponding element of $N_B(X)$.

These definitions apply in particular if $X$ is a spherical $L$-variety. In this case the set $\Div(X)^B\setminus\Div(X)^L$ is called the set of {\em colors} (or {\em $L$-colors}) of $X$, and is denoted by $\Delta_L(X)$. If $K$ is the stabilizer in $L$ of a point in the open $L$-orbit of $X$, then $X$ is called an {\em embedding} of its open orbit $L/K$. Intersecting with $L/K$ gives a bijection between $\Delta_L(X)$ and $\Div(L/K)^B$.

\section{Preliminaries on the geometry of $P/H$}

The case where $H$ is not contained in any proper parabolic subgroup of $P$ will have a prominent role in the paper. We discuss in this section some consequences of this assumption.

\begin{definition}
Let $G$ be a connected reductive algebraic group, and $K$ a subgroup. Then $K$ is {\em very reductive} if it is not contained in any parabolic subgroup of $G$.
\end{definition}

Recall that a very reductive subgroup of a reductive group is reductive.

The following first lemma is known, and will be useful in later proofs. We recall that, given $G$ a reductive group and $V$ a $G$-module, if $V$ is a spherical $G$-variety then it is also called a {\em spherical module}.

\begin{lemma}\label{lemma:PmodH}
Assume that $H$ is not contained in any parabolic subgroup of $P$. Then $H^u\subseteq P^u$, and up to conjugating $H$ in $P$ we can assume that $H$ has a Levi subgroup $K$ contained in $L$. Under this assumption we have that
\begin{enumerate}
\item\label{lemma:PmodH:veryred} $K$ is a spherical very reductive subgroup of $L$, and the homogeneous space $P/H$ has a unique closed $L$-orbit isomorphic to $L/K$;
\item\label{lemma:PmodH:bundle} $P/H$ is an affine variety, $L$-equivariantly isomorphic to the vector bundle $L\times^K P^u/H^u$ via the morphism
\[
\begin{array}{lccc}
\varphi\colon & L\times^K P^u/H^u & \to & P/H \\
& [l, pH^u] & \mapsto & lpH
\end{array}
\]
where the action of $K$ on $P^u/H^u$ is induced by the conjugation action on $P^u$;
\item\label{lemma:PmodH:module} $P^u/H^u$ is a spherical module under the action of $K^\circ$.
\end{enumerate}
\end{lemma}
\begin{proof}
The image of $H$ via the quotient map $P\to P/P^u$ is not contained in any proper parabolic subgroup of $P/P^u$, otherwise $H$ would be contained in some proper parabolic subgroup of $P$. This shows that the image of $H$ is reductive, in other words $H^u$ is mapped to the trivial subgroup of $P/P^u$. This shows that $H^u\subseteq P^u$.

Let $K$ be a maximal reductive subgroup of $H$. Replacing if necessary $H$ and $K$ with conjugates in $P$, we can suppose that $K\subseteq L$. Then the map $\varphi$ is defined and is an isomorphism (see e.g.\ \cite[Lemma~3.5.10]{Lo09}). In particular $P/H$ is affine, its unique closed $L$-orbit is isomorphic to $L/K$. and the quotient $P^u/H^u$ is an affine space, spherical under the action of $K^\circ$ (for this last assertion see e.g.\ \cite[Corollary~2.2]{KVS06}).

Finally, the homogeneous space $L/K$ is spherical under the action of $L$, being an $L$-orbit of the spherical $L$-variety $P/H$. So $K$ is a spherical subgroup of $L$. It is also equal to the image of $H$ under the projection $P\to P/P^u$, if one identifies $P/P^u$ with $L$, and this implies that $K$ is very reductive in $L$.
\end{proof}

\begin{lemma}\label{lemma:PmodHdiv}
Let $H$ and $K$ satisfy the assumptions of Lemma~\ref{lemma:PmodH}, and identify $P/H$ with $L\times^K P^u/H^u$ via the map $\varphi$ of Lemma~\ref{lemma:PmodH}. Let $\pi\colon L\times^K P^u/H^u\to L/K$ be the projection, and identify $P^u/H^u$ with the fiber $\pi^{-1}(eK)$. Up to conjugating $K$ by an element of $L$ we may assume that $BK$ is open in $L$. Then for any $(B\cap K)^\circ$-stable prime divisor $E$ of $P^u/H^u$ the closure $D=\overline{BE}$ is a $B$-stable but not $P^u$-stable prime divisor of $P/H$.
\end{lemma}
\begin{proof}
Since $K$ is a spherical subgroup of $L$, up to replacing it by a conjugate in $L$ we may assume that $BK$ is open in $L$. Let $x\in P^u/H^u$: then the $B$-orbit of $\pi(x)=eK$ is isomorphic to $B/(B\cap K)$ and is dense in $L/K$.

The $B$-orbit $Bx$ is isomorphic to the homogeneous space $B/B_x$, where the stabilizer $B_x$ of $x$ in $B$ is a subgroup of $B\cap K$. Then
\[
\dim Bx= \underbrace{\dim B - \dim (B\cap K)}_{=\dim L/K} + \underbrace{\dim(B\cap K)^\circ - \dim B_x}_{=\dim (B\cap K)^\circ x},
\]
which, together with $\dim P/H=\dim L/K+\dim P^u/H^u$, shows that
\begin{equation}\label{lemma:PmodHdiv:codim}
\codim_{P/H} Bx = \codim_{P^u/H^u} (B\cap K)^\circ x.
\end{equation}

Now, the map $Y\mapsto Y\cap P^u/H^u$ induces a bijection between the set of those $B$-orbits of $P/H$ that meet $P^u/H^u$ and the set of $(B\cap K)$-orbits of $P^u/H^u$. This shows in particular that $P^u/H^u$ has finitely many $(B\cap K)$-orbits; each such orbit is a finite union of $(B\cap K)^\circ$-orbits, so $(B\cap K)^\circ$ also has finitely many orbits on $P^u/H^u$.

Hence, for any $(B\cap K)^\circ$-stable prime divisor $E$ of $P^u/H^u$, there exists a $(B\cap K)^\circ$-orbit that is dense in $E$. If $x$ lies on this orbit, then (\ref{lemma:PmodHdiv:codim}) yields that $Bx$ has codimension $1$ in $P/H$. In other words $\overline {Bx}$ is a $B$-stable prime divisor of $P/H$, contained in $\overline{BE}$. Observing that $E= \overline{(B\cap K)^\circ x}\subseteq \overline {Bx}$ we conclude $BE\subseteq \overline{Bx}$ since $\overline{Bx}$ is $B$-stable, and even $\overline {BE}=\overline{Bx}$ since $\overline{Bx}$ is closed.

It remains to show that $D=\overline{BE}$ is not $P^u$-stable. Suppose for sake of contradiction that $D$ is $P^u$-stable: then it contains the $P^u$-orbit of $x$ in $P/H$. This $P^u$-orbit is $P^u/H^u$, since $p\cdot \varphi([e, eH^u])= pH = \varphi([e, pH^u])$ for all $p\in P^u$.

On the other hand, the open $B$-orbit of $P/H$ is mapped via $\pi$ onto the open $B$-orbit $B\cdot eK$ of $L/K$. It follows that the open $B$-orbit of $P/H$ intersects $\pi^{-1}(eK) = P^u/H^u$, but then it intersects $D$ too: contradiction.
\end{proof}

\begin{corollary}\label{cor:PumovesD}
Suppose that $H$ doesn't contain $P^u$ and $H$ is not contained in any proper parabolic subgroup of $P$. Then there exists an element $D\in \Div(P/H)^B$ not stable under $P^u$.
\end{corollary}
\begin{proof}
We may assume that $H$ and a Levi subgroup $K\subseteq H$ are as in Lemma~\ref{lemma:PmodH}. Since here $H$ doesn't contain $P^u$, we have that $P^u/H^u$ is a non-zero $(B\cap K)^\circ$-module. The group $(B\cap K)^\circ$ being solvable, the quotient $P^u/H^u$ has a $(B\cap K)^\circ$-stable prime divisor $E$. Then $D=\overline{BE}$ is a $B$-stable but not $P^u$-stable prime divisor of $P/H$ by Lemma~\ref{lemma:PmodHdiv}.
\end{proof}

\begin{lemma}\label{lemma:fiberoverflag}
Suppose that $H$ is contained in a parabolic subgroup $Q$ of $P$ containing $B_-$, and denote $\pi\colon P/H\to P/Q$ the natural morphism. Denote $M$ the Levi subgroup of $Q$ contained in $L$ and containing $T$, and set $B_M=M\cap B$, which is a Borel subgroup of $M$ containing $T$. Then the fiber $Z=\pi^{-1}(eQ)$ is spherical under the action of $M$, and $\Xi_B(P/H)=\Xi_{B_M}(Z)$.
\end{lemma}
\begin{proof}
Denote by $R$ the parabolic subgroup of $L$ containing $B_-$ and such that $Q=RP^u$, and denote by $X_0$ the open $L$-orbit of $P/H$. Then $\pi$ restricts to an $L$-equivariant morphism $\pi_0\colon X_0\to L/R$, and the lemma follows from \cite[Lemma~3.5.5, part~(1)]{Lo09}.
\end{proof}

\section{Toroidal embeddings}\label{s:emb}

We recall that a spherical $L$-variety $X$ is {\em toroidal} if no $L$-color contains an $L$-orbit; to emphasize the acting group we will rather use the term {\em $L$-toroidal} in this paper. This notion, and more generally the one of equivariant embeddings, adapt quite directly to our setting.

\begin{definition}
A {\em $P$-embedding} of $P/H$ is a normal $P$-variety $X$ together with a $P$-equivariant open embedding $P/H\hookrightarrow X$. In this case we identify $P/H$ with its image in $X$. If, in addition, for all $D\in \Div(P/H)^B$ the closure of $D$ in $X$ doesn't contain any $P$-orbit, then $X$ is called a {\em $P$-toroidal embedding} of $P/H$. 
\end{definition}

\begin{lemma}\label{lemma:toroidalexist}
Let $Y_1$ and $Y_2$ be $P$-embeddings of $P/H$. Then there exist a $P$-toroidal embedding $X$ of $P/H$ and proper, $P$-equivariant maps $X\to Y_i$ extending the identity on $P/H$ for all $i$. In particular, if $Y_1$ and $Y_2$ are complete then $X$ is complete, and there exist complete $P$-toroidal embeddings of $P/H$.
\end{lemma}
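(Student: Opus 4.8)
The plan is to obtain $X$ in two stages: first produce a single $P$-embedding $Z$ of $P/H$ that dominates both $Y_1$ and $Y_2$ through proper $P$-equivariant morphisms, and then replace $Z$ by a $P$-toroidal embedding $X$ admitting a proper birational $P$-morphism $X\to Z$. Composing $X\to Z\to Y_i$ then settles the main assertion, since a composition of proper morphisms is proper. The remaining claims follow formally: a variety proper over a complete one is complete, so if the $Y_i$ are complete then so is $X$; and applying the statement to $Y_1=Y_2$ equal to any complete $P$-embedding of $P/H$ (which exists by equivariant completion, e.g.\ Sumihiro's theorem applied to the $P$-action on $P/H$) produces a complete $P$-toroidal embedding.

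For the first stage I would take $Z$ to be the normalization of the closure of $P/H$, embedded diagonally via the two open immersions, inside $Y_1\times Y_2$. This $Z$ is a normal irreducible $P$-variety containing the diagonal copy of $P/H$ as a dense $P$-orbit, hence a $P$-embedding, and the two projections restrict to $P$-equivariant birational morphisms $p_i\colon Z\to Y_i$ that are the identity on $P/H$. The properness of the $p_i$ is the first delicate point: it is immediate from the properness of the projections $Y_1\times Y_2\to Y_i$ once the other factor is complete, and in the general case one verifies it through the valuative criterion, using that $P/H$ is $L$-spherical so that every $P$-invariant valuation of $\CC(P/H)$ is determined by its restriction to $\CC(P/H)^{(B)}$, and that there are only finitely many divisors in $\Div(P/H)^B$ whose centers must be controlled.

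For the second stage I would adapt the classical decoloration procedure. The failure of $Z$ to be $P$-toroidal is measured precisely by those colors $D\in\Div(P/H)^B$ whose closure in $Z$ contains a $P$-orbit. I would remove these by passing to the $P$-embedding $X$ having the same $P$-stable prime divisors as $Z$ but for which no color closure contains a $P$-orbit; concretely this amounts to blowing up $Z$, in a $P$-equivariant and normalized way, along the $P$-orbit closures contained in colors, until the defining condition of a $P$-toroidal embedding is met. The resulting morphism $X\to Z$ is proper, being a composition of blow-ups along $P$-stable closed centers followed by normalization, hence projective, and it is birational since it is an isomorphism over the open orbit $P/H$.

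I expect the main obstacle to be the control of properness and of the $P$-orbit structure in the genuinely non-reductive situation, where $P$ carries a nontrivial unipotent radical $P^u$. Unlike the classical reductive case, the orbits we must track are the $P$-orbits of $Z$ rather than the $L$-orbits directly governed by Luna--Vust theory, and the colors in $\Div(P/H)^B$ have to be understood together with the $P^u$-action. The hypothesis that $P/H$ is $L$-spherical is what keeps this finite and tractable: it guarantees finiteness of $\Div(P/H)^B$ and identifies $P$-invariant valuations with $B$-invariant data on $\CC(P/H)^{(B)}$, which is exactly what makes both the properness verification in the first stage and the decoloration in the second stage go through.
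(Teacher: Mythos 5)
Your two-stage plan diverges from the paper's argument at exactly the point where the real work happens, and your second stage contains the essential gap. The paper performs no decoloration by blow-ups: following the proof of \cite[Lemma 5.2]{Kn91}, it chooses an eigenfunction $f\in\CC[P]^{(B\times H)}$ vanishing on $\pi^{-1}(D)$ for every $D\in\Div(P/H)^B$, lets $V\subseteq\CC[P]$ be the (finite-dimensional) $P$-module generated by $f$, and maps $P/H$ into $\PP(V^*)$. Since the translates $pf$ span $V$, they have no common zero on $V^*$, so the zero locus of $f$ in $\PP(V^*)$ contains no $P$-orbit; hence in the closure $X'$ of the image no color closure contains a $P$-orbit, and taking $X$ to be the normalization of the closure of $P/H$ in $X'\times Y_1\times Y_2$ yields toroidality in one stroke. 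Your stage two instead invokes a ``classical decoloration procedure'' by iterated normalized $P$-equivariant blow-ups of orbit closures inside colors. In the reductive case that procedure is licensed by the Luna--Vust colored fan formalism; here no embedding theory for $P$-actions is available (the paper explicitly declines to develop one, remarking after Theorem~\ref{thm:morphisms} that this goes beyond its scope), so you have established neither the existence of ``the $P$-embedding $X$ having the same $P$-stable prime divisors as $Z$'' nor that your blow-up process terminates, nor that its outcome is toroidal: blowing up a $P$-stable center can create new $B$-stable divisors and new orbits, and nothing in your sketch controls this. Replacing the eigenfunction construction by decoloration is precisely the step that fails without new ideas.

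Your stage one also asserts something false: the projections from the normalized closure $Z$ of the diagonal in $Y_1\times Y_2$ to the factors are not proper in general, and no valuative-criterion argument can repair this. Already for $P=L=B=\Gm$ and $H=\{e\}$, with $Y_1=\PP^1$ and $Y_2=\CC$, the closure of the diagonal is isomorphic to $\CC$ and its projection to $Y_1$ is an open immersion, hence not proper. (Moreover, restricting valuations to $\CC(P/H)^{(B)}$ controls only \emph{invariant} valuations, whereas the valuative criterion quantifies over all of them; the reduction to invariant data is again a piece of Luna--Vust-type theory you do not have for $P$-embeddings.) The paper's construction sidesteps the difficulty by inserting the complete factor $X'\subseteq\PP(V^*)$: projection away from a complete factor is proper, which is exactly what makes the single-target variant (Lemma~\ref{lemma:extend}, where $P/K$ replaces $Y_1\times Y_2$) work and immediately gives completeness of $X$ when $Y_1$ and $Y_2$ are complete --- and these are the forms of the lemma the paper actually uses later. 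So the auxiliary projective embedding built from $f$ is doing double duty, securing both toroidality and properness, and your proposal is missing it on both counts.
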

\begin{proof}
The proof of \cite[Lemma~6.2]{Kn91} yields essentially the desired result. Let $\pi\colon P\to P/H$ be the quotient map, and choose $f\in\CC[P]^{(B\times H)}$ vanishing on $\pi^{-1}(D)$ for all $D\in \Div(P/H)^B$. Let us check that such $f$ exists. Since $B$ has an open orbit on $P/H$, we may assume that $BH$ is open in $P$. Denote by $E$ the sum of all $D\in \Div(P/H)^B$. The divisor $E$ is Cartier since $P/H$ is smooth, and we can fix a positive integer $n$ such that the associated line bundle $\mathcal L=\mathcal O(nE)$ on $P/H$ is $P$-linearizable (see \cite[Proposition~2.4]{KKLV89}). The divisor $nE$ is the zero set of a global section $s\in H^0(P/H,\mathcal L)$. Consider the corresponding element $f\in\CC[P]$ under the isomorphism $H^0(P/H,\mathcal L)\cong (\CC[P]^{(H)}_\chi \cup\{0\})$ (for a suitable character $\chi$ of $H$) induced by pulling back to $P$ the line bundle $\mathcal L$ along $\pi$ (see \cite[Proposition~2.4]{Ti11}): then the zero set of $f$ is $\pi^{-1}(D)$. Since $\pi^{-1}(D)$ is $B$-stable, for any $b\in B$ the quotient $bf/f\in \CC(P)$ is a regular invertible function on $P$, i.e.\ a scalar constant $c_b\in\CC$ times a character $\lambda_b$ of $P$ by \cite[Proposition~1.2]{KKV89}. Since $B$ is connected, the character $\lambda_b$ doesn't depend on $b$, so we denote it by $\lambda$. Since $1 = f(eb)/f(b) = c_e\lambda(b)$ for all $b\in B$, the character $\lambda$ is trivial, and it follows that $b\mapsto c_b$ is a character of $B$. In other words for all $b\in B$ and all $h\in H$ we have $f(bh) = f(b) \chi(h) = c_{b^{-1}} f(e) \chi(h)$. Since $BH$ is open in $P$, this shows $f\in \CC[P]^{(B\times H)}$.

Let $V$ be the $P$-submodule of $\CC[P]$ generated by $f$; this induces a $P$-equivariant morphism $\varphi\colon P/H\to \PP(V^*)$. Let $X'$ be the closure of the image.

Regard $f$ as a linear function on $V^*$, and let $Z$ be its set of zeros in $\PP(V^*)$. Since $V$ is generated by $f$ as a $P$-module, the elements $pf$ for $p$ varying in $P$ are not contained in any hyperplane of $V$. In other words they have no common zero in $\PP(V)$, so $\bigcap_{p\in P} pZ=\varnothing$. This implies that $Z$ doesn't contain any $P$-orbit. Since $\varphi(D)$ is contained in $Z$ for all $D\in \Div(P/H)^B$ in $\PP(V^*)$, we deduce that the closure of $\varphi(D)$ in $X'$ doesn't contain any $P$-orbit.

Now we identify $P/H$ with its image in $X'\times Y_1\times Y_2$ under the map $(\varphi, \Diag)$, and let $X$ be the normalization of the closure of the image. The map $\varphi$ extends to a $P$-equivariant morphism $X\to X'$, also denoted $\varphi$. The variety $X$ is a $P$-embedding of $P/H$, and the projections onto $Y_1$ and $Y_2$ are proper morphisms extending the identity on $P/H$. In addition $X$ is $P$-toroidal: if the closure of any $D\in \Div(P/H)^B$ in $X$ contained a $P$-orbit, then the closure of $\varphi(D)$ in $X'$ would contain a $P$-orbit, contradiction.

The second statement is obvious, and to show the last statement we take any two $P$-equivariant completions $Y_1$ and $Y_2$ of $P/H$ (by Chevalley's theorem $P/H$ is a $P$-stable subvariety of the projective space $\PP(V)$ of a $P$-module $V$, it is enough to take $Y_1=Y_2$ equal to the normalization of the closure of $P/H$ in  $\PP(V)$), and apply the first statement.
\end{proof}

\begin{lemma}\label{lemma:extend}
Let $K$ be a subgroup of $P$ containing $H$. There exist a $P$-toroidal embedding $X$ of $P/H$ and a proper $P$-equivariant morphism $X\to P/K$ extending the natural map $\pi\colon P/H\to P/K$.
\end{lemma}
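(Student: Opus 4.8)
\emph{Proof plan.}
The plan is to first produce a single $P$-embedding $Y$ of $P/H$ admitting a proper $P$-equivariant morphism $Y\to P/K$ that extends $\pi$, and then to dominate $Y$ by a toroidal embedding using Lemma~\ref{lemma:toroidalexist}. The geometric idea is that $\pi$ exhibits $P/H=P\times_K(K/H)$ as the fiber bundle associated with the principal $K$-bundle $P\to P/K$ and with fiber $K/H$. Since $P/K$ is homogeneous, hence not complete, one cannot extend $\pi$ to an arbitrary complete embedding of $P/H$ without the image leaving $P/K$; instead one must compactify only in the fiber direction.

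To build $Y$, I would first choose, using \cite{Su74} and normalizing, a complete normal $K$-variety $F$ containing $K/H$ as a dense open $K$-stable subset, and then set $Y=P\times_K F$. This is a normal $P$-variety containing $P\times_K(K/H)=P/H$ as a dense open $P$-stable subset; it is irreducible because $P/H$ is (note that even if $K$ is disconnected, so that $K/H$ and $F$ may be reducible, their components are permuted transitively by $K$ and $P/H$ stays irreducible). The bundle projection $p\colon Y\to P/K$ is proper because its fiber $F$ is complete, and by construction the restriction of $p$ to $P/H$ equals $\pi$. Thus $Y$ is a $P$-embedding of $P/H$ together with a proper $P$-equivariant morphism $p\colon Y\to P/K$ extending $\pi$.

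Finally I would apply Lemma~\ref{lemma:toroidalexist} to $Y$ (taking $Y_1=Y_2=Y$) to obtain a $P$-toroidal embedding $X$ of $P/H$ and a proper $P$-equivariant morphism $q\colon X\to Y$. The composition $p\circ q\colon X\to P/K$ is then proper and $P$-equivariant, and it extends $\pi$ because $q$ restricts to the identity on $P/H$; this gives the assertion. The step requiring the most care is the construction of $Y$: one must check that the associated bundle $P\times_K F$ exists as a normal algebraic variety with the stated properties, which reduces to standard facts about associated bundles once the complete $K$-equivariant completion $F$ of the fiber $K/H$ has been fixed. The essential conceptual obstacle is precisely that the target $P/K$ is not complete, which is why a fiberwise compactification, rather than a global one, is the right construction.
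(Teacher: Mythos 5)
Your proof is correct, but it takes a genuinely different route from the paper's. The paper disposes of the lemma in one line by rerunning the proof of Lemma~\ref{lemma:toroidalexist} with $P/K$ in place of $Y_1\times Y_2$ and $(\varphi,\pi)$ in place of $(\varphi,\Diag)$: one takes the normalization $X$ of the closure of the image of $P/H$ in $X'\times P/K$, where $X'\subseteq \PP(V^*)$ is the projective variety built from the semiinvariant $f$; toroidality comes from the $X'$-factor exactly as in Lemma~\ref{lemma:toroidalexist}, and properness of $X\to P/K$ is immediate because $X$ is finite over a closed subvariety of $X'\times P/K$ with $X'$ projective. Your construction instead compactifies only in the fiber direction, forming $Y=P\times_K F$ and then invoking Lemma~\ref{lemma:toroidalexist} purely as a black box (its statement, not its proof); this is more geometric and more modular, but it buys these advantages at the cost of having to construct $Y$, and two points there need more care than your sketch gives them. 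First, $K$ is not assumed connected, so \cite{Su74} does not apply verbatim to $K/H$. Second, the associated bundle $P\times_K F$ exists as a (normal) variety only once one knows, for instance, that $F$ is projective with a $K$-linearized ample line bundle, since $P\to P/K$ is in general only \'etale-locally trivial and descent of the fiber along it requires such a linearization. Both issues are resolved simultaneously by building $F$ via Chevalley's theorem rather than Sumihiro: choose a $K$-module $V$ and a line $\ell\subseteq V$ whose stabilizer in $K$ is exactly $H$, embed $K/H$ as the orbit of $[\ell]\in\PP(V)$, and let $F$ be the normalization of its closure; then $F$ is projective with linearized $\mathcal O(1)$, so $P\times_K F$ exists, is normal, and is proper over $P/K$. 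With that repair the rest of your argument goes through as you say: $Y$ is irreducible because $P/H$ is dense in it, the composite $X\to Y\to P/K$ is proper and $P$-equivariant, and it extends $\pi$ because the maps produced in the proof of Lemma~\ref{lemma:toroidalexist} restrict to the identity on $P/H$ (a fact visible from that proof, though not stated explicitly in the lemma).
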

\begin{proof}
It is enough to apply the proof of Lemma~\ref{lemma:toroidalexist} with $P/K$ instead of $Y_1\times Y_2$ and with $(\varphi,\pi)$ instead of $(\varphi,\Diag)$.
\end{proof}

The next lemma is fundamental in this paper and carries most of the informations we will need about the action of $P^u$ on $P/H$. It resembles results by Demazure (see e.g.\ \cite[Section~3.4]{Oda88}) on automorphism groups of toric varieties, and also results by Bien and Brion in \cite{BiBr96} on automorphism groups of complete toroidal spherical varieties. However, the results in \cite{BiBr96} cannot be used here directly, since in general a $P$-toroidal embedding of $P/H$ is not $L$-toroidal.

\begin{lemma}\label{lemma:demazure}
Let $X$ be a $P$-embedding of $P/H$, let $-\alpha$ be a root of $P$ and let $D\in \Div(X)^B$. Let $U'_{-\alpha}$ be a one-dimensional subgroup of $U_{-\alpha}$ stable under conjugation by $T$, and suppose that $D$ is not stable under the action of $U'_{-\alpha}$, but it is stable under the action of the commutator $(U'_{-\alpha}, B^u)$. Let $n$ be the positive integer such that $T$ acts on $U'_{-\alpha}$ via the character $-n\alpha$. Then $n\alpha\in \Xi_B(X)$ and $\langle\rho(D),n\alpha\rangle =1$. For all $E\in\Div(X)^B$ different from $D$ we have $\langle \rho(E),n\alpha\rangle \leq 0$. If in addition $U_{-\alpha}'$ is centralized by $B^u$ then any such $E$ is stable under $U'_{-\alpha}$.
\end{lemma}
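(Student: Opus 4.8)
The plan is to study the one-parameter unipotent group $U'_{-\alpha}\cong\Ga$ through the derivation it induces on $\CC(X)$ and to reduce the entire statement to a single homogeneity property. Fix an isomorphism $t\mapsto u(t)$ of $\Ga$ onto $U'_{-\alpha}$. Since $U'_{-\alpha}$ is one-dimensional and normalized by $T$, its Lie algebra is a $T$-eigenline contained in some $\fp_{-n\alpha}$, which fixes the positive integer $n$; thus $s\,u(t)\,s^{-1}=u((-n\alpha)(s)\,t)$. Let $\xi$ be the vector field on $X$ generated by this action, a rational and generically regular derivation of $\CC(X)$. Expanding $u(t)^{-1}f=\sum_{k\ge 0}t^k f_k$ for $f\in\CC(X)^{(B)}_\lambda$ and comparing the $T$-action on both sides shows at once that $f_k\in\CC(X)^{(T)}_{\lambda-kn\alpha}$; in particular $f_1=-\xi f$ is a $T$-semiinvariant of weight $\lambda-n\alpha$. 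The crux of the proof is the following homogeneity claim: $\xi$ maps $\CC(X)^{(B)}_\lambda$ into $\CC(X)^{(B)}_{\lambda-n\alpha}\cup\{0\}$.

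Granting this claim, all assertions for $D$ follow cleanly. Because $D$ is not $U'_{-\alpha}$-stable, $\xi$ does not preserve the maximal ideal $\mathfrak m_D$ of $\mathcal O_{X,D}$, so $v_D(\xi\pi)=0$ for a uniformizer $\pi$; consequently $v_D(\xi f)=v_D(f)-1$ whenever $v_D(f)\ne 0$. Choosing $f\in\CC(X)^{(B)}_\lambda$ with $m:=\langle\rho(D),\lambda\rangle\ne 0$ (so that $\xi f\ne 0$), the claim gives $\xi f\in\CC(X)^{(B)}_{\lambda-n\alpha}$, whence
\[
m-1=v_D(\xi f)=\langle\rho(D),\lambda-n\alpha\rangle=m-\langle\rho(D),n\alpha\rangle,
\]
so $\langle\rho(D),n\alpha\rangle=1$; moreover $n\alpha=\lambda-(\lambda-n\alpha)\in\Xi_B(X)$ since both $\lambda$ and $\lambda-n\alpha$ lie in $\Xi_B(X)$. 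Uniqueness of $n$ is then immediate: any $n'$ as in the statement would satisfy $n'\langle\rho(D),\alpha\rangle=1=n\langle\rho(D),\alpha\rangle$ with $\langle\rho(D),\alpha\rangle=1/n\ne 0$.

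To prove the homogeneity claim I must upgrade the $T$-semiinvariance of $\xi f$ to $B$-semiinvariance, and this is where the commutator hypothesis enters. Writing $(g,h)=ghg^{-1}h^{-1}$ and using $v\cdot f=f$ for $v\in B^u$, one has $v\cdot(u(t)^{-1}f)=u(t)^{-1}\!\cdot\!\big((u(t),v)\cdot f\big)$; differentiating at $t=0$ yields $v\cdot\xi f-\xi f=\delta_v\cdot f$, where $\delta_v=\tfrac{d}{dt}\big|_{0}(u(t),v)=(\operatorname{id}-\Ad v)X\in\operatorname{Lie}\big((U'_{-\alpha},B^u)\big)$. The hypothesis that $D$ is stable under $(U'_{-\alpha},B^u)$ means precisely that $\operatorname{Lie}\big((U'_{-\alpha},B^u)\big)$ is tangent to $D$, i.e.\ $v_D(\delta_v g)\ge v_D(g)$ for all $g$; applied to $f$ this forces $v_D(v\cdot\xi f-\xi f)\ge m>m-1=v_D(\xi f)$, so $v\cdot\xi f$ and $\xi f$ share the same leading term along $D$. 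I expect this passage from leading-term $B^u$-invariance to genuine $B^u$-invariance to be the main obstacle. To overcome it I would run the same computation along the prime divisor $u(1)\cdot D$ — which is $B^u$-stable precisely because $B^u$ and $(U'_{-\alpha},B^u)$ both stabilize $D$ — and bootstrap through the coefficients $f_k=\tfrac{(-1)^k}{k!}\xi^k f$ of the finite expansion, working on a complete $P$-toroidal embedding from Lemma~\ref{lemma:toroidalexist} so that enough $B$-semiinvariants with prescribed divisors are available.

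Finally, the assertions for $E$ follow from the same circle of ideas. For any $B$-stable prime divisor $Z$ the homogeneity claim gives $v_Z(\xi f)=\langle\rho(Z),\lambda\rangle-\langle\rho(Z),n\alpha\rangle$ for a suitable $f$, while $\xi$ being a global vector field forces $v_Z(\xi f)\ge v_Z(f)-1$; since $\langle\rho(Z),n\alpha\rangle\in\ZZ$, this yields the dichotomy that $Z$ is $U'_{-\alpha}$-stable, with $\langle\rho(Z),n\alpha\rangle\le 0$, exactly when it is not moved, and $\langle\rho(Z),n\alpha\rangle=1$ exactly when it is moved. It then remains to see that $D$ is the only moved $B$-stable divisor; this is a genuine consequence of the commutator hypothesis — in the reductive case a one-dimensional root subgroup can move two colors at once — and I would derive it by letting $B^u$ act on the $T$-family $\{u(t)\cdot D\}$ and using the $B^u$-stability of $u(1)\cdot D$ to exclude a second moved divisor. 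The last assertion is then formal: if $U_\alpha$, and hence $U_{-\alpha}$, is one-dimensional, then $U'_{-\alpha}=U_{-\alpha}$ has $T$-weight exactly $-\alpha$, so $n=1$.
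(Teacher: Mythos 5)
Your route is genuinely different from the paper's: you attempt a Demazure-style infinitesimal analysis via the vector field $\xi$ of $U'_{-\alpha}$, whereas the paper argues geometrically, proving that the action map $\varphi\colon U'_{-\alpha}\times D\to X$ is birational and equivariant and then reading off everything from the two projections (the first projection is a function $f_1\in\CC(X)^{(B)}_{n\alpha}$ with $\Divf(f_1|_{X_0})=D_0$, which gives $n\alpha\in\Xi_B(X)$ and $\langle\rho(D),n\alpha\rangle=1$ at one stroke; the second projection $f_2$ drives the statements about $E$). However, your proposal has a genuine gap at its declared crux: the homogeneity claim that $\xi$ maps $\CC(X)^{(B)}_\lambda$ into $\CC(X)^{(B)}_{\lambda-n\alpha}\cup\{0\}$ is never proved. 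Your own computation gives $v\cdot\xi f-\xi f=\delta_v\cdot f$ with $\delta_v$ in the Lie algebra of $(U'_{-\alpha},B^u)$, and the hypothesis only makes $\delta_v$ \emph{tangent to $D$}; tangency controls orders along the single divisor $D$ (and along $u(1)\cdot D$), but gives no reason for the rational function $\delta_v\cdot f$ to vanish, so $\xi f$ may fail to be $B^u$-invariant. The ``bootstrap through the coefficients $f_k$ on a complete toroidal embedding'' is a plan, not an argument, and every subsequent identity of the form $v_Z(\xi f)=\langle\rho(Z),\lambda-n\alpha\rangle$ depends on the claim: on a variety with dense $B$-orbit the valuation of a function is determined by a weight only when the function is a genuine $B$-eigenvector, so agreement of leading terms along $D$ is not enough.

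There are two further gaps. First, the argument is circular at its starting point: you choose $f\in\CC(X)^{(B)}_\lambda$ with $\langle\rho(D),\lambda\rangle\neq0$, but a priori nothing you have established rules out that $\rho(D)$ vanishes on all of $\Xi_B(X)$ (as happens, e.g., for divisors on rank-zero embeddings); the existence of a $B$-eigenfunction with nonzero order along $D$ is precisely what must be \emph{manufactured}, and in the paper it is produced as $f_1$ from the birational decomposition rather than found among pre-existing eigenfunctions. Second, the assertion that every $E\neq D$ is $U'_{-\alpha}$-stable is where the paper works hardest (showing $E\cap X_0=\varnothing$ via the finite-degree restriction $f_2|_E$ and the lattice relation $\ZZ\alpha\cap\Xi_B(D)=\{0\}$); your one-sentence plan about the family $\{u(t)\cdot D\}$ does not engage with this, and your dichotomy for a divisor $Z$ quietly uses its stability (tangency of $\xi$ to $Z$) as an input rather than deriving it — as you yourself note, in the reductive case a root group can move two colors, each with pairing $1$, so the exclusion of a second moved divisor is genuine content coming from the commutator hypothesis, not a formal consequence of the valuation estimates. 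The correct ingredients in your text — the $T$-weight bookkeeping for the $f_k$, the identity $v_D(\xi f)=v_D(f)-1$ when $v_D(f)\neq0$, the $B^u$-stability of $u(1)\cdot D$, and $n=1$ when $U_\alpha$ is one-dimensional — do not suffice to carry the lemma without these three missing steps.
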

\begin{proof}
First we claim that for some $v_0\in U'_{-\alpha}$ the group $K=v_0^{-1}(\ker\alpha)B^uv_0$ has an open orbit on $D$. To prove this, consider $x\in D$ and $v_0\in U'_{-\alpha}$ such that $v_0x$ is in the open $B$-orbit of $X$. Since $(\ker\alpha)B^u$ has codimension $1$ in $B$, the orbit $(\ker\alpha)B^uv_0x$ has codimension at most $1$ in $X$. Let $y$ be in this orbit, so there exist $t\in \ker\alpha$ and $b\in B^u$ such that $y = tbv_0x$. Then $y=v_0tbcx$ with $c\in(U'_{-\alpha}, B^u)$. It follows that $v_0^{-1}y\in D$, whence the claim.

Consider now the morphism $\varphi\colon U'_{-\alpha}\times D\to X$ induced by the action. It is dominant, since $D$ has codimension $1$ and not stable under $U'_{-\alpha}$. We claim that $\varphi$ is also generically injective, and for this it is enough to prove that for general $x\in D$ and $v\in U'_{-\alpha}$ the condition $vx \in D$ implies $v=e$.

Let $x$ be in the open $K$-orbit of $D$, suppose that $vx\in D$, let $b\in (\ker\alpha)B^u$ and consider the element $v(v_0^{-1}bv_0)x$. We have $v(v_0^{-1}bv_0)=cbv$ where $c\in (U'_{-\alpha}, B^u)$. It follows that $vv_0^{-1}bv_0x = cbvx\in cbD$, and the latter is equal to $D$ since $b$ and $c$ stabilize $D$. Since $Kx$ is open in $D$, we conclude that $vD=D$. The stabilizer of $D$ in $U'_{-\alpha}\cong \Ga$ is a proper subgroup, hence trivial; in other words $v=e$.

As a consequence the map $\varphi$ is birational, $U'_{-\alpha}$-equivariant, where we let this group act on $U'_{-\alpha}\times D$ by left translation on the first factor, and also $T$-equivariant, where we let $T$ act diagonally: by conjugation on $U_{-\alpha}'$ and as usual on $D$. The map $\varphi$ induces an isomorphism of an open subset of $U'_{-\alpha}\times D$ onto an open subset of $X$. By $U'_{-\alpha}$-equivariance, we may assume that the former is of the form $U_{-\alpha}'\times D_0$ for $D_0$ an open subset of $D$, and we denote by $X_0$ the image of $U_{-\alpha}'\times D_0$.

The projection of $U'_{-\alpha}\times D$ on the first factor induces a $T$-equivariant and $U'_{-\alpha}$-equivariant rational map $f_1\colon X\dashrightarrow U'_{-\alpha}$. On the other hand $v D$ is $B^u$-stable for all $v\in U'_{-\alpha}$, because $uvD = v(v^{-1}uvu^{-1})D = vD$ for all $u\in B^u$. Therefore $f_1$ is $B$-equivariant, where $B$ acts on $X$ as usual and we let $B^u$ act trivially on $U'_{-\alpha}$. Notice that at this point we have defined an action of $B$ on $U'_{-\alpha}\cong \CC$ that is simply the linear action via the character $-n\alpha$. So $f_1$ is actually an element of $\CC(X)^{(B)}_{n\alpha}$, which proves $n\alpha\in\Xi_B(X)$. Seen as a rational function on $U_{-\alpha}'\times D$, the function $f_1$ has a simple zero on $D$, which shows $\langle\rho(D),n\alpha\rangle =1$. 

Consider now a $B$-stable prime divisor $E\neq D$: we want to show that $\langle \rho(E),n\alpha\rangle \leq 0$. Suppose first that the $E$ doesn't intersect $X_0$. Then $E$ is an irreducible component of $\overline{X\smallsetminus X_0}$ therefore it is $U_{-\alpha}'$-stable. Now, any zero of the rational function $f_1$ on $X$ is mapped by $f_1$ to $e\in U'_{-\alpha}$, hence it is not $U'_{-\alpha}$-stable. This shows that $\langle \rho(E),n\alpha\rangle \leq 0$.

We are left with the case where $E$ intersects $X_0$. But then, since $\varnothing\neq E\cap X_0 \neq D_0$, the divisor $E$ is not a zero of $f_1$, so again $\langle \rho(E),n\alpha\rangle \leq 0$.

Let us show the last assertion of the lemma, so we assume that $U_{-\alpha}'$ is centralized by $B^u$, and we let $E$ be a $B$-stable prime divisor different from $D$. We claim that in this case $E$ cannot intersect $X_0$, which concludes the proof by the same argument as above, namely $E$ is an irreducible component of $\overline{X\smallsetminus X_0}$ therefore it is $U_{-\alpha}'$-stable.

To show the claim, we notice that the assumption that $U_{-\alpha}'$ is centralized by $B^u$ implies that $\varphi$ is $B$-equivariant, where we let $B$ act diagonally: by conjugation on $U_{-\alpha}'$ and as usual on $D$. Hence the rational map $f_2\colon X\dashrightarrow D$ induced by the second projection is also $B$-equivariant. The pull-back of functions along $f_2$ gives an inclusion $\Xi_B(D)\subseteq \Xi_B(X)$ with $\ZZ\alpha\cap \Xi_B(D) = \{0\}$.

The proof of the claim proceeds now by contradiction: assume that $E\cap X_0\neq\varnothing$. Then the restriction $f_2|_E\colon E\dashrightarrow D$ is defined, $B$-equivariant, and dominant. This assures that $\Xi_B(D)$ is a subgroup of $\Xi_B(E)$. Since $\varnothing\neq E\cap X_0 \neq D_0$ the function $f_1$ restricts to a non-zero and $B$-semiinvariant rational function on $E$, which implies $n\alpha\in\Xi_B(E)$.

On the other hand, by $B$-equivariance, the rational map $f_2|_E$ is a morphism if restricted to the open $B$-orbit $E'$ of $E$, and has image equal to the open $B$-orbit $D'$ of $D$. The existence of this morphism, and the fact that $E'$ and $D'$ are $B$-orbits of the same dimension, yields that $E$ and $D$ have the same rank as $B$-varieties, so $\Xi_B(D)$ is a subgroup of $\Xi_B(E)$ of finite index. This implies that a non-zero multiple of $\alpha$ is in $\Xi_B(D)$: contradiction.
\end{proof}

We use now Lemma~\ref{lemma:demazure} to clarify some basic relationships between the action of $P^u$ on $P/H$ and its $B$-stable prime divisors.

\begin{corollary}\label{cor:halfspace}
Let $X$ be a $P$-embedding of $P/H$. If $P^u$ doesn't act trivially on $X$, then there exists $\beta\in \Xi_B(X)$ that is $\leq 0$ on $\rho_X(\Div(X)^P)$ but not on $\rho_X(\Div(X)^B)$.
\end{corollary}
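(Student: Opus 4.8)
The plan is to reduce Corollary~\ref{cor:halfspace} to Lemma~\ref{lemma:PumovesD} followed by Lemma~\ref{lemma:demazure}. First I would dispose of the hypotheses needed to invoke Lemma~\ref{lemma:PumovesD}. If $P^u$ acts nontrivially on $X$ then it certainly acts nontrivially on the open orbit $P/H$, so $H$ does not contain $P^u$. I would also want $H$ not contained in any proper parabolic of $P$; if it were, I expect one can replace $P$ by that parabolic without changing the relevant geometry near $P/H$ (the colors and the $\rho$-values of interest are unaffected), so this is a harmless reduction, though it requires a short argument and may be the first technical obstacle. Once these are in place, Lemma~\ref{lemma:PumovesD} produces a divisor $D\in\Div(P/H)^B$ that is not $P^u$-stable, and taking closures gives a $D\in\Div(X)^B$ not stable under $P^u$.

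Next I would locate a one-parameter unipotent subgroup witnessing the instability of $D$. Since $D$ is not $P^u$-stable, some root subgroup $U_{-\alpha}$ (for $-\alpha$ a root of $P$, and $-\alpha$ must be negative since the positive root groups lie in $B^u$, which does stabilize $B$-stable divisors) moves $D$. Inside $U_{-\alpha}$ I would choose a one-dimensional $T$-stable subgroup $U'_{-\alpha}$ not stabilizing $D$; this is possible because $U_{-\alpha}$ is a successive extension of $T$-stable one-dimensional pieces (the root spaces $\fp_{-n\alpha}$), and $D$ being unstable forces at least one such piece to move it. The commutator $(U'_{-\alpha},B^u)$ lands in the product of higher root groups and, after possibly adjusting the choice of the $T$-eigenline, I can arrange that it stabilizes $D$ so the hypotheses of Lemma~\ref{lemma:demazure} are met. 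This verification of the commutator condition is the place where I expect the most delicate bookkeeping.

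With Lemma~\ref{lemma:demazure} applied to this $X$, $-\alpha$, $D$ and $U'_{-\alpha}$, I obtain a positive integer $n$ with $\beta:=n\alpha\in\Xi_B(X)$ satisfying
\[
\langle\rho(D),\beta\rangle = 1,
\qquad
\langle\rho(E),\beta\rangle \leq 0
\quad\text{for all } E\in\Div(X)^B,\ E\neq D.
\]
Every $P$-stable prime divisor is in particular $B$-stable and is distinct from $D$ (since $D$ meets the open $P$-orbit $P/H$, it is not $P$-stable), so $\langle\rho_X(F),\beta\rangle\leq 0$ for every $F\in\Div(X)^P$; thus $\beta$ is $\leq 0$ on $\rho_X(\Div(X)^P)$. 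On the other hand $\langle\rho_X(D),\beta\rangle = 1 > 0$ with $D\in\Div(X)^B$, so $\beta$ is not $\leq 0$ on $\rho_X(\Div(X)^B)$.

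This $\beta$ is exactly the element required by the statement, so the corollary follows. The only genuine work beyond citing the two lemmas is the reduction handling the parabolic hypothesis and the careful choice of $U'_{-\alpha}$ so that the commutator condition of Lemma~\ref{lemma:demazure} holds; everything after that is a direct specialization of the inequalities already proved there.
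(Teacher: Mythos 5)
Your overall route is the paper's: the proof of Corollary~\ref{cor:halfspace} is exactly the reduction to Lemma~\ref{lemma:PumovesD} followed by Lemma~\ref{lemma:demazure}, and your endgame (every $P$-stable divisor is $B$-stable and distinct from $D$, so the inequalities of Lemma~\ref{lemma:demazure} give the required $\beta=n\alpha$) is correct. The genuine gap is in the step you yourself flag as ``delicate bookkeeping'': your mechanism for producing $U'_{-\alpha}$ satisfying the commutator hypothesis does not work. You first fix a root $\alpha$ such that $U_{-\alpha}$ moves $D$ and then propose to adjust the $T$-eigenline inside $\bigoplus_{n>0}\fp_{-n\alpha}$. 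But the obstruction to the condition that $(U'_{-\alpha},B^u)$ stabilizes $D$ does not live in multiples of $\alpha$: commutators with $B^u$ produce components in root spaces of weights $-n\alpha+\beta$ with $\beta$ a nonzero sum of positive roots, and nothing in your setup prevents one of those pieces of $P^u$ from moving $D$. No adjustment of the line \emph{within} $U_{-\alpha}$ can repair this; one must choose the weight itself correctly. The paper's device, which is the missing idea, is to take a filtration $0=\fu_0\subset\fu_1\subset\fu_2\subset\cdots$ of $\mathrm{Lie}(P^u)$ by $(\Ad B)$-stable subspaces with one-dimensional quotients, let $i$ be \emph{minimal} with $\Exp(\fu_i)$ not stabilizing $D$, and take $U'_{-\alpha}=\Exp(\CC\mathsf x)$ for a $T$-eigenvector $\mathsf x\in\fu_i\smallsetminus\fu_{i-1}$. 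This selects $\alpha$ and the line simultaneously: since $B^u$ acts trivially on the one-dimensional quotient $\fu_i/\fu_{i-1}$, the bracket of $\mathrm{Lie}(B^u)$ with $\mathsf x$ falls into $\fu_{i-1}$, whose exponentials stabilize $D$ by minimality of $i$; the actual verification is done by linearizing on a $P$-stable quasi-projective open subset $X_0\subseteq\PP(V)$ (Sumihiro) and computing the commutator as unipotent operators in $\End(V)$.

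The parabolic reduction is also more than ``replace $P$ by that parabolic.'' The conclusion concerns the given $P$-variety $X$, its lattice $\Xi_B(X)$ and the images $\rho_X(\Div(X)^P)$, $\rho_X(\Div(X)^B)$; $X$ is not an embedding of $Q/H$, and the Borel subgroup changes when passing to $Q$. The paper takes $Q$ \emph{minimal} among proper parabolics containing $H$ (you omit minimality, without which the reduction does not terminate in one step), arranges $T\subseteq Q$ and $B^uQ$ open in $P$, extends $P/H\to P/Q$ to a $P$-equivariant map $X\to P/Q$ using completeness of $P/Q$, and works with the fiber $Y$ over $eQ$, which is a $Q$-embedding of $Q/H$ on which $Q^u\supseteq P^u$ acts nontrivially. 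The first case applied to $Y$ then has to be transferred back: restriction of functions gives $\Xi_B(X)\cong\Xi_{B_M}(Y)$ with $B_M=B\cap M$ (using that $P/Q$ is a complete $L$-orbit with $eQ$ in the open $B^u$-orbit), and $B$-stable, respectively $P$-stable, prime divisors of $X$ intersect $Y$ in $B_M$-stable, respectively $Q$-stable, ones compatibly with $\rho$. Your phrase ``the colors and the $\rho$-values of interest are unaffected'' gestures at this dictionary but is not a proof of it. So: same skeleton as the paper, correct final inequalities, but the key construction in the unipotent step is absent and your proposed substitute would fail in general, while the parabolic step needs the fiber-and-transfer argument rather than a change of ambient group.
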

\begin{proof}
Suppose first that no proper parabolic subgroup of $P$ contains $H$. By Corollary~\ref{cor:PumovesD} there exists $D\in \Div(P/H)^B$ not stable under $P^u$. The Lie algebra $\fu$ of $P^u$ admits a filtration $0=\fu_0\subset \fu_1\subset \fu_2\subset\ldots$ where the subspaces are $(\Ad B)$-stable subalgebras and the quotients $\fu_i/\fu_{i-1}$ are one-dimensional for all $i>0$. The existence of such filtration can be proved by induction on the dimension of $\fu$. First, one finds a filtration of $(\Ad B)$-stable subspaces of the center $\mathfrak z(\fu)$. These subspaces are subalgebras. Then one extends the filtration to the whole $\fu$ by passing to the quotient $\fu/\mathfrak z(\fu)$ and applying induction on the dimension.

For $i\geq0$ denote by $U_i$ the subgroup $\Exp(\fu_i)$ of $P$. It is a subgroup of $P^u$ stable under conjugation by $B$, normal in $U_{i+1}$, and $B^u$ acts trivially on $U_{i+1}/U_{i}$ for all $i\geq0$. Let $i>0$ be such that $U_{i-1}$ stabilizes $D$ but $U_i$ doesn't. Let $\alpha$ be the character such that $T$ acts on $\fu_{i}/\fu_{i-1}$ with eigenvalue $-\alpha$ (so $-\alpha$ is a root of $P$), let $\mathsf x\in \fu_{i}\smallsetminus\fu_{i-1}$ be an $\Ad T$-eigenvector, and consider the subgroup $U'_{-\alpha}=\Exp(\CC\mathsf x)$ of $U_i$.

Then $U'_{-\alpha}$ is a $T$-stable subgroup of $U_{-\alpha}$, it doesn't stabilize $D$ but the commutator $(U'_{-\alpha},B^u)$ does. The last assertion is true because, for $u\in U'_{-\alpha}$ and $b\in B^u$, the commutator $(u,b)$ is in $U_{i-1}$ so it stabilizes $D$. Lemma~\ref{lemma:demazure} implies the corollary with $\beta=\alpha$.

It remains to consider the case where $H$ is contained in a proper parabolic subgroup of $P$, and we denote by $Q$ such a subgroup. We may suppose that $Q$ is minimal with this property, therefore $H$ is not contained in any proper parabolic subgroup of $Q$. We may also suppose that $Q$ contains $B_-$, the Borel subgroup of $L$ opposite to $B$ with respect to $T$. This implies that $B^uQ$ is open in $P$. We denote by $M$ the Levi subgroup of $Q$ containing $T$, and in this way $B_M=B\cap M=B\cap Q$ is a Borel subgroup of $M$.

Let $Y_0=Q/H$ be the fiber over $eQ\in P/Q$ of the natural morphism $\pi\colon P/H\to P/Q$, let $Y$ be the closure of $Y_0$ in $X$ and consider the equivariant fiber bundle $X'= P\times^Q Y$ over $P/Q$. It is a $P$-embedding of $P/H$ equipped with the $P$-equivariant morphism $\varphi\colon X'\to X$ given by $[p,y]\mapsto py$. The fiber bundle $X'$ can be constructed as a closed subset of $P/Q\times X$, in such a way that $\varphi$ is the restriction to $X'$ of the second projection $P/Q\times X\to X$. Since $P/Q$ is complete, the image of $\varphi$ is closed. Then, since $\varphi$ is birational, it is also surjective.

Up to positive multiples we have $\rho_{X'}(\Div(X')^P)\supseteq \rho_{X}(\Div(X)^P)$. Indeed, for any $Z\in \Div(X)^P$ there exists $Z'\in \Div(X')^P$ mapped onto $Z$. Inside $\CC(X)=\CC(X')$, the local ring $\mathcal O_{X,Z}$ is contained in the local ring $\mathcal O_{X',Z'}$ by pulling back functions from $X$ to $X'$. Being both discrete valuation rings, both $\mathcal O_{X,Z}$ and $\mathcal O_{X',Z'}$ are maximal subrings of $\CC(X)$, hence they are equal. The equality $\Div(X')^B \smallsetminus \Div(X')^P = \Div(X)^B\smallsetminus \Div(X)^P$ also holds and is compatible with the maps $\rho_{X'}$ and $\rho_X$, if one identifies non-$P$-stable prime divisors of $X$ and of $X'$ with their respective intersections with $P/H$. Therefore we may replace $X$ by $X'$, which enables us to assume that there is a $P$-equivariant map $X\to P/Q$ extending $\pi$. We still denote by $Y$ the fiber of this map over $eQ$.

The group $Q^u$ doesn't act trivially on $Q/H$, otherwise $P^u$ being contained in $Q^u$ would act trivially on $P/H$. The first part of the proof then yields $\beta\in\Xi_{B_M}(Y)$ that is $\leq 0$ on $\rho_Y(\Div(Y)^Q)$ but not on $\rho_Y(\Div(Y)^{B_M})$.

On the other hand $P/Q$ is a complete $L$-orbit, and the point $eQ$ is in the open $B^u$-orbit. It follows that restriction of functions from $X$ to $Y$ induces an isomorphism $\Xi_B(X)\cong \Xi_{B_M}(Y)$, that any $D\in\Div(X)^P$ intersects $Y$ in a $Q$-stable prime divisor, and that any $E\in \Div(Y)^{B_M}$ is the intersection of some $D\in \Div(X)^B$ with $Y$. This identifies $\rho_X(\Div(X)^P)$ with $\rho_Y(\Div(Y)^Q)$, and identifies $\rho_Y(\Div(Y)^{B_M})$ with a subset of $\rho_X(\Div(X)^B)$, compatibly with the above isomorphism between the lattices. The corollary follows.
\end{proof}

\section{The spherical roots}\label{s:sigma}

In this section we use the theory of embeddings of spherical homogeneous spaces as presented in \cite{Kn91}, we refer to loc.cit.\ for details.

Let us recall some basic facts of this theory. Given an spherical $L$-variety $X$, with any $L$-orbit $Y\subseteq X$ one associates the polyhedral (i.e.\ finitely generated) convex cone $\Cone_Y$ generated by $\rho(D)$ where $D$ varies in the set of all $B$-stable prime divisors of $X$ that contain $Y$. The {\em colored cone} associated with $Y$ is the couple $(\Cone_Y,\Colemb_Y)$ where $\Colemb_Y$ is the set of those elements in $\Delta_L(X)$ that contain $Y$.

The set of the colored cones of all $L$-orbits of $X$ is called the {\em fan} of $X$ and is denoted here by $\Fan_L(X)$. Moreover, two $L$-orbits $Y$, $Z$ satisfy $\overline Z\supseteq Y$ if and only if $\Cone_Z$ is a face of $\Cone_Y$ and $\Colemb_Z=\Colemb_Y\cap\rho^{-1}(\Cone_Z)$.

The set of $L$-invariant valuations on $\CC(X)$ is identified via the map $\rho$ with its image $V_L(X)$ in $N_B(X)$, and $V_L(X)$ is a polyhedral convex cone of maximal dimension called the {\em valuation cone}. If $X$ is complete and toroidal then $V_L(X)$ is equal to the union of all cones $\Cone_Y$ for $Y$ varying in the set of $L$-orbits of $X$. The dual cone of $-V_L(X)$ in $\Xi_B(X)$ is generated by a set $\Sigma_L(X)$ of primitive, linearly independent elements. They are usually called the {\em spherical roots} of $X$; in this paper we will use the denomination {\em spherical roots of the $L$-action on $X$}. The equality $V_L(X)=N_B(X)$ holds if and only if $X$ is a {\em horospherical} $L$-variety, i.e.\ a maximal unipotent subgroup of $L$ stabilizes a point in the open $L$-orbit of $X$.

Finally, any colored cone $(\Cone,\Colemb)\in \Fan_L(X)$ is uniquely determined by $\Cone$ among the elements of $\Fan_L(X)$. Indeed, let $v\in V_L(X)$ be in the relative interior of $\Cone$ (such $v$ exists by \cite[Section~4, property~CC2]{Kn91}). Then by \cite[Section~4, property~CF2]{Kn91} for any $(\Cone',\Colemb')\in \Fan_L(X)$ different from $(\Cone,\Colemb)$ the point $x$ is not in the relative interior of $\Cone'$, which implies $\Cone\neq\Cone'$.

The purpose of this section is to adapt the above notions to our case, in particular to introduce a set of spherical roots related to the action on $P/H$ of $P$ instead of $L$. We accomplish this task by defining a set $V(P/H)$ that will play the role of the valuation cone, and for this we use $P$-toroidal compactifications of $P/H$. However, several details need to be worked out to prove the two main results of this section: that what we define is indeed a convex cone (Theorem~\ref{thm:V}), and that it is independent from the chosen compactification (Theorem~\ref{thm:Vindep}).

\begin{definition}
Let $X$ be a complete $P$-toroidal embedding of $P/H$. We define
\[
V(X) = \bigcup_Y \Cone_Y
\]
where $Y$ varies in the set of $L$-orbits of $X$ such that $\overline Y$ is $P$-stable.
\end{definition}

Suppose that $X$ is as in the above definition. If $Y$ is an $L$-orbit with $P$-stable closure then $\overline Y$ contains a closed $P$-orbit $Z$. The latter is complete, therefore itself a closed $L$-orbit, hence $\Cone_Y$ is a face of the maximal cone $\Cone_Z\subseteq V(X)$.

It follows that in the definition of $V(X)$ the same set is obtained if we let $Y$ vary only in the set of closed $L$-orbits that are also $P$-orbits, because this corresponds exactly to taking only those $Y$ such that $\overline Y$ is $P$-stable and $\Cone_Y$ is maximal in the fan $\Fan_L(X)$.

We provide now some general insight on $L$-orbits $Y\subseteq X$ whose closure is $P$-stable.

\begin{lemma}\label{lemma:valcontained}
Let $X$ be a complete $P$-toroidal embedding of $P/H$. Then $V(X)$ contains $\rho(\Div(X)^P)$, and is contained in the convex cone generated by $\rho(\Div(X)^P)$, which is contained in $V_L(X)$.
\end{lemma}
\begin{proof}
Let $Y\subseteq X$ be an $L$-orbit whose closure is $P$-stable. We have already recalled from \cite{Kn91} that $\Cone_Y$ is generated as a convex cone by the elements $\rho(D)$ for all $D\in \Div(X)^B$ that contain $Y$. Any such $D$ also contains some $P$-orbit since $\overline Y$ is $P$-stable, therefore $D$ is $P$-stable, because $X$ is $P$-toroidal.

We deduce that $V(X)$ is contained in the convex cone generated by $\rho(\Div(X)^P)$. Also, notice that any $D\in \Div(X)^P$ is in particular $L$-stable, so $\rho(D)\in V_L(X)$.

It remains to show that $V(X)$ contains $\rho(\Div(X)^P)$. Any $D\in \Div(X)^P$ has a dense $L$-orbit $Y$, which has then $P$-stable closure and whose cone $\Cone_Y$ is generated by $\rho(D)$. Therefore $\rho(D)$ is in $V(X)$.
\end{proof}

\begin{proposition}\label{prop:inVstable}
Let $X$ be a complete $P$-toroidal embedding of $P/H$, let $Y\subseteq X$ be an $L$-orbit, and suppose that $\Cone_Y\subseteq V(X)$. Then $\overline Y$ is $P$-stable, and $\Colemb_Y=\varnothing$.
\end{proposition}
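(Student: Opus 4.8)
\emph{Overview and first reduction.} The plan is to combine the Demazure-type analysis of Lemma~\ref{lemma:demazure} with the observation that, by $P$-toroidality, no non-$P$-stable $B$-divisor can contain a $P$-orbit. Recall that $V(X)=\bigcup_Z\Cone_Z$ as $Z$ ranges over the closed $L$-orbits that are $P$-orbits. For such a $Z$ the orbit $Z=\overline Z$ is complete and $P$-stable, so it contains no color: indeed a color $D$ is not $L$-stable, hence not $P$-stable, so it meets the open orbit $P/H$ and, by $P$-toroidality, $D$ contains no $P$-orbit. Thus $\Colemb_Z=\varnothing$, so $\Cone_Z$ is generated by valuations of $L$-stable divisors and lies in $V_L(X)$. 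Now pick $v$ in the relative interior of $\Cone_Y$. Since $\Cone_Y\subseteq V(X)$ we have $v\in\Cone_Z\subseteq V_L(X)$ for some such $Z$, so $v$ lies in the relative interior of the unique face $\Cone_W$ of $\Cone_Z$ containing it, and $W$ is again colorless. The separatedness of the colored fan $\Fan_L(X)$ (distinct colored cones have disjoint cone-relative-interiors inside $V_L(X)$) forces $(\Cone_Y,\Colemb_Y)=(\Cone_W,\Colemb_W)$. Hence $\Colemb_Y=\varnothing$, the cone $\Cone_Y$ is a face of $\Cone_Z$, and $Z\subseteq\overline Y$.

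\emph{Every $B$-divisor through $Y$ is $P$-stable.} Suppose some $D\in\Div(X)^B$ with $Y\subseteq D$ is not $P^u$-stable. As in the proof of Corollary~\ref{cor:halfspace}, filter the Lie algebra of $P^u$ by $(\Ad B)$-stable subspaces $\fu_0\subset\fu_1\subset\cdots$ with one-dimensional quotients, take the least $i$ for which $\Exp(\fu_i)$ does not stabilize $D$, and form $U'_{-\alpha}=\Exp(\CC\mathsf x)$ with $\mathsf x\in\fu_i\smallsetminus\fu_{i-1}$ an $\Ad T$-eigenvector of weight $-\alpha$. By minimality of $i$ the commutator $(U'_{-\alpha},B^u)\subseteq\Exp(\fu_{i-1})$ stabilizes $D$, so Lemma~\ref{lemma:demazure} yields $n>0$ with $n\alpha\in\Xi_B(X)$, $\langle\rho(D),n\alpha\rangle=1$, and $\langle\rho(E),n\alpha\rangle\le 0$ for every other $E\in\Div(X)^B$. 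Since $D$ is not $P$-stable it contains no $P$-orbit; in particular $D\not\supseteq Z'$ for every closed $P$-orbit $Z'$, so $\langle\rho(E),n\alpha\rangle\le 0$ for all $B$-divisors $E\supseteq Z'$, i.e.\ $n\alpha\le 0$ on each $\Cone_{Z'}$ and hence on all of $V(X)$. But $D\supseteq Y$ gives $\rho(D)\in\Cone_Y\subseteq V(X)$, contradicting $\langle\rho(D),n\alpha\rangle=1$. Thus $D$ is $P^u$-stable; being also $L$-stable (as $\Colemb_Y=\varnothing$), it is $P$-stable.

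\emph{Conclusion.} Let $I=\bigcap_D D$, where $D$ ranges over the $B$-divisors containing $Y$; by the previous step $I$ is a $P$-stable closed subset containing $\overline Y$. Because $\Colemb_Y=\varnothing$, the simple open neighborhood $X_Y$ of $Y$ is toroidal, and there the $B$-divisors through $Y$ are exactly the $L$-stable ones whose valuations generate the rays of $\Cone_Y$; their intersection inside $X_Y$ is the closed orbit $Y=\overline Y\cap X_Y$. Hence $I$ and $\overline Y$ coincide near the generic point of $Y$, so $\overline Y$ is the irreducible component of $I$ through $Y$. Since $P$ is connected it fixes each of the finitely many components of the $P$-stable set $I$, so $\overline Y$ is $P$-stable. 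Together with $\Colemb_Y=\varnothing$ this gives the proposition.

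\emph{Main obstacle.} I expect the delicate point to be the middle step: one must reproduce, for a single divisor $D\supseteq Y$ rather than for the whole variety, the filtration-and-commutator bookkeeping underlying Corollary~\ref{cor:halfspace}, and then verify that the resulting functional $n\alpha$ is genuinely $\le 0$ on all of $V(X)$. The concluding step rests on the local structure theorem to identify $\overline Y$ as a component of $\bigcap_D D$; establishing that $\Colemb_Y=\varnothing$ makes $X_Y$ toroidal, so that this intersection is computed exactly as in the toric case, is where the argument needs the most care.
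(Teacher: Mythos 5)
Your proof is correct, and its endpoints coincide with the paper's own argument: the paper likewise produces a closed $P$-orbit $Z$ with $\Cone_Y$ a face of the (colorless, by $P$-toroidality) maximal cone $\Cone_Z$, uses the fact that colored cones of $\Fan_L(X)$ are determined by their cones to conclude $\Colemb_Y=\varnothing$ and $\overline Y\supseteq Z$, and then deduces $P$-stability of $\overline Y$ by identifying it inside the intersection of the $P$-stable prime divisors through $Y$. The one genuine divergence is your middle step, and there the comparison is instructive: you re-run the filtration-and-commutator mechanism of Corollary~\ref{cor:halfspace} together with Lemma~\ref{lemma:demazure} to show that every $B$-stable divisor through $Y$ is $P$-stable, but this machinery is unnecessary, because your own first reduction already yields $Z\subseteq\overline Y$. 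Any $B$-stable prime divisor containing $Y$ then contains the $P$-orbit $Z$, and is therefore $P$-stable at once: a non-$P$-stable element of $\Div(X)^B$ must meet $P/H$ (boundary components are $P$-stable since $P$ is connected), hence is the closure of an element of $\Div(P/H)^B$ and contains no $P$-orbit by $P$-toroidality. This one-line observation is exactly how the paper proceeds, and it also dissolves what you single out as the ``main obstacle'': no divisor-by-divisor Demazure analysis is needed, so the delicate bookkeeping you anticipate never arises. Your Demazure detour is nonetheless sound as written (the minimal-$i$ choice does force $U'_{-\alpha}$ to move $D$ while the commutator fixes it, and the resulting $n\alpha$ is indeed $\le 0$ on every $\Cone_{Z'}$, contradicting $\rho(D)\in\Cone_Y\subseteq V(X)$), and your concluding step --- realizing $\overline Y$ as the irreducible component through $Y$ of $\bigcap_{D\supseteq Y}D$ via the toroidal simple neighborhood $X_Y$ and connectedness of $P$ --- is a correct and slightly more explicit rendering of the paper's assertion that the colored cone of $Y$ equals that of the intersection of the $P$-stable divisors containing $Y$, which forces $\overline Y$ to equal that intersection.
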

\begin{proof}
Since $\Cone_Y\subseteq V(X)$, there exists a closed $P$-orbit $Z$ such that $\Cone_Y$ is a face of the maximal cone $\Cone_Z$. Since $X$ is $P$-toroidal, any $B$-stable prime divisor containing $Z$ is $P$-stable, in particular $\Colemb_Z=\varnothing$. The relative interior of $\Cone_Y$ meets $V_L(X)$, so the face $\Cone_Y$  of $\Cone_Z$ is itself the cone of a colored cone $(\Cone_Y,\Colemb)$ of $\Fan_L(X)$, corresponding to an $L$-orbit $Y'$ of $X$, such that $\overline{Y'}\supseteq Z$ and $\Colemb=\Colemb_Z\cap \rho^{-1}(\Cone_Y)$, i.e.\ $\Colemb = \varnothing$. Colored cones are uniquely determined by their cones, which implies $\Colemb=\Colemb_Y$ and $Y'=Y$.

It follows that $\overline Y\supseteq Z$, hence any $B$-stable prime divisor containing $Y$ is also $P$-stable. As a consequence, the colored cone of $Y$ is equal to the colored cone of $Y'$, where the latter is defined as the open $L$-orbit of the intersection $\overline{Y'}$ of all $P$-stable prime divisors containing $Y$. This implies that $Y=Y'$ and $\overline Y=\overline{Y'}$. Since $\overline{Y'}$ is by definition $P$-stable, we conclude that $\overline Y$ is $P$-stable.
\end{proof}

The following corollary is analogous to \cite[Theorem~3.2.1]{Br07}.

\begin{corollary}\label{cor:Ltoroidaln}
Let $X$ be a complete embedding of $P/H$. Then:
\begin{enumerate}
\item\label{cor:Ltoroidaln:equiv} The embedding $X$ is $P$-toroidal if and only if there exists an $L$-stable open subset $X_0\subseteq X$ such that $X_0$ is an $L$-toroidal embedding of the open $L$-orbit of $X$, and $X_0$ intersects any $P$-orbit of $X$ in a single $L$-orbit.
\item\label{cor:Ltoroidaln:neighb} If $X$ is $P$-toroidal, then any such $X_0$ contains any $L$-orbit $Y\subseteq X$ such that $\overline Y$ is $P$-stable.
\end{enumerate}
\end{corollary}
\begin{proof}
We prove the ``only if'' implication of part (\ref{cor:Ltoroidaln:equiv}). Assume that $X$ is $P$-toroidal, and define $X_0$ to be the embedding of the open $L$-orbit of $X$ with colored fan equal to
\[
\Fan_L(X_0) = \{ (\Cone,\Colemb)\in \Fan_L(X)\;|\; \Cone\subseteq V(X)\}.
\]
Then, by \cite[Section~4]{Kn91}, the embedding $X_0$ is an open subset of $X$ and an $L$-orbit $Y$ of $X$ is in $X_0$ if and only if $\Cone_Y\subseteq V(X)$. Moreover $X_0$ is $L$-toroidal, because by Proposition~\ref{prop:inVstable} we have $\Colemb=\varnothing$ for all $(\Cone,\Colemb)\in \Fan_L(X_0)$.

Let $Z$ be a $P$-orbit. Then $Z$ is $L$-stable, therefore it contains a unique dense $L$-orbit $Y$, and $\overline Y=\overline Z$ is $P$-stable. It follows that $\Cone_Y\subseteq V(X)$, hence $Y\subseteq X_0$. Suppose $X_0\cap Z$ contains an $L$-orbit $Y'\neq Y$, then $\overline{Y'}$ is not $P$-stable since $PY'=Z$ has dimension greater than $\dim Y'$. Proposition~\ref{prop:inVstable} yields that $\Cone_{Y'}\not\subseteq V(X)$, whence $Y'\not\subseteq X_0$: contradiction. Therefore $X_0\cap Z$ is a single $L$-orbit.

Let us show the ``if'' implication of part (\ref{cor:Ltoroidaln:equiv}). Assume that such $X_0$ exists, and suppose that an element $D\in \Div(X)^B$ contains a $P$-orbit $Z$: we must show that $D$ is $P$-stable. Since $X_0$ intersects $Z$, it also intersects $D$. Now $X_0\cap Z$ is $L$-stable and contained in $X_0\cap D$, so the latter is a $B$-stable prime divisor of $X_0$ containing some $L$-orbit. Since $X_0$ is $L$-toroidal, we deduce that $D$ is $L$-stable. Call $D_0$ the open $L$-orbit of $D$; it is contained in $X_0$.

If $D$ is not $P$-stable, then the $P$-orbit $PD_0$ is dense in $X$, i.e.\ it contains the open $L$-orbit of $X$, and this orbit is also contained in $X_0$. It would follow that $X_0$ intersects $PD_0$ in more than one $L$-orbit: contradiction. Therefore $D$ is $P$-stable, so $X$ is $P$-toroidal.

Part (\ref{cor:Ltoroidaln:neighb}) follows from part (\ref{cor:Ltoroidaln:equiv}). Indeed, assume that $X$ is $P$-toroidal and let $Y$ be an $L$-orbit whose closure is $P$-stable. Then $Y$ is open in the $P$-orbit $PY$, because it is open in $\overline Y$ which contains $PY$. By part (\ref{cor:Ltoroidaln:equiv}), the intersection $X_0\cap PY$ is non-empty and it is a single $L$-orbit. Being open in $PY$, it is equal to $Y$.
\end{proof}

\begin{lemma}\label{lemma:Bstable}
Let $X$ be a normal $L$-variety, let $Y\subseteq X$ be an $L$-orbit and $Z\subseteq Y$ be a $B$-stable subvariety. Then $Z$ is contained in a $B$-stable prime divisor of $X$ not containing $Y$. 
\end{lemma}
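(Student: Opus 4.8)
The plan is to reduce the statement to producing a $B$-semiinvariant section of a line bundle, on a suitable neighbourhood, that vanishes on $Z$ but not on $Y$, and then to extract the desired divisor as one of its zero components. We may assume $Z\subsetneq Y$, since the asserted conclusion cannot hold when $Z=Y$. First I would replace $Z$ by its closure $W=\overline Z$ in $X$: this is an irreducible $B$-stable closed subvariety of $X$ contained in $\overline Y$, and $\dim W=\dim Z<\dim Y$ forces $W\subsetneq\overline Y$. It then suffices to find a $B$-stable prime divisor $D$ of $X$ with $W\subseteq D$ and $\overline Y\not\subseteq D$; indeed $Z\subseteq W\subseteq D$, while $Y\subseteq D$ would give $\overline Y\subseteq D$, which we exclude.

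To build $D$ I would first apply Sumihiro's theorem \cite{Su74} to obtain an $L$-stable quasi-projective open neighbourhood $X_0\subseteq X$ of a general point of $Z$, together with an $L$-equivariant locally closed embedding $X_0\hookrightarrow\PP(V)$ for some rational $L$-module $V$. Granting for the moment a $B$-semiinvariant section $f$ of $\mathcal O(d)|_{X_0}$, pulled back from $\PP(V)$, that vanishes on $W\cap X_0$ but not at the generic point of $Y\cap X_0$, the divisor of zeros of $f$ is a nonzero effective divisor on the normal variety $X_0$; it is $B$-stable because $f$ is semiinvariant, and the components of a $B$-stable divisor, being permuted by the connected group $B$, are individually $B$-stable. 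The irreducible set $W\cap X_0$ then lies in a single such component $D_0$, and $D:=\overline{D_0}$ is a $B$-stable prime divisor of $X$ with $W\subseteq D$. Since $D_0$ is closed in $X_0$ we have $D\cap X_0=D_0$, which avoids the generic point of $Y\cap X_0$, so $\overline Y\not\subseteq D$. Thus everything reduces to the existence of $f$, which I expect to be the \emph{crux}: the non-equivariant analogue is immediate, because the prime of $\overline Y$ is strictly contained in that of $W$, but producing a \emph{semiinvariant} $f$ requires the representation theory of $L$.

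To produce $f$ I pass to affine cones in $V$. Let $\hat W\subseteq\hat Y\subseteq V$ be the cones over the projective closures of $W\cap X_0$ and $Y\cap X_0$; by $L$-equivariance of the embedding these are $L$-stable closed cones, and $\hat W\neq\hat Y$ by the strict dimension inequality, so their homogeneous radical ideals satisfy $I_{\hat Y}\subsetneq I_{\hat W}$ in $\CC[V]$. Both are $L$-submodules of the rational $L$-module $\CC[V]$, so complete reducibility (in characteristic zero) gives a nonzero $L$-stable complement $C$ with $I_{\hat W}=I_{\hat Y}\oplus C$. Now a nonzero $B$-semiinvariant vector exists in $C$: a nonzero finite-dimensional $L$-submodule $C'\subseteq C$ has $(C')^{B^u}\neq 0$ since $B^u$ is unipotent, this space is $T$-stable, and a torus acting on it has an eigenvector. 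Because the $L$-action preserves the grading of $\CC[V]$, I may take this vector homogeneous, obtaining a homogeneous $B$-semiinvariant $f\in I_{\hat W}\setminus I_{\hat Y}$, that is, a section of some $\mathcal O(d)$ vanishing on $\hat W$ (hence on $W\cap X_0$) but not on $\hat Y$ (hence not at the generic point of $Y\cap X_0$), exactly as needed above.

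The points I would check with care are that the cones $\hat W,\hat Y$ are genuinely $L$-stable and distinct — both following from the $L$-equivariance of the Sumihiro embedding together with $\dim W<\dim Y$ — and that the passage from $D_0$ to its closure $D$ does not reabsorb $\overline Y$, which is guaranteed by $D\cap X_0=D_0$ and the fact that $f$ does not vanish at the generic point of $Y\cap X_0$. With these verified, the reduction of the first paragraph delivers the claimed $B$-stable prime divisor.
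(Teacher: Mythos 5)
There is a genuine gap, and it sits exactly at the step you flagged as the crux. You assert that both cones $\hat W$ and $\hat Y$ are $L$-stable ``by $L$-equivariance of the Sumihiro embedding'', but $Z$ is only assumed $B$-stable, so $W=\overline Z$, its projective closure, and the cone $\hat W$ are only $B$-stable; equivariance of the embedding transports the $L$-action, it does not make a non-$L$-stable set $L$-stable. (Already for $X=\PP^1=\SL(2)/B$ with $Z$ the $B$-fixed point, $\hat W$ is the highest-weight line in $\CC^2$, which $\SL(2)$ does not preserve.) Consequently $I_{\hat W}$ is not an $L$-submodule of $\CC[V]$, complete reducibility does not provide an $L$-stable complement $C$ with $I_{\hat W}=I_{\hat Y}\oplus C$, and the entire production of the semiinvariant $f$ collapses. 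Nor can you simply weaken everything to $B$: the solvable group $B$ is not linearly reductive, and in a $B$-module an eigenvector of a quotient need not lift — in the standard $\SL(2)$-module restricted to $B$, every $B$-eigenvector lies in the unique $B$-stable line, while the quotient by that line is nonzero. So with only the $B$-structure of $I_{\hat W}$ there is no a priori reason that some $B$-eigenvector of $I_{\hat W}$ avoids $I_{\hat Y}$, which is precisely what you need.

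The repair must exploit the one object that genuinely is $L$-stable, namely $\hat Y$, and doing so essentially recovers the paper's proof. Since $I_{\hat Y}$ is an $L$-submodule of $\CC[V]$, split $\CC[V]=I_{\hat Y}\oplus C'$ $L$-equivariantly (degreewise, so $C'$ is graded); by the modular law, $I_{\hat Y}\subseteq I_{\hat W}$ gives $I_{\hat W}=I_{\hat Y}\oplus\bigl(I_{\hat W}\cap C'\bigr)$, and $I_{\hat W}\cap C'$ is a nonzero $B$-stable graded subspace, being the intersection of the $B$-stable $I_{\hat W}$ with the $L$-stable $C'$. Now Lie--Kolchin applied to a nonzero finite-dimensional $B$-submodule yields a homogeneous $B$-eigenvector $f\in I_{\hat W}\smallsetminus I_{\hat Y}$, after which the rest of your argument (extraction of a $B$-stable component of $\{f=0\}\cap X_0$, the check $D\cap X_0=D_0$, the closure step) goes through verbatim and matches the paper's conclusion. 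Equivalently stated: the ideal of $\hat W$ in $\CC[\hat Y]$ is nonzero and $B$-stable, hence contains a homogeneous $B$-eigenvector, which lifts to a $B$-eigenvector of $\CC[V]$ along the $L$-equivariantly split surjection $\CC[V]\twoheadrightarrow\CC[\hat Y]$; this quotient-then-lift structure is exactly what the paper invokes through the extension theorem \cite[Theorem 1.1]{Kn91}, applied to a $B$-eigenvector chosen in the ideal of the cone over $Z$ inside the coordinate ring of the cone over $Y$. (One minor point in your first paragraph: the inference $\dim Z<\dim Y$ from $Z\subsetneq Y$ reads ``subvariety'' as closed in $Y$; that is the intended reading, and the paper's proof makes the same implicit assumption.)
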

\begin{proof}
Thanks to \cite{Su74}, we may suppose that $X\subseteq \PP(M)$ where $M$ is a finite-dimensional $L$-module. Let $X'$, $Y'$ and $Z'$ be the closures in $M$ of the cones over resp.\ $X$, $Y$ and $Z$. Choose a homogeneous function $f\in \CC[Y']^{(B)}$ vanishing on $Z'$. By \cite[Theorem~2.1]{Kn91} we can extend $f$ to a $B$-semiinvariant homogeneous regular function $F$ on $X'$. Then 
\[
D= \{F=0\}\cap X
\]
contains $Z$ but not $Y$. Any irreducible component of $D$ containing $Z$ satisfies the desired properties.
\end{proof}

\begin{proposition}\label{prop:locSigma1}
Let $X$ be a $P$-toroidal embedding of $P/H$ and $Y\subseteq X$ an $L$-orbit such that $\overline Y$ is $P$-stable.
\begin{enumerate}
\item\label{prop:locSigma1:toroidal} The closure $\overline Y$ is a $P$-toroidal embedding of an $L$-spherical homogeneous space $P/K$ for some subgroup $K\subseteq P$.
\item\label{prop:locSigma1:lattice} Restriction of rational functions from $X$ to $Y$ induces an identification between $\Xi_B(\overline Y)$ and $\Cone_{Y}^\perp(\subseteq \Xi_B(X))$, and correspondingly an identification between $N_B(\overline Y)$ and $N_B(X)/(\Span_\QQ\Cone_{Y})$.
\item\label{prop:locSigma1:fan} The set $V(Y)$ is the projection on $N_B(\overline Y)$ of the set $\bigcup_Z \Cone_Z$, where $Z$ varies in the set of $L$-orbits of $X$ such that $\overline Z$ is $P$-stable and $\Cone_Z\supseteq \Cone_Y$.
\end{enumerate}
\end{proposition}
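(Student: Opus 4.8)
The plan is to treat part~(\ref{prop:locSigma1:toroidal}) as the crux and then read off parts~(\ref{prop:locSigma1:lattice}) and~(\ref{prop:locSigma1:fan}) from the Luna--Vust description of $L$-orbit closures in spherical varieties as in \cite{Kn91}. For part~(\ref{prop:locSigma1:toroidal}): since $\overline Y$ is irreducible and $P$-stable, $P$ has a dense orbit $P/K=P\cdot y$ in it, for $y\in Y$; as $Y=L\cdot y$ is dense in $\overline Y$ it is dense in $P/K$, so $Y$ is the open $L$-orbit of $P/K$ and $P/K$ is $L$-spherical. For normality I would use that $\overline Y$ is an $L$-orbit closure in the spherical $L$-variety $X$, hence normal by \cite{Kn91}; alternatively one covers $\overline Y$ by the $L$-toroidal neighbourhoods of Corollary~\ref{cor:Ltoroidaln}, in which orbit closures are normal. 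It then remains to prove $P$-toroidality, which I would reduce to the claim that no non-$P$-stable $B$-stable prime divisor of $\overline Y$ contains a $P$-orbit: a $B$-stable prime divisor of $\overline Y$ missing the open orbit $P/K$ must be a component of the $P$-stable boundary, hence $P$-stable, so the divisors $\overline D$ with $D\in\Div(P/K)^B$ are exactly the non-$P$-stable ones. Such a divisor $\mathcal{D}$ either meets the open $L$-orbit $Y$ (a \emph{color}) or is $L$-stable.

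The color case I would handle via Lemma~\ref{lemma:Bstable}. Suppose a color $\mathcal{D}$ contains a $P$-orbit $W$. Applying Lemma~\ref{lemma:Bstable} with the $L$-orbit $Y$ and the $B$-stable subvariety $\mathcal{D}\subseteq\overline Y$, I obtain a $B$-stable prime divisor $\mathcal{E}$ of $X$ with $\mathcal{D}\subseteq\mathcal{E}$ and $Y\not\subseteq\mathcal{E}$. Since $\mathcal{D}$ meets $Y$, so does $\mathcal{E}$; were $\mathcal{E}$ $P$-stable it would be $L$-stable, and $\mathcal{E}\cap Y$ would be a nonempty $L$-stable subset of the single $L$-orbit $Y$, forcing $Y\subseteq\mathcal{E}$, a contradiction. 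Hence $\mathcal{E}$ is non-$P$-stable, yet $\mathcal{E}\supseteq\mathcal{D}\supseteq W$ contains the $P$-orbit $W$, contradicting that $X$ is $P$-toroidal. (One can equally argue inside an $L$-toroidal neighbourhood of $W$ given by Corollary~\ref{cor:Ltoroidaln}.)

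The $L$-stable case is the main obstacle, because $L$-toroidality does not by itself forbid an $L$-stable divisor from passing through an $L$-orbit; here the $P^u$-action must genuinely be used, and Lemma~\ref{lemma:Bstable} alone does not suffice since an $L$-stable $\mathcal{D}$ does not meet $Y$. Given an $L$-stable, non-$P$-stable $\mathcal{D}$, I would run the filtration argument of Corollary~\ref{cor:halfspace}: as $\mathcal{D}$ is not $P^u$-stable there is a root $-\alpha$ and a one-dimensional $T$-stable $U'_{-\alpha}\subseteq U_{-\alpha}\subseteq P^u$ moving $\mathcal{D}$ while $(U'_{-\alpha},B^u)$ fixes it, so Lemma~\ref{lemma:demazure} applied to $\overline Y$ produces $n>0$ with $n\alpha\in\Xi_B(\overline Y)$, $\langle\rho(\mathcal{D}),n\alpha\rangle=1$, and $\langle\rho(E),n\alpha\rangle\le 0$ for every other $B$-stable prime divisor $E$ of $\overline Y$. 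By part~(\ref{prop:locSigma1:lattice}) the weight $n\alpha$ lies in $\Cone_Y^\perp\cap\Xi_B(X)$, hence extends to $\Xi_B(X)$. The delicate step is to compare this weight with the ambient $P$-toroidal structure of $X$ and deduce that $n\alpha\le 0$ on $\Cone_W$ for every closed $P$-orbit $W$; the intended mechanism is to apply Lemma~\ref{lemma:demazure} in $X$ to a divisor moved by the same root direction and use that, $X$ being $P$-toroidal, such a divisor avoids all $P$-orbits, so all $B$-stable prime divisors of $X$ through $W$ pair non-positively with $n\alpha$. Granting $n\alpha\le 0$ on $\Cone_W$, if some $W$ were contained in $\mathcal{D}$ then $\rho(\mathcal{D})$ would be a generator of $\Cone_W$ with $\langle\rho(\mathcal{D}),n\alpha\rangle=1>0$ (the pairing being unchanged under the projection of part~(\ref{prop:locSigma1:lattice}), as $n\alpha\in\Cone_Y^\perp$), a contradiction; this forces $\mathcal{D}$ to avoid all $P$-orbits. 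Propagating the $P^u$-noninvariance control from $X$ down to the subvariety $\overline Y$ is the technical heart of the argument.

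Parts~(\ref{prop:locSigma1:lattice}) and~(\ref{prop:locSigma1:fan}) then follow from standard embedding theory, using part~(\ref{prop:locSigma1:toroidal}). For~(\ref{prop:locSigma1:lattice}), a $B$-semiinvariant rational function on $X$ of weight $\lambda$ restricts to a nonzero $B$-semiinvariant function on $\overline Y$ exactly when it has neither zero nor pole along the $B$-stable prime divisors generating $\Cone_Y$, i.e.\ when $\langle\rho(D),\lambda\rangle=0$ for all of them; this is $\lambda\in\Cone_Y^\perp$, giving $\Xi_B(\overline Y)=\Cone_Y^\perp\cap\Xi_B(X)$, and dualizing yields $N_B(\overline Y)=N_B(X)/(\Span_\QQ\Cone_Y)$. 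For~(\ref{prop:locSigma1:fan}), the $L$-orbits of $\overline Y$ are precisely the $L$-orbits $Z\subseteq X$ with $\overline Z\subseteq\overline Y$, i.e.\ with $\Cone_Z\supseteq\Cone_Y$, and their cones in $\Fan_L(\overline Y)$ are the images of $\Cone_Z$ under $N_B(X)\to N_B(\overline Y)$; since $\overline Y$ is closed in $X$, the property that $\overline Z$ be $P$-stable is the same whether computed in $\overline Y$ or in $X$. Substituting these facts into the definition of $V(\overline Y)$ as the union of the $\Cone_W$ over $L$-orbits $W\subseteq\overline Y$ with $\overline W$ $P$-stable gives exactly the asserted projection of $\bigcup_Z\Cone_Z$.
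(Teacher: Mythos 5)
Your parts~(\ref{prop:locSigma1:lattice}) and~(\ref{prop:locSigma1:fan}), and the ``color'' half of your toroidality argument, are in line with the paper's proof. The genuine gap is in the case you yourself flag as the technical heart: a divisor $\mathcal D$ of $\overline Y$ that meets the open $P$-orbit $P/K$ but not $Y$ (so it is $L$-stable but not $P$-stable). There you abandon Lemma~\ref{lemma:Bstable} on the grounds that $\mathcal D$ does not meet $Y$, and substitute a sketch built on Lemma~\ref{lemma:demazure} whose key steps you explicitly leave unproved (``the intended mechanism\dots'', ``Granting $n\alpha\le 0$ on $\Cone_W$\dots''). As written, that sketch does not close: Lemma~\ref{lemma:demazure} applied inside $\overline Y$ gives information about $B$-stable divisors \emph{of $\overline Y$}, and nothing in your argument produces the comparison divisor ``moved by the same root direction'' in $X$, nor the inequality $\langle\Cone_W, n\alpha\rangle\le 0$ for closed $P$-orbits $W$ of $X$; you also quietly invoke part~(\ref{prop:locSigma1:lattice}) inside the proof of part~(\ref{prop:locSigma1:toroidal}) (harmless only because part~(\ref{prop:locSigma1:lattice}) is a standard fact about $L$-orbit closures independent of toroidality, which you should say).

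The irony is that your premise for bifurcating is mistaken: Lemma~\ref{lemma:Bstable} does not require $Z$ to meet $Y$. Its proof takes a $B$-semiinvariant homogeneous function on the cone closure $Y'$ vanishing on $Z'$, so it applies to any proper closed $B$-stable subvariety of $\overline Y$ — and this is exactly how the paper uses it, taking $Z=\overline D$ for an arbitrary $D\in\Div(P/K)^B$. The paper's proof is then uniform and short: if $\overline D$ contained a $P$-orbit, Lemma~\ref{lemma:Bstable} puts $\overline D$ inside a $B$-stable prime divisor $E$ of $X$ not containing $\overline Y$; since $E$ contains a $P$-orbit and $X$ is $P$-toroidal, $E$ cannot meet $P/H$, hence $E$ is a boundary divisor and $P$-stable; then $\overline D$, being an irreducible component of the $P$-stable set $E\cap\overline Y$, is itself $P$-stable, contradicting that $D$ is a proper $B$-stable divisor of the homogeneous space $P/K$. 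Note also that your dichotomy (``meets $Y$'' versus ``$L$-stable'') is not the relevant one — $P$-toroidality of $\overline Y$ concerns precisely the divisors meeting $P/K$, and it is those meeting $P/K$ but not $Y$ that your argument fails to handle. Finally, your ``alternative'' normality argument via Corollary~\ref{cor:Ltoroidaln} does not cover all of $\overline Y$, since that corollary applies only to $L$-orbits with $P$-stable closure; your primary appeal to normality of orbit closures in spherical varieties is the right one.
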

\begin{proof}
Let us prove \ref{prop:locSigma1:toroidal}. The closure $\overline Y$ is a spherical $L$-variety equipped with an action of $P$ extending the one of $L$. It follows that $\overline Y$ contains an open ($L$-spherical) $P$-orbit $P/K$. It remains to check that $\overline Y$ is $P$-toroidal. Let $D$ be an element of $\Div(P/K)^B$ and suppose that $\overline D\subset \overline Y$ contains a $P$-orbit. By Lemma~\ref{lemma:Bstable} the closure $\overline D$ is contained in a $B$-stable prime divisor $E$ of $X$ not containing $\overline Y$. This implies that $\overline D$ is an irreducible component of $E\cap \overline Y$. Since $\overline D$ contains a $P$-orbit, then $E$ contains it too, so it is $P$-stable, therefore $\overline D$ is also $P$-stable.

Part \ref{prop:locSigma1:lattice} is a standard result for spherical $L$-varieties, see e.g.\ \cite[Proof of Theorem~7.3]{Kn91}.

To prove part \ref{prop:locSigma1:fan}, recall that for any $L$-orbit $Z\subseteq X$ the condition $Z\subseteq \overline Y$ is equivalent to $\Cone_Z\supseteq \Cone_Y$. It only remains to prove that the cone of $Z$ in the fan of $\overline Y$ is the projection of $\Cone_Z$ in $N_B(\overline Y)$, for any $L$-orbit $Z$ such that $Z\subseteq \overline Y$ and $\overline Z$ is $P$-stable. Thanks to Corollary~\ref{cor:Ltoroidaln}, $Z$ has in $X$ an $L$-stable neighborhood $X_0$ that is an $L$-toroidal embedding. Then $X_0$ contains $Y$, since $\overline Y\supseteq Z$. At this point the cone of $Z$ in the fan of $\overline Y$ is indeed as desired by the standard theory of toric varieties, thanks to \cite[Theorem~29.1]{Ti11}.
\end{proof}

We are ready to prove that $V(X)$ is a convex cone. The key argument comes from Corollary~\ref{cor:halfspace}, and the following three lemmas will enable us to apply it here.

\begin{lemma}\label{lemma:convex1}
Let $X$ be a complete $P$-toroidal embedding of $P/H$, and let $Y\subseteq X$ be an $L$-orbit whose closure is $P$-stable. If $\Cone_Y$ is contained in a cone of $\Fan_L(X)$ not contained in $V(X)$, then $P^u$ acts nontrivially on $\overline Y$.
\end{lemma}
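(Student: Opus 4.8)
The plan is to argue by contradiction, reducing the nontriviality of the $P^u$-action on $\overline Y$ to the existence of an $L$-orbit inside $\overline Y$ whose closure fails to be $P$-stable. The whole point is to translate the combinatorial hypothesis on cones into a statement about orbits, and then exploit the Levi decomposition.

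First I would unwind the hypothesis. By assumption $\Cone_Y$ is contained in some cone of $\Fan_L(X)$ not contained in $V(X)$; since every cone of the fan is of the form $\Cone_{Y'}$ for an $L$-orbit $Y'\subseteq X$, and two cones of a fan meet in a common face, $\Cone_Y$ is a face of $\Cone_{Y'}$. By the order-reversing orbit--cone correspondence recalled in Section~\ref{s:sigma} this means $Y'\subseteq\overline Y$. Next I would invoke the definition $V(X)=\bigcup\Cone_Z$, the union being over $L$-orbits $Z$ with $\overline Z$ $P$-stable: were $\overline{Y'}$ $P$-stable, $\Cone_{Y'}$ would be one of the cones in this union and hence contained in $V(X)$, contrary to the choice of the cone. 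Thus $\overline{Y'}$ is not $P$-stable, while $Y'$ is an $L$-orbit contained in the $P$-stable variety $\overline Y$.

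Then I would conclude by contradiction. Suppose $P^u$ acts trivially on $\overline Y$. Using the Levi decomposition $P=P^u\rtimes L$, for any $z\in\overline Y$ one has $Pz=P^u L z=Lz$, because $Lz\subseteq\overline Y$ and $P^u$ fixes $\overline Y$ pointwise; hence the $P$-orbits and the $L$-orbits in $\overline Y$ coincide. In particular $Y'$ would then be a $P$-orbit and $\overline{Y'}$ would be $P$-stable, contradicting the previous paragraph. Therefore $P^u$ acts nontrivially on $\overline Y$.

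The argument is short, and the only delicate point is the bookkeeping in the first step: one must correctly match the cone containing $\Cone_Y$ with an actual $L$-orbit $Y'$ of $X$ sitting inside $\overline Y$, and correctly read off from the definition of $V(X)$ that its closure cannot be $P$-stable. I expect this translation --- from the combinatorial condition on cones to the geometric statement that $\overline Y$ contains an $L$-orbit with non-$P$-stable closure --- to be the main thing to get right. Once it is in place, the Levi-decomposition observation that a trivial $P^u$-action forces all $L$-orbits in $\overline Y$ to be $P$-orbits is immediate.
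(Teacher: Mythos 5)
Your proof is correct and takes essentially the same route as the paper's: both translate the hypothesis into the existence of an $L$-orbit $Y'\subseteq\overline Y$ whose cone is not contained in $V(X)$, conclude from the definition of $V(X)$ that $\overline{Y'}$ is not $P$-stable, and then note that a trivial $P^u$-action on $\overline Y$ would make every $L$-orbit in $\overline Y$ a $P$-orbit (via $P=P^u L$), a contradiction. Your write-up simply makes explicit the orbit--cone bookkeeping and the Levi-decomposition step that the paper's three-line proof leaves implicit.
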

\begin{proof}
By assumption $\Cone_Y$ is contained in a cone of the form $\Cone_Z$, where $Z$ is an $L$-orbit of $X$, such that $\Cone_Z$ is not contained in $V(X)$. Then $\overline Z$ is contained in $\overline Y$, it is $L$-stable but not $P$-stable. Hence $P^u$ doesn't act trivially on $\overline Y$.
\end{proof}

\begin{lemma}\label{lemma:convex2}
If $P^u$ acts non-trivially on a complete $P$-toroidal embedding $X$ of $P/H$ then $V(X)_{\RR} \smallsetminus\{0\}$ is path-connected, where $V(X)_\RR$ is the convex cone generated by $V(X)$ in $N_B(X)\otimes_\QQ\RR$.
\end{lemma}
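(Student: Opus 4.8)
The plan is to prove that $V(X)_\RR$ is a full-dimensional convex cone contained in a proper closed half-space of $N_B(X)\otimes_\QQ\RR$, and then to invoke the elementary fact that such a cone, with the origin removed, is connected. Throughout I take $X$ to be complete $P$-toroidal, as in the definition of $V(X)$.

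First I would extract the half-space from Corollary~\ref{cor:halfspace}. Since $P^u$ acts nontrivially on $X$, that corollary yields a nonzero $\beta\in\Xi_B(X)$ which is $\le 0$ on $\rho_X(\Div(X)^P)$ but not on $\rho_X(\Div(X)^B)$. I then claim $V(X)\subseteq\{\beta\le 0\}$. Recall that $V(X)$ is the union of the cones $\Cone_Y$ over the $L$-orbits $Y$ whose closure is $P$-stable, and that each $\Cone_Y$ is generated by the elements $\rho_X(D)$ with $D\in\Div(X)^B$ containing $Y$. By Proposition~\ref{prop:inVstable} and its proof, for such a $Y$ every $B$-stable prime divisor containing $Y$ is in fact $P$-stable; hence $\beta$ is $\le 0$ on every generator of $\Cone_Y$, so $\beta\le 0$ on all of $V(X)$, and therefore on the convex cone $V(X)_\RR$ that it generates.

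Next I would show that $V(X)_\RR$ is full-dimensional. As $X$ is complete, the $P$-action has a closed orbit $Z_0$; being a complete $P$-homogeneous space, $Z_0\cong P/Q$ for a parabolic subgroup $Q\subseteq P$, and since every parabolic of $P$ contains $P^u$ we may write $Q=Q_L\ltimes P^u$ with $Q_L\subseteq L$ parabolic, so that $Z_0\cong L/Q_L$ is a closed $L$-orbit. Its closure is $P$-stable, whence $\Cone_{Z_0}\subseteq V(X)$; moreover its color set is empty by Proposition~\ref{prop:inVstable} and $\Cone_{Z_0}\subseteq V_L(X)$. As the cone of a closed orbit of the complete spherical $L$-variety $X$ it is a maximal cone of $\Fan_L(X)$, and for a complete variety such a cone is full-dimensional (see \cite{Kn91}). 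Thus $\dim V(X)_\RR=\dim N_B(X)$.

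Finally I would combine the two facts. A full-dimensional convex cone contained in $\{\beta\le 0\}$ with $\beta\neq 0$ cannot be a linear subspace: a subspace lying in a half-space is forced into the bounding hyperplane $\ker\beta$, which is proper and hence too small to contain a full-dimensional set. In particular $V(X)_\RR$ is neither $\{0\}$ nor a line, and for any convex cone $C$ the set $C\smallsetminus\{0\}$ is connected unless $C$ is a line---if two of its points cannot be joined by a segment avoiding the origin they must be antipodal, and one then reroutes through any other ray of $C$. This proves the lemma. I expect the full-dimensionality of $V(X)_\RR$ to be the only delicate point: the half-space inclusion by itself does not suffice, since a line contained in $\ker\beta$ would also fit inside $\{\beta\le 0\}$, so it is essential that a closed $P$-orbit contributes a full-dimensional cone.
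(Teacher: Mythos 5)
There is a genuine gap, though it is one the statement's wording invites: you have proved only the weakest possible reading of the lemma, and that reading is not the one the paper proves or uses. Despite the phrase ``convex cone generated,'' the conclusion that is actually consumed downstream is connectivity of the set $V(X)$ itself --- the (in general non-convex) finite union of the real cones $(\Cone_Y)_\RR$ --- minus the origin. Indeed, in the proof of Theorem~\ref{thm:V} this lemma supplies the hypothesis ``$V\smallsetminus\{0\}$ is connected'' of Lemma~\ref{lemma:convex3}, and there $V=V(\overline Y)$ is a union of maximal cones that is assumed \emph{not} convex (that is the contradiction hypothesis); applied to a convex hull, Lemma~\ref{lemma:convex3} is vacuous. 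Connectivity of the hull minus the origin, which is all your argument yields, says nothing about connectivity of the union minus the origin: two full-dimensional pointed cones meeting only at $0$ have connected convex hull minus $\{0\}$ but disconnected union minus $\{0\}$. So with your version of the lemma, the convexity proof of Theorem~\ref{thm:V} collapses. (Your own closing remark, that full-dimensionality is ``the only delicate point,'' is a symptom of this: under your reading the lemma is nearly trivial and could not carry the weight it does in the paper.)

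The missing idea is a fixed-point argument, not convex geometry. The paper observes that since $X$ is complete, $X^{P^u}\neq\varnothing$, and by Bia{\l}ynicki-Birula \cite{BB73} the set $X^{P^u}$ is \emph{connected}; moreover any $L$-orbit containing a $P^u$-fixed point is itself a $P$-orbit, hence has $P$-stable closure. This chains any two closed $L$-orbits that are $P$-orbits through a sequence of $P$-orbits with successive closure containments, and the hypothesis that $P^u$ acts nontrivially enters precisely to exclude the open $L$-orbit (whose cone is $\{0\}$) from the chain. Via the orbit--cone dictionary this produces, for any two maximal cones of $V(X)$, a chain of nonzero cones inside $V(X)$, each contained in or containing the next, whence the union minus $\{0\}$ is connected. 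Ironically, your two correct ingredients are both already in the paper but play different roles: the containment $V(X)\subseteq\{\beta\le 0\}$ from Corollary~\ref{cor:halfspace} is exactly the statement that Theorem~\ref{thm:V} ultimately contradicts, and the full-dimensionality of a closed-orbit cone is invoked there to show $V(X)$ has maximal dimension. Neither can substitute for the $X^{P^u}$-connectedness argument, which your proposal never touches.
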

\begin{proof}
Since $X$ is complete, the set $X^{P^u}$ is non-empty by Borel's fixed point theorem, and by \cite[Theorem~1]{BB73} it is connected.

It follows that any two closed $L$-orbits $Y,Y'$ of $X$ that are also $P$-orbits are ``connected'' by a sequence of $L$-orbits $Y=Y_1, Y_2,\ldots,Y_k = Y'$ such that for all $i\in\{1,\ldots,k-1\}$ the orbit $Y_i$ contains a point fixed by $P^u$ and we have either $\overline{Y_i}\supset Y_{i+1}$ or $Y_i\subset \overline{Y_{i+1}}$. For all $i$, the $L$-orbit $Y_i$ is then itself a $P$-orbit, thus has $P$-stable closure. Moreover $Y_i$ is not the open $L$-orbit of $X$, since $P^u$ doesn't act trivially on the whole $X$.

This translates to the existence, for any two maximal cones $\Cone, \Cone'$ in the union defining $V(X)$, of a sequence of cones $\Cone=\Cone_1, \Cone_2,\ldots,\Cone_k=\Cone'$ such that for all $i\in\{1,\ldots,k-1\}$ the cone $\Cone_i$ is contained in $V(X)$, and either is contained in $\Cone_{i+1}$ or contains it. They are also different from $\{0\}$, which is the cone of the open $L$-orbit of $X$, and the lemma follows.
\end{proof}

\begin{lemma}\label{lemma:convex3}
Let $V$ be a finite union of polyhedral convex cones of maximal dimension in a finite-dimensional real vector space, such that the intersection of any two of these cones is a face of both. If $V\smallsetminus\{0\}$ is path-connected and locally convex, and $V$ is not convex, then $V$ is not contained in any closed half-space.
\end{lemma}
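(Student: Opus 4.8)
The plan is to prove the contrapositive: assuming that $V$ is contained in a half-space and that $V\smallsetminus\{0\}$ is connected and locally convex, I will show that $V$ is convex. Since $V$ is a cone containing $0$, scaling forces the half-space to be of the form $\{\,x\mid\langle x,\beta\rangle\le 0\,\}$ for some nonzero linear functional $\beta$, and convexity of the cone $V$ is equivalent to the requirement that $[x,y]\subseteq V$ for all $x,y\in V$. The one point at which local convexity is allowed to fail is the origin, so the whole difficulty is to organize the argument so that every segment we need to control stays away from $0$.

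The key observation is that, because each of the finitely many cones $C_i$ has maximal dimension, no $C_i$ lies in the boundary hyperplane $\{\,x\mid\langle x,\beta\rangle=0\,\}$; hence there exists $p\in V$ with $\langle p,\beta\rangle<0$. For such a $p$ and any $w\in V$ the segment $[p,w]$ meets the hyperplane at most in the endpoint $w$, since $\langle (1-s)p+sw,\beta\rangle\le(1-s)\langle p,\beta\rangle<0$ for $s\in[0,1)$; in particular $[p,w]$ avoids the origin and lies in $V\smallsetminus\{0\}$ as soon as it lies in $V$. First I would show that $V$ is star-shaped with respect to $p$, i.e.\ that $A=\{\,w\in V\smallsetminus\{0\}\mid[p,w]\subseteq V\,\}$ equals $V\smallsetminus\{0\}$. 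The set $A$ is nonempty ($p\in A$) and closed in $V\smallsetminus\{0\}$, because $V$ is closed and segments vary continuously with their endpoints. It is also open: given $w\in A$, the compact segment $[p,w]$ lies in the locally convex set $V\smallsetminus\{0\}$, so it can be covered by finitely many balls on which $V$ is convex, and the standard tube argument from the proof of Tietze's theorem (a closed connected locally convex subset of $\RR^n$ is convex) shows that $[p,w']\subseteq V$ for every $w'\in V$ close to $w$. Since $V\smallsetminus\{0\}$ is connected, $A=V\smallsetminus\{0\}$; together with the trivial inclusion $[p,0]\subseteq V$ this gives star-shapedness with respect to $p$. The same argument applies to every $p\in V$ with $\langle p,\beta\rangle<0$.

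Finally I would upgrade star-shapedness to full convexity. Let $x,y\in V$ and, for small $s>0$, set $x_s=(1-s)x+sp$ and $y_s=(1-s)y+sp$; these lie on $[x,p]$ and $[y,p]$, hence in $V$, and satisfy $\langle x_s,\beta\rangle=\langle y_s,\beta\rangle=s\langle p,\beta\rangle<0$. Thus $x_s$ is a point with strictly negative $\beta$-value, so by star-shapedness with respect to $x_s$ the whole segment $[x_s,y_s]$ lies in $V$. A direct computation gives $[x_s,y_s]=(1-s)\,[x,y]+s\,p$, so letting $s\to 0^+$ and using that $V$ is closed yields $[x,y]\subseteq V$. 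Hence $V$ is convex, which is the contrapositive of the assertion. I expect the openness of $A$, that is, the tube argument, to be the only genuinely delicate point; everything else is bookkeeping with the sign of $\beta$, arranged precisely to keep the origin, the unique point where local convexity may fail, off all the segments under consideration.
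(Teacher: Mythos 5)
Your proposal is correct in substance but follows a genuinely different route from the paper. The paper also argues by contraposition, but it excises the origin differently: writing the boundary of the half-space as a hyperplane $M$, it first shows that the topological interior $V'$ of $V$ is connected (this is where the maximal dimensionality of all the cones is exploited, to get $V=\overline{V'}$), deduces that $V\smallsetminus M$, squeezed between $V'$ and its closure, is connected and locally convex, then invokes the Tietze--Nakajima principle (connected $+$ locally convex $+$ relatively closed in the open half-space $\Rightarrow$ convex) for $V\smallsetminus M$, and concludes by taking closures. You instead keep the origin in play and run an open/closed connectedness argument on $V\smallsetminus\{0\}$ to prove star-shapedness with respect to a point $p$ with $\langle p,\beta\rangle<0$ (such $p$ exists by full dimensionality, as you say), and then upgrade star-shapedness to convexity by the pivot points $x_s,y_s$ and the limit $s\to0^+$; this last step, the reduction of the half-space to a linear one, and the closedness of $A$ are all fine, and your argument has the minor virtue of never using the hypothesis that any two of the cones intersect in a common face.

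The one point that deserves a caution is exactly the one you flag: the openness of $A$. As literally described, the tube argument does not close up. Covering $[p,w]$ by finitely many balls with convex traces and chaining along them only lets you apply convexity to points \emph{already known} to lie in $V$; for $w'\in V$ near $w$ the intermediate points of $[p,w']$ are not yet known to be in $V$, so the chaining produces a polygonal path from $p$ to $w'$ inside $V$ close to the segment, not the segment itself, and straightening that path is precisely the nontrivial content of Tietze's theorem (classically handled by a minimal-length or corner-cutting argument using closedness). The step is nonetheless correct, and can be made rigorous by citing the theorem rather than its proof: choose a small ball $B$ at $w$ avoiding $0$ with $B\cap V$ convex, take $w'\in B\cap V$, note that the triangle $\mathrm{conv}\{p,w,w'\}$ avoids $0$, and let $N$ be a closed convex neighborhood of this triangle containing $B$ and still avoiding $0$. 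The connected component of $V\cap N$ containing $[p,w]$ is closed, locally convex, and contains $[w,w']$, hence also $w'$; by Tietze--Nakajima it is convex, so $[p,w']\subseteq V$. Since the paper's own proof invokes the same classical theorem as a black box (indeed in the slightly more delicate relative form, as $V\smallsetminus M$ is closed only in the open half-space), your proof, thus repaired, stands on the same footing.
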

\begin{proof}
Suppose that $V$ is contained in one of the two half-spaces defined by a linear subspace $M$ of codimension $1$. For dimension reasons $V$ is not contained in $M$, and we also have $\overline{V\smallsetminus M}=V$.

Let $\gamma\colon [0,1]\to V\smallsetminus\{0\}$ be a continuous path joining two points $\gamma(0)$ and $\gamma(1)$ both in $V\smallsetminus M$. Since $V\smallsetminus\{0\}$ is locally convex, it is elementary to show that $\gamma$ can be deformed if necessary in such a way that it connects the same two points $\gamma(0)$ and $\gamma(1)$, and $\gamma([0,1])$ is contained in $V\smallsetminus M$. Since $V\smallsetminus\{0\}$ is path-connected, we deduce that $V\smallsetminus M$ is path-connected too. It is also locally convex since it is the intersection of $V\smallsetminus \{0\}$ and an open half space, which are both locally convex.

At this point we claim that $V\smallsetminus M$ is convex; this will yield the desired contradiction because it implies that its closure $V$ is convex too.

Let $x,y\in V\smallsetminus M$, and fix $v$ in the open half-space defined by $M$ and containing $V\smallsetminus M$. We want to prove that the segment $s$ from $x$ to $y$ is contained in $V\smallsetminus M$; for this we may replace if necessary $x$ and $y$ with positive multiples, and assume they both lie on $(M+v)\cap V$. This intersection is closed and locally convex, and it is path-connected being a deformation retract of $V\smallsetminus M$. Thus $(M+v)\cap V$ is convex by the Tietze-Nakajima theorem (see e.g.\ \cite{T28}).

Then $s$ is contained in $V\smallsetminus M$, so $V\smallsetminus M$ is convex, and the proof is complete.
\end{proof}

\begin{theorem}\label{thm:V}
Let $X$ be a complete $P$-toroidal embedding of $P/H$. Then $V(X)$ is equal to the polyhedral convex cone generated by $\rho(\Div(X)^P)$, and has maximal dimension.
\end{theorem}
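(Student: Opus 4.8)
The plan is to show first that both assertions reduce to the convexity of $V(X)$, and then to prove convexity by induction on the rank $n=\dim_\QQ N_B(X)$, feeding the three preceding lemmas into Corollary~\ref{cor:halfspace}. For the reduction I would use the observation, made right after the definition of $V(X)$, that $V(X)$ is the finite union of the maximal cones $\Cone_Z$ attached to the closed $L$-orbits $Z$ that are also $P$-orbits. Each such $\Cone_Z$ is full-dimensional because $Z$ is a closed orbit of the complete variety $X$, and at least one closed $P$-orbit exists; hence $V(X)$ already has maximal dimension, and it is a finite union of maximal-dimensional polyhedral cones whose pairwise intersections are faces (the fan property), exactly the shape required by Lemma~\ref{lemma:convex3}.

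For the identification of $V(X)$ with the convex cone generated by $\rho(\Div(X)^P)$ I would prove both inclusions. Given an $L$-orbit $Y$ with $\overline Y$ $P$-stable and $\Cone_Y\subseteq V(X)$, its closure contains a closed $P$-orbit $Z$ (as in the proof of Proposition~\ref{prop:inVstable}), and since $X$ is $P$-toroidal every $B$-stable prime divisor containing $Y$ also contains $Z$ and is therefore $P$-stable; thus $\Cone_Y$ is generated by elements $\rho(D)$ with $D\in\Div(X)^P$, and taking the union gives that $V(X)$ is contained in the cone generated by $\rho(\Div(X)^P)$. Conversely, each $D\in\Div(X)^P$ is the closure of a codimension-one $L$-orbit with $P$-stable closure, so its ray $\QQ_{\geq0}\rho(D)$ is one of the $\Cone_Y$ and lies in $V(X)$. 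Once $V(X)$ is known to be convex, this last fact forces the reverse inclusion, yielding equality; so everything rests on convexity.

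I would establish convexity by induction on $n$. If $P^u$ acts trivially on $X$, then $L$ and $P$ have the same orbits, $V(X)$ coincides with the full valuation cone $V_L(X)$, and is convex. Assume $P^u$ acts nontrivially and put $V=V(X)_\RR$. By Lemma~\ref{lemma:convex2} the set $V\smallsetminus\{0\}$ is connected. The crucial point is local convexity of $V$ away from the origin: for $p$ in the relative interior of a cone $\Cone_Y\subseteq V$ with $Y$ not the open orbit, Corollary~\ref{cor:Ltoroidaln} replaces a neighborhood of $Y$ in $X$ by an $L$-toroidal embedding, and Proposition~\ref{prop:locSigma1}(\ref{prop:locSigma1:fan}) identifies the transversal slice of $V$ at $p$ with $V(\overline Y)_\RR$ in $N_B(\overline Y)_\RR$, where $\overline Y$ is a complete $P$-toroidal embedding of an $L$-spherical homogeneous space $P/K$ of strictly smaller rank. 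By the inductive hypothesis $V(\overline Y)$ is convex, so $V$ is locally convex at $p$. When $\Cone_Y$ is a face of some cone of $\Fan_L(X)$ not contained in $V(X)$, Lemma~\ref{lemma:convex1} guarantees that $P^u$ acts nontrivially on $\overline Y$, which is the precise input needed to run the nontrivial case of the induction (through Lemma~\ref{lemma:convex2}) for $\overline Y$.

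With $V\smallsetminus\{0\}$ connected and $V$ locally convex, Lemma~\ref{lemma:convex3} applies: were $V$ not convex, it would be contained in no half-space. But Corollary~\ref{cor:halfspace} produces a nonzero $\beta\in\Xi_B(X)$ with $\langle\rho(D),\beta\rangle\leq0$ for all $D\in\Div(X)^P$; since $V$ is contained in the cone generated by $\rho(\Div(X)^P)$, the functional $\beta$ is $\leq0$ on all of $V$, exhibiting a half-space containing $V$ and contradicting Lemma~\ref{lemma:convex3}. Hence $V(X)$ is convex, and the two displayed assertions follow. I expect the main obstacle to be the local convexity step: correctly pinning down the local model of $V$ near a boundary cone $\Cone_Y$ as the lower-rank cone $V(\overline Y)$ via Proposition~\ref{prop:locSigma1}, and keeping the induction and the hypotheses of Lemmas~\ref{lemma:convex1} and \ref{lemma:convex2} aligned throughout the descent.
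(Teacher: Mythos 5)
Your proposal is correct and takes essentially the same route as the paper: both reduce the two assertions to convexity of $V(X)$, prove the inclusion of $V(X)$ in the cone generated by $\rho(\Div(X)^P)$ (and the reverse containment of generators) via toroidality, obtain local convexity away from the origin by identifying the slice of $V(X)$ along a cone $\Cone_Y$ with $V(\overline Y)$ through Proposition~\ref{prop:locSigma1}, and conclude by playing Lemmas~\ref{lemma:convex2} and \ref{lemma:convex3} against Corollary~\ref{cor:halfspace}. The only (cosmetic) difference is organizational: you run an explicit induction on the rank and derive the final contradiction on $X$ itself, where nontriviality of the $P^u$-action is the standing case assumption — which actually makes your appeal to Lemma~\ref{lemma:convex1} superfluous — whereas the paper selects a counterexample orbit closure $\overline Y$ with $\Cone_Y$ of maximal dimension and derives the contradiction on $\overline Y$, which is precisely where Lemma~\ref{lemma:convex1} is needed to guarantee that $P^u$ acts nontrivially there.
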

\begin{proof}
If $V(X)$ is a convex cone, then it has maximal dimension because $X$ contains a complete $P$-orbit, which is a complete $L$-orbit whose cone has maximal dimension and is contained in $V(X)$. At this point, thanks to Lemma~\ref{lemma:valcontained}, the theorem follows if we show that $V(X)$ is convex.

Denote by $\mathcal Y$ the set of $L$-orbits whose closure is $P$-stable, and let $Y\in \mathcal Y$. Consider the union of the projections on $N_B(\overline Y)$ of all cones $\Cone_Z$ such that $Z\in \mathcal Y$ and $\Cone_Z\supseteq \Cone_{Y}$. By Proposition~\ref{prop:locSigma1}, this union is equal to $V(\overline Y)$.

We proceed by contradiction, so we assume that $V(X)$ is not convex. In this case there exists $Y\in\mathcal Y$ such that $V(\overline Y)$ is not convex and $\Cone_Y$ has maximal dimension possible under these assumptions. We claim that $V(\overline Y)\smallsetminus\{0\}$ is locally convex, by maximality of the dimension of $\Cone_Y$.

Let us show this claim. For sake of contradiction, let $v\in N_B(\overline{Y})$ be a point of $V(\overline Y)\smallsetminus \{0\}$ where the latter is not locally convex. Then there exists $Y'\in\mathcal Y$ contained in the closure of $Y$ such that $v$ lies in the relative interior of the cone $\Cone^{\overline Y}_{Y'}$ of $Y'$ in the fan $\Fan_L(\overline Y)$. Then $V(\overline {Y'})$ is not convex, because $V(\overline Y)\smallsetminus \{0\}$ is not locally convex in $v$, and the cone $\Cone_{Y'}$ of $Y'$ in $\Fan_L(X)$ has bigger dimension than $\Cone_Y$: contradiction. This shows the claim that $V(\overline Y)\smallsetminus\{0\}$ is locally convex.

Now $V(\overline Y)$ is contained in the valuation cone $V_L(\overline Y)$ of the complete variety $\overline Y$. So any point $x$ not in $V(\overline Y)$ but in its convex hull is contained in a cone $\Cone^{\overline Y}_Z$ of $\Fan_L(\overline Y)$ for some $L$-orbit $Z$ in $\overline Y$ such that $Z\notin \mathcal Y$. Then $\Cone^{\overline Y}_Z$ is not contained in $V(\overline Y)$, and by Lemma~\ref{lemma:convex1} we have that $P^u$ doesn't act trivially on $\overline Y$. By Lemma~\ref{lemma:convex2}, the set $V(\overline Y)_\RR \smallsetminus\{0\}$ is path-connected, and by Lemma~\ref{lemma:convex3} the set $V(\overline Y)$ is not contained in any half-space. This contradicts Corollary~\ref{cor:halfspace}, therefore $V(X)$ is convex and the proof is complete.
\end{proof}

\begin{theorem}\label{thm:Vindep}
If $X$ and $Y$ are complete $P$-toroidal embeddings of $P/H$ then $V(X)=V(Y)$.
\end{theorem}
\begin{proof}
Thanks to Lemma~\ref{lemma:toroidalexist} there exists a complete $P$-toroidal embedding equipped with $P$-equivariant maps to $X$ and to $Y$. Hence we may suppose that there is a $P$-equivariant morphism $\varphi\colon X\to Y$. In this situation \cite[Theorem~5.1]{Kn91} implies that for any $L$-orbit $Z$ of $X$ the cone $\Cone_Z$ of $\Fan_L(X)$ is contained in the cone $\Cone_{\varphi(Z)}$ of $\Fan_L(Y)$.

Let $Z$ be a closed $L$-orbit of $X$. If $\Cone_Z\subseteq V(X)$ then $\overline Z=Z$ is $P$-stable by Proposition~\ref{prop:inVstable}, and $P^u$ acts trivially on $Z$. Then it acts trivially on $\varphi(Z)$ too, which is therefore also a $P$-orbit. It follows that $(\Cone_Z\subseteq)\Cone_{\varphi(Z)}\subseteq V(Y)$, and $V(X)\subseteq V(Y)$.

To show the reverse inclusion, we observe that $\varphi$ is proper and birational, so for all $D\in \Div(Y)^P$ there exists $D'\in \Div(X)^P$ such that $\varphi(D')=D$ and $\rho_X(D')=\rho_Y(D)$. This yields $\rho(\Div(X)^P)\supseteq \rho(\Div(Y)^P)$, and $V(X)\supseteq V(Y)$ by Theorem~\ref{thm:V}.
\end{proof}

Thanks to Theorems~\ref{thm:V} and~\ref{thm:Vindep}, we are now able to give the following.

\begin{definition}\label{def:sphroots}
Define $V(P/H)=V(X)$ where $X$ is any complete $P$-toroidal embedding of $P/H$. Also, define the set $\Sigma(P/H)$ of {\em spherical roots of the $P$-action on $P/H$} as the minimal set of primitive elements of $\Xi_B(P/H)$ such that
\[
V(P/H) = \{ v \in N_B(P/H) \;|\; \langle v, \sigma\rangle \leq 0 \;\forall\sigma\in \Sigma(P/H)\}.
\]
\end{definition}

\begin{remark}
We call the spherical roots of the $P$-action on $P/H$ also simply the {\em spherical roots of $P/H$.} To avoid confusion, we avoid doing the same for the spherical roots of the $L$-action on $P/H$.
\end{remark}

\begin{proposition}\label{prop:horospherical}
We have $V(P/H)=N_B(P/H)$ if and only if $\Sigma(P/H)=\varnothing$ if and only if $H$ contains a maximal unipotent subgroup of $P$.
\end{proposition}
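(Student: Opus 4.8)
First I would observe that the equivalence $V(P/H)=N_B(P/H)\iff\Sigma(P/H)=\varnothing$ is purely formal. By Theorem~\ref{thm:V} the set $V(P/H)$ is a full-dimensional rational polyhedral cone, so by Definition~\ref{def:sphroots} the set $\Sigma(P/H)$ is the well-defined set of primitive normals to its facets (with the sign convention $\langle v,\sigma\rangle\leq 0$). If $\Sigma(P/H)=\varnothing$ the defining inequalities are vacuous and $V(P/H)=N_B(P/H)$; conversely, if $\Sigma(P/H)\neq\varnothing$ then any $\sigma\in\Sigma(P/H)$, being primitive and hence nonzero, exhibits $V(P/H)$ inside the proper half-space $\{v\mid\langle v,\sigma\rangle\leq 0\}$, so that $V(P/H)\neq N_B(P/H)$.

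The real content is the equivalence with the condition that $H$ contain a maximal unipotent subgroup $U$ of $P$. I would first prove that $V(P/H)=N_B(P/H)$ forces $P^u\subseteq H$, arguing by contraposition from Corollary~\ref{cor:halfspace}. Fix a complete $P$-toroidal embedding $X$ with $V(P/H)=V(X)$. If $P^u$ does not act trivially on $P/H$ then it does not act trivially on $X$ (the $P^u$-fixed locus is closed and $P/H$ is dense), so Corollary~\ref{cor:halfspace} supplies $\beta\in\Xi_B(X)$ with $\langle\rho_X(D),\beta\rangle\leq 0$ for all $D\in\Div(X)^P$ but $\langle\rho_X(D'),\beta\rangle>0$ for some $D'\in\Div(X)^B$. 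By Theorem~\ref{thm:V} the cone $V(X)$ is generated by $\rho_X(\Div(X)^P)$, so $\beta\leq 0$ on all of $V(P/H)$, while $\rho_X(D')\in N_B(P/H)$ violates this; hence $V(P/H)$ is proper. Contrapositively, $V(P/H)=N_B(P/H)$ implies that $P^u$ acts trivially on $P/H$, i.e.\ $P^u\subseteq H$.

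Once $P^u\subseteq H$ I would reduce everything to the classical horospherical criterion recalled in Section~\ref{s:sigma}. Writing $q\colon P\to L=P/P^u$, we have $H=q^{-1}(K)$ with $K=H/P^u$, and $P/H\cong L/K$ as $L$-varieties with $P$ acting through $L$. In this case any complete $P$-toroidal embedding $X$ is $L$-toroidal, every $L$-orbit is a $P$-orbit with $P$-stable closure, and $\Div(X)^P=\Div(X)^L$; consequently $V(P/H)=V(X)=V_L(X)=V_L(L/K)$. Therefore $V(P/H)=N_B(P/H)$ is equivalent to $L/K$ being horospherical, i.e.\ to $K$ containing a maximal unipotent subgroup $U_L$ of $L$. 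Finally I would translate this back to $P$: since the maximal unipotent subgroups of $P$ are exactly the preimages $q^{-1}(U_L)=U_L\ltimes P^u$ of the maximal unipotent subgroups $U_L$ of $L$, the condition $K\supseteq U_L$ is equivalent to $H=q^{-1}(K)$ containing a maximal unipotent subgroup of $P$. As $H\supseteq U$ in turn trivially forces $P^u\subseteq H$, the two directions combine into the desired triple equivalence.

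The step I expect to require the most care is the identification $V(P/H)=V_L(L/K)$ in the case $P^u\subseteq H$: I must verify that $P$-toroidality of $X$ then coincides with $L$-toroidality, that every $L$-orbit closure is automatically $P$-stable, and hence that the union defining $V(X)$ exhausts the full valuation cone $V_L(X)$. Everything else reduces either to Corollary~\ref{cor:halfspace} or to elementary convex geometry together with the structure theory (Levi decomposition, conjugacy of maximal unipotent subgroups) of linear algebraic groups.
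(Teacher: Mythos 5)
Your proposal is correct and follows essentially the same route as the paper: the first equivalence is formal, the implication $V(P/H)=N_B(P/H)\Rightarrow P^u\subseteq H$ is extracted from Corollary~\ref{cor:halfspace} together with Theorem~\ref{thm:V}, and once $P^u$ acts trivially the argument reduces to the classical horospherical criterion via the identifications $\Div(X)^P=\Div(X)^L$ and $V(X)=V_L(X)$, with a maximal unipotent subgroup of $P$ recovered as $U_LP^u$. The only differences are expository — you spell out the contraposition and the passage through $L/K=P/H$ more explicitly than the paper does.
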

\begin{proof}
The first equivalence is obvious. For the second, let $X$ be a complete $P$-toroidal embedding of $P/H$. If $H$ contains a maximal unipotent subgroup of $P$ then it contains $P^u$, therefore $P^u$ acts trivially on $P/H$. It follows that all $L$-orbits of $X$ are also $P$-orbits, so $X$ is also $L$-toroidal. As a consequence, we have $\Div(X)^P=\Div(X)^L$, and $V(X)$ is equal to the valuation cone $V_L(X)$ of $X$ as a spherical $L$-variety. We also have that $X$ is horospherical, because $P/H\cong (P/P^u)/(H/P^u)\cong L/K$ where $K$ contains a maximal unipotent subgroup of $L$. Then $V_L(X)=N_B(X)$.

Suppose now that $V(X)=N_B(X)$. By Corollary~\ref{cor:halfspace}, the unipotent radical $P^u$ acts trivially on $P/H$. This implies as above that all $L$-orbits of $X$ are also $P$-orbits, and $V(X)=V_L(X)$. Finally, the equality $V_L(X)=N_B(X)$ implies that $X$ is a horospherical $L$-variety, whence $H$ also contains a maximal unipotent subgroup $U$ of $L$. The product $UP^u$ is contained in $H$ and is a maximal unipotent subgroup of $P$.
\end{proof}

For any spherical variety $X$, the elements of $\Sigma_L(X)$ are linear combinations of simple roots of $L$ with non-negative rational coefficients (actually, integer in most cases). The analogue in our setting is provided by the following theorem.

\begin{theorem}\label{thm:NS}
Let $\sigma\in\Sigma(P/H)$. Then $\sigma\in\Sigma_L(P/H)$, or $-\sigma$ is a root of $P^u$.
\end{theorem}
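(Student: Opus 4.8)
The plan is to localize the problem to a rank-one situation attached to a single facet of the cone $V(P/H)$, and there to play Corollary~\ref{cor:halfspace} against the $L$-theory of spherical roots. Fix $\sigma\in\Sigma(P/H)$. By Theorem~\ref{thm:V} the cone $V(P/H)$ is polyhedral, convex and of maximal dimension, and by Definition~\ref{def:sphroots} the hyperplane $\sigma^\perp$ supports a facet $F=V(P/H)\cap\sigma^\perp$ of it, with $\sigma$ the associated primitive normal. I would choose a complete $P$-toroidal embedding $X$ (its choice is immaterial by Corollary~\ref{cor:Vindep}); the maximal cones $\Cone_Y$, for $Y$ a closed $L$-orbit that is also a $P$-orbit, tile $V(X)=V(P/H)$, so the relative interior of $F$ meets the relative interior of a codimension-one face $\Cone_{Y_0}$ of one of them. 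Then $\Cone_{Y_0}$ spans $\sigma^\perp$ and lies in $V(X)$, hence by Proposition~\ref{prop:inVstable} the closure $\overline{Y_0}$ is $P$-stable and $\Colemb_{Y_0}=\varnothing$.

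First I would record the rank-one geometry. By Proposition~\ref{prop:locSigma1} the closure $\overline{Y_0}$ is a complete $P$-toroidal embedding of an $L$-spherical homogeneous space $P/K$, with $\Xi_B(\overline{Y_0})=\Cone_{Y_0}^\perp=\ZZ\sigma$ (the equality $\QQ\sigma\cap\Xi_B(P/H)=\ZZ\sigma$ holds because $\sigma$ is primitive) and $N_B(\overline{Y_0})\cong\QQ$. Using the third part of Proposition~\ref{prop:locSigma1}, $V(\overline{Y_0})$ is the image in $N_B(\overline{Y_0})$ of the union of the cones $\Cone_Z\supseteq\Cone_{Y_0}$ with $\overline Z$ $P$-stable; all of these sit inside $V(X)\subseteq\{\langle\cdot,\sigma\rangle\le 0\}$, and the maximal one through $\Cone_{Y_0}$ projects onto the negative ray, so $V(\overline{Y_0})=\{w:\langle w,\sigma\rangle\le 0\}$. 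In particular $V(\overline{Y_0})\neq N_B(\overline{Y_0})$, so $\Sigma(\overline{Y_0})=\{\sigma\}$ and, by Proposition~\ref{prop:horospherical}, $\overline{Y_0}$ is not horospherical.

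Now I would split according to the action of $P^u$ on $\overline{Y_0}$. If $P^u$ acts trivially, then $\Div(\overline{Y_0})^P=\Div(\overline{Y_0})^L$ and $V(\overline{Y_0})=V_L(\overline{Y_0})$, whence $\sigma$ is the unique $L$-spherical root of $\overline{Y_0}$; since $Y_0$ has an $L$-toroidal neighborhood in $X$ by Corollary~\ref{cor:Ltoroidaln}, the localization property of $L$-spherical roots in toroidal embeddings identifies $\Sigma_L(\overline{Y_0})$ with $\Sigma_L(P/H)\cap\ZZ\sigma$, and therefore $\sigma\in\Sigma_L(P/H)$. If instead $P^u$ acts nontrivially on $\overline{Y_0}$, I would apply Corollary~\ref{cor:halfspace} to $\overline{Y_0}$: it yields $\beta\in\Xi_B(\overline{Y_0})=\ZZ\sigma$ that is $\le 0$ on $\rho(\Div(\overline{Y_0})^P)$ but not on $\rho(\Div(\overline{Y_0})^B)$, so $\beta=m\sigma$ with $m\ge 1$; moreover its proof, through Lemma~\ref{lemma:demazure}, furnishes a $B$-stable prime divisor $D$ with $\langle\rho(D),\beta\rangle=1$ and exhibits $-\beta$ as the $T$-weight of an eigenvector in the Lie algebra of $P^u$, i.e.\ as a root of $P^u$. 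Since $\rho(D)$ is integral on $\Xi_B(\overline{Y_0})$, the equality $m\,\langle\rho(D),\sigma\rangle=1$ forces $m=1$, so $\beta=\sigma$ and $-\sigma$ is a root of $P^u$.

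The step I expect to be the main obstacle is this last one: extracting that $-\sigma$ itself, and not merely a positive integer multiple of it, is a root of $P^u$. This rests on the integrality argument pinning $m=1$, which in turn needs the precise normalization $\langle\rho(D),n\alpha\rangle=1$ of Lemma~\ref{lemma:demazure} together with the fact that the distinguished eigenvector has $T$-weight exactly $-\sigma$, so that the Demazure integer is $1$. A secondary subtlety to settle is that, when the stabilizer $K$ happens to lie in a proper parabolic $Q\subsetneq P$, Corollary~\ref{cor:halfspace} produces its root from the Lie algebra of $Q^u$, which decomposes as the sum of the Lie algebra of $P^u$ and that of the unipotent radical of a parabolic of $L$; one must check, using the minimality of $Q$ and the non-horosphericity and positivity of $\sigma$ established above, that this root lies in the Lie algebra of $P^u$ rather than in the Levi direction. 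Finally, I would confirm that the facet-localization of the first paragraph is legitimate, i.e.\ that Corollary~\ref{cor:Vindep} permits refining $X$ so that a codimension-one orbit cone $\Cone_{Y_0}$ lands in the relative interior of $F$.
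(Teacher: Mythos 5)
Your first paragraph and your case where $P^u$ acts trivially on $\overline{Y_0}$ reproduce the paper's opening step faithfully: the paper likewise picks an $L$-orbit $Y$ with $\Cone_Y\subseteq V(X)$ of codimension one and orthogonal to $\sigma$, gets $\Xi_B(\overline Y)=\ZZ\sigma$ and $V(\overline Y)$ the negative half-line from Propositions~\ref{prop:inVstable} and~\ref{prop:locSigma1}, and in the first branch concludes $\sigma\in\Sigma_L(P/H)$. (Your two peripheral worries are harmless: no refinement of $X$ is needed, since the maximal cones of the complete toroidal $X$ cover $V(X)$ and a suitable codimension-one face orthogonal to $\sigma$ is automatically a cone of $\Fan_L(X)$; and your integrality argument pinning $m=1$ is sound where it applies.) But your second branch has a genuine gap, located exactly where you defer a ``secondary subtlety.'' First, your dichotomy is the wrong one: the paper splits on whether $\overline Y$ is $L$-horospherical, i.e.\ whether $V_L(\overline Y)$ equals $V(\overline Y)$ or $N_B(\overline Y)$, not on whether $P^u$ acts trivially. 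It is perfectly possible for $P^u$ to act nontrivially while $V_L(\overline Y)=V(\overline Y)$, in which case the correct conclusion is $\sigma\in\Sigma_L(P/H)$, and your case-2 target statement ``$-\sigma$ is a root of $P^u$'' is not what you are entitled to prove; nontriviality of the $P^u$-action does not force the root produced by Corollary~\ref{cor:halfspace} into the Lie algebra of $P^u$.

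Second, and decisively: Corollary~\ref{cor:halfspace} only manufactures a root of $P^u$ when the stabilizer $K$ of the open $P$-orbit of $\overline{Y_0}$ is contained in no proper parabolic of $P$. Otherwise it passes to a minimal parabolic $Q\supseteq K$, and its filtration lives in $\mathrm{Lie}(Q^u)$, which decomposes as $\mathrm{Lie}(P^u)$ plus the Lie algebra of the unipotent radical of the parabolic $Q\cap L$ of $L$; the break index can fall in the Levi summand, in which case $\beta=m\sigma$ is a positive root of $L$, your integrality trick tells you nothing about $P^u$, and you would instead need to prove $\sigma\in\Sigma_L(P/H)$ --- for which you offer no argument. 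This deferred step is not a side check but the mathematical core of the theorem, and the entire second half of the paper's proof exists to control it: assuming $\overline Y$ is $L$-horospherical (which is what forces $P^u$ to act nontrivially), the paper uses the exact sequence of $B$-semiinvariants attached to $P/K\to P/KP^u$ to show that $P/KP^u$ has rank $0$, so $KP^u/P^u$ is a parabolic subgroup of $L$ and $K$ is connected; then $Z=KP^u/K\cong\fn/\fii$ is an irreducible rank-one spherical module for a Levi $M$ of $K$, whose lattice is generated by the highest weight $\gamma$ of $Z^*$, so that $-\gamma$ is literally a weight of $\fn$, hence a root of $P^u$, with $\gamma=\pm\sigma$. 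Finally the sign is pinned not by integrality but by completeness: a semiinvariant of weight $\gamma$ cannot be regular on the complete $\overline Z$, so it has a pole along a divisor which the paper shows is $P$-stable, giving $\langle\rho(D),\gamma\rangle<0$ with $\rho(D)\in V(\overline Y)$ and hence $\gamma=\sigma$. No shortcut via ``minimality of $Q$ plus non-horosphericity'' appears available, and your sketch does not supply one.
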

\begin{proof}
Let $X$ be a complete $P$-toroidal embedding of $P/H$. Choose an $L$-orbit $Y$ such that $\Cone_Y$ is contained in $V(X)$, has codimension $1$ in $N_B(X)$ and is orthogonal to $\sigma$. Then $\overline Y$ is $P$-stable by Proposition~\ref{prop:inVstable}, and by Proposition~\ref{prop:locSigma1} we have $\Xi_B(\overline Y)=\ZZ\sigma$ and $V(\overline Y)$ is the half-line of $N_B(\overline Y)$ defined by $\langle -, \sigma\rangle\leq 0$.

Since $N_B(\overline Y)$ has dimension $1$, there are only two possibilities for $V_L(\overline Y)$, namely $V(\overline Y)$ and $N_B(\overline Y)$. In the first case $\Cone_Y$ is contained not only in a face of codimension $1$ of $V(X)$ but also of $V_L(X)$, therefore  $\sigma\in\Sigma_L(P/H)$. Hence we may suppose that $V_L(\overline Y)=N_B(\overline Y)$, i.e.\ $\overline Y$ is $L$-horospherical. Moreover $\overline Y$ is complete, hence there are cones in $\Fan_L(\overline Y)$ not in $V(\overline Y)$. This implies that not all closures of $L$-orbits of $\overline Y$ are $P$-stable, therefore $P^u$ doesn't act trivially on $\overline Y$.

Let $P/K$ be the open $P$-orbit of $\overline Y$, and consider the natural morphism $\pi\colon P/K\to P/KP^u$. Since $\Xi_B(\overline Y)=\ZZ\sigma$, the spherical $L$-variety $P/K$ has rank $1$, so its image $P/KP^u$ has rank $1$ or $0$. If $P/KP^u$ has rank $1$, then the map $\pi$ has finite fibers. But the fiber $\pi^{-1}(eKP^u)$ is the quotient $KP^u/K=P^u/(K\cap P^u)$, which is connected and not equal to a single point because $P^u$ doesn't act trivially on $\overline Y$: contradiction. Therefore $P/KP^u$ has rank $0$, i.e.\ it is a complete $L$-homogeneous space, and $KP^u$ is a parabolic subgroup of $P$. Set $Q=KP^u$.

Up to conjugating $Q$ in $P$ we may assume that it contains $B_-$, we denote by $M$ its Levi subgroup and by $B_M$ the Borel subgroup of $M$ as in Lemma~\ref{lemma:fiberoverflag}. We apply Lemma~\ref{lemma:fiberoverflag} to $K$ and $Q$, obtaining that the fiber $Z=\pi^{-1}(eQ)$ is a spherical $M$-variety, with $\Xi_B(P/K)=\Xi_{B_M}(Z)$.

Recall that $P^u$ as a variety is $L$-equivariantly isomorphic to its Lie algebra $\fn$ via the exponential map, which restricts to an $M$-equivariant isomorphism between the intersection $K\cap P^u$ and its own Lie algebra $\fii$. Then $Z\cong P^u/(K\cap P^u)\cong \fn/\fii$ is a spherical module for $M$. Moreover $Z$ has rank $1$, which implies that it is an irreducible $M$-module whose lattice $\Xi_{B_M}(Z)$ is generated by the highest weight $\gamma$ of the dual module $Z^*$. With this notation, the opposite $-\gamma$ is the lowest weight of $Z$, therefore a root of $P^u$, and it is also either $\sigma$ or $-\sigma$ because it generates $\Xi_B(P/K)$.

Consider the closure $\overline Z$ of $Z$ in $\overline Y$. It is connected and complete, so any function $f\in \CC[Z]^{(B\cap M)}_\gamma$ (i.e.\ a highest weight vector of the dual module $Z^*$), being non-constant, cannot be regular on $\overline Z$. Choose such $f$: we have already noticed the equality $\Xi_B(P/K)=\Xi_{B_M}(Z)$, i.e.\ the function $f$ extends to a rational function on $\overline Y$. Then $f$ has on $\overline Y$ a pole that intersects $\overline Z$ but not $Z$. Call it $D$: we have $\langle\rho(D),\gamma\rangle <0$. Now $D$ cannot intersect $P/K$: if it did, it would be either the inverse image of a $B$-stable prime divisor of $P/KP^u$, in which case it wouldn't intersect $\overline Z$, or $D$ would intersect $Z$, which is false. Hence $D$ is $P$-stable, which yields $\langle\rho(D),\sigma\rangle <0$ because $\sigma$ is the spherical root of $\overline Y$. We conclude that $\gamma=\sigma$, i.e.\ $-\sigma$ is a root of $P^u$.
\end{proof}

\section{Morphisms}\label{s:mor}

Given a spherical $L$-homogeneous space $X$, surjective equivariant morphisms with connected fibers $X\to Y$  are classified in \cite[Theorem~5.4]{Kn91} by means of {\em colored subspaces} of $N_B(X)$, i.e.\ couples $(\Cone, \Colemb)$ such that $\Colemb\subseteq \Delta_L(X)$, and $\Cone$ is a linear subspace of $N_B(X)$ generated as a convex cone by $\Colemb$ and by $\Cone\cap V_L(X)$. Precisely, with any morphism $\varphi\colon X\to Y$ one associates the colored subspace $(\Cone_\varphi, \Colemb_\varphi)$, where $\Cone_\varphi$ is the subspace $\Xi_B(Y)^\perp$ of $N_B(X)$, and $\Colemb_\varphi$ is the set of colors of $X$ mapped dominantly to $Y$. 

In this section we describe how this correspondence can be used to describe $P$-equivariant morphisms with domain $P/H$.

\begin{lemma}\label{lemma:pullback}
Let $K$ be a subgroup of $P$ containing $H$, and $\eta\colon P/H\to P/K$ the corresponding $P$-equivariant map.
Then there exist  a $P$-toroidal embedding $Z$ of $P/H$ and a set $\Div_0$ such that:
\begin{enumerate}
\item\label{lemma:pullback:1} $\Colemb_{\eta|_X}\subseteq\Div_0\subseteq \Colemb_{\eta|_X}\cup\Div(Z)^L$, where $X$ is the open $L$-orbit of $Z$,
\item\label{lemma:pullback:2} the convex cone generated by $\rho(\Div_0)$ is a vector subspace,
\item\label{lemma:pullback:3} $\Xi_B(P/K)$ is a subgroup of finite index of $\rho(\Div_0)^\perp (\subseteq \Xi_B(P/H))$.
\end{enumerate}
Moreover, the map $\eta$ has connected fibers if and only if $\Xi_B(P/K)$ is a saturated sublattice of $\Xi_B(P/H)$.
\end{lemma}
\begin{proof}
Using Lemma~\ref{lemma:extend} we extend the map $P/H\to P/K$ to a proper $P$-equivariant map $\psi\colon Z\to P/K$ where $Z$ is a $P$-toroidal embedding of $P/H$. Denote by
\[
Z\stackrel{\psi'}{\to} Z'\stackrel{\mu}{\to} P/K
\]
the Stein factorization of $\psi$.

Denote by $\Div_0\subseteq \Div(Z)^B$ the subsets of the elements mapped dominantly to $P/K$ by $\psi$. Then $ \Colemb_{\eta|_X}\subseteq \Div_0\subseteq \Colemb_{\eta|_X}\cup\Div(Z)^L$. Since $\mu$ is finite, the set $\Div_0$ is also the set of the elements in $\Div(Z)^B$ that are mapped dominantly to $Z'$.

For a $B$-semiinvariant rational function on $Z$, being a pull-back along $\psi'$ is equivalent to being constant on general fibers of $\psi'$, which is also equivalent to having no poles along the $B$-stable prime divisors of $Z$ mapped dominantly to $Z'$. This shows that $\Xi_B(Z')$ is the subset of the elements of $\Xi_B(Z)$ that are $\geq0$ on $\rho(\Div_0)$. Since $\Xi_B(Z')$ is also subgroup of $\Xi_B(Z)$, then all elements of $\rho(\Div_0)$ are actually $0$ on it, whence the convex cone generated by $\rho(\Div_0)$ is a vector subspace, and $\Xi_B(Z')$ is a saturated sublattice of $\Xi_B(Z)$.

The map $\mu$ is finite, therefore $\Xi_B(P/K)$ is a subgroup of $\Xi_B(Z')$ of finite index, and this finishes the proof of statements~(\ref{lemma:pullback:1}),~(\ref{lemma:pullback:2}), and~(\ref{lemma:pullback:3}). Also since $\mu$ is finite, the variety $Z'$ is a homogeneous space $P/K'$ where $H\subseteq K'\subseteq K$ and $K'/K$ is finite. This implies that $\Xi_B(P/K)=\Xi_B(P/K')$ if and only if $K=K'$, proving the last assertion of the lemma.
\end{proof}

\begin{definition}\label{def:Pequiv}
A colored subspace $(\Cone, \Colemb)$ of $N_B(P/H)$ is called {\em $P$-equivariant} if $\Cone$ is generated as a convex cone by $\rho(\Colemb)$, a subset of $\rho(\Div(P/H)^L)$, and finitely many elements of the form $\rho(v)$ for $v$ a $P$-invariant valuation of $\CC(P/H)$.
\end{definition}

\begin{remark}\label{rem:Pequiv}
Let $(\Cone, \Colemb)$ be a colored subspace of $N_B(P/H)$. If it is generated as a convex cone by $\rho(\Colemb)$, a subset of $\rho(\Div(P/H)^L)$, and a face of $V(P/H)$, then it is $P$-equivariant.
\end{remark}

We prepare for the proof of the main result of this section, Theorem~\ref{thm:morphisms}, with two lemmas essentially taken from \cite[Section~2]{Kn91}.

\begin{lemma}[see {\cite[Corollary~2.5]{Kn91}}]\label{lemma:val1}
Any $P$-invariant discrete valuation of $\CC(P/H)$ can be lifted to a $P$-invariant discrete valuation of $\CC(P)$.
\end{lemma}
\begin{proof}
The proofs of \cite[Lemma~2.4 and Corollary~2.5]{Kn91} hold verbatim with the non-reductive group $P$ in place of the reductive group $G$ of loc.cit., and yield the desired result.
\end{proof}

\begin{lemma}[see {\cite[Theorem~2.6 and Section~6]{Kn91}}]\label{lemma:val2}
Let $v$ be a $P$-invariant discrete valuation of $\CC(P)$, let $s$ be a positive integer, let $f_0\in \CC[P]$ and $g_1,\ldots,g_s\in \CC(P)$ be such that $f_i=f_0g_i\in\CC[P]$ for all $i\in\{1,\ldots,s\}$. Denote by $M_i$ the $P$-submodule of $\CC[P]$ generated by $f_i$, and denote by $M_1\cdots M_s$ the linear span of all $s$-fold products where for all $i$ the $i$-th factor is in $M_i$. Then for all $h\in M_1\cdots M_s$ we have
\[
v(h/f_0^s)\geq v(g_1)+\ldots+ v(g_s).	
\] 
\end{lemma}
\begin{proof}
Similarly to the proof of \cite[Theorem~2.6]{Kn91}, we observe that the valuation $v$ on $M_i$ is at least $v(f_i)$ by $P$-invariance, so it is at least $v(f_1)+\ldots+v(f_s)$ on $s$-fold products where the $i$-th factor is in $M_i$. This yields the inequality
\[
v(h)\geq v(f_1)+\ldots+ v(f_s)
\]
which is equivalent to the desired one.
\end{proof}

\begin{theorem}\label{thm:morphisms}
Let $X$ be the open $L$-orbit of $P/H$, let $Y$ be an $L$-homogeneous space, and $\varphi\colon X\to Y$ be an $L$-equivariant map with connected fibers. Then $(\Cone_\varphi,\Colemb_\varphi)$ is $P$-equivariant if and only if there exists a subgroup $K$ of $P$ containing $H$ such that $P/K$ is an $L$-embedding of $Y$ and the corresponding map $P/H\to P/K$ extends $\varphi$. If this is the case, a subgroup $K$ with these properties is unique.
\end{theorem}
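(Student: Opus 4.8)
The plan is to establish the two implications and the uniqueness separately; the substance lies in producing $K$, while the other two parts follow from Lemma~\ref{lemma:pullback} and a fibre-product argument.

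First I would treat the implication that the \emph{existence} of $K$ forces $(\Cone_\varphi,\Colemb_\varphi)$ to be $P$-equivariant. Applying Lemma~\ref{lemma:pullback} to $K$ yields a $P$-toroidal embedding $Z$ with open $L$-orbit $X$ and a finite set $\Div_0$ with $\Colemb_\varphi\subseteq\Div_0\subseteq\Colemb_\varphi\cup\Div(Z)^L$ whose $\rho$-image generates a vector subspace; since $\Xi_B(Y)=\Xi_B(P/K)$ has finite index in $\rho(\Div_0)^\perp$, this subspace is exactly $\Cone_\varphi=\Xi_B(Y)^\perp$. Hence it suffices to check that every $E\in\Div(Z)^L$ satisfies $\rho(E)\in\rho(\Div(P/H)^L)\cup V(P/H)$. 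Writing $E=\overline W$ for its generic (codimension one) $L$-orbit $W$, either $W$ meets the open $P$-orbit $P/H$, so that $\rho(E)\in\rho(\Div(P/H)^L)$; or $W\not\subseteq P/H$, in which case $P\cdot W=P\cdot w$ is a single $P$-orbit distinct from the open one, so $\overline{P\cdot w}$ is a proper $P$-stable closed subset containing $\overline W$ of dimension $\ge\dim W=\dim Z-1$, forcing $\overline{P\cdot w}=E$ and hence $E$ to be $P$-stable. For such a $P$-stable $E$, Lemma~\ref{lemma:toroidalexist} provides a complete $P$-toroidal embedding dominating $Z$, on which the strict transform of $E$ is a $P$-stable prime divisor; Theorem~\ref{thm:V} then gives $\rho(E)\in V(P/H)$. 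Thus $\Cone_\varphi$ is generated by $\rho(\Colemb_\varphi)$ together with finitely many elements of $\rho(\Div(P/H)^L)\cup V(P/H)$, which is precisely $P$-equivariance.

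For the converse I would construct $K$ geometrically. Write $\Cone_\varphi$ as the convex cone generated by $\rho(\Colemb_\varphi)$ and finitely many $v_1,\dots,v_r\in\rho(\Div(P/H)^L)\cup V(P/H)$, and set $\Lambda=\Cone_\varphi^\perp\cap\Xi_B(P/H)$, which is saturated because $\Cone_\varphi$ is a subspace. I would fix a complete $P$-toroidal embedding $Z$ of $P/H$ whose fan is fine enough that each $v_i\in V(P/H)$ is $\rho$ of a $P$-stable prime divisor and each $v_i\in\rho(\Div(P/H)^L)$ is $\rho$ of the corresponding $L$-stable divisor. Then, imitating the construction in the proof of Lemma~\ref{lemma:toroidalexist}, I would choose finitely many $B\times H$-eigenfunctions in $\CC[P]$ whose $B$-weights generate $\Lambda$ and which vanish on the $B$-stable prime divisors to be contracted, obtaining a $P$-equivariant morphism $\psi\colon Z\to\PP(V^*)$ with homogeneous open image; I would let $K$ be the stabiliser in $P$ of $\psi(eH)$. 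By construction $H\subseteq K$, and since $\mathfrak l+\mathfrak h=\mathfrak p$ already forces $\mathfrak l+\mathfrak k=\mathfrak p$, the orbit $L\cdot eK$ is open in $P/K$. Applying Lemma~\ref{lemma:pullback} to this $K$, I would verify that the associated datum is exactly $(\Cone_\varphi,\Colemb_\varphi)$: namely $\Xi_B(P/K)=\Lambda$ (so $P/K$ is an $L$-embedding of $Y$ with connected fibres) and the colors of $P/H$ mapped dominantly onto $P/K$ are precisely $\Colemb_\varphi$, so that the induced map $P/H\to P/K$ extends $\varphi$.

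Finally, for uniqueness, suppose $K$ and $K'$ both satisfy the conclusion, with structure maps $p,p'$, and consider the $P$-orbit $P/(K\cap K')$ that is the image of $(p,p')\colon P/H\to P/K\times P/K'$. Since $\mathfrak l+\mathfrak h=\mathfrak p$ and $\mathfrak h\subseteq\mathfrak k\cap\mathfrak k'$, the $L$-orbit $L/(L\cap K\cap K')=Y$ is dense in $P/(K\cap K')$, so the two projections $P/(K\cap K')\to P/K$ and $P/(K\cap K')\to P/K'$ are finite; on the open $L$-orbits they both restrict to the identity of $Y$, hence are birational, and being finite birational morphisms of normal homogeneous spaces they are isomorphisms. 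Therefore $K=K\cap K'=K'$. I expect the main obstacle to be the converse construction, namely inverting Lemma~\ref{lemma:pullback}: producing eigenfunctions that \emph{simultaneously} cut out the prescribed saturated lattice $\Lambda$ (yielding $\Xi_B(P/K)=\Lambda$ rather than a proper finite-index subgroup, where connectedness of the fibres enters) and contract exactly the intended colors (so that the dominantly mapped colors are precisely $\Colemb_\varphi$, no more). This is exactly where the hypothesis of $P$-equivariance is indispensable: it guarantees that the generators of $\Cone_\varphi$ outside $\rho(\Colemb_\varphi)$ may be taken in $\rho(\Div(P/H)^L)\cup V(P/H)$, and hence that the divisors to be contracted can be chosen $P$-stable or $L$-stable and the contraction made $P$-equivariant with homogeneous image.
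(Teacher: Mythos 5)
Your ``only if'' direction is correct and in fact supplies details the paper compresses into a citation of Lemma~\ref{lemma:pullback}: your dichotomy for $E\in\Div(Z)^L$ (either $E$ meets $P/H$, so $\rho(E)\in\rho(\Div(P/H)^L)$, or its dense $L$-orbit lies in the boundary, in which case the dimension count forces $E=\overline{P\cdot w}$ to be $P$-stable, and Theorem~\ref{thm:V}, applied to a complete $P$-toroidal embedding dominating $Z$ obtained from Lemma~\ref{lemma:toroidalexist}, gives $\rho(E)\in V(P/H)$) is sound. Your uniqueness argument also works and is genuinely different from the paper's: the paper compares $\CC[Y^0_1]$ and $\CC[Y^0_2]$ inside the coordinate ring of the open $B$-orbit to get $K_1^\circ=K_2^\circ$ and then invokes connectedness of the fibres, whereas your route through the image of $(p,p')$ in $P/K\times P/K'$ is shorter: density of the $L$-orbit isomorphic to $Y$ forces all three spaces to have dimension $\dim Y$, so the projections are finite; they restrict to $\mathrm{id}_Y$ on the open $L$-orbits, hence are birational; and a finite birational morphism onto a normal variety is an isomorphism. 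Notably your version does not even use the connected-fibre hypothesis at this point.

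The genuine gap is in the converse, which is the substance of the theorem. You begin by fixing a complete $P$-toroidal embedding ``whose fan is fine enough that each $v_i\in V(P/H)$ is $\rho$ of a $P$-stable prime divisor''. Nothing in the paper produces such an embedding: the only existence statements are Lemmas~\ref{lemma:toroidalexist} and~\ref{lemma:extend}, which give no control over which rays of $V(P/H)$ carry $P$-stable divisors, and Theorem~\ref{thm:V} only says that $\rho(\Div(X)^P)$ \emph{generates} $V(X)$ for a given $X$; a generator $v_i$ lying in the interior of $V(P/H)$ need not equal any $\rho(E)$, and you cannot trade it for the extremal generators $\rho(E_j)$ without possibly enlarging the cone beyond the subspace $\Cone_\varphi$. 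A $P$-equivariant embedding theory with prescribed fans is precisely what the paper declares beyond its scope. Moreover the decisive verification---that your $K$ satisfies $\Xi_B(P/K)=\Lambda$, that the fibres are connected, and that the colors mapped dominantly are exactly $\Colemb_\varphi$---is only announced (``I would verify''), and this is where the actual work lies and where $P$-equivariance is consumed. The paper does it without any fan refinement: for $f\in\CC[X_0]^{(B)}_\chi$ and a generator $c\in V(P/H)$ of $\Cone_\varphi$, it writes $c$, via Theorem~\ref{thm:V}, as a non-negative combination of $P$-invariant valuations of $\CC(P/H)$, lifts these to $P$-invariant valuations of $\CC(P)$ by \cite[Corollary 1.5]{Kn91}, and concludes they are non-negative on the $P$-module generated by $f_0,\ldots,f_n$, hence $\langle c,\chi\rangle\geq0$. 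Without this valuation-lifting argument (or your unproven refinement claim) nothing prevents $\CC[X_0]^{(B)}$ from containing weights outside $\Lambda$, i.e.\ $\Xi_B(P/K)\supsetneq\Lambda$, in which case the constructed $P/K$ would not be an embedding of $Y$ and the induced map would not extend $\varphi$. So your architecture matches the paper's, but the step that makes the construction work is missing.
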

\begin{proof}
Suppose that $\varphi\colon X\to Y$ extends to a $P$-equivariant map $P/H\to P/K$. Then $(\Cone_\varphi,\Colemb_\varphi)$ is $P$-equivariant thanks to Lemma~\ref{lemma:pullback}.

We prove the converse statement, so suppose that $(\Cone_\varphi, \Colemb_\varphi)$ is $P$-equivariant. First we construct a map from $P/H$ onto a $P$-homogeneous space $P/K$ associated with $\Cone_\varphi$ as in the proof of \cite[Theorem~5.4]{Kn91} (see \cite[proof of Theorem~3.3]{Kn91} for more details), then we check it extends $\varphi$.

The dual cone $\Lambda$ of $\Cone_\varphi$ in $\Xi_B(P/H)$ (i.e.\ the set of all elements that are non-negative on $\Cone_\varphi$) is actually a subgroup and is equal to $\Cone_\varphi^\perp$; by Gordan's Lemma $\Lambda$ is a finitely generated monoid. Let $\chi_1,\ldots,\chi_n$ be generators of $\Lambda$ and for all $i\in\{1,\ldots,n\}$ choose $g_i\in\CC(P/H)^{(B)}_{\chi_i}$. Let $D_0$ be the union of all elements of $\Div(P/H)^B$ not contained in $\Colemb_\varphi$ and not contained in $\Div(P/H)^L\cap\rho^{-1}(\Cone_\varphi)$.

For all $i$ the poles of $g_i$ are contained in $D_0$. The pull-back of $g_i$ in $\CC(P)$, also denoted $g_i$, is $H$-invariant and $B$-semiinvariant. Choose a non-zero function $f_0\in\CC[P]^{(B\times H)}$ that vanishes exactly on the inverse image of $D_0$ in $P$ (for its existence, see the proof of Lemma~\ref{lemma:toroidalexist}), with enough multiplicity so that $f_i=f_0g_i$ is a regular function on $P$.

The $P$-module $M$ generated by $f_0,f_1,\ldots,f_n$ in $\CC[P]$ defines naturally a $P$-equivariant map $\psi\colon P/H\to\PP(M^*)$. The image is a $P$-orbit, and we denote the stabilizer of $\psi(eH)$ by $K$. Notice that this orbit, isomorphic to $P/K$, is dense in $P\cdot X_0$ where $X_0 = \overline{\psi(P/H)}\setminus\{f_0=0\}$.

We claim now that
\begin{enumerate}
\item\label{proof:functions} $\Xi_B(P/K)=\Lambda$;
\item\label{proof:fibers} $\psi$ has connected fibers;
\item\label{proof:colors} $\Colemb_\varphi$ is the set of $L$-colors of $P/H$ mapped dominantly by $\psi$ to $P/K$.
\end{enumerate}
From these three properties we conclude that the $L$-equivariant map $\psi|_X$ has colored cone equal to $(\Cone_\varphi,\Colemb_\varphi)$, hence $\psi|_X=\varphi$.

Let us show (\ref{proof:functions}). It follows if we prove that $\mathcal M = \CC[X_0]^{(B)}$, where
\[
\mathcal M = \left\{f\in\CC(P/H)^{(B)}_\lambda\;\middle\vert\; \lambda\in\Lambda\right\}.
\]

The inclusion $\mathcal M \subseteq \CC[X_0]^{(B)}$ stems from the fact that the functions $g_1,\ldots,g_n$ correspond to regular functions on the principal open subset $\PP(M^*)_{f_0}$, so they restrict to regular functions on $X_0$. To show the other inclusion, we show that $\langle c, \chi\rangle \geq 0$ for any $c$ belonging to a given set of generators of $\Cone_\varphi$ as in Definition~\ref{def:Pequiv} and any $B$-character $\chi\neq0$ such that $\CC[X_0]^{(B)}_\chi$ is non-empty. Denote by $f$ an element of $\CC[X_0]^{(B)}_\chi$.

If $c=\rho(D)$ where $D\in \Colemb_\varphi\cup\Div(P/H)^{L}$ then $\langle c, \chi\rangle$, which is equal to $\Ord_D(f)$, is non-negative because the poles of $f$ as a rational function on $P/H$ are contained in $D_0$. Otherwise $c$ is of the form $c = \rho(v)$ where $v$ is a $P$-invariant discrete valuation of $\CC(P/H)$, so $\langle c,\chi\rangle = v(f)$.

We lift $v$ to a $P$-invariant valuation of $\CC(P)$ thanks to Lemma~\ref{lemma:val1}. For all $i\in\{0,\ldots,n\}$ choose $p_{i,1},\ldots,p_{i,m_i}\in P$ in such a way that the elements
\[
p_{0,1}f_0,p_{0,2}f_0,\ldots,p_{n,m_n-1}f_n,p_{n,m_n}f_n
\]
form a basis of $M$. We observe now that $X_0$ is a closed subvariety of the principal open subset $\PP(M^*)_{f_0}$. Then $f$ is the restriction to $X_0$ of a regular function on $\PP(M^*)_{f_0}$, and any such function is a polynomial in $\frac{p_{0,1}f_0}{f_0},\ldots,\frac{p_{n,m_n}f_n}{f_0}$. In other words $f$ is a linear combination of products of the form
\begin{equation}\label{eq:monomial}
\left(\frac{p_{0,1}f_0}{f_0}\right)^{a_{0,1}}\cdots\left(\frac{p_{n,m_n}f_n}{f_0}\right)^{a_{n,m_n}}
\end{equation}
for non-negative integers $a_{0,1},\ldots,a_{n,m_n}$.

Since $v$ is a discrete valuation and we want to prove that $v(f)\geq 0$, it is enough to prove this inequality assuming that $f$ is given by a single product as in (\ref{eq:monomial}). Moreover, for all $j$ we have $v(p_{0,j}f_0/f_0)=0$ by $P$-invariance of $v$, so we can also assume that
\[
f=\left(\frac{p_{1,1}f_1}{f_0}\right)^{a_{1,1}}\cdots\left(\frac{p_{n,m_n}f_n}{f_0}\right)^{a_{n,m_n}}.
\]
Denote $s=a_{1,1}+\ldots+a_{n,m_n}$. Up to reindexing $\chi_1,\ldots,\chi_n$ and introducing repetitions if necessary, we may assume that $s\leq n$ and that $ff_0^s$ is in $M_1\cdots M_s$ where $M_j$ is the $P$-submodule of $\CC[P]$ generated by $f_j$ for all $j\in\{1,\ldots,s\}$. Lemma~\ref{lemma:val2} applied to $v$ and $h = ff_0^s$ yields
\[
v(f) \geq v(g_1)+\ldots+v(g_s) = \langle c,\chi_1\rangle+\ldots\langle c,\chi_s\rangle \geq 0,
\]
concluding the proof of part (\ref{proof:functions}).

Part (\ref{proof:fibers}) stems from part (\ref{proof:functions}) and Lemma~\ref{lemma:pullback}, let us show part (\ref{proof:colors}). Any $L$-color of $P/H$ not in $\Colemb_\varphi$ is mapped to the proper subset $\{f_0=0\}$ of $P/K$, so it is not mapped dominantly to $P/K$. Suppose now that an $L$-color $D\in \Colemb_\varphi$ is not mapped dominantly to $P/K$. Then $f_0$ is non-zero on the inverse image of $D$ in $P$, so $\psi(D)$ intersects $X_0$ in a $B$-stable prime divisor $E$. By part (\ref{proof:functions}), and since $\rho(\Colemb_\varphi)\subseteq\Cone_\varphi$, no non-zero $B$-semiinvariant rational function of $X_0$ vanishes on $E$. This is absurd since $X_0$ is affine, and part (\ref{proof:colors}) is shown.

It remains to show uniqueness of $K$. Suppose that $\varphi$ has extensions $\psi_i\colon P/H\to P/K_i$ given by subgroups $K_i\supseteq H$ for $i\in \{1,2\}$. We may suppose that the $L$-orbit of $eH$ in $P/H$ is dense. Denote by $I$ the stabilizer of $eH$ in $L$, and by $J$ the stabilizer in $L$ of $\varphi(eH)$. Then $I=H\cap L$ and $J = K_i\cap L$ for any $i$. The two maps $\psi_i$ both factor as the composition of the natural maps $P/H \to P/(K_1\cap K_2)\to P/K_i$. The restriction to the open $L$-orbits, i.e.\ the map $\varphi$, is then the composition
\[
L/I \to L/J' \to L/J,
\]
where $J'=(K_1\cap K_2)\cap L$. From this equality follows $J'\supseteq J$, which yields $J'=J$. In other words the maps $P/(K_1\cap K_2)\to P/K_i$ are bijective if restricted to the respective open $L$-orbits. Since $P$ is connected, this is only possible if $K_1=K_2$.
\end{proof}

We may restate more concisely the above theorem in the following way.

\begin{corollary}
The bijection $\varphi\mapsto(\Cone_\varphi,\Colemb_\varphi)$ of \cite[Theorem~5.4]{Kn91} induces a bijection between the set of surjective $P$-equivariant maps with domain $P/H$ and connected fibers and the set of $P$-equivariant colored subspaces of $N_B(P/H)$.
\end{corollary}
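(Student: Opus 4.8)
The plan is to deduce the corollary by chaining Knop's bijection \cite[Theorem 4.4]{Kn91} with Theorem~\ref{thm:morphisms}, using restriction to the open $L$-orbit and extension to $P/H$ as two mutually inverse operations. First I would describe the forward map. Given a surjective $P$-equivariant map $\psi$ with domain $P/H$ and connected fibers, set $K=\mathrm{Stab}_P(\psi(eH))$; then $H\subseteq K$, the target is $P$-equivariantly isomorphic to $P/K$ with $\psi(eH)=eK$, and $P/K$ is an $L$-embedding of its open $L$-orbit $Y$. The restriction $\varphi:=\psi|_X\colon X\to Y$ is then $L$-equivariant and surjective, where $X$ denotes the open $L$-orbit of $P/H$. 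The key geometric point here is that $\varphi$ has connected fibers: its fiber over $eK$ is $X\cap(K/H)$, a non-empty (by surjectivity of $\varphi$) open subset of the irreducible homogeneous space $K/H$, hence itself irreducible and connected. Since $\psi$ extends $\varphi$ and $P/K$ is an $L$-embedding of $Y$, Theorem~\ref{thm:morphisms} shows that $(\Cone_\varphi,\Colemb_\varphi)$ is a $P$-equivariant colored subspace, so $\psi\mapsto(\Cone_\varphi,\Colemb_\varphi)$ is well defined.

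Next I would construct the inverse. Given a $P$-equivariant colored subspace $(\Cone,\Colemb)$, Knop's theorem produces an $L$-equivariant map $\varphi\colon X\to Y$ with connected fibers and $(\Cone_\varphi,\Colemb_\varphi)=(\Cone,\Colemb)$, unique up to isomorphism of $Y$. Because this colored subspace is $P$-equivariant, Theorem~\ref{thm:morphisms} yields a unique subgroup $K\supseteq H$ such that $P/K$ is an $L$-embedding of $Y$ together with a $P$-equivariant map $\psi\colon P/H\to P/K$ extending $\varphi$. This $\psi$ is automatically surjective, since its image is the single $P$-orbit $P\cdot eK=P/K$, and it has connected fibers, as established in the course of the proof of Theorem~\ref{thm:morphisms}. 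Thus every $P$-equivariant colored subspace arises from a surjective $P$-equivariant map with connected fibers.

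It then remains to verify that the two assignments are mutually inverse. In one composition, restricting the extension $\psi$ constructed from $(\Cone,\Colemb)$ back to $X$ recovers $\varphi$ by construction, and Knop's bijection sends $\varphi$ back to $(\Cone,\Colemb)$. In the other composition, starting from $\psi\colon P/H\to P/K$, forming $\varphi=\psi|_X$ and extending again produces a $P$-equivariant map onto some $P/K'$ with $P/K'$ an $L$-embedding of $Y$ extending $\varphi$; the uniqueness clause of Theorem~\ref{thm:morphisms} forces $K'=K$, and since a $P$-equivariant map out of the homogeneous space $P/H$ is determined by the image of $eH$, the normalization $\psi(eH)=eK$ gives back the same $\psi$. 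Injectivity of the forward map follows in the same way from the uniqueness in \cite[Theorem 4.4]{Kn91} together with that of $K$ in Theorem~\ref{thm:morphisms}.

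The main obstacle I anticipate is bookkeeping rather than a new idea: one must consistently regard morphisms up to isomorphism of the target and normalize so that $H\subseteq K$ and the base points match, so that the two uniqueness statements can be chained without ambiguity. The only genuinely geometric input beyond \cite[Theorem 4.4]{Kn91} and Theorem~\ref{thm:morphisms} is the irreducibility argument above, which guarantees that restriction to the open $L$-orbit preserves connectedness of fibers.
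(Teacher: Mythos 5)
Your proposal is correct and follows exactly the route the paper intends: the paper states this corollary without proof as a ``more concise restatement'' of Theorem~\ref{thm:morphisms} combined with Knop's bijection, and your argument simply makes that explicit --- forward map via restriction to the open $L$-orbit (with the irreducibility argument $X\cap g(K/H)$ open in the connected, hence irreducible, fiber $g(K/H)$ showing connectedness of fibers is preserved), inverse via Knop plus the existence and uniqueness clauses of Theorem~\ref{thm:morphisms}, including the fact that the constructed extension has connected fibers, which is claim~(\ref{proof:fibers}) in that proof. The only caveat, which you already flag as bookkeeping, is that $eK$ need not lie in the open $L$-orbit $Y$, so one should take the fiber of $\varphi$ over an arbitrary $y=gK\in Y$ (namely $X\cap g(K/H)$) or normalize base points as the paper does in its uniqueness argument.
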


Let $K\supseteq H$ be a subgroup of $P$ and $\varphi\colon P/H\to P/K$ the induced map, denote by $\varphi^*\colon \Xi_B(P/K)\hookrightarrow \Xi_B(P/H)$ the homomorphism induced by the pull-back of functions, and by $\varphi_*\colon N_B(P/H) \twoheadrightarrow N_B(P/K)$ the dual homomorphism. If $\varphi$ has connected fibers then $\varphi_*$ can be identified with the quotient map $N_B(P/H)\twoheadrightarrow N_B(P/H)/\Cone_\varphi$.

\begin{proposition}\label{prop:imageV}
Let $\varphi\colon P/H\to P/K$ be as above. Then $\varphi_*(V(P/H))=V(P/K)$.
\end{proposition}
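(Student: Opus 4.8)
The plan is to reduce the statement to a \emph{valuative} description of the cone, namely the identity
\[
V(P/H) = \{\, \rho(v) \mid v \text{ a } P\text{-invariant discrete valuation of } \CC(P/H)\,\},
\]
which I will refer to as $(\ast)$, together with the observation that $\varphi_*$ is simply restriction of valuations along $\CC(P/K)\subseteq\CC(P/H)$. Concretely, for any discrete valuation $v$ of $\CC(P/H)$ one reads off from the definitions of $\varphi^*$ and $\varphi_*$ that $\varphi_*(\rho(v)) = \rho(v|_{\CC(P/K)})$: for $\mu\in\Xi_B(P/K)$ and $f_\mu\in\CC(P/K)^{(B)}_\mu$, the value $\langle\varphi_*\rho(v),\mu\rangle = v(f_\mu)$ is computed by the pulled-back function, hence equals $(v|_{\CC(P/K)})(f_\mu)$. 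If $v$ is $P$-invariant so is $v|_{\CC(P/K)}$, and conversely every $P$-invariant valuation of $\CC(P/K)$ extends to a $P$-invariant valuation of $\CC(P/H)$ by \cite[Corollary 1.5]{Kn91}, the same lifting already used in the proof of Theorem~\ref{thm:morphisms}. Granting $(\ast)$ for both $P/H$ and $P/K$, both inclusions then follow at once: for $\subseteq$, if $v$ is $P$-invariant on $\CC(P/H)$ then $\varphi_*\rho(v)=\rho(v|_{\CC(P/K)})\in V(P/K)$; for $\supseteq$, each generator of $V(P/K)$ equals $\rho(w)$ for a $P$-invariant $w$, which lifts to some $P$-invariant $\tilde v$ with $\varphi_*\rho(\tilde v)=\rho(w)$ and $\rho(\tilde v)\in V(P/H)$, so $V(P/K)\subseteq\varphi_*(V(P/H))$ because the latter is a convex cone.

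\textbf{Proof of $(\ast)$.} Fix a complete $P$-toroidal embedding $X$ of $P/H$, so $V(P/H)=V(X)$ by Corollary~\ref{cor:Vindep}, and by Theorem~\ref{thm:V} the cone $V(X)$ is generated by $\rho(\Div(X)^P)$. Each $D\in\Div(X)^P$ is the closure of a codimension-one $L$-orbit with $P$-stable closure, so its divisorial valuation is $P$-invariant; hence the generators of $V(X)$, and after a fan subdivision inside $V(X)$ (which remains $P$-toroidal by Proposition~\ref{prop:inVstable}, the new cones having empty color set) every point of $V(X)$, lie in the image of $P$-invariant valuations. For the reverse inclusion let $v$ be $P$-invariant; since $X$ is complete, $v$ has a nonempty center $C$ on $X$, and $C$ is $P$-stable because $v$ is. The $P$-stable irreducible closed subsets of the $P$-toroidal $X$ are exactly the closures $\overline Y$ of those $L$-orbits $Y$ with $\overline Y$ $P$-stable; writing $C=\overline Y$ and passing to the $L$-toroidal neighbourhood of $Y$ furnished by Corollary~\ref{cor:Ltoroidaln}, the standard Luna--Vust local description forces $\rho(v)\in\Cone_Y\subseteq V(X)$. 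This establishes $(\ast)$.

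\textbf{Main obstacle.} The real work sits entirely inside $(\ast)$. The first delicate point is that the subdivision realizing an arbitrary point of $V(X)$ by a $P$-stable divisor must yield a genuine complete $P$-toroidal embedding; this is where Proposition~\ref{prop:inVstable} is essential, since it guarantees that cones added inside $V(X)$ correspond to $L$-orbits with $P$-stable closure and empty color set, so that no color contains the new orbits. The second, and I expect the harder, point is the center computation $\rho(v)\in\Cone_Y$: it requires correctly identifying $C$ with an orbit closure $\overline Y$ of the expected type and then invoking the toric local model on the $L$-toroidal neighbourhood of Corollary~\ref{cor:Ltoroidaln}. One should finally confirm that \cite[Corollary 1.5]{Kn91} applies to the non-reductive group $P$; this is exactly the use already made of it in the proof of Theorem~\ref{thm:morphisms}, so it is legitimate here. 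Once $(\ast)$ is in hand the proposition is immediate, and I note that no hypothesis on the fibers of $\varphi$ is needed, since restriction and lifting of valuations are insensitive to connectedness.
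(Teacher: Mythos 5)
Your route is genuinely different from the paper's: the paper never mentions invariant valuations, but instead extends $\varphi$ to a $P$-equivariant map $X\to Y$ of complete $P$-toroidal embeddings via Lemma~\ref{lemma:extend}, obtains $\varphi_*(V(X))\subseteq V(Y)$ by pushing a closed $P$-orbit $Z$ forward (its image is a complete $P$-orbit and $\Cone_{\varphi(Z)}\supseteq\varphi_*(\Cone_Z)$ by \cite[Theorem 4.1]{Kn91}), and obtains the reverse inclusion by choosing over each $E\in\Div(Y)^P$ a $P$-stable prime divisor of $X$ dominating it, using Theorem~\ref{thm:V}. Your reduction to the valuative identity $(\ast)$ is precisely the statement that the paper, in the remark closing the section on morphisms, flags as provable ``as in \cite[Lemma 5.1]{Kn91}'' but \emph{beyond the scope of the present work} --- and indeed your proof of $(\ast)$ has a genuine gap at the subdivision step. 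Refining the fan of $X$ along a ray through an arbitrary point of $V(X)$ produces, by Luna--Vust theory, an \emph{$L$-embedding} $X'$ with a proper $L$-morphism $X'\to X$; nothing in the paper guarantees that the $P$-action lifts to $X'$. Proposition~\ref{prop:inVstable} cannot supply this: it concerns $L$-orbits of a variety \emph{already} equipped with a $P$-action, whereas the divisorial orbit created by the subdivision is not an orbit of $X$ at all. The absence of a colored-fan theory of $P$-embeddings is exactly why Lemma~\ref{lemma:toroidalexist} builds $P$-toroidal embeddings by the projective-module trick, so the claim that the subdivision ``remains $P$-toroidal'' is unjustified, and the direction of $(\ast)$ asserting that \emph{every} point of $V(X)$ is $\rho$ of a $P$-invariant valuation is not established.

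Fortunately the proposition does not need that direction, and the repair is already contained in your own treatment of $\supseteq$: by Theorem~\ref{thm:V} the cone $V(P/H)$ is generated by $\rho(\Div(X)^P)$, each generator is $\rho(\Ord_D)$ for the $P$-invariant divisorial valuation of a $P$-stable divisor, $\varphi_*\rho(\Ord_D)=\rho\bigl(\Ord_D|_{\CC(P/K)}\bigr)$ lies in $V(P/K)$ by your center argument, and since $\varphi_*$ is linear and $V(P/K)$ is convex, the inclusion $\subseteq$ follows on generators. The remaining ingredients of your argument do hold up: the center of a $P$-invariant valuation is $P$-stable, hence (by finiteness of $L$-orbits on the spherical $L$-variety $X$) of the form $\overline Y$ with $\overline Y$ $P$-stable, and the Luna--Vust center criterion applied to $v$ as an $L$-invariant valuation gives $\rho(v)\in\Cone_Y\subseteq V(X)$; moreover the lifting of invariant valuations via \cite[Corollary 1.5]{Kn91} for the non-reductive group $P$ is legitimate for the same reason as the paper's own use of it in the proof of Theorem~\ref{thm:morphisms}. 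With this repair your proof is correct, at the cost of being longer than the paper's purely geometric two-step argument; what it buys is a proof, along the way, of the weak form of the deferred valuative description of $V(P/H)$ (realization of generators by invariant valuations plus the center inclusion), which the paper's proof avoids entirely.
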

\begin{proof}
Let $Y$ be a $P$-toroidal complete embedding of $P/K$. Thanks to Lemma~\ref{lemma:extend}, we can find a complete $P$-toroidal embedding $X$ of $P/H$ such that the map $\varphi$ extends to a $P$-equivariant map $\varphi\colon X\to Y$. Let $Z$ be a closed $P$-orbit of $X$, so that $\Cone_Z$ is a maximal cone contained in $V(X)$. Then $\varphi(Z)$ is a complete $P$-orbit of $Y$, and $\Cone_{\varphi(Z)}$ is contained in $V(Y)$. On the other hand $\Cone_{\varphi(Z)}\supseteq \varphi^*(\Cone_Z)$, which implies $\varphi^*(V(X))\subseteq V(Y)$.

Let now $Z$ be a $P$-stable prime divisor of $Y$. The inverse image $\varphi^{-1}(Z)$ contains a $P$-stable prime divisor $Z'$ of $X$ mapped dominantly to $Z$. Then $\QQ_{\geq 0}\varphi^*(\rho_X(Z'))=\QQ_{\geq 0}\rho_Y(Z)$, which shows $\varphi^*(V(X))\supseteq V(Y)$.
\end{proof}

We expect that it is possible to develop a theory of $P$-equivariant embeddings of $P/H$ based on the same ideas of \cite{Kn91}, as already suggested by the proof of Theorem~\ref{thm:morphisms}. In particular, one can show as in \cite[Lemma~6.1]{Kn91} that $V(P/H)$ is the image in $N_B(P/H)$ of the set of $P$-invariant discrete valuations of $\CC(P/H)$, in analogy with the fact that for a complete toroidal spherical $L$-variety $X$ the convex cone generated by $\rho(\Div(X)^L)$ is the valuation cone $V_L(X)$. This goes beyond the scope of the present work.

\section{Localization via one-parameter subgroups}\label{s:loclambda}

In this section we study {\em localizations} of a $P$-toroidal embedding $X\supseteq P/H$, i.e.\ certain subvarieties of $X$ that are invariant (and spherical) under the action of Levi subgroups of parabolic subgroups of $L$.

Our goal is essentially to adapt some of the results of \cite[Section~4]{Kn14}, taking into account the additional action of $P^u$ on $X$. Several proofs here follow the guidelines of those in \cite{Kn14} and \cite{Lu97} with only minimal modifications. We start recalling some well-known results on the action of one-parameter subgroups on normal, complete varieties, and we refer to loc.cit.\ for more details.

Let $X$ be a complete normal $\Gm$-variety. Then the set of fixed points $X^{\Gm}$ is closed, thus complete, and there is a unique connected component $Y$ of $X^{\Gm}$ such that
\[
\lim_{t\to0} t\cdot x \in Y
\]
for a general point $x\in X$. Denote by $\pi_\lambda(x)$ the above limit and by $Z$ the subset of $X$ such that $\pi_\lambda(x)\in Y$. Then $Z$ is open in $X$, the map $\pi\colon Z\to Y$ is a categorical quotient by $\Gm$ and its general fibers are irreducible. In particular $Y$ is irreducible and normal, and is characterized by the property that, for any $x\in X$, we have $\lim_{t\to \infty} t\cdot x\in Y$ if and only if $x\in Y$.

If $\lambda\colon \Gm\to T$ is a one-parameter subgroup of $T$, then one denotes by $L^\lambda$ the centralizer of $\lambda(\Gm)$ in $L$ and by $L_\lambda$ the parabolic subgroup of $L$ of those $l\in L$ such that $\lim_{t\to0} \lambda(t)l\lambda(t)^{-1}$ exists. Then $L^\lambda$ is a Levi subgroup of $L_\lambda$, and the map $l\mapsto \pi_L(l) = \lambda(t)l\lambda(t)^{-1}$ is the quotient map $L_\lambda\to L_\lambda/L_\lambda^u\cong L^\lambda$.

\begin{definition}\label{def:loclambda}
Let $X$ be a complete normal $T$-variety and $\lambda$ a one-parameter subgroup of $T$. We define $X_\lambda=Z$ and $X^\lambda=Y$, where $Z$ and $Y$ are defined as above considering $X$ as a $\Gm$-variety via the action of $\lambda(\Gm)\subseteq T$. The variety $X^\lambda$ is called the {\em localization of $X$ with respect to $\lambda$}.
\end{definition}

\begin{definition}
A one-parameter subgroup $\lambda$ of $T$ is {\em $P$-dominant} if $\langle\lambda,\alpha\rangle \leq0$ for all roots $\alpha$ of $P^u$ and of $(B_-)^u$. If this is the case we denote by $P^\lambda$ the centralizer in $P$ of $\lambda(\Gm)$, and put
\[
P_\lambda = \{ p\in P \;|\; \lim_{t\to0} \lambda(t)p\lambda(t)^{-1} \text{ exists} \}.
\]
For all $p\in P_\lambda$ we also set $\pi_P(p)=\lim_{t\to0} \lambda(t)p\lambda(t)^{-1}$.
\end{definition}

\begin{lemma}\label{lemma:locP}
Let $\lambda$ be a $P$-dominant one-parameter subgroup of $T$. The groups $P_\lambda$ and $P^\lambda$ have the same Levi subgroup $L^\lambda$; we have $(P^\lambda)^u = P^\lambda\cap P^u$ and $(P_\lambda)^u = (P^\lambda)^u(L_\lambda)^u$. The intersection $B^\lambda=L^\lambda\cap B$ is a Borel subgroup of $L^\lambda$ and is equal to $\pi_P(B)$; the intersection $B^\lambda_-=L^\lambda\cap B_-$ is the Borel subgroup of $L^\lambda$ opposite to $B^\lambda$ with respect to $T$. Finally, a root $\alpha$ of $P^u$ (resp.\ $B_-$) is not a root of $(P^\lambda)^u$ (resp.\ $B^\lambda_-$) if and only if $\langle\lambda,\alpha\rangle <0$.
\end{lemma}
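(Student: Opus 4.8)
The plan is to reduce every assertion to the $\lambda$-equivariant Levi decomposition $P=L\ltimes P^u$. Since $\lambda(\Gm)\subseteq T\subseteq L$, conjugation by $\lambda(\Gm)$ stabilizes both $L$ and $P^u$, and the product map $L\times P^u\to P$ is a $\lambda$-equivariant isomorphism of varieties. Consequently, for $p=lu$ with $l\in L$ and $u\in P^u$, the limit $\lim_{t\to0}\lambda(t)p\lambda(t)^{-1}$ exists if and only if the limits of $\lambda(t)l\lambda(t)^{-1}$ and of $\lambda(t)u\lambda(t)^{-1}$ both exist. First I would use this to obtain the semidirect decompositions $P^\lambda=L^\lambda\ltimes(P^\lambda\cap P^u)$ and $P_\lambda=L_\lambda\ltimes(P_\lambda\cap P^u)$, where $L_\lambda$ and $L^\lambda$ are the standard parabolic subgroup of $L$ attached to $\lambda$ and its Levi subgroup. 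All the statements of the lemma involving only $L$---that $L^\lambda$ is reductive, that $B^\lambda=L^\lambda\cap B$ is a Borel subgroup of $L^\lambda$, that $B^\lambda_-=L^\lambda\cap B^-$ is its opposite with respect to $T$, and that $\pi_L(B)=B^\lambda$---are then the familiar facts about centralizers of cocharacters in a reductive group, valid once we know $B\subseteq L_\lambda$.

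The single place where $P$-dominance is essential is the control of the unipotent part. Decomposing $\fn=\mathrm{Lie}(P^u)$ into $\Ad\lambda(\Gm)$-weight spaces, each root space $\fp_\alpha\subseteq\fn$ carries the weight $\langle\lambda,\alpha\rangle$, which is $\leq0$ by $P$-dominance. Since $\lim_{t\to0}\lambda(t)\Exp(x)\lambda(t)^{-1}$ exists exactly when $x$ has nonnegative weight, I conclude $P_\lambda\cap P^u=P^\lambda\cap P^u$, so that $P_\lambda=L_\lambda\ltimes(P^\lambda\cap P^u)$ and $P^\lambda=L^\lambda\ltimes(P^\lambda\cap P^u)$ have the same unipotent part over their respective Levi subgroups. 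As $L^\lambda$ is reductive and $P^\lambda\cap P^u$ is unipotent, these are genuine Levi decompositions: both $P_\lambda$ and $P^\lambda$ have Levi subgroup $L^\lambda$, and $(P^\lambda)^u=P^\lambda\cap P^u$.

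For the unipotent radical of $P_\lambda$ I would invoke the general dynamical structure $P_\lambda=P^\lambda\ltimes U_\lambda$, where $U_\lambda=\ker\pi_P$ is unipotent with Lie algebra $\bigoplus_{n>0}\fp(n)$ and $\fp(n)$ denotes the $\Ad\lambda(\Gm)$-weight-$n$ subspace of $\fp$. The same $P$-dominance computation gives $\fn\cap\bigoplus_{n>0}\fp(n)=0$, whence $\bigoplus_{n>0}\fp(n)=\mathrm{Lie}(L_\lambda^u)$ and therefore $U_\lambda=L_\lambda^u$. Thus $(P^\lambda)^u\cdot L_\lambda^u$ is a closed unipotent subgroup (the product of $(P^\lambda)^u$ with the normal subgroup $L_\lambda^u$), its Lie algebra equals $\mathrm{Lie}(P_\lambda^u)$, and $L^\lambda$ is a reductive complement; hence $P_\lambda^u=(P^\lambda)^u L_\lambda^u$. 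To check $B\subseteq L_\lambda$ I note that the roots of $(B^-)^u$ are the negative roots of $L$, so $P$-dominance forces $\langle\lambda,\beta\rangle\geq0$ for every positive root $\beta$ of $L$; then $\pi_P(B)=\pi_L(B)=B^\lambda$ as above. The last assertion is immediate from the weight description: a root $\alpha$ of $P^u$ (resp.\ of $B^-$) is a root of $(P^\lambda)^u=P^\lambda\cap P^u$ (resp.\ of $B^\lambda_-=L^\lambda\cap B^-$) precisely when $\langle\lambda,\alpha\rangle=0$, and by $P$-dominance the complementary case $\langle\lambda,\alpha\rangle\neq0$ is exactly $\langle\lambda,\alpha\rangle<0$.

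I expect the main obstacle to be organizational rather than conceptual: no individual step is deep, so the real work is in correctly combining the general ``$G_\lambda=G^\lambda\ltimes U_\lambda$'' dynamical structure with the $\lambda$-equivariant Levi decomposition for the possibly non-reductive group $P$, and in verifying that the various semidirect products and the product-of-subgroups identity $P_\lambda^u=(P^\lambda)^u L_\lambda^u$ are genuine (normality and triviality of the factor intersection). The one input beyond standard reductive theory is the observation that $P$-dominance annihilates all strictly positive $\lambda$-weights on $\fn$, which is what confines the attracting part $U_\lambda$ to $L$ and makes the unipotent radical split as stated.
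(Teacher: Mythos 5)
Your proposal is correct and follows essentially the same route as the paper: the paper's entire proof consists of the observation that, via the $\lambda(\Gm)$-equivariant Levi decomposition $P\cong L\times P^u$, the limit $\lim_{t\to 0}\lambda(t)p\lambda(t)^{-1}$ exists if and only if it exists for both components, after which everything reduces to the standard facts about $L_\lambda$, $L^\lambda$ recalled just before the lemma together with the weight computation showing that $P$-dominance leaves only weight-zero vectors in $\mathrm{Lie}(P^u)$ with existing limits. Your write-up is simply a fully detailed version of this argument (including the identification $U_\lambda=L_\lambda^u$ and the splitting $P_\lambda^u=(P^\lambda)^uL_\lambda^u$, which the paper leaves implicit), and all the steps check out.
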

\begin{proof}
We deduce all statements from the fact that, for any $p\in P$, the limit
\[
\lim_{t\to0} \lambda(t)p\lambda(t)^{-1}
\]
exists in $P$ if and only if $\lim_{t\to0} \lambda(t)l\lambda(t)^{-1}$ exists in $L$ and $\lim_{t\to0} \lambda(t)u\lambda(t)^{-1}$ exists in $P^u$, where $p=ul$ with $u\in P^u$ and $l\in L$. The ``if'' part of this claim is obvious. The ``only if'' part holds because $P\cong P^u\times L$ as varieties equipped with the action of $\lambda(\Gm)$ by conjugation.

At this point we have that $p\in P_\lambda$ if and only if $l\in L_\lambda$ and $u\in (P^u)_\lambda$, so $P_\lambda = L_\lambda \ltimes (P^u)_\lambda = (L^\lambda \ltimes (L_\lambda)^u)\ltimes (P^u)_\lambda$. This shows $(P_\lambda)^u = (P^u)_\lambda(L_\lambda)^u$, which is similar, but not yet equal, to one of the claims of the lemma. In the same way one checks that $p$ is $\lambda(\Gm)$-stable if and only if $u$ and $l$ are, so $P^\lambda = L^\lambda \ltimes (P^u)^\lambda$. This proves that $L^\lambda$ is a Levi subgroup of $P_\lambda$ and of $P^\lambda$, and also that $(P^\lambda)^u = (P^u)^\lambda$. A consequence of the last equality is the desired formula $(P^\lambda)^u = P^\lambda\cap P^u$.

We observe that $P^u$ is an affine space where $\lambda(\Gm)$ acts linearly via the characters $\langle \lambda,\alpha\rangle$ for $\alpha$ varying in the roots of $P^u$. Since $\langle\lambda,\alpha\rangle \leq 0$ for all such $\alpha$ by hypothesis, the limit $\lim_{t\to0} \lambda(t)u\lambda(t)^{-1}$ exists if and only if $u$ is fixed by $\lambda(\Gm)$. This shows $(P^u)_\lambda = (P^u)^\lambda$, and with it the desired equality $(P_\lambda)^u = (P^\lambda)^u(L_\lambda)^u$. The other claims of the lemma are standard.
\end{proof}

\begin{definition}
Let $\lambda$ be a one-parameter subgroup of $T$. We denote by $\lambda^r\in N_B(P/H)$ the corresponding element induced by restricting to $\Xi_B(P/H)$ the natural pairing of $\lambda$ with $\Chars(B)$.
\end{definition}

\begin{lemma}\label{lemma:lambdaPdominant}
If a one-parameter subgroup $\lambda$ of $T$ is $P$-dominant then $-\lambda^r\in V(P/H)$.
\end{lemma}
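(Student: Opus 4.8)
The plan is to reduce the assertion to a statement about spherical roots and then verify it case by case using Theorem~\ref{thm:NS}. By Definition~\ref{def:sphroots} we have
\[
V(P/H) = \{ v\in N_B(P/H) \mid \langle v,\sigma\rangle\leq 0\ \forall\sigma\in\Sigma(P/H)\},
\]
so proving $-\lambda^r\in V(P/H)$ is equivalent to showing $\langle\lambda^r,\sigma\rangle\geq 0$ for every $\sigma\in\Sigma(P/H)$. Since each spherical root $\sigma$ lies in $\Xi_B(P/H)\subseteq\Chars(T)$ and $\lambda^r$ is by definition the restriction to $\Xi_B(P/H)$ of the pairing of $\lambda$ with $\Chars(B)$, we have $\langle\lambda^r,\sigma\rangle=\langle\lambda,\sigma\rangle$. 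Hence it suffices to prove $\langle\lambda,\sigma\rangle\geq 0$ for all $\sigma\in\Sigma(P/H)$.

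Next I would invoke Theorem~\ref{thm:NS}, which asserts that each $\sigma\in\Sigma(P/H)$ either lies in $\Sigma_L(P/H)$, or satisfies that $-\sigma$ is a root of $P^u$. I would treat these two alternatives separately. If $-\sigma$ is a root of $P^u$, then $P$-dominance of $\lambda$ gives directly $\langle\lambda,-\sigma\rangle\leq 0$, i.e.\ $\langle\lambda,\sigma\rangle\geq 0$. If instead $\sigma\in\Sigma_L(P/H)$, I would use that elements of $\Sigma_L(P/H)$ are non-negative rational combinations $\sigma=\sum_i c_i\alpha_i$ of simple roots $\alpha_i$ of $L$. Each $-\alpha_i$ is a root of $(B^-)^u$, so $P$-dominance yields $\langle\lambda,-\alpha_i\rangle\leq 0$, whence $\langle\lambda,\alpha_i\rangle\geq 0$ for all $i$ and therefore $\langle\lambda,\sigma\rangle=\sum_i c_i\langle\lambda,\alpha_i\rangle\geq 0$.

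Combining the two cases gives $\langle\lambda,\sigma\rangle\geq 0$ for all $\sigma\in\Sigma(P/H)$, and hence $-\lambda^r\in V(P/H)$. I do not expect any serious obstacle here: the whole argument is a formal consequence of the description of $V(P/H)$ via $\Sigma(P/H)$, the classification of spherical roots in Theorem~\ref{thm:NS}, and the two halves of the definition of $P$-dominance. The only point requiring care is that the two conditions in the definition of a $P$-dominant one-parameter subgroup --- non-positivity of $\langle\lambda,-\rangle$ on roots of $P^u$ and on roots of $(B^-)^u$ --- match exactly the two alternatives provided by Theorem~\ref{thm:NS}; once this correspondence is recognized, the verification is immediate.
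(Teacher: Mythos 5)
Your proof is correct and is exactly the argument the paper compresses into its one-line proof ``This stems from Theorem~\ref{thm:NS}'': the two alternatives of that theorem (either $\sigma\in\Sigma_L(P/H)$, hence a non-negative rational combination of simple roots of $L$ whose negatives are roots of $(B^-)^u$, or $-\sigma$ a root of $P^u$) are handled precisely by the two halves of the definition of $P$-dominance, yielding $\langle\lambda,\sigma\rangle\geq 0$ for all $\sigma\in\Sigma(P/H)$. Nothing is missing; you have simply written out the details the paper leaves to the reader.
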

\begin{proof}
This stems from Theorem~\ref{thm:NS}, and the fact that all elements of $\Sigma_L(P/H)$ are linear combinations of simple roots of $L$ with non-negative coefficients.
\end{proof}

\begin{lemma}\label{lemma:loclambda}
Let $X$ be a $P$-toroidal embedding of $P/H$ and $\lambda$ a $P$-dominant one-parameter subgroup of $T$. Then $X_\lambda$ is $P_\lambda$-stable and $X^\lambda$ is $P^\lambda$-stable and spherical as an $L^\lambda$-variety. Moreover, the map $\pi_\lambda\colon X_\lambda\to X^\lambda$ is $P_\lambda$-equivariant, if we let $P_\lambda$ act on $X^\lambda$ via the quotient $\pi_P\colon P_\lambda \to P^\lambda$.
\end{lemma}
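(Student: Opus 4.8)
The plan is to deduce all four assertions from a single intertwining property of the limit map $\pi_\lambda\colon X\to X^\lambda$, combined with the description of $P_\lambda$ and $P^\lambda$ in Lemma~\ref{lemma:locP}. Recall that since $X$ is complete the limit $\pi_\lambda(x)=\lim_{t\to0}\lambda(t)\cdot x$ exists for every $x\in X$, that $X^\lambda=Y$ is the \emph{unique} connected component of $X^{\lambda(\Gm)}$ whose attracting set $X_\lambda=Z$ is dense, and that $\pi_\lambda$ restricts to the identity on $X^\lambda$ (so it is surjective onto $X^\lambda$).

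First I would show that $X^\lambda$ is $P^\lambda$-stable. For $g\in P^\lambda$ the element $g$ commutes with $\lambda(\Gm)$, so $\lambda(t)gx=g\lambda(t)x$ and therefore $\pi_\lambda(gx)=g\cdot\pi_\lambda(x)$ whenever the limit exists. Hence $g$ permutes the components of $X^{\lambda(\Gm)}$, and the attracting set of the component $g\cdot X^\lambda$ equals $g\cdot X_\lambda$, which is dense because $g$ is an automorphism of $X$. By the uniqueness of the component with dense attracting set we get $g\cdot X^\lambda=X^\lambda$; this establishes $P^\lambda$-stability without any appeal to connectedness of $P^\lambda$.

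Next I would treat $P_\lambda$-stability of $X_\lambda$ and $P_\lambda$-equivariance of $\pi_\lambda$ simultaneously. For $p\in P_\lambda$ and $x\in X_\lambda$ I write $\lambda(t)px=(\lambda(t)p\lambda(t)^{-1})(\lambda(t)x)$; as $t\to0$ the first factor tends to $\pi_P(p)\in P^\lambda$ and the second to $\pi_\lambda(x)\in X^\lambda$, so by continuity of the action $\pi_\lambda(px)=\pi_P(p)\cdot\pi_\lambda(x)$. By the previous step this value lies in $X^\lambda$, whence $px\in X_\lambda$. This yields at once the $P_\lambda$-stability of $X_\lambda$ and the equivariance formula $\pi_\lambda(px)=\pi_P(p)\pi_\lambda(x)$, with $P_\lambda$ acting on $X^\lambda$ through $\pi_P$. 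For the final assertion, sphericity of $X^\lambda$, I would transport the open $B$-orbit of $X$ downward. By Lemma~\ref{lemma:locP} the $P$-dominance of $\lambda$ forces $B\subseteq P_\lambda$ with $\pi_P(B)=B^\lambda=L^\lambda\cap B$ a Borel subgroup of $L^\lambda$, so $X_\lambda$ is $B$-stable. Let $\mathcal O\subseteq X$ be the dense $B$-orbit (it exists since $X$ is $L$-spherical). Then $\mathcal O\cap X_\lambda$ is a nonempty $B$-stable subset of the single orbit $\mathcal O$, hence equals $\mathcal O$, giving $\mathcal O\subseteq X_\lambda$. The equivariance formula shows $\pi_\lambda(\mathcal O)=B^\lambda\cdot\pi_\lambda(x_0)$ is a single $B^\lambda$-orbit; it is dense because $\pi_\lambda^{-1}$ of its closure is a closed $B$-stable subset containing the dense set $\mathcal O$, hence all of $X_\lambda$, while $\pi_\lambda$ is surjective onto $X^\lambda$. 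As $X^\lambda$ is irreducible and normal, the presence of a dense $B^\lambda$-orbit makes it spherical under $L^\lambda$.

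The hard part will be the sphericity step, and in particular the assertion that the open $B$-orbit of $X$ is contained in $X_\lambda$: this hinges on having $B\subseteq P_\lambda$, which is precisely where the clause of $P$-dominance concerning the roots of $(B^-)^u$ is indispensable (it amounts to $\langle\lambda,\beta\rangle\geq0$ for all positive roots $\beta$ of $L$, hence to $B\subseteq L_\lambda\subseteq P_\lambda$). Once this is in place, everything else is a formal consequence of the intertwining identities for $\pi_\lambda$ derived above.
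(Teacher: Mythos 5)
Your proof is correct and takes essentially the same route as the paper's (much terser) proof: the limit identity $\lambda(t)px=(\lambda(t)p\lambda(t)^{-1})(\lambda(t)x)$ gives $P_\lambda$-stability of $X_\lambda$ and the equivariance of $\pi_\lambda$, and sphericity is obtained exactly as in the paper by noting that $B\subseteq P_\lambda$ forces the open $B$-orbit into $X_\lambda$, whose image under $\pi_\lambda$ is a dense $B^\lambda$-orbit of $X^\lambda$. The only cosmetic difference is in the $P^\lambda$-stability of $X^\lambda$: the paper uses connectedness of $P^\lambda$ to preserve components of $X^{\lambda(\Gm)}$, while you use uniqueness of the component with dense attracting set --- both are valid.
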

\begin{proof}
The centralizer $P^\lambda$ of $\lambda(\Gm)$ acts on $X^{\lambda(\Gm)}$; since $P^\lambda$ is connected it preserves the connected components hence also $X^\lambda$.

The other statements follow, thanks to the equality
\[
\lim_{t\to0} \left(\lambda(t) \cdot px\right) = \lim_{t\to0} \left((\lambda(t)p\lambda(t)^{-1}) \cdot \lambda(t)x\right) = \pi_P(p)\cdot \pi_\lambda(x)
\]
for all $x\in X_\lambda$, $p\in P_\lambda$, except for the fact that $X^\lambda$ is spherical. To prove it, we observe that the open subset $X_\lambda$ of $X$ intersects the open $B$-orbit of $X$; for any $x$ in this intersection the orbit $B^\lambda\pi_\lambda(x)$ is open in $X^\lambda$.
\end{proof}

\begin{lemma}\label{lemma:negfixes}
Let $X$ and $\lambda$ be as in Lemma~\ref{lemma:loclambda}, and let $\alpha$ be a root of $P$ with $\langle\lambda, \alpha\rangle <0$. Then $U_\alpha$ acts trivially on $X^\lambda$.
\end{lemma}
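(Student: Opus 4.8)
The plan is to fix a point $y\in X^\lambda$ and an element $u\in U_\alpha$, and to prove $u\cdot y=y$ by comparing the two limits of the $\Gm$-orbit of $z:=u\cdot y$ under the action of $\lambda(\Gm)$. Write $s=\langle\lambda,\alpha\rangle$, so that $s<0$, and set $u(t):=\lambda(t)u\lambda(t)^{-1}$; since $U_\alpha$ is $T$-stable, hence $\lambda(\Gm)$-stable, we have $u(t)\in U_\alpha$. As $y$ is fixed by $\lambda(\Gm)$, this gives $\lambda(t)\cdot z=u(t)\cdot y$ for all $t$. Writing $u=\Exp(\mathsf x)$ with $\mathsf x=\sum_{n>0}\mathsf x_{n\alpha}$ and $\mathsf x_{n\alpha}\in\fp_{n\alpha}$, the adjoint action yields $u(t)=\Exp\big(\sum_{n>0}t^{ns}\mathsf x_{n\alpha}\big)$; since every exponent $ns$ is negative, the assignment $t\mapsto u(t)$ extends to a morphism regular at $t=\infty$ with value $e$, so $u(t)\to e$ as $t\to\infty$, while the coefficients blow up as $t\to 0$.

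First I would compute the limit for $t\to\infty$. Because $u(t)\to e$ in $U_\alpha$ and the action is a morphism, $\lim_{t\to\infty}\lambda(t)\cdot z=\lim_{t\to\infty}u(t)\cdot y=e\cdot y=y$. Next I would show that $z\in X_\lambda$ and that $\pi_\lambda(z)=y$, which identifies the limit for $t\to 0$. Here I only use that $X_\lambda$ is open and $\lambda(\Gm)$-stable and contains $X^\lambda$ (indeed $\pi_\lambda$ restricts to the identity on $X^\lambda$, so $y\in X_\lambda$). For $t$ large, $u(t)$ is close to $e$, hence $u(t)\cdot y=\lambda(t)\cdot z$ lies in the open set $X_\lambda$ near $y$; by $\lambda(\Gm)$-stability of $X_\lambda$ this forces $z\in X_\lambda$. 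Now $\pi_\lambda$ is a $\Gm$-invariant morphism on $X_\lambda$, so $\pi_\lambda(z)=\pi_\lambda(u(t)\cdot y)$ for all $t$; letting $t\to\infty$, using $u(t)\cdot y\to y$ and continuity of $\pi_\lambda$, we get $\pi_\lambda(z)=\pi_\lambda(y)=y$, that is, $\lim_{t\to 0}\lambda(t)\cdot z=y$ as well.

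Finally I would conclude. The forward and backward limits of the orbit $\lambda(\Gm)\cdot z$ both equal $y$; a $\Gm$-orbit whose two limit points coincide must be a single (fixed) point, as one sees by passing to a $\lambda(\Gm)$-stable quasi-projective neighbourhood and comparing weights in an equivariant projective embedding. Hence $z$ is fixed by $\lambda(\Gm)$ and therefore equal to its limit $y$, i.e.\ $u\cdot y=y$. Since $u\in U_\alpha$ and $y\in X^\lambda$ are arbitrary, $U_\alpha$ acts trivially on $X^\lambda$.

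I expect the main subtlety to be the step identifying $\pi_\lambda(z)=y$: one must be certain that $z$ genuinely lies in $X_\lambda$, so that $\pi_\lambda(z)$ is defined and coincides with the $t\to 0$ limit. This is exactly where the openness and $\lambda(\Gm)$-invariance of $X_\lambda$ do the work, rather than any finer feature of the toroidal embedding. The concluding ``two coinciding limits force a fixed point'' is standard but worth stating with care; note that the whole argument deals with a single element $u$ and never requires $U_\alpha$ to be abelian, one-dimensional, or to carry a one-parameter subgroup structure, so the possibly non-abelian higher-dimensional case is handled uniformly.
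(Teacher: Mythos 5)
Your proof is correct, and it is essentially the argument the paper delegates by reference: the paper's proof of this lemma consists solely of the citation ``shown as in the proof of \cite[Proposition 1.3]{Lu97}'', and the standard argument there is exactly your conjugation trick $\lambda(t)u\lambda(t)^{-1}=\Exp\bigl(\sum_{n>0}t^{n\langle\lambda,\alpha\rangle}\mathsf x_{n\alpha}\bigr)\to e$ as $t\to\infty$, combined with comparing the two limits of the $\lambda(\Gm)$-orbit of $u\cdot y$. Your treatment of the two delicate points is sound --- that $z=u\cdot y$ lies in $X_\lambda$ (via openness and $\lambda(\Gm)$-stability of $X_\lambda$, reading ``$u(t)\cdot y$ close to $y$'' as the extended morphism $\PP^1\smallsetminus\{0\}\to X$ pulling back the open set $X_\lambda$ to a neighbourhood of $\infty$), and that a $\Gm$-orbit whose two limit points coincide is a fixed point (via a $\Gm$-stable quasi-projective neighbourhood and an equivariant embedding into $\PP(V)$, which is where normality of $X$ and Sumihiro's theorem enter, since the $t\to0$ and $t\to\infty$ limits lie in projectivized eigenspaces of distinct weights unless the orbit is a point) --- and, as you note, it applies uniformly to the possibly higher-dimensional group $U_\alpha=\Exp\bigl(\bigoplus_{n>0}\fp_{n\alpha}\bigr)$ used in this paper.
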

\begin{proof}
This is shown as in \cite[Proof of Proposition~1.3]{Lu97}. For all $x\in X^\lambda$ we have $x = \lambda(t)x$, and for all $u\in U_\alpha$ we have $\lim_{t\to 0} \lambda(t)^{-1}u\lambda(t) = e$. Then
\[
\lim_{t\to\infty} \lambda(t) ux = \lim_{t\to 0} \lambda(t)^{-1} ux = \lim_{t\to 0} \left(\lambda(t)^{-1}u\lambda(t)\right) \cdot x = x \in X^\lambda.
\]
This shows that $ux\in X^\lambda$, whence $\lambda(t)ux = ux$ for all $t$, so $ux=x$.
\end{proof}

Thanks to Lemma~\ref{lemma:loclambda}, the localization $X^\lambda$ has an open $B^\lambda$-orbit, therefore also an open $P^\lambda$-orbit.

\begin{proposition}\label{prop:loclambda}
Let $X$, $\lambda$ be as in Lemma~\ref{lemma:loclambda} and $P^\lambda/K$ be the open $P^\lambda$-orbit of $X^\lambda$. For all $D\in \Div(P^\lambda/K)^{B^\lambda}$ there is a unique $D^*\in \Div(P/H)^B$ such that $\pi_\lambda(D^*\cap X_\lambda)=D$. The map $D\mapsto D^*$ is a bijection between $\Div(P^\lambda/K)^{B^\lambda}$ and the set of those elements of $\Div(P/H)^B$ that are not stable under $P^\lambda$. Moreover, for any root $\alpha$ of $P^\lambda$, an element $D\in\Div(P^\lambda/K)^{B^\lambda}$ is not stable under $U_\alpha\subseteq P^\lambda$ if and only if the same holds for $D^*$.
\end{proposition}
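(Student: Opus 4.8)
The plan is to study the limit retraction $\pi_\lambda\colon X_\lambda\to X^\lambda$ (the categorical $\Gm$-quotient of Definition~\ref{def:loclambda}) and to transfer $B$-stable prime divisors along it in both directions. Since $\lambda(\Gm)\subseteq T\subseteq B$, every $D^*\in\Div(P/H)^B$ has a $\Gm$-stable closure $\overline{D^*}$ in $X$; as $X$ is $P$-toroidal, $\overline{D^*}$ is not $P$-stable, hence meets the open orbit and also meets $X_\lambda$, so $\overline{D^*}\cap X_\lambda$ is a $\Gm$-stable prime divisor of $X_\lambda$. Because $\pi_\lambda$ is a quotient with irreducible general fibres, such a divisor falls into exactly one of two classes: either it dominates $X^\lambda$ (call it \emph{horizontal}), in which case $\pi_\lambda(\overline{D^*}\cap X_\lambda)=X^\lambda$ and, taking limits $\lim_{t\to0}\lambda(t)x\in\overline{D^*}$, one gets $X^\lambda\subseteq\overline{D^*}$; or it is a full fibrewise preimage $\pi_\lambda^{-1}(\overline D)$ of a prime divisor $\overline D$ of $X^\lambda$ (\emph{vertical}), in which case the image is the divisor $\overline D$. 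I would record this dichotomy first, together with the fact from Lemma~\ref{lemma:loclambda} that the open $B$-orbit of $X$ lies in $X_\lambda$ and maps onto the open $B^\lambda$-orbit of $X^\lambda$.

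Next I construct the inverse correspondence. Given $D\in\Div(P^\lambda/K)^{B^\lambda}$, close it up to a $B^\lambda$-stable prime divisor $\overline D$ of $X^\lambda$; then $\pi_\lambda^{-1}(\overline D)$ is irreducible and $B$-stable, and its closure $\overline{D^*}$ in $X$ meets $P/H$ because $D$ meets the open $P^\lambda$-orbit and $\pi_\lambda$ is $P_\lambda$-equivariant. Since a $P$-stable divisor cannot meet the homogeneous space $P/H$, the divisor $\overline{D^*}$ is not $P$-stable and $D^*:=\overline{D^*}\cap(P/H)$ is a genuine element of $\Div(P/H)^B$ with $\pi_\lambda(D^*\cap X_\lambda)=D$. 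Uniqueness is then immediate from the dichotomy: if $\pi_\lambda(E\cap X_\lambda)=D$ is a divisor for some $E\in\Div(P/H)^B$, then $E$ cannot be horizontal, so $E$ is vertical, whence $E\cap X_\lambda=\pi_\lambda^{-1}(\overline D)$ and $E=D^*$. This also shows $D\mapsto D^*$ is injective.

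I would then settle the \emph{moreover} statement, which also identifies the image. For a root $\alpha$ of $P^\lambda$ we have $U_\alpha\subseteq P^\lambda$, and since $\pi_P$ restricts to the identity on $P^\lambda$, the group $U_\alpha$ preserves $X_\lambda$ and $X^\lambda$ and $\pi_\lambda$ is $U_\alpha$-equivariant. For a vertical pair $D^*\cap X_\lambda=\pi_\lambda^{-1}(\overline D)$ one then has $U_\alpha\cdot\pi_\lambda^{-1}(\overline D)=\pi_\lambda^{-1}(U_\alpha\overline D)$, so $D^*$ is $U_\alpha$-stable if and only if $\overline D$ is, i.e.\ if and only if $D$ is; this is exactly the moreover part for the divisors in the correspondence. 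Applying this with $\alpha=-\beta$, and using that $P^\lambda/K$ is $L^\lambda$-spherical and homogeneous, every colour $D$ is moved by some $U_{-\beta}$ with $\beta$ a root of $L^\lambda$, so the corresponding $D^*$ is moved by the same $U_{-\beta}\subseteq P^\lambda$ and is therefore not $P^\lambda$-stable. Hence the image of $D\mapsto D^*$ consists of non-$P^\lambda$-stable colours.

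The main obstacle is the reverse inclusion: every non-$P^\lambda$-stable $D^*$ is vertical, equivalently every horizontal $D^*$ is $P^\lambda$-stable. The key observation I would isolate is that for a horizontal $D^*$ the element $\rho(D^*)$ vanishes on $\{\chi\in\Xi_B(P/H)\mid\langle\lambda,\chi\rangle=0\}$: any such $\chi$ is represented by a $\lambda(\Gm)$-invariant $B$-semiinvariant on $X$, which descends through $\pi_\lambda$ to a $B^\lambda$-semiinvariant on $X^\lambda$, and since $D^*$ dominates $X^\lambda$ its order along $D^*$ is $0$; consequently $\rho(D^*)\in\QQ\lambda^r$. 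I then show no root subgroup of $P^\lambda$ moves such a divisor. For $\alpha$ a root of $(P^\lambda)^u=P^\lambda\cap P^u$ (so $\langle\lambda,\alpha\rangle=0$) I apply Lemma~\ref{lemma:demazure}, verifying its commutator hypothesis that $(U'_{-\alpha},B^u)$ stabilises $D^*$ exactly as in Corollary~\ref{cor:halfspace} via the $(\Ad B)$-stable filtration of the Lie algebra of $P^u$ and an $\End(V)$-computation; it would give $\langle\rho(D^*),n\alpha\rangle=1$, contradicting $\langle\lambda,n\alpha\rangle=0$ and $\rho(D^*)\in\QQ\lambda^r$. For $\beta$ a root of $L^\lambda$ the pairing $\langle\rho(D^*),\beta\rangle=0$ contradicts the fact that a colour moved by $U_{-\beta}$ pairs positively with $\beta$, working inside $X^\lambda$ as an $L^\lambda$-spherical variety where $\pi_\lambda$ intertwines the $U_{-\beta}$-actions. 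Reconciling the unipotent-radical and Levi cases, and in particular checking the commutator hypothesis of Lemma~\ref{lemma:demazure}, is where I expect the real work to lie. Granting horizontal $\Rightarrow$ $P^\lambda$-stable, the map $D\mapsto D^*$ is a bijection onto the non-$P^\lambda$-stable elements of $\Div(P/H)^B$, and the full moreover statement has been established along the way.
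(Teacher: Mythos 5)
Your construction of the correspondence is the paper's: $D\mapsto D^*=\overline{\pi_\lambda^{-1}(D)}\cap P/H$, irreducibility of $\pi_\lambda^{-1}(D)$ via $P^\lambda$-equivariance, homogeneity of $P^\lambda/K$ and irreducibility of general fibers, uniqueness and injectivity, and the transfer of $U_\alpha$-stability through $P^\lambda$-equivariance of $\pi_\lambda$ all match (note that non-$P^\lambda$-stability of the image divisors needs no root-subgroup argument: no divisor of the homogeneous space $P^\lambda/K$ is $P^\lambda$-stable, and your claim that every element of $\Div(P^\lambda/K)^{B^\lambda}$ is moved by some $U_{-\beta}$ with $\beta$ a root of $L^\lambda$ is false for the $L^\lambda$-stable ones). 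But your basic dichotomy is incomplete: it is not true that every $D^*\in\Div(P/H)^B$ meets $X_\lambda$. The inference ``not $P$-stable, hence meets the open orbit, hence meets $X_\lambda$'' is a non sequitur, since a $B$-stable divisor avoids the open $B$-orbit and may avoid the attracting set entirely: in Example~\ref{ex:toric} with $m=2$ and $\lambda$ orthogonal to $\epsilon_1$ but not to $\epsilon_2$, the divisor $\{x_2=0\}$ is disjoint from $X_\lambda$. The paper disposes of this third class in one line --- such a divisor's closure is a divisorial component of the closed $P_\lambda$-stable set $X\smallsetminus X_\lambda$, hence $P_\lambda$-stable because $P_\lambda$ is connected, hence $P^\lambda$-stable --- and you need this before ``non-$P^\lambda$-stable $\Rightarrow$ vertical'' can even be asserted.

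The serious gap is in the horizontal case, which you rightly identify as the crux. Your key claim --- that every $\chi\in\Xi_B(P/H)$ with $\langle\lambda,\chi\rangle=0$ descends through $\pi_\lambda$ to a $B^\lambda$-semiinvariant on $X^\lambda$ --- contradicts the paper's own Theorem~\ref{thm:loclambda}, which gives $\Xi_{B^\lambda}(X^\lambda)=\Cone(\lambda)^\perp$, in general a \emph{proper} subgroup of $\lambda^\perp\cap\Xi_B(P/H)$ (strictly proper whenever $\Cone(\lambda)$ is not a ray). The failure mode is exactly what your argument ignores: for $\chi\notin\Cone(\lambda)^\perp$ the subvariety $X^\lambda$ lies inside the zero or polar locus of $f_\chi$, so ``its restriction'' is not a nonzero rational function and you cannot conclude $\Ord_{D^*}(f_\chi)=0$. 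Corrected, your descent argument yields only $\rho(D^*)\in\Span_\QQ\Cone(\lambda)$, and with this weaker conclusion your contradictions no longer close: a root $\alpha$ of $P^\lambda$ satisfies $\langle\lambda,\alpha\rangle=0$ but need not vanish on all of $\Cone(\lambda)$, so $\langle\rho(D^*),n\alpha\rangle=1$ from Lemma~\ref{lemma:demazure} is compatible with $\rho(D^*)\in\Span_\QQ\Cone(\lambda)$; your whole Demazure-plus-Luna apparatus (including the delicate adapted-filtration check you defer) is thus aimed at a statement it cannot reach. In fact the horizontal case is vacuous for a reason already in your hands: a horizontal $D^*$ has closure containing $X^\lambda$, hence containing a closed $P^\lambda$-orbit $Z$ of $X^\lambda$; by Lemma~\ref{lemma:loclambdaclosed}, $Z$ contains a $P^uB^-$-fixed point, so the closed $P$-orbit $PZ$ equals $\overline{BZ}$ (as in the proof of Theorem~\ref{thm:loclambda}), and the $B$-stable closed set $\overline{D^*}$ then contains the $P$-orbit $PZ$, contradicting $P$-toroidality of $X$. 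The paper avoids horizontality altogether: given $F\in\Div(P/H)^B$ not $P^\lambda$-stable, it meets $X_\lambda$, and $\pi_\lambda(F\cap X_\lambda)=F\cap X^\lambda$ is not $P^\lambda$-stable, which directly produces the divisor $D$ with $D^*=F$.
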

\begin{proof}
We have that $E=\pi_\lambda^{-1}(D)$ is irreducible, because $\pi_\lambda$ is $P^\lambda$-equivariant, its general fibers are irreducible, and $P^\lambda/K$ is $P^\lambda$-homogeneous and dense in $X^\lambda$. Moreover $E$ cannot intersect the open $B$-orbit of $X$, otherwise $D$ would intersect the open $B^\lambda$-orbit of $X^\lambda$. Hence the closure $\overline E$ of $E$ in $X$ is an element of $\Div(X)^B$. Since $D$ is not $P^\lambda$-stable then $\overline E$ is not $P^\lambda$-stable. This also implies that $\overline E$ is not $P$-stable, therefore the intersection $D^*=\overline E\cap P/H$ is non-empty and obviously an element of $\Div(P/H)^B$.

The map $D\mapsto D^*$ is injective, so it remains only to prove that its image is as claimed. Let $F\in \Div(P/H)^B$ be not $P^\lambda$-stable. Then $F$ intersects $X_\lambda$, otherwise it would be $P_\lambda$-stable. But $F$ is anyway $\lambda(\Gm)$-stable because $\lambda(\Gm)\subseteq B$, therefore $D_0=\pi_\lambda(F\cap X_\lambda)$ is not $P^\lambda$-stable. It follows that $D=D_0\cap P^\lambda/K$ is non-empty and a $B^\lambda$-stable prime divisor of $P^\lambda/K$ satisfying $D^*=F$.

The last assertion stems from the fact that $\pi_\lambda\colon X_\lambda\to X^\lambda$ is in particular $P^\lambda$-equivariant.
\end{proof}

\begin{lemma}\label{lemma:loclambdaclosed}
Let $X$ and $\lambda$ be as above, and let $Z$ be a closed $P^\lambda$-orbit of $X^\lambda$. Then $Z$ contains a point fixed by $P^uB_-$, and $PZ$ is a closed $P$-orbit of $X$ with $PZ\cap X^\lambda = Z$.
\end{lemma}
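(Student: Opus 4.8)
\emph{Proof proposal.}
The plan is to produce the distinguished point first, and then read off the two orbit statements. Since $X$ is complete, the component $X^\lambda$ of $X^{\lambda(\Gm)}$ is complete, hence the closed $P^\lambda$-orbit $Z$ is complete as well. A complete orbit of the connected group $P^\lambda$ has parabolic isotropy group, which therefore contains $(P^\lambda)^u$; thus $(P^\lambda)^u$ acts trivially on $Z$. Combining this with Lemmas~\ref{lemma:locP} and~\ref{lemma:negfixes} I would show that in fact all of $P^u$ acts trivially on $Z$: a root $\alpha$ of $P^u$ with $\langle\lambda,\alpha\rangle=0$ is a root of $(P^\lambda)^u$ by Lemma~\ref{lemma:locP}, whereas $\langle\lambda,\alpha\rangle<0$ forces $U_\alpha$ to act trivially on the whole of $X^\lambda$ by Lemma~\ref{lemma:negfixes}; since $\lambda$ is $P$-dominant no other case occurs, so each $U_\alpha\subseteq P^u$ fixes $Z$ pointwise.

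Next I would pick the point. By Lemma~\ref{lemma:locP} the group $B^\lambda_-=L^\lambda\cap B^-$ is a Borel subgroup of $L^\lambda$, and since $Z$ is complete it fixes some $z\in Z$. This $z$ is then fixed by all of $B^-=T(B^-)^u$: the torus $T\subseteq B^\lambda_-$ fixes it; a negative root $\alpha$ of $L$ with $\langle\lambda,\alpha\rangle=0$ gives $U_\alpha\subseteq B^\lambda_-$, which fixes $z$; and $\langle\lambda,\alpha\rangle<0$ gives $U_\alpha$ acting trivially on $X^\lambda$ by Lemma~\ref{lemma:negfixes}, the case $\langle\lambda,\alpha\rangle>0$ being excluded by $P$-dominance. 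Together with the triviality of the $P^u$-action on $Z$ this shows that $z$ is fixed by $P^uB^-$, which is the first assertion; note in passing that $P^uB^-$ is a Borel subgroup of $P$.

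The orbit statements then follow quickly. Because $(P^\lambda)^u$ fixes $z$ we have $Z=P^\lambda z=L^\lambda z$, and since $L^\lambda\subseteq P$ this gives $PZ=Pz=:O$. The isotropy group $\mathrm{Stab}_P(z)$ contains the Borel subgroup $P^uB^-$, hence is parabolic, so $O\cong P/\mathrm{Stab}_P(z)$ is complete and therefore closed in $X$; thus $O=PZ$ is a closed $P$-orbit.

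The last and most delicate point is $O\cap X^\lambda=Z$, and here I would use the retraction $\pi_\lambda$. Every point of $X^\lambda$ lies in $X_\lambda$ and is its own limit, so $\pi_\lambda$ is the identity on $X^\lambda$; since $O$ is closed and $\lambda(\Gm)$-stable, the limit $\pi_\lambda(x)$ of any $x\in O\cap X_\lambda$ lies in $O\cap X^\lambda$, and conversely $O\cap X^\lambda\subseteq X^\lambda\subseteq X_\lambda$ is fixed by $\pi_\lambda$, so $\pi_\lambda(O\cap X_\lambda)=O\cap X^\lambda$. As $O\cap X_\lambda$ is a nonempty (it contains $z$) open subset of the irreducible variety $O$, it is irreducible, and hence so is its image $O\cap X^\lambda$. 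On the other hand $X^\lambda$ is a single connected component of $X^{\lambda(\Gm)}$, so $O\cap X^\lambda$ is a union of connected components of $O^{\lambda(\Gm)}$; being irreducible, it is exactly one of them. Finally, the connected components of the fixed locus of $\lambda(\Gm)$ on the flag variety $O=P/\mathrm{Stab}_P(z)$ are precisely the $P^\lambda$-orbits, so that component equals $P^\lambda z=Z$. I expect this final part to be the main obstacle: the inclusion $Z\subseteq O\cap X^\lambda$ is immediate, but ruling out extra $P^\lambda$-orbits in $O\cap X^\lambda$ is exactly what forces the irreducibility argument via $\pi_\lambda$ together with the identification of fixed-point components with $P^\lambda$-orbits.
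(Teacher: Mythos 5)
Your proposal is correct and follows essentially the same route as the paper: triviality of $(P^\lambda)^u$ on the complete orbit $Z$, a point $z$ fixed by $B^\lambda_-$ and $(P^\lambda)^u$, promotion to a $P^uB^-$-fixed point via Lemmas~\ref{lemma:locP} and~\ref{lemma:negfixes}, and then $Pz=PZ$ closed with $PZ\cap X^\lambda=Z$. Your final step via $\pi_\lambda$, irreducibility of $O\cap X^\lambda$, and the identification of $\lambda(\Gm)$-fixed components of the flag variety with $P^\lambda$-orbits is just a careful unpacking of the paper's terse chain $PZ\cap X^\lambda=(Pz)^\lambda=P^\lambda z=Z$.
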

\begin{proof}
Since $Z$ is a complete $P^\lambda$-orbit then it contains a point $z$ fixed by $(P^\lambda)^uB^\lambda_-$ by Borel's fixed point theorem. By Lemma~\ref{lemma:locP} and Lemma~\ref{lemma:negfixes} we have that $z$ is fixed also by $P^u$ and by $B_-$, whence $Pz=PZ$ is a closed $P$-orbit of $X$. Finally $z\in (PZ)^\lambda$, hence $PZ\cap X^\lambda = (Pz)^\lambda=P^\lambda z =Z$.
\end{proof}

\begin{definition}
Let $X$ and $\lambda$ be as above. We denote by $\Cone(\lambda)$ the cone of $\Fan_L(X)$ such that the relative interior of $\Cone(\lambda)$ contains $-\lambda^r$.
\end{definition}

\begin{theorem}\label{thm:loclambda}
Under the assumptions and notations of Proposition~\ref{prop:loclambda}:
\begin{enumerate}
\item\label{thm:loclambda:toroidal} the variety $X^\lambda$ is a $P^\lambda$-toroidal complete embedding of $P^\lambda/K$;
\item\label{thm:loclambda:Lequiv} the variety $X^\lambda$ is $L$-equivariantly birational to the localization with respect to $\lambda$ of an $L$-toroidal complete embedding of the open $L$-orbit of $P/H$;
\item\label{thm:loclambda:comb} the lattice $\Xi_{B^\lambda}(X^\lambda)$ is equal to $\Cone(\lambda)^\perp(\subseteq \Xi_B(X))$, we have $\Sigma(X^\lambda) = \Sigma(X)\cap \Cone(\lambda)^\perp=\Sigma(X)\cap \lambda^\perp$, and for all $D\in\Div(P^\lambda/K)^{B^\lambda}$ the element $\rho_{X^\lambda}(D)$ is the restriction of $\rho_X(D^*)$ on $\Xi_{B^\lambda}(P^\lambda/K)$.
\end{enumerate}
\end{theorem}
\begin{proof}
We have already observed that $X^\lambda$ is normal and complete, with open $P^\lambda$-orbit $P^\lambda/K$. Let us show it is $P^\lambda$-toroidal, so suppose that the closure in $X^\lambda$ of some $D\in\Div(P^\lambda/K)^{B^\lambda}$ contains a $P^\lambda$-orbit $Z$. We may suppose that $Z$ is closed, and by Lemma~\ref{lemma:loclambdaclosed} we have that $Z$ contains a point fixed by $P^uB_-$, so $PZ$, which is a closed $P$-orbit, is also equal to $\overline{BZ}$. It follows that the closure of $D^*$ in $X$ contains the $P$-orbit $PZ$, contradicting the fact that $X$ is toroidal. This shows part (\ref{thm:loclambda:toroidal}).

In view of showing parts~(\ref{thm:loclambda:Lequiv}) and~(\ref{thm:loclambda:comb}), we observe that $\lambda$ is dominant as a weight of the maximal torus $T$ of $L$. We proceed similarly as the proof of Corollary~\ref{cor:Ltoroidaln}, so consider the fan of colored cones
\[
\Fan = \left\{ (\Cone\cap V_L(X),\varnothing) \right\}
\]
where $\Cone$ varies in the set of cones of $\Fan_L(X)$. It is the fan of an $L$-toroidal embedding $\widetilde X$ of the open $L$-orbit $X_0$ of $X$, such that the identity on $X_0$ extends to an $L$-equivariant map $\widetilde X\to X$. Moreover, if a cone of $\Fan_L(X)$ is contained in $V_L(X)$ then it is also a cone of $\Fan=\Fan_L(\widetilde X)$.

By Lemma~\ref{lemma:lambdaPdominant} the cone $\Cone(\lambda)$ is contained in $V(X)$, therefore it is contained in $V_L(X)$. Thanks to the observation above, the cone $\Cone(\lambda)$ is also a cone of $\Fan_L(\widetilde X)$. Let $Y$ (resp.\ $\widetilde Y$) be the $L$-orbit of $X$ (resp.\ $\widetilde X$) with cone $\Cone(\lambda)$. We have $\Cone_Y=\Cone_{\widetilde Y}$, and by Proposition~\ref{prop:inVstable} also $\Colemb_{Y}=\Colemb_{\widetilde Y}(=\varnothing)$. Then by \cite[Sections~4 and~5]{Kn91} we have that $\widetilde Y$ is mapped to $Y$, and there exist $L$-equivariant neighborhoods $\widetilde X_{\widetilde Y}$ of $\widetilde Y$ in $\widetilde X$ and $X_Y$ of $Y$ in $X$ such that $\widetilde X_{\widetilde Y}$ is mapped $L$-isomorphically onto $X_Y$. It follows that the map $\widetilde Y\to Y$ is an isomorphism.

Consider now the localization $\widetilde X^\lambda$ of $\widetilde X$ with respect to $\lambda$. Thanks to \cite[Proposition~4.8]{Kn14} we have that an open subset of $\widetilde X^\lambda$ is contained in $\widetilde Y$. Therefore $\widetilde X^\lambda$ intersects the open set $\widetilde X_{\widetilde Y}$. Since $\widetilde Y\to Y$ is an isomorphism, the set $\widetilde X^\lambda$ is mapped birationally onto $X^\lambda$. This shows part (\ref{thm:loclambda:Lequiv}).

The equality $\Xi_{B^\lambda}(X^\lambda)=\Cone(\lambda)^\perp$ is now a consequence of \cite[Theorem~4.6]{Kn14}. It also follows that $N_{B^\lambda}(X^\lambda)$ is identified with the quotient of $N_B(X)$ by its subspace $\Span_\QQ\Cone(\lambda)$. The assertion on $\rho_{X^\lambda}(D)$ stems from the fact that $D^*=\overline{\pi_\lambda^{-1}(D)}\cap P/H$.

To show that $\Sigma(X^\lambda)=\Sigma(X)\cap \Cone(\lambda)^\perp$, it is enough to show that $V(X^\lambda)\subseteq N_{B^\lambda}(X^\lambda)$ is the union of the projections of the maximal cones of $\Fan_L(X)$ contained in $V(X)$ and containing $\Cone(\lambda)$.

A maximal cone of $\Fan_L(X)$ contained in $V(X)$ is of the form $\Cone_Z$ for $Z$ a closed $P$-orbit of $X$. Then $Z$ is also a closed $L$-orbit, and, as above, we have that $Z$ has an $L$-stable neighborhood $X_Z$ equivariantly isomorphic to an $L$-stable neighborhood $\widetilde X_{\widetilde Z}$ of $\widetilde Z$, where $\widetilde Z$ is a closed $L$-orbit of $\widetilde X$ mapped isomorphically onto $Z$.

If $\Cone_Y\supseteq \Cone(\lambda)$ then $\widetilde X^\lambda\cap  \widetilde Z\neq \varnothing$, and $\widetilde X_{\widetilde Z}$ (resp.\ $X_Z$) contains the open set $\widetilde X_{\widetilde Y}$ (resp.\ $X_Y$) considered before. This implies that $\widetilde X^\lambda$ and $X^\lambda$ are isomorphic in a neighborhood of $\widetilde X^\lambda\cap \widetilde Z = X^\lambda\cap Z$. Thanks to \cite[Theorem~4.6]{Kn14} the projection of $\Cone_Z$ in $N_{B^\lambda}(X^\lambda)$ is the maximal cone $\Cone_{X^\lambda\cap Z}$ associated with $X^\lambda\cap Z$, which is a closed $L^\lambda$-orbit of $X^\lambda$. Since $P^u$ acts trivially on $Z$, we have that $(P^\lambda)^u$ acts trivially on $X^\lambda\cap Z$. Therefore the latter is a closed $P^\lambda$-orbit of $X^\lambda$ and $\Cone_{X^\lambda\cap Z}\subseteq V(X^\lambda)$. Lemma~\ref{lemma:loclambdaclosed} implies that all closed $P^\lambda$-orbits of $X^\lambda$ are obtained in this way, and the proof of $\Sigma(X^\lambda)=\Sigma(X)\cap \Cone(\lambda)^\perp$ is complete.

It remains to show that $\Sigma(X)\cap \Cone(\lambda)^\perp$ is also equal to $\Sigma(X)\cap \lambda^\perp$, and this follows from the fact that any $\sigma\in\Sigma(X)$ is $\leq0$ on $V(X)$. Then $\sigma\in \Cone(\lambda)^\perp$ if and only if $\sigma\in \lambda^\perp$.
\end{proof}

\begin{corollary}\label{cor:Sigma_L}
Recall that $S_P$ denotes the set of simple roots of $L$. If there exists a $P$-dominant one-parameter subgroup of $T$ vanishing on $S_P$ but not vanishing on any root of $P^u$, then $\Sigma_L(P/H)= \Sigma(P/H)\cap \Span_\QQ S_P$.
\end{corollary}
\begin{proof}
Let $\lambda$ be such a $P$-dominant one-parameter subgroup of $T$, and let $X$ be a $P$-toroidal complete embedding of $P/H$. Then $\Sigma(X^\lambda)= \Sigma(X)\cap\lambda^\perp$ by Theorem~\ref{thm:loclambda}. By Theorem~\ref{thm:NS}, an element of $\Sigma(P/H)$ is either in $\Span_\QQ S_P$, in which case it is in $\lambda^\perp$, or it is a root of $P^u$ up to sign, in which case it is not in $\lambda^\perp$. This yields $\Sigma(P/H)\cap\lambda^\perp=\Sigma(P/H)\cap \Span_\QQ S_P$. On the other hand $P^\lambda=L^\lambda=L$ and $B^\lambda=B$, therefore $\Sigma(X^\lambda)=\Sigma_L(X^\lambda)$.

In Theorem~\ref{thm:loclambda} we have shown that $X^\lambda$ is $L$-equivariantly birational to the localization with respect to $\lambda$ of an $L$-toroidal complete embedding of the open $L$-orbit of $P/H$. By \cite[Corollary~4.7]{Kn14} we conclude $\Sigma_L(X^\lambda)=\Sigma_L(P/H)\cap \lambda^\perp = \Sigma_L(P/H)$.
\end{proof}

\section{Examples}\label{s:examplesfinite}

\begin{example}\label{ex:SL2modT}
Consider the case $L=T$ and $\dim P^u=1$, i.e.\ $P$ is a semi-direct product $T\ltimes \CC$. Denote by $-\alpha$ the root of $P^u$, i.e.\ the character of $T$ acting on $\CC$. Notice that $\alpha$ extends to a character of $P$, trivial on $P^u$, and that $H^u\subseteq P^u$ since $L$ is a torus. We may also assume that $H=M\ltimes H^u$, where $M\subseteq T$. Here the quotient $P/H= T\times^M P^u/H^u$ is spherical under the action of $L(=T)$ if and only if either $H^u=P^u$, in which case $P/H$ is the torus $T/M$, or $M^\circ$ acts non-trivially on $P^u/H^u$. The last condition amounts to requiring that no multiple of $\alpha$ is trivial on $M$. If $H^u=P^u$ then $\Div(P/H)^B=\varnothing=\Sigma(P/H)$, and $\Xi_B(P/H) = \Chars(T/M)$.

Let us discuss the case where $M^\circ$ acts nontrivially on $P^u/H^u$, which implies that $H^u$ is trivial and $H=M$. The vector bundle $T\times^H P^u$ is $T$-equivariantly trivial, in other words $P/H\cong T/H \times P^u$, where $T$ acts diagonally: by left translation on the first factor, and by conjugation on the second. This implies that  $\Xi_B(P/H)=\Chars(T/H)\oplus \ZZ\alpha$.

Since $P^u$ has no non-constant invertible regular functions, the action of $P^u$ on $P/H$ is the left translation on the factor $P^u$. So $\Div(P/H)^B$ has one element $D$, not stable under $P^u=U_{-\alpha}$, and $\alpha$ is an element of $\Xi_B(P/H)$ where $\rho(D)$ takes value $1$.

Moreover, the variety $X=\PP^n\times \PP^1$, where $n=\dim T/H$, is a $P$-toroidal compactification of $P/H$: to check this, let us define explicitly the action. Let $\gamma_1,\ldots,\gamma_n$ be a basis of $\Chars(T/H)$. Then an element $(t,u)\in T\ltimes \CC$ acts as follows:
\[
(t,u)\cdot([x_0,\ldots,x_n],[y_0,y_1]) = ([x_0,\gamma_1(tH)x_1,\ldots,\gamma_d(tH)x_n],[y_0,\alpha(t^{-1})(y_1+uy_0)]).
\]
The $B$-stable (i.e.\ $T$-stable) prime divisors are $D_0=\PP^n\times \{[0,1]\}$, $D_1=\PP^n\times \{[1,0]\}$, and $E_i\times \PP^1$ with $i\in\{0,\ldots,n\}$, where $E_i = \{ x_i=0\}$. The only $B$-stable prime divisor that is not $P$-stable is $D_1$, which doesn't contain any $P$-orbit, so $X$ is $P$-toroidal. The rational function $y_1/y_0$ on $X$ is $B$-semiinvariant with weight $\alpha$, has a simple zero on $D_1$, a simple pole on $D_0$, and no zeros nor poles on the other prime divisors. Then $V(\PP^n\times \PP^1)$ is defined by the equation $\langle -, \alpha\rangle \leq 0$. We conclude that $\Sigma(P/H)=\{\alpha\}$.
\end{example}

\begin{example}\label{ex:Cn}
We want to discuss the fundamental example where $P/H\cong \CC^n$ with $n\geq 1$ and $P$ is the semidirect product of $\GL(n)$ with the group of translations of $\CC^n$. The special case where $n=1$ was essentially analyzed already in Example~\ref{ex:SL2modT}.

It is convenient to work even more in general, because the computations carried out here and in the next similar examples will be used in the proof of Theorem~\ref{thm:values}. Our assumptions are the following: we require that $L\subseteq H$, with $(L,L)\cong \SL(n)$ or $\Sp(n)$ (the latter with $n$ even), and that $H^u$ is a normal subgroup of $P^u$ with $P/H\cong P^u/H^u$ abelian and isomorphic to the $(L,L)$-module $M=\CC^n$ or $M=(\CC^n)^*$.

In other words $P/H$ is the affine space $M$, where $L$ acts linearly (and contains $\SL(M)$ or $\Sp(M)$), and $P^u$ acts as the group of translations of $M$.

The rank of $P/H$ as a spherical $L$-variety is $1$, and $\Xi_B(P/H)$ is generated by the $B$-eigenvalue $\omega$ of a $B$-eigenvector $f\in M^*$. There is exactly one $B$-stable prime divisor of $P/H$, namely $\Divf(f)$. It is $L$-stable if $\dim M=1$ and an $L$-color if $\dim M>1$. We claim that $\Sigma(P/H)=\{\omega\}$. Indeed, the homogeneous space $P/H$ embeds equivariantly in $\PP^n=\PP(M\oplus\CC)$ where $P$ acts trivially on the summand $\CC$. Then $\PP^n$ is a complete $P$-toroidal embedding of $P/H$. Its unique $P$-stable prime divisor is $\PP^n\smallsetminus M$ where $f$ has a pole, whence the claim.
\end{example}

\begin{example}\label{ex:toric}
Let us consider a variation of Example~\ref{ex:Cn}, where $P/H\cong \CC^m$, but now $L$ is equal to $(\Gm)^m$ acting on $\CC^n$ via linearly independent characters denoted $-\alpha_1,\ldots,-\alpha_n$, and $P^u$ is the group of translations of $\CC^m$. Here $\Xi(P/H)=\Chars(L)\cong \ZZ^m$, the colors of $P/H$ are the coordinate hyperplanes of $\CC^m$, and a $P$-toroidal compactification of $P/H$ is $\PP^1\times\ldots\times \PP^1$. It follows that $\Sigma(P/H)=\{\alpha_1,\ldots,\alpha_m\}$.

Notice that $P/H$ and its $P$-equivariant embeddings are toric varieties, which raises the question of the relationship between our invariants and the standard theory of toric varieties and their automorphisms. Here we do not develop this theme further. 
\end{example}

\begin{example}\label{ex:CpCn1}
For later convenience we discuss another variation of Example~\ref{ex:Cn}: we assume $n>1$ and we ``enlarge'' $P^u/H^u$, assuming that $P^u/H^u\cong M\oplus \CC$ as $L$-modules. So $L$ acts on the summand $\CC$ via a character, that is non-trivial since $P/H$ is $L$-spherical. We maintain the hypothesis that $H^u$ is a normal subgroup of $P^u$ and that $P^u/H^u$ is abelian, so it is a vector group and the isomorphism with $M\oplus \CC$ is also as (additive) groups. Denote again by $\omega$ and $f$ resp.\ the highest weight and a highest weight vector of $M^*$, and by $\beta$ the highest weight of the dual of $\CC$, so that $\Xi_B(P/H)=\Span_\ZZ\{\omega,\beta\}$. In this case $P/H$ has exactly one $L$-stable prime divisor $E=M\oplus\{0\}$, and exactly one $L$-color $F=M'\oplus\CC$, where $M'=\Divf(f)$. The variety $\PP^n\times \PP^1$ is a complete $P$-toroidal embedding of $P/H$, and we conclude that $\Sigma(P/H)=\{\omega,\beta\}$. 
\end{example}

\begin{example}\label{ex:CpCn}
Maintain all the assumptions of Example~\ref{ex:CpCn1} except for the fact that $H^u$ is now allowed to be non-normal in $P^u$. The descriptions of $\Div(P/H)^B$ and of $\Xi_B(P/H)$ remain the same. Moreover, the summand $\CC$ is $T$-stable so it is the image in $P^u/H^u$ of a one-dimensional $T$-stable subgroup $U'_{-\beta}$ of $U_{-\beta}\subseteq P^u$. Then $U'_{-\beta}$ moves $E$, and Corollary~\ref{cor:PumovesD} implies that $\beta$ is non-positive on $\rho(D)$ for $D$ any $P$-stable prime divisor in any $P$-toroidal embedding of $P/H$. It follows that $\beta$ is a linear combination of spherical roots of $P/H$ with non-negative rational coefficients.

The normalizer $K$ of $H^u$ in $P^u$ is strictly bigger than $H^u$ and stable under conjugation by $L$, so $K/H^u$ in $P^u/H^u$ is $M$, $\CC$, or $M\oplus \CC$. Using the fact that the Lie algebra of $P^u$ is nilpotent and isomorphic as a vector space to the sum of $M\oplus \CC$ and the Lie algebra of $H^u$, and that $\dim M>1$, it is easy to deduce that $K/H^u$ contains the summand $\CC$. The normalizer of $K$ in $P^u$ is also $L$-stable, therefore equal to $P^u$.

To sum up, the summand $\CC$ corresponds to a subgroup of $P^u$ containing and normalizing $H^u$ and normal in $P$, let us denote it $R$. The natural map $P/H\to P/RL$ is $P$-equivariant, so the spherical root of $P/RL$, which is $\omega$ by Example~\ref{ex:Cn}, is also a linear combination of spherical roots of $P/H$ with non-negative rational coefficients by Proposition~\ref{prop:imageV}. Moreover, the condition $\langle-,\omega\rangle = 0$ defines a face of $V(P/H)$ of dimension $0$ or $1$, of dimension $1$ if and only if $\omega\in\Sigma(P/H)$.

On the other hand $X=M\times \PP^1$ is a $P$-toroidal embedding of $P/H$. We can fix a normal $P$-equivariant completion $\overline X$ of $X$, and thanks to Lemma~\ref{lemma:toroidalexist} there exists a complete $P$-toroidal embedding $Y$ of $P/H$ mapping surjectively to $\overline X$. The valuation of the $P$-stable invariant divisor $D=M\times \{\infty\}$ of $X$ is $0$ on $\omega$ and $<0$ on $\beta$, so the same holds for a $P$-stable prime divisor $E$ of $Y$ mapping dominantly to $D$. This implies that $\langle -,\omega\rangle = 0$ defines a face of $V(P/H)$ of dimension $>0$, in other words $\omega\in\Sigma(P/H)$.
\end{example}

\section{Kac-Moody groups and parabolic subgroups}\label{s:KM}

We denote by $\kmG$ a minimal Kac-Moody group with set of simple roots $S$, as constructed in \cite[Section~7.4]{Ku02} (denoted $\kmG^\text{min}$ in loc.cit.) following Kac-Peterson (see e.g.\ \cite{PK83}). We denote by $T$ the standard maximal torus of $\kmG$. If $\alpha$ is a real root, we denote by $\kmU_\alpha$ the one-parameter root subgroup of $\kmG$ associated with $\alpha$, by $\kmU$ (resp.\ $\kmU_-$) the subgroup generated by $\kmU_\alpha$ for all positive (resp.\ negative) real roots $\alpha$. We denote by $\kmB$ (resp.\ $\kmB_-$) the standard Borel subgroup containing $T$ and $\kmU$ (resp.\ $\kmU_-$). We denote by $W$ the Weyl group of $\kmG$.

A subset of simple roots is called {\em of finite type} if the corresponding Dynkin subdiagram has connected components only of finite type. A standard parabolic subgroup of $\kmG$ containing $\kmB$ or $\kmB_-$ is called {\em of finite type} if the corresponding set of simple roots is of finite type.

For any standard parabolic subgroup $\kmP\subseteq \kmG$ of finite type and of negative sign, i.e.\ containing $\kmB_-$, we denote the corresponding subset of simple roots by $S_\kmP$. We denote by $L$ the standard Levi subgroup of $\kmP$ containing $T$, and we recall that $\kmP= L\ltimes \kmP^u$ for a subgroup $\kmP^u$ of $\kmU_-$ stable under conjugation by $L$.

We also consider the ``completed'' Kac-Moody group as constructed in \cite[Section~7.1]{Ku02}, and we denote it here by $\hat\kmG$. Correspondingly, we denote by $\hat\kmB$ the standard Borel subgroup of $\hat\kmG$, so $\kmB=\hat\kmB\cap\kmG$. We denote by $\hat\kmP^+$ the standard parabolic subgroup of $\hat\kmG$ associated with $S_\kmP$ and containing $\hat\kmB$. Denote by $\tilde\omega\colon \kmG\to\kmG$ an involution of $\kmG$ as in \cite[7.4.E.1]{Ku02} sending $\kmU_\alpha$ onto $\kmU_{-\alpha}$ for all real roots $\alpha$. We recall that $\hat\kmP^+$ is equipped with the structure of a pro-group, and we fix a subgroup $N\subseteq \hat\kmP^+$ in its defining family. Then $\tilde\omega(\kmP)$ is mapped surjectively to $\hat\kmP^+/N$ (this can be shown as in the proof of \cite[Corollary~7.3.8]{Ku02}).

Therefore we can identify $\hat\kmP^+/N$ with the quotient $P=\kmP/\tilde\omega^{-1}(N\cap\kmG)$, and equip $P$ with the corresponding structure of (finite-dimensional) algebraic group. We identify the subgroups $L$ and $T$ of $\kmP$ with their (isomorphic) images in $P$, we denote $B$ the Borel subgroup of $L$ containing $T$ and associated with the set of simple roots $S_\kmP$ (i.e.\ $B=L\cap\kmB$), and as usual we denote $B_-$ the opposite Borel subgroup with respect to $T$.

\section{Spherical subgroups of Kac-Moody groups}\label{s:KMsph}

We fix from now on a parabolic subgroup $\kmP$ of finite type of $\kmG$ containing $\kmB_-$ and a finite-dimensional quotient $P$ of $\kmP$ constructed as in the previous section. We denote by $H$ a closed subgroup of $P$ and by $\kmH$ its inverse image in $\kmP$; we identify the quotient $\kmP/\kmH$ with $P/H$.

In this section we introduce in Definition~\ref{def:invariants} below the combinatorial objects that will be used in Section~\ref{s:hsd} to generalize the notion of homogeneous spherical datum, and discuss some first properties.

\begin{remark}\label{rem:finitecase}
Let us first make some remarks on the invariants defined in Section~\ref{s:invariantsfinite} in the finite-dimensional case, so suppose that $\kmG$ is finite-dimensional. Then we may assume $\kmH=H$ and $\kmP=P$. Notice that $B$ is not necessarily equal to $\kmB$, because $B$ is a Borel subgroup of the Levi subgroup $L$ of $\kmP$, and $\kmB$ is a Borel of $\kmG$. By Lemma~\ref{lemma:fiberoverflag}, the homogeneous space $P/H$ is a spherical $L$-variety if and only if $\kmG/\kmH$ is a spherical $\kmG$-variety. We assume this holds, and the same lemma assures that the lattice of $\kmG/\kmH$ is equal to $\Xi_B(P/H)$. Let us make some further remarks about the invariants of $\kmG/\kmH$ as a spherical $\kmG$-variety; these facts are known, and also follow from Theorem~\ref{thm:indep} below applied to $\kmQ=\kmG$. Consider the natural map $\pi\colon\kmG/\kmH\to\kmG/\kmP$. Given a $\kmG$-color $D$ of $\kmG/\kmH$, either it is the inverse image of a $\kmG$-color of $\kmG/\kmP$, or $D$ meets the fiber $\kmP/\kmH=\pi^{-1}(e\kmP)$, in which case the intersection $D\cap \kmP/\kmH$ is a $B$-stable prime divisor of $\kmP/\kmH$, and all such divisors of $\kmP/\kmH$ are obtained in this way. Notice also that the $\kmG$-colors of $\kmG/\kmP$ are the Schubert divisors, which are in bijection with $S\smallsetminus S_\kmP$ via the map that associates a simple root $\alpha$ to the Schubert divisor of $\kmG/\kmP$ moved by $\alpha$. In addition, we claim the following.
\begin{enumerate}
\item\label{enum:colors} The valuation of $D=\pi^{-1}(D')$, where $D'$ is a $\kmG$-color of $\kmG/\kmP$, is equal to $\alpha^\vee|_{\Xi_B(P/H)}$, if $\alpha\in S$ moves $D$ and only $D$. Otherwise, any $\alpha\in S$ moving $D$ is also a spherical root of the $\kmG$-action on $\kmG/\kmH$, and the other color $E\subset\kmG/\kmH$ moved by $\alpha$ intersects $\kmP/\kmH$.
\item The spherical roots of the $\kmG$-action on $\kmG/\kmH$ are equal to the spherical roots of the $\kmP$-action on $\kmP/\kmH$.
\end{enumerate}
These claims follow again from Theorem~\ref{thm:indep} (see Remark~\ref{rem:finitedim}). A last remark about a color $D$ as in claim (\ref{enum:colors}) above: we recall that if $D$ is moved by $\alpha\in S$ that is a spherical root and moves another color $E$, then we have $\rho(D)+\rho(E)=\alpha^\vee|_{\Xi_B(P/H)}$ by \cite[Proposition~3.4]{Lu97}.
\end{remark}

We get back to the general case of $\kmG$ being possibly infinite-dimensional.

\begin{definition}\label{def:invariants}
If $P/H$ is a spherical $L$-variety then $\kmH$ is called a {\em standard spherical subgroup of finite type} of $\kmG$. We define:
\begin{enumerate}
\item the lattice
\[
\Xi(\kmG/\kmH) = \Xi_B(P/H),
\]
whose rank is by definition the {\em rank} of $\kmG/\kmH$ and is denoted by $\Rank \kmG/\kmH$;
\item the rational vector space
\[
N(\kmG/\kmH) = \Hom_\ZZ(\Xi(\kmG/\kmH),\QQ);
\]
\item the finite set
\[
\Col(\kmG/\kmH)= \Div(P/H)^B \cup (S\smallsetminus S_\kmP)
\]
(considered as an abstract disjoint union), whose elements are the {\em colors} of $\kmG/\kmH$.
\item\label{def:invariants:moves} We say that a simple root $\alpha$ of $\kmG$ {\em moves} a color $D$ of $\kmG/\kmH$ if $D$ is not stable under the action of $U_{-\alpha}\subseteq P$ in the case where $D\in \Div(P/H)^B$, and if $D=\alpha$ in the case where $D\in S\smallsetminus S_\kmP$.
\item The map $\rho$ is defined on $\Div(P/H)^B$ with values in $N(\kmG/\kmH)$, we extend it to the set $\Col(\kmG/\kmH)$. Let $D\in \Col(\kmG/\kmH)\smallsetminus \Div(P/H)^B$, thus $D=\alpha\in S\smallsetminus S_\kmP$. By Lemma~\ref{lemma:demazure} all elements of $\Div(P/H)^B$ are $U_{-\alpha}$-stable except possibly for one element $E$, since in this case $U_{-\alpha}$ is one-dimensional and commutes with $B^u$. We define
\[
\rho(D) = \begin{cases} \alpha^\vee|_{\Xi(\kmG/\kmH)} - \rho(E) & \text{if $E$ exists,} \\ \alpha^\vee|_{\Xi(\kmG/\kmH)} & \text{otherwise.}\end{cases}
\] 
We denote this extended map also by $\rho_{\kmG/\kmH}$.
\item We define the {\em valuation cone} $V(\kmG/\kmH)=V(P/H)$ and the set of {\em spherical roots} $\Sigma(\kmG/\kmH)=\Sigma(P/H)$.
\item If $H$ contains a maximal unipotent subgroup of $P$, then $\kmG/\kmH$ is called {\em horospherical}.
\end{enumerate}
\end{definition}

Unless otherwise stated, from now on we assume that $\kmH$ is a standard spherical subgroup of finite type.

The above data associated with $\kmG/\kmH$ do not depend on the element $N$ of the defining family of $\hat\kmP^+$ used in the definition of $\kmH$. Indeed, let $N'$ be another element in this family, giving similarly as above the finite-dimensional quotient $P'$ of $\kmP$, with a subgroup $H'\subseteq P'$ such that $\kmH$ is its inverse image in $\kmP$. We may assume $N'\subset N$, then the group $P$ is a quotient of $P'$ and the image of $H'$ in the quotient is $H$, so that $P'/H'\cong P/H$ as $P'$-varieties.

The above data actually do not even depend on $\kmP$. This is not clear a priori, and will be shown in Section~\ref{s:indep}.

Conjugating $\kmH$ with an element $p\in \kmP$ does not change the above objects, since $P/H$ and $P/({}^{\overline p}H)$, where $\overline p$ is the class of $p$ in $P$, are $P$-equivariantly isomorphic. Conjugating $\kmH$ in $\kmG$ also behaves well but again requires a detailed treatment, see Section~\ref{s:conjugation}. 

We end this section rewriting in the setting of Kac-Moody groups two results obtained previously.

\begin{proposition}\label{prop:SigmainNS}
Any spherical root of $\kmG/\kmH$ is a linear combination of simple roots of $\kmG$ with non-negative rational coefficients. 
\end{proposition}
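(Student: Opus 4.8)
The plan is to reduce the statement to Theorem~\ref{thm:NS} and treat its two alternatives separately. Fix a spherical root $\sigma\in\Sigma(\kmG/\kmH)=\Sigma(P/H)$. Since $\Xi(\kmG/\kmH)=\Xi_B(P/H)$ is a sublattice of $\Chars(T)$ and every simple root in $S$ is a character of $T$, it makes sense to ask whether $\sigma$ is a non-negative rational combination of the elements of $S$, and this is exactly what I want to prove. By Theorem~\ref{thm:NS} there are precisely two cases: either $\sigma\in\Sigma_L(P/H)$, or $-\sigma$ is a root of $P^u$.

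In the first case I would simply quote the standard theory of spherical $L$-varieties recalled just before Theorem~\ref{thm:NS}: each element of $\Sigma_L(P/H)$ is a non-negative rational combination of the simple roots of $L$. As the simple roots of $L$ are precisely the elements of $S_\kmP\subseteq S$, the root $\sigma$ is already of the required form.

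In the second case the point is to understand the roots of $P^u$ through the construction of $P$ in Section~\ref{s:KM}. Since $\kmP$ is a standard parabolic of negative sign with Levi $L$ attached to $S_\kmP$, the roots of its unipotent radical $\kmP^u$ are exactly the roots of $\kmG$ that are negative and do not belong to the root subsystem generated by $S_\kmP$; in particular every root of $\kmP^u$ is a non-positive integer combination of the simple roots in $S$. Now $P$ is a quotient of $\kmP$ by a normal subgroup meeting $L$ trivially, so $P^u$ is the image of $\kmP^u$ under a $T$-equivariant surjection, and hence every root of $P^u$ is already a root of $\kmP^u$, and therefore a negative root of $\kmG$. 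Applying this to $-\sigma$ shows that $-\sigma$ is a negative root, so $\sigma$ is a positive root and thus a non-negative (indeed integer) combination of the simple roots of $\kmG$.

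Combining the two cases proves the proposition. The only step that is not purely formal is the identification in the second case of the roots of $P^u$ with a subset of the negative roots of $\kmG$, which is where the precise structure of $\kmP$ and of its finite-dimensional quotient $P$ from Section~\ref{s:KM} enters; this is the part I would write out most carefully, checking in particular that the $T$-equivariant quotient map $\kmP^u\to P^u$ introduces no new $T$-weights.
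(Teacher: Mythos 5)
Your proposal is correct and matches the paper's proof, which consists of the single line ``This is Theorem~\ref{thm:NS}'': the paper leaves implicit exactly the two-case deduction you spell out, namely that elements of $\Sigma_L(P/H)$ are non-negative rational combinations of $S_\kmP\subseteq S$ (as recalled just before Theorem~\ref{thm:NS}), and that roots of $P^u$ are negative roots of $\kmG$ since the $T$-equivariant quotient $\kmP\to P$ introduces no new $T$-weights. Your careful verification of the second case is a sound filling-in of what the paper treats as immediate from the construction in Section~\ref{s:KM}.
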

\begin{proof}
This is Theorem~\ref{thm:NS}.
\end{proof}

\begin{proposition}
We have $\Sigma(\kmG/\kmH)=\varnothing$ if and only if $\kmG/\kmH$ is horospherical.
\end{proposition}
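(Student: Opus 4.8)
The plan is to observe that this proposition is nothing more than a translation of Proposition~\ref{prop:horospherical} through the definitions set up at the beginning of Section~\ref{s:KMsph}, so that essentially no new argument is required and the whole content reduces to matching terminology.

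First I would unwind both sides of the claimed equivalence. By Definition~\ref{def:invariants}\,(6) the set of spherical roots of $\kmG/\kmH$ is \emph{defined} as $\Sigma(\kmG/\kmH)=\Sigma(P/H)$, so the hypothesis $\Sigma(\kmG/\kmH)=\varnothing$ is literally the condition $\Sigma(P/H)=\varnothing$. On the other side, by Definition~\ref{def:invariants}\,(7) the homogeneous space $\kmG/\kmH$ is \emph{called} horospherical precisely when $H$ contains a maximal unipotent subgroup of $P$. Thus the equivalence to be proved is exactly
\[
\Sigma(P/H)=\varnothing \iff H \text{ contains a maximal unipotent subgroup of } P,
\]
with $P$, $H$, $L$, $B$, $T$ the finite-dimensional data fixed in Sections~\ref{s:KM}--\ref{s:KMsph}.

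Next I would simply invoke Proposition~\ref{prop:horospherical}, which asserts precisely this equivalence (together with the further equivalent condition $V(P/H)=N_B(P/H)$, which here coincides with $V(\kmG/\kmH)=N(\kmG/\kmH)$ by Definition~\ref{def:invariants}\,(6)). Since that proposition was established in the first, purely finite-dimensional, part of the paper under the single standing assumption that $P/H$ is a spherical $L$-variety, and since $\kmH$ being a standard spherical subgroup of finite type guarantees exactly that assumption, it applies verbatim. This completes the proof.

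I do not expect any genuine obstacle here: the statement carries no content beyond Proposition~\ref{prop:horospherical}, and the only point worth a moment's care is purely formal, namely checking that the notion of ``maximal unipotent subgroup of $P$'' appearing in Definition~\ref{def:invariants}\,(7) is the same one used in Proposition~\ref{prop:horospherical}. It is, because in both places $P$ denotes the same fixed finite-dimensional quotient of $\kmP$ with the same choice of Levi subgroup $L$, Borel $B$ and maximal torus $T$; so there is nothing further to verify.
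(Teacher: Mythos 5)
Your proposal is correct and matches the paper exactly: the paper's proof consists of the single line ``This is Proposition~\ref{prop:horospherical}'', relying on precisely the same definitional unwinding you spell out. Your only addition is to make explicit the (correct) check that Definition~\ref{def:invariants} makes the two statements literally identical, which the paper leaves implicit.
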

\begin{proof}
This is Proposition~\ref{prop:horospherical}.
\end{proof}

\begin{example}\label{ex:rank0}
The case where $\kmH$ is a parabolic subgroup of $\kmG$ (containing $\kmB_-$ and contained in $\kmP$) can be regarded as the most basic one. Here $\Sigma(\kmG/\kmH)=\varnothing$ and $\Delta(\kmG/\kmH)$ can be identified with $S\smallsetminus S_\kmH$ thanks to the natural identification between $\Div(P/H)^B$ and $S_\kmP\smallsetminus S_\kmH$. Conversely, suppose that $\Sigma(\kmG/\kmH)=\varnothing$ for a standard spherical subgroup $\kmH\subseteq \kmP$ of finite type of $\kmG$. Then the spherical $L$-variety $P/H$ has rank $0$, i.e.\ it is a complete homogeneous space for $L$. In this case, up to conjugating $H$ in $P$, we have that $H$ contains $B_-P^u$, so $\kmH$ is a parabolic subgroup of $\kmG$ containing $\kmB_-$ and contained in $\kmP$.
\end{example}

\section{Types of simple roots}

In this section we generalize standard results on spherical varieties (see e.g.\ \cite{Kn14}) to the Kac-Moody case.

\begin{proposition}\label{prop:atmostone}
A simple root $\alpha\in S$ moves at most two colors of $\kmG/\kmH$, and it moves two colors if and only if $\alpha\in \Sigma(\kmG/\kmH)$. In the latter case $\langle \rho(D),\alpha\rangle =1$ for $D$ equal to either color moved by $\alpha$, and $\langle\rho(E),\alpha\rangle\leq 0$ for any color $E\in\Div(P/H)^B$ not moved by $\alpha$.
\end{proposition}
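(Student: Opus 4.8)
The plan is to treat the two kinds of simple roots separately: for $\alpha\in S_\kmP$ everything reduces to the classical theory of the reductive group $L$, whereas for $\alpha\in S\smallsetminus S_\kmP$ the new input is Lemma~\ref{lemma:demazure}, which I would apply with $U'_{-\alpha}=U_{-\alpha}$. This is legitimate because $U_{-\alpha}$ is one-dimensional (as $\alpha$ is a real simple root) and $T$-stable, and crucially the commutator $(U_{-\alpha},B^u)$ is trivial: $B^u$ is generated by the $U_\beta$ with $\beta$ a positive root of $L$, and for such $\beta$ the weight $\beta-\alpha$ carries the coefficient $-1$ on $\alpha\notin S_\kmP$ together with non-negative coefficients on $S_\kmP$, so it is of mixed sign and hence not a root; thus $[\fp_\beta,\fp_{-\alpha}]=0$ and $U_{-\alpha}$ centralises $B^u$. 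Lemma~\ref{lemma:demazure} (with $n=1$, since $U_\alpha$ is one-dimensional) then gives the opening assertion at once: at most one $D\in\Div(P/H)^B$ fails to be $U_{-\alpha}$-stable, such a $D$ satisfies $\langle\rho(D),\alpha\rangle=1$, and $\langle\rho(E),\alpha\rangle\le0$ for every other $E\in\Div(P/H)^B$.

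From here I would read off the ``at most two colours'' statement and the values of $\rho$. If $\alpha\in S\smallsetminus S_\kmP$ the colours moved by $\alpha$ are the formal colour $\alpha$ and the at most one divisor just found, hence at most two; moreover, when the divisor $D$ exists, Definition~\ref{def:invariants} gives $\rho(\alpha)=\alpha^\vee|_{\Xi(\kmG/\kmH)}-\rho(D)$, whence $\langle\rho(\alpha),\alpha\rangle=\langle\alpha^\vee,\alpha\rangle-1=2-1=1$, so the value $1$ holds on both moved colours. If instead $\alpha\in S_\kmP$, then $U_{-\alpha}\subseteq L$ fixes every $L$-stable divisor, so the colours moved by $\alpha$ are exactly the $L$-colours moved by $\alpha$; by the classical theory of spherical $L$-varieties (see \cite{Kn91,Lu01}) these number at most two, with $\langle\rho(\cdot),\alpha\rangle=1$ on them and $\le0$ on the remaining $B$-stable divisors.

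For the equivalence ``moves two colours $\iff\alpha\in\Sigma(\kmG/\kmH)$'', the case $\alpha\in S_\kmP$ is the classical equivalence ``$\alpha$ moves two $L$-colours $\iff\alpha\in\Sigma_L(P/H)$'' combined with Corollary~\ref{cor:Sigma_L} and the fact that $\alpha\in\Span_\QQ S_\kmP$. For $\alpha\in S\smallsetminus S_\kmP$ the condition ``moves two colours'' means ``moves the divisor $D$'', and the forward implication is the clean part: from Lemma~\ref{lemma:demazure} applied to a complete $P$-toroidal embedding we obtain $\alpha\in\Xi_B(P/H)$, with $\alpha$ primitive (it takes the value $1$ on the integral point $\rho(D)$), and $\langle\rho(E),\alpha\rangle\le0$ on all $P$-stable divisors, so $\alpha\le0$ on $V(P/H)$ by Theorem~\ref{thm:V} and $\alpha\in V(P/H)^\vee=\mathrm{cone}(\Sigma)$. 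Writing $\alpha=\sum_\sigma c_\sigma\sigma$ with $c_\sigma\ge0$ and comparing coefficients in the simple-root basis of $\kmG$ — every spherical root having non-negative coefficients by Proposition~\ref{prop:SigmainNS} — the vanishing of the coefficient of each simple root distinct from $\alpha$ forces each $\sigma$ with $c_\sigma>0$ to be a multiple of $\alpha$; primitivity then yields $\sigma=\alpha$, i.e.\ $\alpha\in\Sigma(P/H)$.

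The reverse implication for $\alpha\in S\smallsetminus S_\kmP$ is the step I expect to be the real obstacle, since one must manufacture a moved divisor out of the bare fact that $\alpha\in\Sigma$. Here $-\alpha$ is a root of $P^u$, and I would localise: choosing a $P$-dominant one-parameter subgroup $\lambda$ with $-\lambda^r$ in the relative interior of a codimension-one face of $V(P/H)$ contained in $\alpha^\perp$, Theorem~\ref{thm:loclambda} produces a rank-one complete $P^\lambda$-toroidal embedding $X^\lambda$ with $\Xi_{B^\lambda}(X^\lambda)=\ZZ\alpha$ and $\Sigma(X^\lambda)=\{\alpha\}$. Thus $X^\lambda$ is not horospherical, and since $\alpha\notin\Span_\QQ S_\kmP$ its radical $(P^\lambda)^u$ acts non-trivially; Corollary~\ref{cor:halfspace} then yields a $B^\lambda$-stable divisor of the open $P^\lambda$-orbit not stable under $(P^\lambda)^u$, which in rank one with $\Xi_{B^\lambda}(X^\lambda)=\ZZ\alpha$ must be moved by $U_{-\alpha}$ precisely because $\alpha$ is simple. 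Proposition~\ref{prop:loclambda} transports this divisor back to one of $P/H$ moved by $U_{-\alpha}$. The delicate points, where I expect the genuine work to lie, are the existence of a $P$-dominant $\lambda$ realising the prescribed facet as $\Cone(\lambda)$ and the identification of the destabilising root with $\alpha$ itself in the localisation.
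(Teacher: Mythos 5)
Your argument coincides with the paper's in structure and in nearly every step: the case split $\alpha\in S_\kmP$ versus $\alpha\in S\smallsetminus S_\kmP$; the application of Lemma~\ref{lemma:demazure} with $U'_{-\alpha}=U_{-\alpha}$ (the paper asserts the commutation with $B^u$ without detail, and your sign argument on $\beta-\alpha$ is the right justification — though note the operative hypothesis for $n=1$ is that $U_{-\alpha}$ is one-dimensional, since $U_\alpha$ is in fact trivial in $P$ for $\alpha\notin S_\kmP$); the forward implication via a complete $P$-toroidal embedding, Theorem~\ref{thm:V} and Proposition~\ref{prop:SigmainNS} (your primitivity finish, using that $\alpha$ takes the value $1$ on the integral functional $\rho(D)$, makes explicit what the paper leaves terse); and the reduction to the classical theory plus Corollary~\ref{cor:Sigma_L} for $\alpha\in S_\kmP$, where the paper is only slightly more careful in treating an $L$-stable $E$ separately via $\rho(E)\in V_L(P/H)$.

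The one divergence is the endgame of the reverse implication, and both of your flagged ``delicate points'' are avoidable rather than genuine obstacles. The paper never tries to realise the facet $V(P/H)\cap\alpha^\perp$ as $\Cone(\lambda)$ and never needs $X^\lambda$ to have rank one: it takes $\lambda$ adapted to $S'=\{\alpha\}$, i.e.\ $\langle\lambda,\alpha\rangle=0$ and $\langle\lambda,\beta\rangle>0$ for $\beta\in S\smallsetminus\{\alpha\}$ (such $\lambda$ is automatically $P$-dominant), and Theorem~\ref{thm:loclambda} then gives $\Sigma(X^\lambda)=\Sigma(X)\cap\lambda^\perp=\{\alpha\}$ \emph{whatever} the dimension of $\Cone(\lambda)$, because by Proposition~\ref{prop:SigmainNS} only multiples of $\alpha$ can lie in $\lambda^\perp$. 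With this $\lambda$ one has $L^\lambda=T$ and $(P^\lambda)^u=U_{-\alpha}$, so the open orbit $P^\lambda/K$ is exactly the situation of Example~\ref{ex:SL2modT}; since $\Sigma(X^\lambda)\neq\varnothing$ it is the non-horospherical case of that example, whose unique $B^\lambda$-stable divisor is moved by $U_{-\alpha}$, and the last clause of Proposition~\ref{prop:loclambda} transports it back to $\Div(P/H)^B$. Your second worry dissolves for the same reason: since $P^\lambda=TU_{-\alpha}$ and $B^\lambda=T$, any $B^\lambda$-stable divisor that is not $P^\lambda$-stable is automatically moved by $U_{-\alpha}$, so your variant via Corollary~\ref{cor:halfspace} does in fact go through — one only needs to add that a $B^\lambda$-stable divisor of $X^\lambda$ not meeting the open orbit is an irreducible component of the boundary, hence $P^\lambda$-stable because $P^\lambda$ is connected, so the divisor that the corollary produces meets $P^\lambda/K$. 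The example-based case analysis of the paper is simply the shorter route, and it shows your rank-one reduction is superfluous.
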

\begin{proof}
Let $\alpha\in S\smallsetminus S_\kmP$. Then we have already observed that at most one element of $\Div(P/H)^B$ is not stable under $U_{-\alpha}$. The fact that $\alpha$ moves at most two colors follows. If instead $\alpha$ is a simple root of $L$, then it moves no $L$-stable prime divisor of $P/H$ and it moves not more than two $L$-colors of $P/H$ by \cite[Section~2.7]{Lu97}.

We show the remaining assertions. Assume $\alpha\in S_\kmP$: in this case $\alpha$ moves two colors if and only if $\alpha\in\Sigma_L(P/H)$; this is equivalent to $\alpha$ being in $\Sigma(\kmG/\kmH)$ thanks to Corollary~\ref{cor:Sigma_L}. The value on $\alpha$ of any $L$-color of $P/H$ moved by $\alpha$ is computed in \cite[Section~3.2]{Lu97}. This concludes the proof for $\alpha\in S_\kmP$, except for the case where $E$ is not an $L$-color, i.e.\ if $E$ is $L$-stable. In this case $\rho(E)$ is in $V_L(P/H)$, and $\alpha\in\Sigma_L(P/H)$ assures $\langle\rho(E),\alpha\rangle \leq 0$.

It remains to consider $\alpha\in S\smallsetminus S_\kmP$: then $\alpha$ moves two colors if and only if it moves an element $D\in\Div(P/H)^{(B)}$.

Suppose that $\alpha\in S\smallsetminus S_\kmP$ moves indeed an element $D\in \Div(P/H)^{(B)}$, and let $X$ be a $P$-toroidal complete embedding of $P/H$. Notice that $D$ is a $B$-stable prime divisor that is not stable under the action of $U_{-\alpha}\subseteq P^u$, and it is stable under the action of the commutator $(U_{-\alpha},B^u)$ because $B^u$ commutes with $U_{-\alpha}$ (since $\alpha\notin S_\kmP$).

Lemma~\ref{lemma:demazure} implies that $\alpha\in\Xi_B(X)$, and that $\langle\rho(E),\alpha\rangle \leq 0$ for all $B$-stable prime divisors $E$ of $X$ different from $D$. This applies in particular if $E$ is $P$-stable, then $\langle V(X),\alpha\rangle \leq 0$, and $\alpha$ is then a linear combination with non-negative coefficients of spherical roots of $\kmG/\kmH$. From Proposition~\ref{prop:SigmainNS} we deduce that $\alpha\in\Sigma(\kmG/\kmH)$.

Suppose now that $\alpha\in S\smallsetminus S_\kmP$ belongs to $\Sigma(\kmG/\kmH)$, and choose a $P$-toroidal complete embedding $X$ of $P/H$. Since $S$ is linearly independent, we can choose a one parameter subgroup $\lambda$ of $T$ such that $\langle \lambda, \alpha\rangle =0$ and $\langle\lambda,\beta\rangle >0$ for all $\beta\in S\smallsetminus \{\alpha\}$, and consider the localization $X^\lambda$. In this case $P^\lambda$ has one-dimensional unipotent radical, equal to $U_{-\alpha}$, and $L^\lambda=T$. Moreover, the localization $X^\lambda$ is a $P^\lambda$-toroidal complete embedding of an $L^\lambda$-spherical homogeneous space $P^\lambda/K$. By Theorem~\ref{thm:loclambda} we have $\Sigma(X^\lambda)=\{\alpha\}$, hence $P^\lambda/K$ occurs in Example~\ref{ex:SL2modT} as the case that has a $B^\lambda$-stable prime divisor not stable under $U_{-\alpha}$. We conclude thanks to Proposition~\ref{prop:loclambda} that $X$ has a $B$-stable prime divisor not stable under $U_{-\alpha}$, i.e.\ $\alpha$ moves an element of $\Div(P/H)^B$.

Finally, suppose that $\alpha$ is in $S\smallsetminus S_\kmP$ and is a spherical root, denote by $D$ the color in $\Div(P/H)^B$ moved by $\alpha$ and by $E$ the other color moved by $\alpha$ (so formally $E=\alpha$). The equality $\langle \rho(D),\alpha\rangle =1$ follows from Lemma~\ref{lemma:demazure}, because again $D\in \Div(P/H)^B$ is not stable under the action of $U_{-\alpha}$ which commutes with $B^u$. This yields $\langle \rho(E),\alpha\rangle = \langle\alpha^\vee,\alpha\rangle - \langle \rho(D),\alpha\rangle = 2-1 = 1$.
\end{proof}

The last statement of the above proposition holds in greater generality, namely for all $E\in\Delta(\kmG/\kmH)$ not moved by $\alpha$. We will prove this fact later in Corollary~\ref{cor:values}.

\begin{definition}
We denote $S^a(\kmG/\kmH)=S\cap \Sigma(\kmG/\kmH)$ and $S^{2a}(\kmG/\kmH)=S\cap \frac12 \Sigma(\kmG/\kmH)$, we denote by $S^b(\kmG/\kmH)$ the set of simple roots of $\kmG$ that move exactly one color of $\kmG/\kmH$ and are not in $S^{2a}(\kmG/\kmH)$, and we denote by $S^p(\kmG/\kmH)$ the set of simple roots of $\kmG$ that move no color of $\kmG/\kmH$. The {\em type} of a simple root $\alpha$ is $(a)$, $(2a)$, $(b)$ or $(p)$ according to the set where it belongs. If $\alpha\in S^a(\kmG/\kmH)$ then we denote by $D_\alpha^+$ and $D_\alpha^-$ the two colors moved by $\alpha$ (with $D_\alpha^+\in\Div(P/H)^{(B)}$). If $\alpha$ moves only one color of $\kmG/\kmH$, then we denote this color by $D_\alpha$.
\end{definition}

\begin{definition}
We denote by $\A(\kmG/\kmH)$ the set of all colors of $\kmG/\kmH$ moved by simple roots that are also spherical roots. For such a root $\alpha$, we denote by $\A(\kmG/\kmH,\alpha)$ or simply $\A(\alpha)$ the set $\{D_\alpha^+,D_\alpha^-\}$ of the two colors moved by $\alpha$.
\end{definition}

\begin{proposition}\label{prop:typeofS}
The set $S$ is the disjoint union
\[
S = S^a(\kmG/\kmH)\cup S^{2a}(\kmG/\kmH) \cup S^b(\kmG/\kmH)\cup S^p(\kmG/\kmH).
\]
Any simple root in $S^{2a}(\kmG/\kmH)$ moves exactly one color, and $S^{2a}(\kmG/\kmH)\cup S^p(\kmG/\kmH)\subseteq S_\kmP$. For any $\alpha\in S$, according to its type we have the following formulae for the colors moved by $\alpha$:
\[
\begin{array}{c|l}
\text{type} & \\
\hline
(a) & \rho(D_\alpha^+) + \rho(D_\alpha^-) = \alpha^\vee|_{\Xi(\kmG/\kmH)} \\
(2a) & \rho(D_\alpha) = \frac12\alpha^\vee|_{\Xi(\kmG/\kmH)} \\
(b) & \rho(D_\alpha) = \alpha^\vee|_{\Xi(\kmG/\kmH)}
\end{array}
\]
\end{proposition}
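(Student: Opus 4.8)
The plan is to classify each simple root $\alpha\in S$ according to the number of colors of $\kmG/\kmH$ it moves, and to match this count with the partition into the four types. By Proposition~\ref{prop:atmostone} every simple root moves at most two colors, it moves exactly two precisely when $\alpha\in\Sigma(\kmG/\kmH)$, i.e.\ when $\alpha\in S^a(\kmG/\kmH)$, while the roots moving no color are by definition those of $S^p(\kmG/\kmH)$. Once I show (below) that every root of $S^{2a}(\kmG/\kmH)$ moves exactly one color, the simple roots moving exactly one color split into $S^{2a}(\kmG/\kmH)$ and, by definition, its complement $S^b(\kmG/\kmH)$ among such roots. Partitioning $S$ by the number $0$, $1$, $2$ of colors moved then yields simultaneously the disjointness and the exhaustiveness of the four sets, hence the asserted disjoint union.

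The crux is the analysis of $S^{2a}(\kmG/\kmH)$. Fix $\alpha\in S^{2a}(\kmG/\kmH)$, so that $2\alpha\in\Sigma(\kmG/\kmH)=\Sigma(P/H)$. By Theorem~\ref{thm:NS} either $2\alpha\in\Sigma_L(P/H)$ or $-2\alpha$ is a root of $P^u$. Since $\alpha$ is a simple, hence real, root of $\kmG$, the element $2\alpha$ is not a root of $\kmG$; as every root of $P^u$ is a root of $\kmG$, this rules out the second alternative. Thus $2\alpha\in\Sigma_L(P/H)\subseteq\Span_\QQ S_\kmP$, and linear independence of the simple roots forces $\alpha\in S_\kmP$. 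Now $\alpha$ is a simple root of $L$ with $2\alpha\in\Sigma_L(P/H)$, so the finite-dimensional classification of \cite[Sections 2.7 and 3.2]{Lu97} shows that $\alpha$ moves no $L$-stable prime divisor and exactly one $L$-color $D_\alpha$, with $\rho(D_\alpha)=\tfrac12\alpha^\vee|_{\Xi(\kmG/\kmH)}$. This establishes simultaneously that each element of $S^{2a}(\kmG/\kmH)$ moves exactly one color, the inclusion $S^{2a}(\kmG/\kmH)\subseteq S_\kmP$, and the type $(2a)$ formula. For $S^p(\kmG/\kmH)$, any $\alpha\in S\smallsetminus S_\kmP$ moves the abstract color $\alpha$ by Definition~\ref{def:invariants}(\ref{def:invariants:moves}), hence moves at least one color and lies outside $S^p(\kmG/\kmH)$; so $S^p(\kmG/\kmH)\subseteq S_\kmP$ as well.

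To finish I verify the formulae for types $(a)$ and $(b)$, distinguishing whether or not $\alpha\in S_\kmP$. If $\alpha\in S_\kmP$ then $\alpha$ is a simple root of $L$, and by Corollary~\ref{cor:Sigma_L} the conditions $\alpha\in\Sigma(\kmG/\kmH)$ and $2\alpha\in\Sigma(\kmG/\kmH)$ are equivalent to $\alpha\in\Sigma_L(P/H)$ and $2\alpha\in\Sigma_L(P/H)$ respectively; hence the $\kmG$-type of $\alpha$ coincides with its $L$-type, and the equalities $\rho(D_\alpha^+)+\rho(D_\alpha^-)=\alpha^\vee|_{\Xi(\kmG/\kmH)}$ (type $(a)$) and $\rho(D_\alpha)=\alpha^\vee|_{\Xi(\kmG/\kmH)}$ (type $(b)$) are the corresponding finite-dimensional statements of \cite[Section 3.2]{Lu97}. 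If instead $\alpha\in S\smallsetminus S_\kmP$, then the color $\alpha$ is always moved, and in type $(a)$ exactly one further color is moved, namely the unique element $D_\alpha^+$ of $\Div(P/H)^B$ not stable under $U_{-\alpha}$; the extension of $\rho$ to the colors in $S\smallsetminus S_\kmP$ (Definition~\ref{def:invariants}) sets $\rho(\alpha)=\alpha^\vee|_{\Xi(\kmG/\kmH)}-\rho(D_\alpha^+)$, so that $\rho(D_\alpha^+)+\rho(D_\alpha^-)=\alpha^\vee|_{\Xi(\kmG/\kmH)}$ with $D_\alpha^-=\alpha$. In type $(b)$ no element of $\Div(P/H)^B$ is moved, for otherwise $\alpha\in\Sigma(\kmG/\kmH)$ by Proposition~\ref{prop:atmostone}, so the unique moved color is $D_\alpha=\alpha$ and the same definition gives $\rho(D_\alpha)=\alpha^\vee|_{\Xi(\kmG/\kmH)}$.

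The main obstacle I anticipate is precisely the analysis of $S^{2a}(\kmG/\kmH)$ in the second step: one must first use Theorem~\ref{thm:NS} together with the fact that twice a real root is never a root to exclude the possibility that $2\alpha$ arises from the action of $P^u$, thereby forcing the situation into the $L$-spherical regime where Luna's classification of simple roots applies. Once this reduction is secured, all remaining assertions reduce to bookkeeping with Proposition~\ref{prop:atmostone} and to directly unwinding the definition of $\rho$ on the abstract colors of $S\smallsetminus S_\kmP$.
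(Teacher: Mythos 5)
Your proof is correct, but it handles the decisive step by a genuinely different mechanism than the paper. To show that a simple root $\alpha$ with $2\alpha\in\Sigma(\kmG/\kmH)$ lies in $S_\kmP$ and moves exactly one color, the paper localizes a complete $P$-toroidal embedding at $S'=\{\alpha\}$ (as in the proof of Proposition~\ref{prop:atmostone}) and observes that the resulting variety occurs in Example~\ref{ex:SL2modT}, where no spherical root of the form $2\alpha$ is possible; you instead apply Theorem~\ref{thm:NS} to $\sigma=2\alpha$ and exclude the alternative ``$-2\alpha$ is a root of $P^u$'' by the Kac--Moody fact that a nontrivial multiple of a real root is never a root, together with the observation (immediate from the construction of $P$ in Section~\ref{s:KM}, and worth stating explicitly) that every root of $P^u$ is a root of $\kmG$. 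Your route is shorter, bypasses the localization machinery of Theorem~\ref{thm:loclambda} entirely for this proposition, and makes visible exactly where the Kac--Moody root structure enters; note, however, that the same root-theoretic fact is implicitly present in the paper's argument, since it is what makes $(P^\lambda)^u$ one-dimensional with root $-\alpha$ in the reduction to Example~\ref{ex:SL2modT}. After this step the two proofs coincide in substance: disjointness and exhaustiveness of the four types follow from the color count of Proposition~\ref{prop:atmostone}, the formulae for $\alpha\in S_\kmP$ are Luna's finite-dimensional statements from \cite{Lu97} (your appeal to Corollary~\ref{cor:Sigma_L} to identify the $\kmG$-type of $\alpha$ with its $L$-type is exactly the needed bridge, which the paper leaves tacit), and for $\alpha\in S\smallsetminus S_\kmP$ the formulae hold by the definition of $\rho$ on the abstract colors.
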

\begin{proof}
The four subsets of $S$ are disjoint by definition, except for $S^{2a}(\kmG/\kmH)$ that may intersect $S^p(\kmG/\kmH)$. Let $\alpha\in S^{2a}(\kmG/\kmH)$. If $\alpha\in S_\kmP$ then $\alpha\in S_\kmP\cap \frac12\Sigma_L(P/H)$, so $\alpha$ moves exactly one color of the spherical $L$-variety $P/H$ (see e.g.\ \cite[Section~2.7]{Lu97}).

Suppose now that $\alpha\in S\smallsetminus S_\kmP$, and let $X$ be a complete $P$-toroidal embedding of $P/H$. As in the proof of Proposition~\ref{prop:atmostone}, we consider a localization of $X$ with respect to a one-parameter subgroup $\lambda$ of $T$ such that $\langle\lambda,\alpha\rangle = 0$ and $\langle\lambda,\beta\rangle >0$ for all $\beta\in S\smallsetminus \{\alpha\}$. Then $X^\lambda$ occurs in Example~\ref{ex:SL2modT}, but no variety in this example has spherical root $2\alpha$: contradiction.

It follows that the disjoint union $S^{2a}(\kmG/\kmH)\cup S^b(\kmG/\kmH)$ is the set of simple roots that move exactly one color each, and the union of the four subsets of the statement of the proposition is $S$. Moreover we have showed that $S^{2a}(\kmG/\kmH)\subseteq S_\kmP$, and since any root in $S\smallsetminus S_\kmP$ moves at least one color by definition, we have also $S^p(\kmG/\kmH)\subseteq S_\kmP$.

It remains the formulae for the colors moved by a simple root $\alpha$. If $\alpha\in S_\kmP$ then they are proved in \cite[Proposition~3.4]{Lu97}. Otherwise $\alpha$ is in $S\smallsetminus S_\kmP$ and it is either in $S^{a}(\kmG/\kmH)$ or $S^b(\kmG/\kmH)$ thanks to the first part of the proof. In this case the corresponding formula holds by definition.
\end{proof}

\section{Independence from $\kmP$}\label{s:indep}

Suppose that $\kmH$ is contained in another parabolic subgroup $\kmQ$ of finite type containing $\kmB_-$. Then we can apply to $\kmQ$ the same construction of Section~\ref{s:KM}, based on the pro-group structure of $\hat\kmQ^+$. Any element in the defining family of $\hat\kmP^+$ as a pro-group contains some element of the defining family of $\hat\kmQ^+$, which shows that $\kmH$ is also the inverse image in $\kmQ$ of an algebraic subgroup of $Q$, the latter being a finite-dimensional quotient of $\kmQ$. In other words $\kmQ$ can play the same role of $\kmP$ in our constructions.

We show in this section that choosing $\kmQ$ instead of $\kmP$ essentially doesn't change the objects introduced in Definition~\ref{def:invariants}. In this section we use the notations $S^p(\kmG/\kmH)_\kmP$, $\Sigma(\kmG/\kmH)_\kmP$, $\rho_{\kmG/\kmH,\kmP}$, etc., to underline the possible dependence on $\kmP$.

\begin{theorem}\label{thm:indep}
Let $\kmQ$ be a parabolic subgroup of $\kmG$ of finite type containing $\kmB_-$ and $\kmH$. Then
\[
\begin{array}{rcl}
S^p(\kmG/\kmH)_\kmP &=& S^p(\kmG/\kmH)_\kmQ\\
\Xi(\kmG/\kmH)_\kmP &=& \Xi(\kmG/\kmH)_\kmQ\\
\Sigma(\kmG/\kmH)_\kmP &=&\Sigma(\kmG/\kmH)_\kmQ,
\end{array}
\]
and there is a bijection
\[
\vartheta\colon \Delta(\kmG/\kmH)_\kmP\to\Delta(\kmG/\kmH)_\kmQ
\]
such that $\vartheta(\A(\kmG/\kmH)_\kmP)=\vartheta(\A(\kmG/\kmH)_\kmQ)$, such that $\rho_{\kmG/\kmH,\kmP} = \rho_{\kmG/\kmH,\kmQ}\circ\vartheta$, and such that for all $D\in \Delta(\kmG/\kmH)_\kmP$ and all $\alpha\in S$ the color $D$ is moved by $\alpha$ if and only if $\vartheta(D)$ is.
\end{theorem}
\begin{proof}
The intersection $\kmQ\cap \kmP$ is also a parabolic subgroup of $\kmG$ of finite type containing $\kmB_-$ and $\kmH$, therefore we may suppose that $\kmQ$ is contained in $\kmP$, and also that the finite-dimensional quotient $Q$ of $\kmQ$ as above is the projection of $\kmQ$ in $P$. More precisely, the projection $Q$ is a parabolic subgroup of $P$, and $P/Q$ is a spherical $L$-variety. But $Q^u$ contains $P^u$, so $P/Q$ is isomorphic to $(P/P^u)/(Q/P^u)$. The quotient $P/P^u$ is isomorphic to $L$ and $Q/P^u$ is a parabolic subgroup, so the rank of $P/Q$ as a spherical $L$-variety is $0$ and $P/Q\cong L/R$ where $R$ is a parabolic subgroup of $L$.

Denote by $L_Q$ the Levi subgroup of $Q$ containing $T$, and by $B_Q$ the Borel subgroup $B\cap L_Q$ of $L_Q$. Consider the natural map $\varphi\colon P/H\to P/Q$. We have $\Xi_B(P/H)=\Xi_{B_Q}(Q/H)$, because $Q/H$ is the fiber of $\varphi$ over $eQ$ and $BQ$ is open in $P$. Thanks to Lemma~\ref{lemma:extend} we can choose a complete $P$-toroidal embedding $X$ of $P/H$ such that $\varphi$ extends to a $P$-equivariant map $X\to P/Q$. Denote this map also by $\varphi$, and denote by $X_Q$ the fiber $\varphi^{-1}(eQ)$ in $X$.

Since $BQ$ is open in $P$ we have that intersecting with $\varphi^{-1}(eQ)$ induces a bijection between $\Div(X_Q)^{B_Q}$ and the subset of $\Div(X)^B$ of those elements mapped dominantly to $P/Q$. On the other hand any $P$-orbit of $X$ intersects $X_Q$ in a $Q$-orbit, and if $Y$ is a closed $Q$-orbit of $X_Q$ then $Q^u\supseteq P^u$ acts trivially on $Y$ and $BY$ is a closed $P$-orbit of $X$. It follows that $X_Q$ is a complete $Q$-toroidal embedding of $Q/H$, and that $\Sigma(P/H)=\Sigma(Q/H)$.

This yields the equalities $\Xi(\kmG/\kmH)_\kmP = \Xi(\kmG/\kmH)_\kmQ$ and $\Sigma(\kmG/\kmH)_\kmP =\Sigma(\kmG/\kmH)_\kmQ$.

Let us now consider the set of colors. Denote by $\Delta_1$ the set of the elements of $\Div(P/H)^B$ mapped dominantly to $P/Q$, and by $\Delta_2$ the complement $\Div(P/H)^B\smallsetminus \Delta_1$. Intersecting with $\varphi^{-1}(eQ)$ induces a bijection between $\Delta_1$ and $\Div(Q/H)^{B_Q}$. The elements of $\Delta_2$ are precisely the inverse images via $\varphi$ of the elements of $\Div(P/Q)^{B}$. Since $P/Q$ is a complete $L$-homogeneous space, the set $\Div(P/Q)^{B}$ is in bijection with $S_\kmP\smallsetminus S_\kmQ$, via the map $\tau$ that associates to $E\in\Div(P/Q)^{B}$ the unique simple root $\alpha$ that moves $E$.

Notice also that $\Delta(\kmG/\kmH)_\kmP$ is the disjoint union $\Delta_1\cup\Delta_2\cup (S\smallsetminus S_\kmP)$, so we can define the map $\vartheta$ as follows:
\begin{enumerate}
\item $\vartheta(D) = D\cap \varphi^{-1}(eQ)$ if $D\in \Delta_1$,
\item $\vartheta(D)=\tau(E)$ if $D\in \Delta_2$ and $D=\varphi^{-1}(E)$ for $E\in \Div(P/Q)^{B}$;
\item $\vartheta(D)=D$ if $D\in S\smallsetminus S_\kmP$.
\end{enumerate}
This defines a bijection $\theta\colon \Delta(\kmG/\kmH)_\kmP\to \Delta(\kmG/\kmH)_\kmQ$ such that $\vartheta(\Delta_1) = \Div(Q/H)^{B_Q}$ and $\vartheta(\Delta_2\cup (S\smallsetminus S_\kmP))=S\smallsetminus S_\kmQ$, and $\vartheta$ satisfies the required property that $D\in \Delta(\kmG/\kmH)_\kmP$ is moved by $\alpha\in S$ if and only if $\vartheta(D)$ is.

It remains to check that $\rho_{\kmG/\kmH,\kmP} = \rho_{\kmG/\kmH,\kmQ}\circ\vartheta$, so let $D\in \Delta(\kmG/\kmH)_\kmP$. If $D\in\Delta_1$ it holds $\rho_{P/H}(D) = \rho_{Q/H}(D\cap \varphi^{-1}(eQ))$, hence $\rho_{\kmG/\kmH,\kmP}(D) = \rho_{\kmG/\kmH,\kmQ}(\vartheta(D))$. Otherwise $\vartheta(D)=\alpha\in S\smallsetminus S_\kmQ$.

By Proposition~\ref{prop:typeofS}, the element $\vartheta(D)$ falls in one of the following cases:
\begin{enumerate}
\item\label{enum:typeb} $\rho_{\kmG/\kmH,\kmQ}(\vartheta(D))=\alpha^\vee|_{\Xi(\kmG/\kmH)_\kmQ}$, if $\alpha\notin \Sigma(\kmG/\kmH)_\kmQ$ and $2\alpha\notin \Sigma(\kmG/\kmH)_\kmQ$, or
\item\label{enum:type2a} $\rho_{\kmG/\kmH,\kmQ}(\vartheta(D))=\frac12\alpha^\vee|_{\Xi(\kmG/\kmH)_\kmQ}$, if $2\alpha\in \Sigma(\kmG/\kmH)_\kmQ$, or
\item\label{enum:typea} $\rho_{\kmG/\kmH,\kmQ}(\vartheta(D))=\alpha^\vee|_{\Xi(\kmG/\kmH)_\kmQ}-\rho_{\kmG/\kmH,\kmQ}(E)$, if $\alpha\in \Sigma(\kmG/\kmH)_\kmQ$ and $E\in \Div(Q/H)^{B_Q}$ is moved by $\alpha$.
\end{enumerate}

Since $\Sigma(\kmG/\kmH)_\kmP=\Sigma(\kmG/\kmH)_\kmQ$, we have that $\rho_{\kmG/\kmH,\kmP}(D) = \rho_{\kmG/\kmH,\kmQ}(\vartheta(D))$ if cases (\ref{enum:typeb}) or (\ref{enum:type2a}) occur.

Suppose now that case (\ref{enum:typea}) occurs. Then $\alpha$ moves two elements of $\Delta(\kmG/\kmH)_\kmP$: one is $D$, and the other is $E'$ such that $\vartheta(E')=E$. Therefore $E'\in \Delta_1$, whence $\rho_{\kmG/\kmH,\kmP}(E') = \rho_{\kmG/\kmH,\kmQ}(E)$. Proposition~\ref{prop:typeofS} yields also $\rho_{\kmG/\kmH,\kmP}(D)=\alpha^\vee|_{\Xi(\kmG/\kmH)_\kmP}-\rho_{\kmG/\kmH,\kmP}(E')$, which implies the required $\rho_{\kmG/\kmH,\kmP}(D) = \rho_{\kmG/\kmH,\kmQ}(\vartheta(D))$. The proof of the existence of $\vartheta$ is complete.

The equality $S^p(\kmG/\kmH)_\kmP = S^p(\kmG/\kmH)_\kmQ$ follows, because $S^p(\kmG/\kmH)_\kmP$ is the set of simple roots moving no element of $\Delta(\kmG/\kmH)_\kmP$, and the same, mutatis mutandis, holds for $S^p(\kmG/\kmH)_\kmQ$.
\end{proof}

\begin{remark}\label{rem:finitedim}
If $\kmG$ is finite-dimensional we may set $\kmQ=\kmG$. Then the objects defined in Definition~\ref{def:invariants} with respect to $\kmQ$ are the usual invariants of $\kmG/\kmH$ as a spherical $\kmG$-variety.
\end{remark}

\section{Localization at simple roots}

In this and in the next section we generalize to the Kac-Moody case the procedure of {\em localization} at simple roots and at spherical roots introduced by Luna in \cite{Lu01} (see also \cite{Kn14}).

\begin{definition}
Let $S'\subseteq S$. A one-parameter subgroup $\lambda$ of $T$ is {\em adapted to $S'$} if $\langle \lambda, \alpha\rangle =0$ for all $\alpha\in S'$ and $\langle\lambda,\beta\rangle >0$ for all $\beta\in S\smallsetminus S'$. In this case, if $X$ is a complete $P$-toroidal embedding of $P/H$, then $X^\lambda$ is called {\em a localization of $X$ at $S'$}.
\end{definition}

For any $S'\subseteq S$ we denote by $\kmG^{S'}$ the minimal Kac-Moody group with maximal torus $T$ and set of simple roots $S'$; it is naturally a subgroup of $\kmG$. As for $\kmG$, denote by $\kmB^{S'}$ and $\kmB_-^{S'}$ the standard Borel subgroups of $\kmG^{S'}$ containing $T$.

\begin{lemma}\label{lemma:locS}
Let $S'\subseteq S$ and $\lambda$ be a one parameter subgroup of $T$ adapted to $S'$. Then $P^\lambda$ is a finite-dimensional quotient of $\kmP^{S'}$, where the latter is the parabolic subgroup of $\kmG^{S'}$ associated with $S'\cap S_\kmP$ and containing $\kmB_-^{S'}$.
\end{lemma}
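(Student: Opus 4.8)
The plan is to realize $P^\lambda$ as the image of $\kmP_{S'}$ under the quotient map $q\colon\kmP\to P$ of Section~\ref{s:KM}, and then to check that this realization is of the kind described there.

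First I would record the basic structure. Since $\kmP_{S'}$ contains $\kmB^-_{S'}$ it contains $T$ and the root group $\kmU_{-\beta}$ for every positive real root $\beta$ of $\kmG_{S'}$, i.e.\ every positive real root of $\kmG$ lying in $\Span_\ZZ S'$; together with $\kmU_{\pm\gamma}$ for $\gamma\in S'\cap S_\kmP$ these generate $\kmP_{S'}$, whose Levi subgroup containing $T$ has simple roots $S'\cap S_\kmP$ and is therefore finite-dimensional reductive. All of these root groups lie in $\kmP$: indeed $T$ and every negative root group lie in $\kmB^-\subseteq\kmP$, while $\kmU_\gamma\subseteq\kmP$ for $\gamma\in S_\kmP$. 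Thus $\kmP_{S'}\subseteq\kmP$, and $q$ restricts to a homomorphism $\kmP_{S'}\to P$.

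Next I would identify the image. A root $\chi$ of $P$ with $\langle\lambda,\chi\rangle=0$ is either a real root of $L$, in which case $\chi\in\Span_\ZZ(S'\cap S_\kmP)$ because $\lambda$ is adapted to $S'$ and every root of $L$ is an integer combination of $S_\kmP$ with coefficients of a single sign; or a root $-\beta$ of $P^u$, in which case $\beta$ is a positive real root with $\langle\lambda,\beta\rangle=0$, i.e.\ $\beta\in\Span_\ZZ S'$. Since the simple roots are linearly independent we have $\Span_\ZZ S'\cap\Span_\ZZ S_\kmP=\Span_\ZZ(S'\cap S_\kmP)$, so the roots of the first kind are exactly the roots of $L^\lambda$ and the roots $-\beta$ of the second kind are exactly those with $\beta$ a positive real root of $\kmG_{S'}$ outside $\Span_\ZZ(S'\cap S_\kmP)$. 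Now each generator of $\kmP_{S'}$ is either $T$ or a root group whose root is orthogonal to $\lambda$, so $q(\kmP_{S'})$ centralizes $\lambda(\Gm)$ and is contained in $P^\lambda$. Conversely, by Lemma~\ref{lemma:locP} we have $P^\lambda=L^\lambda\ltimes(P^\lambda\cap P^u)$: the factor $L^\lambda$ is generated by $T$ and the groups $U_{\pm\gamma}$ ($\gamma\in S'\cap S_\kmP$), which are images of $\kmU_{\pm\gamma}$, while $P^\lambda\cap P^u$ is the $\lambda$-fixed subgroup of $P^u$ and is therefore generated, via the exponential of the weight-zero part of $\fn$, by the root groups $U_{-\beta}$ with $\langle\lambda,\beta\rangle=0$, each of which is the image of $\kmU_{-\beta}\subseteq\kmB^-_{S'}$. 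Hence $P^\lambda\subseteq q(\kmP_{S'})$, and the two coincide, so $q$ restricts to a surjection $\kmP_{S'}\twoheadrightarrow P^\lambda$ onto a finite-dimensional group.

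It remains to upgrade ``surjection onto $P^\lambda$'' to ``finite-dimensional quotient in the sense of Section~\ref{s:KM}''. The inclusion $\kmG_{S'}\hookrightarrow\kmG$ and the involution $\tilde\omega$ restrict to $\kmG_{S'}$, and the standard parabolic $\hat\kmP_{S'}^+$ of the completed group $\hat\kmG_{S'}$ maps compatibly into $\hat\kmP^+$. Taking $N'=N\cap\hat\kmP_{S'}^+$, which I would verify lies in the defining family of the pro-group $\hat\kmP_{S'}^+$, the construction of Section~\ref{s:KM} applied to $\kmG_{S'}$, $\kmP_{S'}$ and $N'$ produces a finite-dimensional quotient of $\kmP_{S'}$ that the previous paragraph identifies with $P^\lambda$. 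The main obstacle I anticipate is precisely this last compatibility: because $S'$ need not be of finite type, $\kmG_{S'}$ is in general an honest infinite-dimensional Kac-Moody group, so one must check that its pro-completion $\hat\kmP_{S'}^+$ and defining family are compatible with those of $\hat\kmP^+$ under the natural map, and that $N\cap\hat\kmP_{S'}^+$ is deep enough to belong to the defining family. The root-theoretic bookkeeping of the second paragraph is routine once the linear independence of the simple roots is invoked; the genuine work lies in matching the two pro-group structures.
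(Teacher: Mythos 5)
Your overall route is the paper's: the paper proves this lemma in one line, as a direct consequence of Lemma~\ref{lemma:locP}, and your second paragraph is precisely the root-space bookkeeping that this citation compresses (note that a one-parameter subgroup adapted to $S'$ is nonnegative on all simple roots, hence $P$-dominant, so Lemma~\ref{lemma:locP} does apply). Your third paragraph --- matching the pro-group structures of $\hat\kmP^+$ and $\hat\kmP^+_{S'}$ and checking that $N'=N\cap\hat\kmP^+_{S'}$ belongs to the defining family --- addresses a point the paper leaves entirely implicit, and it is the right thing to worry about if ``finite-dimensional quotient'' is to be taken in the technical sense of Section~\ref{s:KM} (as it must be, since Theorem~\ref{thm:locS} needs $\kmK$ to be a \emph{standard} spherical subgroup of finite type of $\kmG_{S'}$). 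Identifying $P^\lambda=(\hat\kmP^+/N)^\lambda$ with $\hat\kmP^+_{S'}/N'$ through the $\lambda$-weight decomposition of the pro-unipotent radical is the clean way to finish.

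There is, however, one concrete inaccuracy in your second paragraph: you assert that every root $-\beta$ of $P^u$ has $\beta$ a positive \emph{real} root, and you generate $P^\lambda\cap P^u$ by images of the real root groups $\kmU_{-\beta}\subseteq\kmB^-_{S'}$. When $\kmG$ is not of finite type, the $T$-weights of $P^u$ also include negatives of \emph{imaginary} roots: the quotient by $N$ kills only a deep part of the pro-unipotent radical, so imaginary root spaces of small height survive in $\fn$ --- indeed the paper's convention $U_\alpha=\Exp(\bigoplus_{n>0}\fp_{n\alpha})$ is designed for exactly this situation. For imaginary $\beta$ there is no one-parameter root group $\kmU_{-\beta}$ in the minimal group, so the sentence ``each of which is the image of $\kmU_{-\beta}$'' breaks down whenever $S'$ is not of finite type, and Theorem~\ref{thm:locS} is invoked for arbitrary $S'\subseteq S$. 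The gap is repairable, and your own final paragraph in effect repairs it: surjectivity of $\kmP_{S'}\to\hat\kmP^+_{S'}/N'$ is built into the Section~\ref{s:KM} construction (via the argument of \cite[Corollary 7.3.8]{Ku02}, resting on the fact that the negative nilpotent algebra is generated by the simple root vectors, so images of real root groups fill the truncated imaginary root spaces), and the identification of $\hat\kmP^+_{S'}/N'$ with $P^\lambda$ uses only your weight computation, which goes through verbatim once ``real'' is deleted --- the criterion $\langle\lambda,\beta\rangle=0\Leftrightarrow\beta\in\Span_\ZZ S'$ holds for imaginary roots as well.
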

\begin{proof}
This follows from Lemma~\ref{lemma:locP}.
\end{proof}

\begin{theorem}\label{thm:locS}
In the hypotheses of Lemma~\ref{lemma:locS}, let $X$ be a complete $P$-toroidal embedding of $P/H$. Denoting by $P^\lambda/K$ the open $P^\lambda$-orbit of $X^\lambda$ and by $\kmK$ the inverse image of $K$ in $\kmP^{S'}$, the subgroup $\kmK$ of $\kmG^{S'}$ is standard spherical of finite type, and we have:
\[
\begin{array}{lcl}
S^p(\kmG^{S'}/\kmK) &=& S^p(\kmG/\kmH)\cap S'\\[5pt]
\Sigma(\kmG^{S'}/\kmK) &=& \Sigma(\kmG/\kmH)\cap \Span_\QQ S'\\[5pt]
\Xi(\kmG^{S'}/\kmK) &=& \Xi(\kmG/\kmH)\cap \lambda^\perp.
\end{array}
\]
Moreover, the set $\Delta(\kmG^{S'}/\kmK)$ can be identified with the subset of $\Delta(\kmG/\kmH)$ of elements moved by some $\alpha\in S'$, in such a way that the identification respects the property of being moved by a simple root in $S'$ and is compatible with the map $\rho$, in the sense that
\[
\rho_{\kmG^{S'}/\kmK}(D) = \rho_{\kmG/\kmH}(D)|_{\Xi(\kmG^{S'}/\kmK)}
\]
for any $D\in \Delta(\kmG^{S'}/\kmK)$.
\end{theorem}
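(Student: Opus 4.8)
The plan is to read off each equality from the localization results of Section~\ref{s:loclambda} together with the combinatorial definitions of Definition~\ref{def:invariants}, using Theorem~\ref{thm:loclambda} and Proposition~\ref{prop:loclambda} as the main engines. First I would set the stage: by Lemma~\ref{lemma:loclambda} the variety $X^\lambda$ is $L^\lambda$-spherical, so its open $P^\lambda$-orbit $P^\lambda/K$ is an $L^\lambda$-spherical homogeneous space; since $S'\cap S_\kmP\subseteq S_\kmP$ is of finite type and $P^\lambda$ is a finite-dimensional quotient of $\kmP_{S'}$ by Lemma~\ref{lemma:locS}, the subgroup $\kmK$ is standard spherical of finite type and all invariants of $\kmG_{S'}/\kmK$ are defined in terms of $X^\lambda$. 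The equalities $\Xi(\kmG_{S'}/\kmK)=\Xi(\kmG/\kmH)\cap\Cone(\lambda)^\perp$ and $\Sigma(\kmG_{S'}/\kmK)=\Sigma(X)\cap\lambda^\perp$ are then immediate from Theorem~\ref{thm:loclambda}, recalling that $\Xi(\kmG_{S'}/\kmK)=\Xi_{B^\lambda}(X^\lambda)$ and $\Sigma(\kmG_{S'}/\kmK)=\Sigma(X^\lambda)$.

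To replace $\lambda^\perp$ by $\Span_\QQ S'$ I would use that $\lambda$ is adapted to $S'$ and that every spherical root is a non-negative combination of simple roots (Proposition~\ref{prop:SigmainNS}). Writing $\sigma=\sum_{\gamma\in S}c_\gamma\gamma$ with $c_\gamma\geq0$, the pairing $\langle\lambda,\sigma\rangle=\sum_{\gamma\in S\smallsetminus S'}c_\gamma\langle\lambda,\gamma\rangle$ is a sum of non-negative terms with $\langle\lambda,\gamma\rangle>0$, hence vanishes exactly when $c_\gamma=0$ for all $\gamma\notin S'$; since the simple roots are linearly independent this is equivalent to $\sigma\in\Span_\QQ S'$, giving the claimed description of $\Sigma(\kmG_{S'}/\kmK)$.

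The heart of the argument is the treatment of colors. By Proposition~\ref{prop:loclambda} the map $D\mapsto D^*$ is a bijection between $\Div(P^\lambda/K)^{B^\lambda}$ and the set of $D^*\in\Div(P/H)^B$ that are not $P^\lambda$-stable, and for every root $\beta$ of $P^\lambda$ one has $D$ not $U_\beta$-stable iff $D^*$ is not; applied to $\beta=-\alpha$ for $\alpha\in S'$ this shows the bijection respects the relation ``moved by $\alpha$'' for each simple root $\alpha\in S'$. The step I expect to be the main obstacle is identifying the \emph{source} set, namely that a $B$-stable prime divisor $D$ of $P/H$ fails to be $P^\lambda$-stable precisely when it is moved by some $\alpha\in S'$. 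One inclusion is easy, since $U_{-\alpha}\subseteq P^\lambda$ for every $\alpha\in S'$. For the converse I would note that $\mathrm{Stab}_{P^\lambda}(D)$ contains $B^\lambda$, hence $T$ and all positive root groups of $L^\lambda$, and argue that $P^\lambda$ is generated by $T$, the groups $U_{\pm\alpha}$ with $\alpha\in S'\cap S_\kmP$ and the groups $U_{-\alpha}$ with $\alpha\in S'$; this follows from $P^\lambda$ being the finite-dimensional quotient of $\kmP_{S'}$ and from the fact that the subalgebra $\mathfrak{p}^\lambda$ is generated by its Levi $\mathfrak{l}^\lambda$ together with the negative simple root spaces $\mathfrak{g}_{-\alpha}$, $\alpha\in S'\smallsetminus S_\kmP$ (a consequence of Kac-Moody algebras being generated by Chevalley generators). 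Thus, were $D$ stable under all $U_{-\alpha}$ with $\alpha\in S'$, its stabilizer would contain all these generators and equal $P^\lambda$, a contradiction.

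Granting this, the colors match: the bijection $D\mapsto D^*$ together with the trivial identification of the two copies of $S'\smallsetminus S_\kmP$ identifies $\Delta(\kmG_{S'}/\kmK)$ with the subset of $\Delta(\kmG/\kmH)$ moved by some $\alpha\in S'$, compatibly with the moving relation. The equality $S^p(\kmG_{S'}/\kmK)=S^p(\kmG/\kmH)\cap S'$ then follows, since for $\alpha\in S'\cap S_\kmP$ one has $U_{-\alpha}\subseteq L^\lambda$, so a color moved by $\alpha$ is automatically non-$P^\lambda$-stable, while roots in $S'\smallsetminus S_\kmP$ move their own color in both spaces. Finally I would verify $\rho$-compatibility: for $D\in\Div(P^\lambda/K)^{B^\lambda}$ the identity $\rho_{\kmG_{S'}/\kmK}(D)=\rho_{\kmG/\kmH}(D^*)|_{\Xi(\kmG_{S'}/\kmK)}$ is exactly the last assertion of Theorem~\ref{thm:loclambda}, and for a color $\alpha\in S'\smallsetminus S_\kmP$ it follows by unwinding the defining formula of Definition~\ref{def:invariants}, using that $\alpha$ moves an element $E$ of $\Div(P/H)^B$ iff it moves the corresponding $E'\in\Div(P^\lambda/K)^{B^\lambda}$ and that both $\alpha^\vee$ and $\rho(E)$ restrict compatibly to the sublattice $\Xi(\kmG_{S'}/\kmK)$.
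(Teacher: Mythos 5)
Your proposal is correct and follows exactly the route the paper takes: its own proof of Theorem~\ref{thm:locS} is the single line ``Everything follows from Proposition~\ref{prop:loclambda} and Theorem~\ref{thm:loclambda}'', and you have used precisely these two results as the engines, correctly supplying the glue the paper leaves implicit. In particular, your translation of $\lambda^\perp$ into $\Span_\QQ S'$ via Proposition~\ref{prop:SigmainNS} and the linear independence of simple roots, and your identification of the non-$P^\lambda$-stable divisors with those moved by some $\alpha\in S'$ (via the generation of $P^\lambda$ by $L^\lambda$ together with the groups $U_{-\alpha}$, $\alpha\in S'\smallsetminus S_\kmP$, coming from the Chevalley generators of $\kmP_{S'}$) are exactly the details a careful reading of the paper's one-line proof requires, and both check out.
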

\begin{proof}
Everything follows from Proposition~\ref{prop:loclambda} and Theorem~\ref{thm:loclambda}.
\end{proof}

\begin{remark}\label{rem:finitetype}
If $S'$ in the above theorem is of finite type, then $\kmG^{S'}$ and $\kmK$ are finite-dimensional groups, therefore the set of spherical roots $\Sigma(\kmG/\kmH)\cap \Span_\QQ S'$ is the set of spherical roots of a finite-dimensional spherical variety.
\end{remark}

\begin{corollary}\label{cor:multipleofsimple}
If $\sigma\in\Sigma(\kmG/\kmH)$ is a multiple of a simple root $\alpha$, then $\sigma=\alpha$ or $\sigma=2\alpha$.
\end{corollary}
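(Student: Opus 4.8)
The plan is to reduce the statement to the finite-dimensional theory of spherical varieties of semisimple rank one, where the assertion is classical. Suppose $\sigma = c\alpha$ with $c$ a positive rational number and $\alpha\in S$. The idea is that localizing at the single simple root $\alpha$ isolates $\sigma$ inside a genuinely finite-dimensional spherical homogeneous space.

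First I would put $S'=\{\alpha\}$ and observe that $S'$ is of finite type, being a single node. Choosing a one-parameter subgroup $\lambda$ of $T$ adapted to $S'$ and forming the localization, Theorem~\ref{thm:locS} produces a standard spherical subgroup $\kmK$ of finite type of $\kmG_{S'}$ with
\[
\Sigma(\kmG_{S'}/\kmK)=\Sigma(\kmG/\kmH)\cap\Span_\QQ S'=\Sigma(\kmG/\kmH)\cap\QQ\alpha.
\]
Since $\sigma=c\alpha$ lies in $\Sigma(\kmG/\kmH)$ and in $\QQ\alpha$, it belongs to $\Sigma(\kmG_{S'}/\kmK)$.

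Next I would invoke Remark~\ref{rem:finitetype}: as $S'$ is of finite type, $\kmG_{S'}$ and $\kmK$ are finite-dimensional, and $\Sigma(\kmG_{S'}/\kmK)$ is literally the set of spherical roots of the finite-dimensional spherical homogeneous space $\kmG_{S'}/\kmK$ under the action of $\kmG_{S'}$. The group $\kmG_{S'}$ has semisimple rank one with $\alpha$ as its only simple root, so every spherical root of $\kmG_{S'}/\kmK$ is a positive multiple of $\alpha$; the classical classification of spherical roots then shows that the only possibilities are $\alpha$ and $2\alpha$ (realized, for a semisimple rank-one group, by the $\SL_2/T$-type and $\SL_2/N(T)$-type homogeneous spaces respectively). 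Hence $\sigma\in\{\alpha,2\alpha\}$.

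The main obstacle is ensuring that the reduction genuinely lands in a finite-dimensional situation to which the classical list of spherical roots applies; this is exactly what Remark~\ref{rem:finitetype} guarantees, after which the semisimple rank-one input is elementary. As a cross-check one may argue directly according to the position of $\alpha$: if $\alpha\in S_\kmP$ then Corollary~\ref{cor:Sigma_L} gives $\sigma\in\Sigma_L(P/H)$, placing us immediately in the finite-dimensional theory, whereas if $\alpha\notin S_\kmP$ the localized space falls under Example~\ref{ex:SL2modT}, which forces $\sigma=\alpha$.
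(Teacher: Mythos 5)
Your proposal is correct and follows essentially the same route as the paper: localize at $S'=\{\alpha\}$ via Theorem~\ref{thm:locS}, use Remark~\ref{rem:finitetype} to land in a finite-dimensional spherical homogeneous space of semisimple rank one, and conclude from the classical classification of spherical $\SL(2)$-varieties. The paper merely makes explicit one technical step you gloss over --- passing to the quotient by the connected center $Z(G)^\circ$, which preserves spherical roots by \cite[Theorem 4.4]{Kn91}, so as to reduce honestly to $\SL(2)$-homogeneous spaces --- but this does not affect the soundness of your argument.
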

\begin{proof}
Let $G$ be a connected reductive group and $X$ a spherical $G$-homogeneous space. Then $X$ admits a geometric quotient $X/Z(G)^\circ$, which is a spherical $(G,G)$-homogeneous space. It has the same spherical roots as $X$, thanks to \cite[Theorem~5.4]{Kn91}. The corollary follows now from the well-known classification of spherical $\SL(2)$-varieties, thanks to Remark~\ref{rem:finitetype} applied to the subset of simple roots $S'=\{\alpha\}$.
\end{proof}

\section{Localization at spherical roots}

\begin{theorem}\label{thm:locSigma}
Let $Y$ be an $L$-orbit of a $P$-toroidal embedding $X$ of $P/H$. Suppose that $X'=\overline Y$ is $P$-stable and let $P/K$ be the open $P$-orbit of $X'$. Then the inverse image $\kmK$ of $K$ in $P$ is a standard spherical subgroup of finite type of $\kmG$ contained in $\kmP$, and
\[
\begin{array}{lcl}
S^p(\kmG/\kmK) &=& S^p(\kmG/\kmH)\\[5pt]
\Sigma(\kmG/\kmK) &=& \Sigma(\kmG/\kmH)\cap \Cone_Y^\perp\\[5pt]
\Xi(\kmG/\kmK) &=& \Xi(\kmG/\kmH)\cap \Cone_Y^\perp.
\end{array}
\]
Moreover, the set $\A(\kmG/\kmK)$ can be identified with the subset
\[
\bigcup_{\alpha\in \Sigma(\kmG/\kmK)} \left\{ D_\alpha^+,D_\alpha^- \right\}
\]
of $\A(\kmG/\kmH)$ in such a way that the identification respects the property of being moved by a simple root in $S$ and is compatible with the map $\rho$, in the sense that
\[
\rho_{\kmG/\kmK}(D) = \rho_{\kmG/\kmH}(D)|_{\Xi(\kmG/\kmK)}
\]
for any $D\in \A(\kmG/\kmK)$.
\end{theorem}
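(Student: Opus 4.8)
The plan is to reduce to the case where $X$ is complete, then read off the first assertion and the $\Xi$-identity directly from Proposition~\ref{prop:locSigma1}, prove the $\Sigma$-identity by a projection-and-duality argument, deduce the $S^p$-identity from $L$-toroidal localization, and finally build the identification of $\A$ by restricting colors from $X$ to $\overline Y$. Since every invariant occurring in the statement is intrinsic to the relevant homogeneous space (Corollary~\ref{cor:Vindep}) and $\Cone_Y$ is unaffected by adding orbits at infinity, I would first complete the fan of $X$ so as to assume $X$ complete, with $Y$ still an $L$-orbit carrying the same colored cone; then $X'=\overline Y$ is a complete $P$-toroidal embedding of $P/K$. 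That $P/K$ is $L$-spherical, hence that its preimage $\kmK\subseteq\kmP$ is standard spherical of finite type, is Proposition~\ref{prop:locSigma1}\ref{prop:locSigma1:toroidal}. The $\Xi$-identity is then immediate from Proposition~\ref{prop:locSigma1}\ref{prop:locSigma1:lattice}: restriction of functions identifies $\Xi(\kmG/\kmK)=\Xi_B(\overline Y)$ with $\Cone_Y^\perp\subseteq\Xi(\kmG/\kmH)$, and since $\Cone_Y^\perp$ already lies in $\Xi(\kmG/\kmH)$ this reads $\Xi(\kmG/\kmK)=\Xi(\kmG/\kmH)\cap\Cone_Y^\perp$.

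The heart of the argument is the $\Sigma$-identity. Let $\pi\colon N_B(P/H)\to N_B(P/H)/\Span_\QQ\Cone_Y=N_B(P/K)$ be the projection. Proposition~\ref{prop:locSigma1}\ref{prop:locSigma1:fan} expresses $V(P/K)$ as $\pi$ of the star of $\Cone_Y$, i.e.\ of the union of the cones $\Cone_Z\subseteq V(X)$ with $\Cone_Z\supseteq\Cone_Y$. Since $V(X)=V(P/H)$ is a convex cone (Theorem~\ref{thm:V}) subdivided by the fan $\{\Cone_Z\}_{Z\in\mathcal Y}$ of which $\Cone_Y$ is a member, the projection of this star equals $\pi(V(P/H))$: given $v\in V(P/H)$ and a relative-interior point $u$ of $\Cone_Y$, for small $t>0$ the point $(1-t)u+tv$ lies in $V(P/H)$ and, being close to the relative interior of $\Cone_Y$, lies in some $\Cone_Z\supseteq\Cone_Y$, while its image is $t\,\pi(v)$. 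Thus $V(P/K)=\pi(V(P/H))$, and dualizing the projection gives $V(P/K)^\vee=V(P/H)^\vee\cap\Cone_Y^\perp$, that is $\mathrm{cone}(\Sigma(\kmG/\kmK))=\mathrm{cone}(\Sigma(\kmG/\kmH))\cap\Cone_Y^\perp$. To pass from cones to the sets of spherical roots, I would evaluate at a relative-interior point of $\Cone_Y$: a non-negative combination of $\Sigma(\kmG/\kmH)$ landing in $\Cone_Y^\perp$ can only involve the $\sigma$ on which $\langle\cdot,\sigma\rangle$ vanishes on $\Cone_Y$, so $\mathrm{cone}(\Sigma(\kmG/\kmH))\cap\Cone_Y^\perp=\mathrm{cone}(\Sigma(\kmG/\kmH)\cap\Cone_Y^\perp)$. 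As $V(P/H)$ is full-dimensional its polar is pointed, so each element of $\Sigma(\kmG/\kmH)$ spans an extreme ray; an extreme ray stays extreme in any subcone containing it and stays primitive in the saturated sublattice $\Cone_Y^\perp$. This yields $\Sigma(\kmG/\kmK)=\Sigma(\kmG/\kmH)\cap\Cone_Y^\perp$ with no need for linear independence of the spherical roots.

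For $S^p$ I would argue locally. By Proposition~\ref{prop:inVstable} we have $\Colemb_Y=\varnothing$, so Corollary~\ref{cor:Ltoroidaln} provides an $L$-stable, $L$-toroidal neighbourhood $X_0$ of $Y$ whose open $L$-orbit is that of $P/H$; inside it $\overline Y\cap X_0$ realizes the open $L$-orbit of $P/K$ as a boundary orbit. Since no $L$-color contains $Y$, every $L$-color of $P/H$ restricts to an $L$-color of $\overline Y$, and the correspondence preserves which simple root of $L$ moves a given color (the $U_{-\alpha}$-action on $\overline Y$ is the restriction of that on $X$); this is the standard toroidal localization for $L$-spherical varieties. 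Because $S^{2a}\cup S^p\subseteq S_\kmP$ (Proposition~\ref{prop:typeofS}) and an $\alpha\in S_\kmP$ lies in $S^p$ exactly when it moves no $L$-color, the equality of the color-moving data gives $S^p(\kmG/\kmK)=S^p(\kmG/\kmH)$.

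Finally I would set up the identification of $\A$. Let $\alpha\in S\cap\Sigma(\kmG/\kmK)$; then $\alpha\in\Cone_Y^\perp$, and by the $\Sigma$-identity $\alpha\in S^a(\kmG/\kmH)$, so $\alpha$ moves exactly two colors $D_\alpha^{\pm}$ of $\kmG/\kmH$ with $\langle\rho(D_\alpha^{\pm}),\alpha\rangle=1$ (Proposition~\ref{prop:atmostone}). Since $\alpha\in\Cone_Y^\perp$, the pairing $\langle\cdot,\alpha\rangle$ vanishes on $\Cone_Y$, so a genuine divisor $D_\alpha^{\pm}\in\Div(P/H)^B$ cannot have closure containing $Y$ (otherwise $\rho(D_\alpha^{\pm})\in\Cone_Y\subseteq\alpha^\perp$, contradicting $\langle\rho(D_\alpha^{\pm}),\alpha\rangle=1$); hence it restricts to a $B$-stable prime divisor of $\overline Y$, which I would take as the corresponding color of $\kmG/\kmK$, the formal colors in $S\smallsetminus S_\kmP$ being literally shared by $\kmG/\kmH$ and $\kmG/\kmK$. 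Compatibility with $\rho$ along these restrictions is Proposition~\ref{prop:locSigma1}\ref{prop:locSigma1:lattice} (as in the $\rho$-statement of Theorem~\ref{thm:loclambda}), and the defining formula for $\rho$ on a formal color in Definition~\ref{def:invariants} then transports verbatim, since both $\alpha^\vee$ and $\rho(E)$ restrict; the relation ``moved by $\alpha$'' is preserved because $U_{-\alpha}$ acts on $\overline Y$ by restriction. Running Proposition~\ref{prop:atmostone} for $\kmG/\kmK$ shows conversely that every element of $\A(\kmG/\kmK)$ arises this way. The step I expect to be most delicate is precisely this last matching: relating the $B$-stable prime divisors of the $P$-homogeneous space $P/K$ with those of $P/H$ so that the valuations $\rho$ and the moving relation are preserved \emph{simultaneously}, since each color of $P/K$ is cut out inside $\overline Y$ through the interplay of the $L$-toroidal local structure and the residual $P^u$-action, and one must verify that the restriction is a single prime divisor carrying the correct $B$-semiinvariant.
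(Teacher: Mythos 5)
Your reduction of the first three identities essentially tracks the paper: the standard sphericity of $\kmK$ and the $\Xi$-identity are read off from Proposition~\ref{prop:locSigma1}\ref{prop:locSigma1:toroidal}--\ref{prop:locSigma1:lattice}, and your convexity/duality argument (projection of the star of $\Cone_Y$ equals $\pi(V(P/H))$, then dualize and pass to extreme rays) correctly fleshes out what the paper compresses into ``lattice and spherical roots as desired''; the $S^p$-equality is also the paper's route, where your appeal to ``standard toroidal localization'' is exactly \cite[Proposition 6.1]{Kn14} applied to an $L$-toroidal neighborhood (though your stronger intermediate claim that \emph{every} $L$-color of $P/H$ restricts to an $L$-color of $\overline Y$ is neither needed nor what that result asserts). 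Two smaller caveats: the paper never completes $X$, and your proposed fix of ``completing the fan'' is not innocent, since a completion of the $L$-fan carries no obvious extension of the $P$-action; and Proposition~\ref{prop:locSigma1}\ref{prop:locSigma1:fan} is what legitimizes the $V$-computation directly on the given embedding.

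The genuine gap is in the $\A$-identification, the very step you flag as delicate but do not resolve. Your plan restricts $D_\alpha^\pm$ \emph{downward}: from $\overline{D}\not\supseteq Y$ (your argument for this is fine) you conclude that $\overline{D}\cap\overline{Y}$ ``is'' a color of $P/K$, but on a normal variety this intersection need not be irreducible, of pure codimension one, or meet the open $P$-orbit; moreover ``moved by $\alpha$'' does not pass to the intersection --- $U_{-\alpha}$ can move $\overline D$ while stabilizing $\overline{D}\cap\overline{Y}$, so the implication you need points the wrong way; and $\rho$-compatibility is emphatically not Proposition~\ref{prop:locSigma1}\ref{prop:locSigma1:lattice}, which identifies lattices but says nothing about orders of vanishing along the intersected divisor. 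The paper argues \emph{upward}: since $\alpha\in\Sigma(\kmG/\kmK)$ (by the already-proved $\Sigma$-identity), Proposition~\ref{prop:atmostone} applied to $\kmG/\kmK$ produces the color $E\in\Div(P/K)^B$ moved by $\alpha$; Lemma~\ref{lemma:Bstable} supplies $\widetilde D\in\Div(X)^B$ containing $E$ but not $X'$; since $E$ is moved by $\alpha$ while $X'$ is $U_{-\alpha}$-stable, $\widetilde D$ is moved by $\alpha$ and hence equals $\overline D$, making $E$ a component of $\overline D\cap P/K$; then the \emph{uniqueness} of the $B$-stable divisor containing $E$ but not $X'$ controls the poles of extended $B$-eigenfunctions and yields that $\rho_{\kmG/\kmK}(E)$ is a positive multiple of $\rho_{\kmG/\kmH}(D)|_{\Xi(\kmG/\kmK)}$, normalized by the common value $1$ on $\alpha$. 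Finally, for $\alpha\in S_\kmP$ (both colors genuine $L$-colors) the paper does not reprove anything but invokes \cite[Lemma 6.2]{Kn14} on the $L$-toroidal neighborhood from Corollary~\ref{cor:Ltoroidaln} --- precisely the finite-dimensional statement your sketch would have to reconstruct. Without these ingredients the simultaneous matching of divisors, valuations and the moving relation is unestablished.
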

\begin{proof}
The homogeneous space $P/K$ is $L$-spherical, therefore the inverse image $\kmK$ of $K$ in $\kmP$ is a spherical subgroup of finite type of $\kmG$. Thanks to Proposition~\ref{prop:locSigma1}, the variety $X'$ is a $P$-toroidal embedding of $P/K$ with lattice and spherical roots as desired. 

Let us prove the assertion on $S^p$. First observe that $S^p(\kmG/\kmK)$ is equal to the set of simple roots of $L$ moving no $L$-color of $X'$. Then by Corollary~\ref{cor:Ltoroidaln} the open $L$-orbit of $X$ has in $X$ an $L$-stable neighborhood $Z$ that is an $L$-toroidal embedding and contains the open $L$-orbit of $X'$. At this point the assertion on $S^p$ follows from \cite[Proposition~6.1]{Kn14}.

The last assertion of the theorem follows if we prove an analogue of \cite[Lemma~6.2]{Kn14}, i.e.:
\begin{enumerate}
\item\label{proof:locsigmaSa} we have $S^a(\kmG/\kmH)\cap \Cone_Y^\perp=S^a(\kmG/\kmK)$;
\item\label{proof:locsigmaDcapXp} if $D\in \Div(P/H)^B$ is a color of $\kmG/\kmH$ moved by $\alpha\in S^a(\kmG/\kmH)\cap \Cone_Y^\perp$, then $\overline D\cap P/K$ contains a color $E$ of $\kmG/\kmK$ moved by $\alpha$ so that $\rho_{\kmG/\kmK}(E)$ is the restriction of $\rho_{\kmG/\kmH}(D)$ to $\Xi(\kmG/\kmK)$.
\end{enumerate}
Part (\ref{proof:locsigmaSa}) follows from $\Sigma(\kmG/\kmK) = \Sigma(\kmG/\kmH)\cap \Cone_Y^\perp$, let us prove part (\ref{proof:locsigmaDcapXp}). If $\alpha\in S_\kmP$ then $D$ is an $L$-color of $X$, and in this case we conclude using \cite[Lemma~6.2]{Kn14} applied to an $L$-toroidal neighborhood of $Y$ in $X$ thanks to Corollary~\ref{cor:Ltoroidaln}. We may suppose $\alpha\in S\smallsetminus S_\kmP$, so $\alpha$ moves in $X$ exactly one $B$-stable prime divisor, which is the closure of an element $D\in\Div(P/H)^B$, and exactly one element $E\in \Div(P/K)^B$. The two functionals associated with $D$ and $E$ are both $1$ on $\alpha(\in\Xi(\kmG/\kmK)\subseteq \Xi(\kmG/\kmH))$, in particular they are non-zero.

By Lemma~\ref{lemma:Bstable} there is an element $\widetilde D\in \Div(X)^B$ that contains $E$ but not $X'$. Since $E$ is moved by $\alpha$ but $X'$ is $P$-stable, we deduce that $\widetilde D$ is moved by $\alpha$, so $\widetilde D=\overline D$. In other words $E$ is an irreducible component of $\overline D\cap P/K$. Now, if  a $B$-semiinvariant rational function $f$ on $X'$ has no pole on $\overline D$ once extended to $X$, then $f$ has no pole on $E$. This implies that $\rho_{\kmG/\kmK}(E)$ is a positive rational multiple of $\rho_{\kmG/\kmH}(D)|_{\Xi(\kmG/\kmK)}$. Since these two functionals are both $1$ on $\alpha$, they are equal.
\end{proof}

\begin{definition}
In the assumptions of Theorem~\ref{thm:locSigma}, the variety $X'$ is called {\em a localization of $X$ at $\Sigma'=\Sigma(\kmG/\kmK)$}.
\end{definition}

\section{Other relations between spherical roots and colors}\label{s:other}

The goal of this section is to prove a crucial property of colors moved by simple roots that are spherical roots, given in Theorem~\ref{thm:values} below. In the finite-dimensional case, it was proved by Luna in \cite{Lu01} in characteristic $0$ by reducing to the case of spherical varieties of rank $2$, and in arbitrary characteristic with other methods by Knop in \cite{Kn14}.

\begin{proposition}\label{prop:commoncolor}
Two different simple roots $\alpha$ and $\beta$ move the same color of $\kmG/\kmH$ if and only if one of the two following mutually exclusive situations occur.
\begin{enumerate}
\item\label{prop:commoncolor:a} Both $\alpha$ and $\beta$ are spherical roots, and they move the same color.
\item\label{prop:commoncolor:apa} They satisfy $\langle\alpha^\vee, \beta\rangle = 0$, and $\alpha+\beta$ or $\frac12(\alpha+\beta)$ is in $\Sigma(\kmG/\kmH)$. Also, both $\alpha$ and $\beta$ are not in $\Sigma(\kmG/\kmH)$.
\end{enumerate}
Moreover, if condition (\ref{prop:commoncolor:apa}) holds, then both $\alpha$ and $\beta$ are in $S^b(\kmG/\kmH)$.
\end{proposition}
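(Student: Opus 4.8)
The plan is to argue by the type of the two simple roots, after a preliminary observation that pins down the nature of a shared color. By Definition~\ref{def:invariants}(\ref{def:invariants:moves}) a ``formal'' color $\gamma\in S\smallsetminus S_\kmP$ is moved only by $\gamma$ itself, so two distinct simple roots can never share such a color; hence if $\alpha$ and $\beta$ move a common color $D$, then $D\in\Div(P/H)^B$. Moreover, if $\alpha\in S\smallsetminus S_\kmP$ moves a divisor in $\Div(P/H)^B$ then it also moves the formal color $\alpha$, so it moves two colors and is a spherical root by Proposition~\ref{prop:atmostone}; equivalently, a simple root of type $(b)$ or $(2a)$ that moves some $D\in\Div(P/H)^B$ must lie in $S_\kmP$.

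First I would treat the forward direction. Assume $\alpha$ and $\beta$ move the common color $D\in\Div(P/H)^B$; each of them has type $(a)$, $(2a)$ or $(b)$, since type $(p)$ moves no color. Using the formulae of Proposition~\ref{prop:typeofS} together with $\langle\rho(D),\gamma\rangle=1$ for a spherical root $\gamma$ moving $D$ (Proposition~\ref{prop:atmostone}) and the inequality $\langle\delta^\vee,\gamma\rangle\le 0$ for distinct simple roots, I would rule out every mixed combination. For instance, if $\alpha$ has type $(a)$ and $\beta$ has type $(b)$, then $\rho(D)=\beta^\vee|_{\Xi(\kmG/\kmH)}$ and $1=\langle\rho(D),\alpha\rangle=\langle\beta^\vee,\alpha\rangle\le 0$, a contradiction; the combinations involving a type $(2a)$ root are excluded similarly by evaluating the two expressions for $\rho(D)$ at $2\beta\in\Xi(\kmG/\kmH)$, which forces a positive value of $\langle\alpha^\vee,\beta\rangle$. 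This leaves exactly two possibilities: both roots have type $(a)$, which is precisely case (\ref{prop:commoncolor:a}), or both have type $(b)$.

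In the type $(b)$ case both $\alpha,\beta$ lie in $S_\kmP$ by the preliminary observation, so they are simple roots of the reductive group $L$ and $D$ is an $L$-color of the spherical $L$-variety $P/H$. Here I would invoke the finite-dimensional theory of colors (cf.\ \cite{Lu97}): two simple roots of $L$ move a common $L$-color exactly when they are orthogonal and $\alpha+\beta$ or $\tfrac12(\alpha+\beta)$ is an $L$-spherical root, in which case both are of type $(b)$. Transferring $\Sigma_L(P/H)$ to $\Sigma(\kmG/\kmH)$ via Corollary~\ref{cor:Sigma_L} then yields case (\ref{prop:commoncolor:apa}) and the final assertion. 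For the converse, case (\ref{prop:commoncolor:a}) produces a common color by hypothesis; in case (\ref{prop:commoncolor:apa}) the generalized Cartan matrix axiom turns $\langle\alpha^\vee,\beta\rangle=0$ into $\langle\beta^\vee,\alpha\rangle=0$, so $\{\alpha,\beta\}$ is of type $A_1\times A_1$, and since neither $\alpha+\beta$ (a sum of orthogonal simple roots) nor $\tfrac12(\alpha+\beta)$ is a root, Theorem~\ref{thm:NS} forces the relevant spherical root into $\Sigma_L(P/H)$; the finite-dimensional $A_1\times A_1$ picture then gives a single common $L$-color moved by $\alpha$ and $\beta$, both of type $(b)$. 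Finally, the two cases are mutually exclusive because case (\ref{prop:commoncolor:a}) places both roots in $S^a(\kmG/\kmH)$ while case (\ref{prop:commoncolor:apa}) places them in $S^b(\kmG/\kmH)$, and these sets are disjoint.

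The main subtlety is the type $(b)$ case. The equalities $\rho(D)=\alpha^\vee|_{\Xi(\kmG/\kmH)}=\beta^\vee|_{\Xi(\kmG/\kmH)}$ are \emph{not} enough to conclude: in an $A_2$ (or $B_2$, $G_2$) configuration the two colors moved by $\alpha$ and $\beta$ carry the same $\rho$-value and yet are distinct divisors, so coincidence of $\rho$ does not detect coincidence of colors. Separating the genuinely shared color of the orthogonal $A_1\times A_1$ case from the equal-$\rho$-but-distinct colors of the other rank-two configurations is exactly what requires the finite-dimensional classification, and this is where I expect the real work to lie. The reduction making that input available is the observation that a type $(b)$ root moving a true divisor lies in $S_\kmP$, so that the whole discussion takes place inside the finite-dimensional Levi $L$.
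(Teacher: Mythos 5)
Your proof is correct, and it splits into a part that matches the paper and a part that genuinely diverges. The forward direction is essentially the paper's argument: the paper likewise observes that a shared color must lie in $\Div(P/H)^B$, that a non-spherical simple root moving such a divisor lies in $S_\kmP$ (so $\rho(D)$ is a positive rational multiple of the coroot, by Proposition~\ref{prop:typeofS}), kills the mixed case by pairing $\rho(D)$ against the spherical root via Proposition~\ref{prop:atmostone}, and then defers the residual case to \cite[Proposition 3.2]{Lu01} applied to the $L$-spherical variety $P/H$; your explicit elimination of the combinations involving a type $(2a)$ root is subsumed in that citation but is a harmless refinement. Where you take a different route is the converse and the final assertion: the paper localizes at $S'=\{\alpha,\beta\}$ using Remark~\ref{rem:finitetype} and Theorem~\ref{thm:locS}, which produces a finite-dimensional spherical $\kmG_{S'}$-homogeneous space to which \cite[Proposition 3.2]{Lu01} applies directly, with no need to know where $\alpha$ and $\beta$ sit relative to $S_\kmP$; you instead note that $\langle\alpha^\vee,\beta\rangle=0$ forces $\alpha+\beta$ not to be a root (and $\frac12(\alpha+\beta)$ is not in the root lattice), so Theorem~\ref{thm:NS} places the relevant spherical root in $\Sigma_L(P/H)$, whence $\alpha,\beta\in S_\kmP$ and the whole configuration lives inside the Levi $L$, where the classical statement finishes. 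Both routes are valid and non-circular (Theorem~\ref{thm:NS} and Corollary~\ref{cor:Sigma_L} precede this proposition, while Theorem~\ref{thm:values}, which quotes it, comes after); yours trades the localization machinery for the structural Theorem~\ref{thm:NS} and yields $\alpha,\beta\in S_\kmP$ as a byproduct, whereas the paper's localization is the uniform template reused elsewhere (e.g., Corollary~\ref{cor:multipleofsimple}) and needs no preliminary placement of the roots. One small caveat: your quoted finite-dimensional dichotomy (``move a common $L$-color exactly when orthogonal\dots'') omits the case of two simple roots that are $L$-spherical roots sharing a color; in your type-$(b)$ setting this is excluded because, via Corollary~\ref{cor:Sigma_L}, neither root lies in $\Sigma_L(P/H)$, but this exclusion should be stated explicitly.
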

\begin{proof}
We prove the ``only if'' part: supposing that condition (\ref{prop:commoncolor:a}) is not satisfied, we show that (\ref{prop:commoncolor:apa}) holds.

If $\alpha$ and $\beta$ move the same color $D$ then $D\in\Div(P/H)^B$. Suppose that $\alpha\notin\Sigma(\kmG/\kmH)$, then it is either in $S^{2a}(\kmG/\kmH)$ or in $S^{b}(\kmG/\kmH)$. In either case the root $\alpha$ is in $S_\kmP$. Then $D$ is not $L$-stable so it is an $L$-color of $P/H$, and $\rho(D)$ is a positive rational multiple of $\alpha^\vee|_{\Xi(\kmG/\kmH)}$ by Proposition~\ref{prop:typeofS}. Suppose now $\beta\in\Sigma(\kmG/\kmH)$: then $\langle\rho(D),\beta\rangle$ is equal to $1$ by Proposition~\ref{prop:atmostone} and is equal to a positive multiple of $\langle\alpha^\vee,\beta\rangle$, which is impossible. Hence $\beta\notin\Sigma(\kmG/\kmH)$, and for the same reasons as $\alpha$ we conclude $\beta\in S_\kmP$. Then the rest follows from \cite[Proposition~3.2]{Lu01} applied to the $L$-spherical variety $P/H$.

For sake of completeness, we recall that the argument in loc.cit.\ is essentially the following. First, we may apply Remark~\ref{rem:finitetype} to the subset of simple roots $S'=\{\alpha,\beta\}$ of $S_\kmP$. Then one reduces the problem, in a way similar to the proof of Corollary~\ref{cor:multipleofsimple}, to the case of a homogeneous space that is spherical under the action of a semisimple group of rank $2$, and such homogeneous spaces are well-known. For more details see also \cite[Proof of Proposition~5.4]{Kn14}.

The ``if'' part amounts to the fact that if condition (\ref{prop:commoncolor:apa}) is met, then $\alpha$ and $\beta$ move the same color. Here $\langle\alpha^\vee,\beta\rangle=0$, so $S'=\{\alpha,\beta\}$ is of finite type, and we conclude as before applying Remark~\ref{rem:finitetype} together with \cite[Proposition~3.2]{Lu01}.

The last assertion and the fact that (\ref{prop:commoncolor:a}) and (\ref{prop:commoncolor:apa}) are mutually exclusive follow in the same way.
\end{proof}

\begin{definition}
A set of spherical roots $\Sigma'\subseteq \Sigma(\kmG/\kmH)$ is called {\em a set of neighbors} if there exists $v\in V(\kmG/\kmH)$ such that
\[
\Sigma'=\{ \sigma\in\Sigma(\kmG/\kmH) \;|\; v(\sigma)=0 \}.
\] 
Two spherical roots $\sigma,\tau\in\Sigma(\kmG/\kmH)$ are {\em neighbors} if $\{\sigma,\tau\}$ is a set of neighbors.
\end{definition}

Notice that any two spherical roots that are multiples of simple roots are neighbors: this is shown exactly as in \cite[Lemma~6.4]{Kn14}, thanks to Proposition~\ref{prop:SigmainNS}.

\begin{remark}\label{rem:locSigmaexist}
Let $X$ be a complete $P$-toroidal embedding of $P/H$. We remark that localizations at $\Sigma'$ of $X$ exist for any set of neighbors $\Sigma'\subseteq \Sigma(\kmG/\kmH)$. Indeed, the set $F=V(X)\cap\{\sigma=0\text{ for all } \sigma\in \Sigma' \}$ is a face of $V(X)$. Then it is enough to consider $X'=\overline Y$ where $Y$ is an $L$-orbit such that $\Cone_Y$ is contained in $F$ and has the same dimension of $F$. Notice that the rank of such localization is $\Rank X - \dim F$.
\end{remark}

We prepare for the proof of the main result of this section, Theorem~\ref{thm:values}, with three lemmas.

\begin{lemma}\label{lemma:rank1}
Let $G$ be a connected reductive group and $M$ be a spherical $G$-module of rank $1$. If $M$ has dimension $1$ then it has exactly one $G$-stable prime divisor and no $G$-color, otherwise it has exactly one $G$-color and no $G$-stable prime divisor. 
\end{lemma}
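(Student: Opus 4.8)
The plan is to reduce the entire statement to a single dichotomy: show that (regardless of $\dim M$) the variety $M$ has exactly one $B$-stable prime divisor $D$, and then decide whether $D$ is $G$-stable or a color purely in terms of $\dim M$. Since $B\subseteq G$, every $G$-stable prime divisor is automatically $B$-stable, so $\Div(M)^G\subseteq\{D\}$ and the whole lemma follows once this dichotomy is pinned down.

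First I would prove uniqueness of the $B$-stable prime divisor. Because $M$ is a $G$-module, $\CC[M]=\mathrm{Sym}(M^*)$ is a polynomial ring, hence factorial, so every $B$-stable prime divisor has the form $\{f=0\}$ for an irreducible $B$-eigenfunction $f$ (any irreducible factor of a $B$-eigenfunction is again one, since $\CC[M]$ is factorial and $B$ is connected, so it fixes each factor's line). Sphericity forces $\CC(M)^B=\CC$, hence every $B$-eigenspace of $\CC[M]$ is at most one-dimensional. All $B$-weights lie in the rank-one lattice $\Xi_B(M)=\ZZ\gamma$, so if $f_1\neq f_2$ were two distinct irreducible $B$-eigenfunctions, of weights $a_1\gamma$ and $a_2\gamma$, then $f_1^{a_2}$ and $f_2^{a_1}$ would carry the same weight $a_1a_2\gamma$ and thus be proportional, contradicting unique factorization. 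The rank-one hypothesis also guarantees a nonconstant eigenfunction exists, so there is exactly one such $f$, of a necessarily primitive weight $\lambda$; it defines the unique $D\in\Div(M)^B$, and the weight monoid of $\CC[M]$ is $\ZZ_{\geq0}\lambda$.

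Next I would identify when $D$ is $G$-stable. The divisor $D=\{f=0\}$ is $G$-stable exactly when $\CC f$ is a $G$-submodule, i.e.\ when the irreducible $G$-module generated by the highest weight vector $f$ is one-dimensional, which happens if and only if $\lambda$ is a character of $G$. Here I would use the multiplicity-free decomposition afforded by sphericity, $\CC[M]=\bigoplus_{n\geq0}V_{n\lambda}$: if $\lambda$ is a $G$-character then each $V_{n\lambda}$ is one-dimensional, the power $f^n$ spans $V_{n\lambda}$, so $\CC[M]=\CC[f]$ is a polynomial ring in one variable and $\dim M=1$. Conversely, if $\dim M=1$ then $M=\CC$ with $G$ acting through a character, and $\lambda$ is visibly a $G$-character. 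Hence $D$ is $G$-stable if and only if $\dim M=1$.

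Assembling the two parts: there is always exactly one $B$-stable prime divisor $D$. If $\dim M=1$ then $D=\{0\}$ is $G$-stable, so $M$ has one $G$-stable prime divisor and no color; if $\dim M>1$ then $D$ is not $G$-stable, so $M$ has no $G$-stable prime divisor and exactly one color. The step I expect to be the main obstacle is the uniqueness of the $B$-stable prime divisor: it is the only place where rank one enters essentially, and it requires combining factoriality of $\CC[M]$ with the one-dimensionality of $B$-eigenspaces, together with the observation that rank one confines all eigenfunction weights to the single line $\ZZ\gamma$ so that the proportionality argument applies.
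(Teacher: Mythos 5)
Your proof is correct, and its first half is essentially the paper's own argument written out in full: the paper likewise deduces uniqueness of the $B$-stable prime divisor from factoriality of $\CC[M]$ together with the rank-one hypothesis (its one-sentence version, ``considering once again that $M$ has rank $1$, there can only be one $B_G$-stable prime divisor,'' compresses exactly your combination of unique factorization, one-dimensionality of $B$-eigenspaces from sphericity, and the confinement of all eigenfunction weights to $\ZZ\gamma$). Where you genuinely diverge is in settling the dichotomy. The paper argues concretely: for $\dim M=1$ it simply observes that $\{0\}$ is the only $B_G$-stable prime divisor and is $G$-stable; for $\dim M>1$ it first notes that $(G,G)$ acts nontrivially (a spherical torus module of rank $1$ is one-dimensional), so the unique divisor is the zero set of a highest weight vector of the nontrivial module $M^*$, a linear hyperplane that $G$ visibly moves --- hence a color. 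You instead characterize $G$-stability of $D=\{f=0\}$ abstractly by whether $\lambda$ extends to a character of $G$, and close the loop through the multiplicity-free decomposition $\CC[M]=\bigoplus_{n\geq 0}V_{n\lambda}$: a $G$-character $\lambda$ forces every $V_{n\lambda}$ to be the line $\CC f^n$, so $\CC[M]=\CC[f]$ and $\dim M=1$. Your route never mentions $M^*$ or the derived group and is more self-contained; the paper's is shorter and has the benefit of identifying the color explicitly.

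One small patch is needed in your uniqueness step: you tacitly assume $a_1,a_2>0$, and if the two weights had opposite signs, say $a_1>0>a_2$, then $f_1^{a_2}$ is not a regular function and the proportionality argument as stated does not apply. The fix is one line: in that case $f_1^{-a_2}f_2^{a_1}$ is a regular $B$-eigenfunction of weight $(-a_2)a_1\gamma+a_1a_2\gamma=0$, hence $B$-invariant, hence constant by sphericity, and nonzero; this would make $f_1$ and $f_2$ units of $\CC[M]$, contradicting their irreducibility. (The case $a_i=0$ is excluded for the same reason: a weight-zero eigenfunction is constant.) With that line added, your argument is complete.
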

\begin{proof}
Let us denote by $B_G$ a Borel subgroup of $G$. If $M$ has dimension $1$ then $\{0\}$ is the only $B_G$-stable prime divisor of $M$ and it is $G$-stable. If $M$ has dimension greater than $1$ but only rank $1$, then $(G,G)$ acts non-trivially on it. On the other hand $M$ is factorial, so any $B_G$-stable prime divisor $D$ has a global equation. Considering once again that $M$ has rank $1$, there can only be one $B_G$-stable prime divisor, which is a $G$-color and precisely the zero set of a highest weight vector of $M^*$.
\end{proof}

\begin{lemma}\label{lemma:modules}
Let $M$ be a spherical module under the action of a connected reductive group $G\subseteq\GL(M)$.
\begin{enumerate}
\item\label{lemma:modules:nosphroots} If $\Sigma_G(M)=\varnothing$, then $(G,G)$ is a product of special linear groups and of symplectic groups, and $M$ is the direct sum of the corresponding defining representations or their duals.
\item If $M$ has rank $2$ and $|\Sigma_G(M)|=1$, then $M$ is an irreducible $G$-module. In addition, suppose that $M$ has one $G$-stable prime divisor $E$, and denote by $\sigma$ the element of $\Sigma_G(M)$. Then $\langle\rho(E),\sigma\rangle=-1$, and $\sigma$ is a linear combination with positive coefficients of all simple roots of $G$.
\end{enumerate} 
\end{lemma}
\begin{proof}
We use the classification of spherical modules, and refer for this to \cite{Kn98}. First, let $n$ be maximal such that there exist decompositions $G=G_1\times\ldots\times G_n$ and $M=M_1\oplus\ldots\oplus M_n$ in such a way that for all $i$ the factor $G_i$ is a non-trivial group and acts trivially on $M_j$ for all $j\neq i$. We remark that the factors $G_i$ need not be simple, and the summands $M_i$ need not be irreducible $G_i$-modules.

For all $i\in\{1,\ldots,n\}$ the $G_i$-module $M_i$ is spherical, and it appears in the list of modules of \cite[Section~5]{Kn98} up to a certain equivalence. Precisely, denote by $\overline{G_i}$ the product, inside $\GL(M_i)$, of $G_i$ and a maximal torus of $\GL(M_i)^G_i$. Then, by \cite[Theorem~5.2]{Kn98}, the group $\overline{G_i}$ and the module $M_i$ appear in \cite[Section~5]{Kn98} up to conjugating $\overline{G_i}$ by an element of $\GL(M_i)$.

The ranks of $M_1,\ldots,M_n$ add up to the rank of $M$, and $\Sigma_G(M)=\Sigma_{G_1}(M_1)\cup\ldots\cup \Sigma_{G_n}(M_n)$. At this point the lemma follows by inspection on the list of \cite[Section~5]{Kn98}.
\end{proof}

\begin{lemma}\label{lemma:affine}
Suppose that $H$ is not contained in any proper parabolic subgroup of $P$, that $P/H$ has rank $2$, exactly one $L$-stable prime divisor $E$ and exactly one $L$-color $F$. If $|\Sigma_L(P/H)|=0$ then $H$ contains a Levi subgroup of $P$ and $P/H$ is $L$-equivariantly isomorphic to a spherical $L$-module of the form $M\oplus \CC$ as in Example~\ref{ex:CpCn} with $\dim M>1$. If $|\Sigma_L(P/H)|=1$, then one of the following is true:
\begin{enumerate}
\item\label{lemma:affine:zero} the element of $\Sigma_L(P/H)$ is $0$ on $\rho(E)$, or
\item\label{lemma:affine:irred} the group $H$ contains a Levi subgroup of $P$ and $P/H$ is $L$-equivariantly isomorphic to an irreducible spherical $L$-module of dimension $>1$.
\end{enumerate} 
\end{lemma}
\begin{proof}
We may assume that $H$ has a Levi subgroup $K$ contained in $L$. We apply Lemma~\ref{lemma:PmodH} and obtain that $H^u\subseteq P^u$ and $P/H$ is $L$-equivariantly isomorphic to the smooth affine variety $L\times^{K}P^u/H^u$. The fiber $P^u/H^u$ over $eK$ of the corresponding map $P/H\to L/K$ is a $K$-module, spherical under the action of $K^\circ$.

We proceed to prove the lemma under the assumption that $|\Sigma_L(P/H)|=0$. This implies that $\Sigma_L(L/K)=\varnothing$, i.e.\ $K$ contains a maximal unipotent subgroup of $L$. But $K$ is reductive, so it must contain $(L,L)$ and hence the quotient $L/K$ is a torus. 

This implies that we may extend the action of $K$ on $P^u/H^u$ to an action of $L$, so the $L$-equivariant vector bundle $L\times^{K}P^u/H^u$ is trivial. In other words $P/H$ is $L$-equivariantly isomorphic to $L/K \times P^u/H^u$. Then the ranks of $L/K$ and of $P^u/H^u$ as spherical $L$-varieties add up to the rank of $P/H$, the spherical module $P^u/H^u$ has no spherical root for the action of $L$, it has exactly one $L$-stable prime divisor and one $L$-color.

If $L/K$ has rank $2$ or $1$ then $P^u/H^u$ has rank $0$ (i.e.\ is trivial) or $1$. The first possibility is excluded because the module $\{0\}$ has no $L$-color nor any $L$-stable prime divisor, and the possibility of $P^u/H^u$ having rank $1$ is excluded by Lemma~\ref{lemma:rank1}. Therefore $L/K$ has rank $0$, so $L=K$ and $P/H\cong P^u/H^u$.

Lemma~\ref{lemma:modules} implies that  $P^u/H^u$ under the action of $(L,L)$ is a sum of standard representations (or their duals) of special linear groups and of symplectic groups. Each such summand has rank $1$ as a spherical $L$-variety; if it has dimension $1$ then it has one $L$-stable prime divisor and no $L$-color, otherwise it has one $L$-color and no $L$-stable prime divisors. It follows that $P^u/H^u\cong M\oplus\CC$ with $\dim M>1$.

Let us examine now the case $|\Sigma_L(P/H)|=1$, and denote by $\sigma$ the element of $\Sigma_L(P/H)$. We discuss separately the cases where $\Sigma_L(L/K)$ is empty and where it is not.

If $\Sigma_L(L/K)$ is empty then $L/K$ is again a torus, the action of $K$ on $P^u/H^u$ extends to an action of $L$, and $P^u/H^u$ has again exactly one $L$-stable prime divisor and one $L$-color, but this time $|\Sigma_L(P^u/H^u)|=|\Sigma_L(P/H)|=1$. If $L/K$ has rank $2$ or $1$ then $P^u/H^u$ has rank $0$ or $1$, impossible by the same argument as above. So $L=K$ and $P/H\cong P^u/H^u$. In this case the fact that $\Sigma_L(P^u/H^u)\neq \varnothing$ implies that $(L,L)$ acts non-trivially on $P^u/H^u$, which has then dimension $>1$. Since $P^u/H^u$ has rank $2$ and $|\Sigma_L(P^u/H^u)|=1$, Lemma~\ref{lemma:modules} implies that $P^u/H^u$ is irreducible. In this case the statement (\ref{lemma:affine:irred}) of the lemma holds.

Finally, suppose that $\Sigma_L(L/K)$ is not empty. Then the colored subspace associated with the $L$-equivariant map $P/H\to L/K$ has first entry (the vector subspace) equal to $\sigma^\perp$. Moreover, $L/K$ has no $L$-stable prime divisor, so the pull-back of any $B$-semiinvariant function from $L/K$ to $P/H$ has neither a zero nor a pole on $E$. In other words $\langle\rho(E),\sigma\rangle =0$, and the statement (\ref{lemma:affine:zero}) of the lemma holds.
\end{proof}

\begin{theorem}\label{thm:values}
Let $\alpha\in S^a(\kmG/\kmH)$ and $D\in \A(\alpha)$. If $\sigma\in \Sigma(\kmG/\kmH)$ is a neighbor of $\alpha$ and $\langle\rho(D),\sigma\rangle > 0$ then $\sigma\in S^a(\kmG/\kmH)$ and $D\in\A(\sigma)$.
\end{theorem}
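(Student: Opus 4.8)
The plan is to use the neighbour hypothesis to localize down to the rank-two case $\Sigma(\kmG/\kmH)=\{\alpha,\sigma\}$, and then to identify the possible rank-two configurations through an affine local model.

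First I would reduce the rank. Since $\sigma$ is a neighbour of $\alpha$, the pair $\{\alpha,\sigma\}$ is a set of neighbours, so by Remark~\ref{rem:locSigmaexist} a complete $P$-toroidal embedding $X$ of $P/H$ has a localization $X'=\overline Y$ in $\Sigma'=\{\alpha,\sigma\}$. Let $\kmK$ be the standard spherical subgroup of finite type attached to the open $P$-orbit of $X'$. By Theorem~\ref{thm:locSigma} one has $\Sigma(\kmG/\kmK)=\Sigma(\kmG/\kmH)\cap\Cone_Y^\perp=\{\alpha,\sigma\}$ and $\Xi(\kmG/\kmK)=\Xi(\kmG/\kmH)\cap\Cone_Y^\perp$, a lattice containing both $\alpha$ and $\sigma$; moreover $\alpha\in S^a(\kmG/\kmK)$, the colour $D$ is identified with an element of $\A(\kmG/\kmK,\alpha)$ compatibly with the property of being moved by a simple root, and $\rho_{\kmG/\kmK}(D)=\rho_{\kmG/\kmH}(D)|_{\Xi(\kmG/\kmK)}$. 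As $\alpha,\sigma\in\Xi(\kmG/\kmK)$, the values $\langle\rho(D),\alpha\rangle=1$ (Proposition~\ref{prop:atmostone}) and $\langle\rho(D),\sigma\rangle>0$ are unchanged, and both the simplicity of $\sigma$ and the relation ``$\sigma$ moves $D$'' transfer back to $\kmG/\kmH$. Hence I may assume from now on that $\Sigma(\kmG/\kmH)=\{\alpha,\sigma\}$.

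Next I would dispose of the easy half. If $\sigma$ is itself a simple root then $\sigma\in S\cap\Sigma(\kmG/\kmH)=S^a(\kmG/\kmH)$, and when $D\in\Div(P/H)^B$ the last assertion of Proposition~\ref{prop:atmostone} shows that any $B$-stable prime divisor not moved by $\sigma$ is $\le0$ on $\sigma$; since $\langle\rho(D),\sigma\rangle>0$, the root $\sigma$ must move $D$, i.e.\ $D\in\A(\sigma)$. Thus it remains only to prove two things: that $\sigma$ cannot be a non-simple spherical root, and that the remaining possibility $D=D_\alpha^-$ with $\alpha\in S\smallsetminus S_\kmP$ (so that $D$ is the formal colour $\alpha$, moved by $\alpha$ alone) cannot satisfy $\langle\rho(D),\sigma\rangle>0$; equivalently $\langle\rho(D_\alpha^-),\sigma\rangle\le0$ in that case.

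To settle both points I would pass to an affine local model. Taking the closure of an $L$-orbit whose cone spans a facet of $V(P/H)$, respectively localizing at a closed $P$-orbit as in Lemma~\ref{lemma:loclambdaclosed}, produces a homogeneous space meeting the hypotheses of Lemma~\ref{lemma:affine}: a rank-two spherical space with exactly one $L$-stable prime divisor and one $L$-color. Lemma~\ref{lemma:affine} together with the classification of spherical modules then identifies its affine model $L$-equivariantly with a module $M\oplus\CC$ as in Example~\ref{ex:CpCn}, and the simplicity of $\alpha$ forces the $M$-factor to be two-dimensional, so that $\alpha$ is the spherical root coming from the $L$-color; on such explicit models the set $\Sigma$ and all the pairings $\langle\rho(\cdot),\cdot\rangle$ are computed directly as in Examples~\ref{ex:Cn} and~\ref{ex:CpCn}. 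Reading off these values shows that a spherical root pairing strictly positively with $\rho(D)$ is necessarily the simple root attached to the $L$-stable divisor and moves $D$, whereas for the formal colour the corresponding value is $\le0$; this is exactly the content of \cite[Proposition 6.5]{Kn14} when $\{\alpha\}\cup\Supp\sigma$ is of finite type, accessed through Theorem~\ref{thm:locS} and Remark~\ref{rem:finitetype}.

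The main obstacle is precisely this last reduction. In the Kac-Moody setting $\{\alpha\}\cup\Supp\sigma$ need not be of finite type, so one cannot simply localize at simple roots and quote the finite-dimensional statement; the point that makes the argument go through is that the affine model attached to a closed orbit is a genuine spherical module for a finite-dimensional reductive group, whence its spherical roots belong to the finite classification and $\sigma$ is forced to be simple. Carrying this out rigorously --- matching the hypotheses of Lemma~\ref{lemma:affine}, and tracking the identification of $D$ and the value $\langle\rho(D),\sigma\rangle$ through the localizations of Theorems~\ref{thm:loclambda} and~\ref{thm:locSigma}, including the passage between $\Sigma(\kmG/\kmH)$ and $\Sigma_L(P/H)$ of Corollary~\ref{cor:Sigma_L} --- is where the real work lies.
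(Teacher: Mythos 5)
Your opening reduction --- localizing in $\{\alpha,\sigma\}$ via Remark~\ref{rem:locSigmaexist} and Theorem~\ref{thm:locSigma} to assume $\Sigma(\kmG/\kmH)=\{\alpha,\sigma\}$ and rank $2$, then settling the case where $\sigma$ is simple and $D\in\Div(P/H)^B$ by the last statement of Proposition~\ref{prop:atmostone} --- matches the paper's proof exactly. But the two remaining points, which are the actual content of the theorem, are not proved: you declare that ``passing to an affine local model'' and invoking Lemma~\ref{lemma:affine} together with the classification of spherical modules settles them, and you then concede in your final paragraph that matching the hypotheses of Lemma~\ref{lemma:affine} ``is where the real work lies.'' That is precisely the gap. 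Those hypotheses ($H$ contained in no proper parabolic of $P$, {\em exactly one} $L$-stable prime divisor and {\em exactly one} $L$-color) do not come for free from any localization: taking the closure of an orbit over a facet of $V(P/H)$ lowers the rank to $1$, not to the rank-$2$ situation of the lemma, and Lemma~\ref{lemma:loclambdaclosed} concerns closed orbits, which have rank $0$. In the paper's first branch ($D$ the formal color, $\langle\rho(E),\sigma\rangle\leq0$) the hypotheses are established by a genuinely nontrivial argument: one introduces $\gamma=\sigma-\langle\rho(E),\sigma\rangle\alpha$, checks $\langle\rho(E),\gamma\rangle=0$, finds a single color $F$ with $\gamma$ positive on $\rho(F)$, and deduces from the uniqueness part of Theorem~\ref{thm:morphisms} (applied to the colored subspace $(N(P/H),\{F\})$ of the map $P/H\to P/P$) that $\Delta_L(P/H)=\{F\}$. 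Nothing in your sketch produces this. Also, your claim that ``the simplicity of $\alpha$ forces the $M$-factor to be two-dimensional'' is unsupported and is not how the contradiction is reached: in the module case the paper concludes by a weight computation, namely $\sigma\in\beta+\Span_{\ZZ_{\geq0}}S_\kmP$ forces $\langle\alpha^\vee,\sigma\rangle\leq0$ against positivity, or $\Xi_B(P/H)\subseteq\Span_\ZZ\{\beta,S_\kmP\}$ fails to contain $\alpha$.

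More seriously, the second branch ($D\in\Div(P/H)^B$ and $\sigma$ not simple) cannot be handled by your affine model at all. There $D$ turns out to be $L$-stable (once one reduces to $\alpha\notin S_\kmP$) while $\Delta_L(P/H)$ may contain many colors, so the ``one $L$-stable divisor, one $L$-color'' hypothesis of Lemma~\ref{lemma:affine} fails and $P/H$ is in general not a spherical module; the classification of spherical modules therefore says nothing, and your assertion that it ``forces $\sigma$ to be simple'' has no basis in this branch. The paper argues completely differently here: since $\rho(D)$ is positive on both spherical roots, $(N(P/H),\varnothing)$ or $(N(P/H),\{D\})$ is a $P$-equivariant colored subspace, giving by Theorem~\ref{thm:morphisms} a morphism $\varphi\colon P/H\to P/Q$ with $Q$ a parabolic subgroup of $P$; a careful bookkeeping of colors yields $S^p(\kmG/\kmQ)=S^p(\kmG/\kmH)$; localizing in $\{\sigma\}$ then produces $P/K$ with $\Sigma(\kmG/\kmK)=\{\sigma\}$ and with no $B$-stable prime divisor mapped dominantly onto $P/Q$, so that $N(P/K)$ would have to be generated by $V(P/K)=\{\langle-,\sigma\rangle\leq0\}$, which is a half-space --- a contradiction. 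None of this machinery appears in your proposal, so the proof is incomplete exactly at its core, in both branches.
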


The rather long proof of the theorem is based on a reduction to the case where $\kmG/\kmH$ has rank $2$. We discuss this case first, in the two following lemmas.

\begin{lemma}\label{lemma:values1}
In the hypotheses of Theorem~\ref{thm:values}, assume in addition that $\kmG/\kmH$ has rank $2$, that $D\in\Div(P/H)^B$, and that $H$ is not contained in any proper parabolic subgroup of $P$. Then Theorem~\ref{thm:values} holds.
\end{lemma}
\begin{proof}
The proof of this lemma is similar to the one of \cite[Proposition~6.5]{Kn14}. We may assume that $\sigma\neq\alpha$, which implies that $\Sigma(\kmG/\kmH)=\{\alpha,\sigma\}$ because $\kmG/\kmH$ has rank $2$.

If $\sigma$ is a simple root then it moves $D$, because otherwise we would have $\langle\rho(D),\sigma\rangle \leq 0$ thanks to Proposition~\ref{prop:atmostone}, contradicting our assumptions. Being $\sigma$ a simple root that moves $D$, Theorem~\ref{thm:values} holds.

Therefore we assume that $\sigma$ is not a simple root.

Denote by $E$ the other color moved by $\alpha$ and suppose that $D$ or $E$ is also moved by another simple root $\beta$. By Proposition~\ref{prop:commoncolor} the simple root $\beta$ is a spherical root, so $\beta=\sigma$, contradicting the assumption that $\sigma$ is not a simple root.

Therefore we assume also that $\alpha$ is the only simple root moving $D$ or $E$. Our goal is to show that our assumptions now lead to a contradiction.

At this point $\alpha$ is the only simple root of $\kmG$ moving two colors. Since $\rho(D)$ is positive both on $\alpha$ and on $\sigma$, the convex cone generated by $\rho(D)$ together with $V(P/H)$ is the whole $N(P/H)$. In other words, thanks to Remark~\ref{rem:Pequiv}, the vector space $N(P/H)$ is the first component of a $P$-equivariant colored subspace, equal to either $(N(P/H),\varnothing)$ or $(N(P/H),\{D\})$ depending on whether $D$ is also $L$-invariant or not. By Theorem~\ref{thm:morphisms} this colored subspace is associated with a $P$-equivariant morphism $\varphi\colon P/H\to P/Q$. Moreover the $L$-spherical variety $P/Q$ has rank $0$, i.e.\ it is a complete $L$-homogeneous space and $Q$ is a parabolic subgroup of $P$.

We have assumed that $H$ is not contained in any proper parabolic subgroup of $P$, so $Q=P$ and the map $\varphi$ is the map to a single point. As a consequence, all $L$-colors of $P/H$ are mapped dominantly via $\varphi$.

Suppose that $D$ is not $L$-stable, i.e.\ it is an $L$-color. Then $\varphi$ has colored subspace $(N(P/H),\{D\})$, which says that $D$ is the unique $L$-color of $P/H$. On the other hand $\alpha$ is the unique simple root moving it, therefore $\alpha\in S_\kmP$. Then $\alpha\in\Sigma_L(P/H)$, which implies that $\alpha$ moves two $L$-colors of $P/H$: contradiction.

Finally, it remains to discuss the case where $D$ is $L$-stable. Then $P/H$ has no $L$-colors, so $S^p(\kmG/\kmH)=S_\kmP = S^p(\kmG/\kmP)$.

Consider a complete $P$-toroidal embedding $X$ of $P/H$, let $X'\subset X$ be a localization of $X$ at $\{\sigma\}$, with open $P$-orbit $P/K$, and let $\kmK$ be the inverse image of $K$ in $\kmP$. By Theorem~\ref{thm:locSigma}, we have $S^p(\kmG/\kmK)=S^p(\kmG/\kmH)=S_\kmP$. This equality together with $S^a(\kmG/\kmK)=\varnothing$, which is a consequence of $\Sigma(\kmG/\kmK)=\{\sigma\}$, implies that $\Div(P/K)^B=\varnothing$.

Corollary~\ref{cor:PumovesD} yields that $K\supseteq P^u$, so $\kmG/\kmK$ is horospherical and cannot have spherical roots.  We obtain the desired contradiction, and the proof is complete.

\end{proof}

\begin{lemma}\label{lemma:values2}
In the hypotheses of Theorem~\ref{thm:values}, assume in addition that $\kmG/\kmH$ has rank $2$, that $D\notin\Div(P/H)^B$, and that $H$ is not contained in any proper parabolic subgroup of $P$. Then Theorem~\ref{thm:values} holds.
\end{lemma}
\begin{proof}
As in the proof of Lemma~\ref{lemma:values1}, we may assume that $\sigma\neq\alpha$, so $\Sigma(\kmG/\kmH)=\{\alpha,\sigma\}$.

Assume that another simple root $\beta\neq\alpha$ moves $D$; then $\beta$ is a spherical root by Proposition~\ref{prop:commoncolor}, hence $\beta=\sigma$ and Theorem~\ref{thm:values} holds. If another simple root $\beta\neq\alpha$ moves the other color $E$ moved by $\alpha$, then again $\beta=\sigma$ and $\langle \rho(E),\sigma\rangle = 1$. This yields
\begin{equation}\label{eq:alphaveesigma}
\langle\alpha^\vee, \beta\rangle= \langle\alpha^\vee, \sigma\rangle = \langle \rho(D),\sigma\rangle + \langle\rho(E),\sigma\rangle >0
\end{equation}
which contradicts $\beta\neq\alpha$.

Hence we may assume that $\alpha$ is the only simple root moving $D$ or $E$.

If $\langle\rho(E),\sigma\rangle > 0$, then $\sigma$ is a simple root by Lemma~\ref{lemma:values1} applied to the color $E$, and $\sigma$ moves $E$ contradicting our assumptions.

Therefore we may also suppose that $\langle\rho(E),\sigma\rangle \leq 0$.

Our goal now is to show that all these assumptions lead to a contradiction, which will eventually emerge from a detailed analysis of $H^u$.

Since $D\notin\Div(P/H)^B$ we have $E\in\Div(P/H)^B$. No simple root different from $\alpha$ moves $E$, hence we even have $E\in \Div(P/H)^L$.

Suppose that $\sigma$ is a simple root in $S\smallsetminus S_\kmP$. Then $\Sigma_L(P/H)=\varnothing$, and by Proposition~\ref{prop:commoncolor} the colors moved by $\sigma$ are not moved by any other simple roots (any such simple root would be a spherical root, so equal to $\alpha$). In particular $\sigma$ moves an element $E'\in \Div(P/H)^L$, and we even have $\{E,E'\}=\Div(P/H)^L$ since no simple root other than $\alpha$ and $\sigma$ is a spherical root.

Reasoning as in the proof of Lemma~\ref{lemma:affine} we conclude that $P/H$ is the product of a torus $L/K$ and a spherical $L$-module $M$ (where $K$ is a reductive subgroup of $L$). In addition, the module $M$ has rank $\leq2$, we have $\Sigma_{L}(M)=\varnothing$, and $M$ has exactly two $L$-stable prime divisors. By Lemmas~\ref{lemma:modules} and~\ref{lemma:rank1}, this is only possible if $L=K$, the action of $(L,L)$ on $M$ is trivial and $M$ is the direct sum  $M_1\oplus M_2$ of two one-dimensional $L$-modules.

From Example~\ref{ex:toric} we have that the $L$-weights of $M_1^*$ and $M_2^*$ are the spherical roots $\alpha$ and $\sigma$, and $\langle\rho(E),\sigma\rangle = 0$ since the simple root $\sigma$ moves $E'$ and not $E$. This contradicts the fact that
\[
\langle\rho(E),\sigma\rangle = \underbrace{\langle\alpha^\vee,\sigma\rangle}_{\leq0} - \underbrace{\langle \rho(D),\sigma\rangle}_{>0} <0,
\]
which holds because here $\sigma$ is a simple root different from $\alpha$.

This settles the case $\sigma\in S\smallsetminus S_\kmP$, so assume now the contrary: $\sigma\notin S\smallsetminus S_\kmP$. Here $E$ is the unique $L$-stable prime divisor of $P/H$. It is also the unique element of $\Div(P/H)^B$ such that $\langle\rho(E),\alpha \rangle >0$ (this value is not only positive, but actually $1$). The variety $P/H$ admits a $P$-equivariant morphism to a point, let us denote it by $\psi\colon P/H\to P/P$, with associated colored subspace equal to $(N(X),\Delta_L(P/H))$. This implies that the whole space $N(X)$ is generated as a convex cone by $V(X)$ together with $\rho(E)$ and $\rho(\Delta_L(P/H))$, by the definition of colored subspace recalled in Section~\ref{s:mor}.

Consider now the element
\[
\gamma = \sigma - \langle\rho(E),\sigma\rangle\alpha.
\]
Since $\langle\rho(E),\sigma\rangle\leq 0$, the above linear combination of $\sigma$ and $\alpha$ has non-negative coefficients, hence $\gamma$ is $\leq0$ on $V(X)$. On $\rho(E)$ it is zero, indeed:
\[
\langle\rho(E),\gamma\rangle = \langle\rho(E),\sigma\rangle - \langle\rho(E),\sigma\rangle\langle\rho(E),\alpha\rangle = 0,
\]
so there must be an element $F\in\Delta_L(P/H)$ where $\gamma$ is positive. Then it is elementary to show that $N(X)$ is also generated as a convex cone by $V(X)$ together with $\rho(E)$ and the only $\rho(F)$. In other words also $(N(X),\{F\})$ is a $P$-equivariant colored subspace. Denote by $\psi'\colon P/H\to P/Q$ the associated morphism: then $P/Q$ is an $L$-spherical variety of rank $0$, i.e.\ a complete $L$-homogeneous space. So $Q$ is a parabolic subgroup of $P$ containing $H$, whence $Q=P$, $\psi'=\psi$ and the colored subspaces of $\psi'$ and $\psi$ are equal, which yields $\Delta_L(P/H)=\{F\}$.

To summarize, the variety $P/H$ has exactly one $B$-stable prime divisor different from $E$, namely $F$.

Let us assume in addition that $\Sigma_L(P/H)=\varnothing$. By Lemma~\ref{lemma:affine} the group $H$ contains a Levi subgroup of $P$ (we may assume it is $L$) and the homogeneous space $P/H\cong P^u/H^u$ occurs in Example~\ref{ex:CpCn} as the case of the spherical $L$-module $M\oplus\CC$ with $M$ irreducible of dimension $>1$. Thanks to Example~\ref{ex:CpCn}, one of the spherical roots of $P/H$ is the highest weight of $M^*$, and it takes value $0$ on $\rho(E)$ and $1$ on $\rho(F)$. Only $\sigma$ can be this spherical root, in particular
\begin{equation}\label{eqn:positive}
\langle\alpha^\vee,\sigma\rangle = \langle \rho(D),\sigma\rangle + \underbrace{\langle\rho(E),\sigma\rangle}_{=0} >0.
\end{equation}

Example~\ref{ex:CpCn} also shows that there is a normal subgroup $R$ of $P$, contained in $P^u$ and containing $H^u$, such that $P^u/R \cong M$. Then there must exist $\gamma\in S\smallsetminus S_\kmP$ such that $U_{-\gamma}$ is not in $R$, otherwise $R$ would be equal to $P^u$. Then a non-trivial element of the projection of $U_{-\gamma}$ in $P^u/R$ is a highest weight vector of $P^u/R$. In other words $-\gamma$ is the highest weight of $M$, and the lowest weight of $M$ is an element of $-\gamma + \Span_{\ZZ_{\leq 0}}S_\kmP$. So the highest weight $\sigma$ of $M^*$ is in $\gamma+\Span_{\ZZ_{\geq 0}}S_\kmP$. It follows $\alpha=\gamma$, otherwise $\langle\alpha^\vee,\sigma\rangle \leq 0$, contradicting (\ref{eqn:positive}). Therefore $-\alpha$ and $-\sigma$ are resp.\ the highest and the lowest weights of $M$.

Now consider the summand $\CC$ of $P^u/H^u$. It is the image of $U_{-\beta}\subseteq P^u$ for some positive root $\beta\neq\alpha$, which is then in the lattice of $P/H$ and takes value $1$ on $\rho(E)$ and $0$ on $\rho(F)$. These values, together with $\langle\rho(F),\alpha\rangle\leq0$ and $\alpha\neq\beta$, imply that
\[
\alpha = \beta+\underbrace{\langle \rho(F),\alpha\rangle}_{<0} \sigma.
\]
Let $w_0^L$ be the longest element of the Weyl group of $L$. Using the fact that $-w_0^L(-\alpha)=\sigma$ and the fact that $-w_0^L(\beta)=-\beta$ (because $(L,L)$ acts trivially on $\CC$) it is elementary to deduce that $\langle \rho(F),\alpha\rangle$ is actually $-1$, i.e.\
\begin{equation}\label{eqn:roots}
\alpha +\sigma = \beta.
\end{equation}
After Lemma~\ref{lemma:affine} the module $M$ is isomorphic to $\CC^n$ under the action of $(L,L)\cong \SL(n)$ or $\Sp(n)$ (with $n$ even). From (\ref{eqn:roots}) we see that the sum of the highest and the lowest weight of $M$ is trivial when restricted to $T\cap (L,L)$. This implies that $n$ is even and $(L,L)\cong \Sp(n)$.

Because of $\Sigma_L(P/H)=\varnothing$ and Theorem~\ref{thm:NS}, the spherical root $\sigma$ is a root of $\kmG$. Consider the $\alpha$-string of roots $\{\sigma+k\alpha\;|\; -p\leq k\leq q\}$ through $\sigma$, where $p,q$ are integers such that $p-q=\langle\alpha^\vee,\sigma\rangle$. From (\ref{eqn:positive}) and the fact that $\alpha+\sigma$ is a root we deduce $p>q\geq 1$, i.e.\ that $\sigma-\alpha$ and $\sigma-2\alpha$ are (positive) roots too. Now $\sigma-\alpha$ is actually a root of $L$, this is shown easily computing the highest weight of $\CC^n$ minus its lowest weight under the action of any reductive group whose commutator is $\Sp(n)$. Then $\alpha$ doesn't appear in the expression of $\sigma-\alpha$ as a linear combination of simple roots, and so $\sigma-2\alpha$ cannot be a root. We obtain the desired contradiction, and the case $\Sigma_L(P/H)=\varnothing$ is completed.

We examine the only other possibility for $\Sigma_L(P/H)$, namely $\Sigma_L(P/H)=\{\sigma\}$. In this case $\langle\alpha^\vee,\sigma\rangle \leq 0$ (because $\sigma\in\Span_{\QQ_{\geq0}}S_\kmP$), so $\langle\rho(D),\sigma\rangle >0$ implies $\langle\rho(E),\sigma\rangle <0$. This excludes the possibility that $\langle\rho(E),\sigma\rangle =0$, so by Lemma~\ref{lemma:affine} the group $H$ contains a Levi subgroup of $P$ (we may suppose it is $L$), and $P/H\cong P^u/H^u$ is $L$-equivariantly isomorphic to an irreducible spherical $L$-module $M$ of rank $2$ and dimension $>1$. Then $H^u$ is a normal subgroup of $P^u$, otherwise its normalizer would correspond to a non-trivial and proper $L$-submodule of $M$. As above, there exists a simple root $\beta\in S\smallsetminus S_\kmP$ such that $U_{-\beta}$ is not contained in $H^u$, and $-\beta$ is the highest weight of $M$. The $T$-weights of $\CC[M]$ are now in the set $\Span_\ZZ\{\beta, S_\kmP\}$, so the latter contains also $\Xi_B(P/H)$. But $\Span_\ZZ\{\beta, S_\kmP\}$ doesn't contain $\alpha$ unless $\beta=\alpha$.

In other words we have that $-\alpha$ is the highest weight of $M$. Since $M$ has dimension $>1$, there is a simple root $\alpha'\in S_\kmP$ such that $\langle(\alpha')^\vee,-\alpha\rangle \neq 0$, which implies
\begin{equation}\label{eqn:negative}
\langle\alpha^\vee,\alpha'\rangle < 0.
\end{equation}
Notice that $\alpha'$ is a simple root of $\widetilde L$, where $\widetilde L$ is the product of all simple normal subgroups of $L$ acting non-trivially on $M$.

We refer once again to Lemma~\ref{lemma:modules}, which yields $\langle\rho(E),\sigma\rangle=-1$. The same lemma also implies that $\sigma$ is a linear combination with positive coefficients of all simple roots of $\widetilde L$. From this and (\ref{eqn:negative}) we deduce that $\langle\alpha^\vee,\sigma\rangle <0$, whence
\[
\langle\rho(D),\sigma\rangle = \underbrace{\langle\alpha^\vee,\sigma\rangle}_{<0} - \underbrace{\langle\rho(E),\sigma\rangle}_{=-1} \leq 0
\]
which contradicts our assumption $\langle\rho(D),\sigma\rangle >0$. The case $\Sigma_L(P/H)=\{\sigma\}$ is completed, and with it the proof of the lemma.
\end{proof}

\begin{proof}[Proof of Theorem~\ref{thm:values}]
Let $X$ be a complete $P$-toroidal embedding of $P/H$. Replacing if necessary $X$ with a suitable localization at $\{ \sigma, \alpha\}$, we may assume that $\Sigma(\kmG/\kmH)=\{\sigma,\alpha\}$ and that $X$ has rank $2$. In addition, by making $\kmP$ smaller and conjugating $H$ in $P$ if necessary, we may assume that no proper parabolic subgroup of $P$ contains $H$.

At this point, if $D\in \Div(P/H)^B$ then the theorem follows from Lemma~\ref{lemma:values1}, otherwise from Lemma~\ref{lemma:values2}.
\end{proof}

\begin{corollary}\label{cor:values}
The last statement of Proposition~\ref{prop:atmostone} holds for any $E\in\Delta(\kmG/\kmH)$ not moved by $\alpha$.
\end{corollary}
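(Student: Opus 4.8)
The plan is to reduce to the one genuinely new case and then invoke Theorem~\ref{thm:values} with the roles of the two spherical roots interchanged. Since $\Delta(\kmG/\kmH)=\Div(P/H)^B\cup(S\smallsetminus S_\kmP)$, the subcase $E\in\Div(P/H)^B$ is precisely the last assertion of Proposition~\ref{prop:atmostone}, so nothing new is needed there. I would therefore assume $E=\beta\in S\smallsetminus S_\kmP$. Because $\alpha$ moves a parabolic color $\gamma$ exactly when $\gamma=\alpha$, the hypothesis that $\alpha$ does not move $E$ is equivalent to $\beta\neq\alpha$. Moreover $\beta\in S\smallsetminus S_\kmP$ is itself a color and is moved by $\beta$, so $\beta\notin S^p(\kmG/\kmH)$; as $S^{2a}(\kmG/\kmH)\cup S^p(\kmG/\kmH)\subseteq S_\kmP$ by Proposition~\ref{prop:typeofS}, the only possibilities left are $\beta\in S^b(\kmG/\kmH)$ or $\beta\in S^a(\kmG/\kmH)$.

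First I would dispose of $\beta\in S^b(\kmG/\kmH)$. Here $\beta$ moves no element of $\Div(P/H)^B$, so Definition~\ref{def:invariants}(5) gives $\rho(\beta)=\beta^\vee|_{\Xi(\kmG/\kmH)}$, whence $\langle\rho(\beta),\alpha\rangle=\langle\beta^\vee,\alpha\rangle\leq 0$ because $\alpha$ and $\beta$ are distinct simple roots of $\kmG$ and the off-diagonal entries of the generalized Cartan matrix are non-positive.

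The substantive case is $\beta\in S^a(\kmG/\kmH)$. The key observation is that the parabolic color $\beta$ is exactly the second color $D_\beta^-$ moved by $\beta$, its companion $D_\beta^+$ lying in $\Div(P/H)^B$; thus $\rho(\beta)=\rho(D_\beta^-)$ and $D_\beta^-\in\A(\beta)$. Since $\alpha$ and $\beta$ are both spherical roots that are simple roots, they are neighbors, by the remark following the definition of a set of neighbors. I would then apply Theorem~\ref{thm:values} with its $\alpha$ taken to be $\beta$, its color $D$ taken to be $D_\beta^-$, and its $\sigma$ taken to be $\alpha$: if $\langle\rho(\beta),\alpha\rangle=\langle\rho(D_\beta^-),\alpha\rangle>0$ held, the theorem would force $D_\beta^-\in\A(\alpha)$, i.e.\ $\alpha$ would move the parabolic color $\beta$, which is impossible for $\alpha\neq\beta$. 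Hence $\langle\rho(\beta),\alpha\rangle\leq 0$, finishing the proof.

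The step I expect to be the main obstacle is precisely this last case, and the obstacle is a false start rather than a deep difficulty: writing $\rho(\beta)=\beta^\vee-\rho(D_\beta^+)$ and bounding the two terms separately fails, since $\langle\beta^\vee,\alpha\rangle\leq 0$ competes against $-\langle\rho(D_\beta^+),\alpha\rangle\geq 0$ and the decomposition gives no sign on the sum (Theorem~\ref{thm:values} applied to $\alpha$ only re-proves $\langle\rho(D_\beta^+),\alpha\rangle\leq 0$, which is not enough). The resolution is to abandon this decomposition entirely and instead exploit the identification $\beta=D_\beta^-\in\A(\beta)$, applying Theorem~\ref{thm:values} to $\beta$ rather than to $\alpha$ as above.
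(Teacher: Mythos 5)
Your proof is correct and takes essentially the same route as the paper, whose entire proof is the one-line remark that two spherical roots which are simple roots are neighbors: your reduction to the formal color $E=\beta\in S\smallsetminus S_\kmP$, the Cartan-matrix bound $\langle\beta^\vee,\alpha\rangle\leq 0$ when $\beta$ moves no element of $\Div(P/H)^B$, and the application of Theorem~\ref{thm:values} with the roles $(\alpha,D,\sigma)$ played by $(\beta,D_\beta^-,\alpha)$ when $\beta\in S^a(\kmG/\kmH)$ are exactly the intended unpacking of that remark. Your closing observation about the false start (decomposing $\rho(\beta)=\beta^\vee|_{\Xi(\kmG/\kmH)}-\rho(D_\beta^+)$ and bounding termwise) correctly identifies why the theorem must be applied to $\beta$ rather than to $\alpha$.
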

\begin{proof}
This stems from the fact that two spherical roots that are simple roots are neighbors.
\end{proof}

\begin{corollary}\label{cor:valuesnotneighbors}
Theorem~\ref{thm:values} holds even if $\sigma$ is not assumed to be a neighbor of $\alpha$.
\end{corollary}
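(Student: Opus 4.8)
The plan is to reduce the general case to the case of neighbors, already settled in Theorem~\ref{thm:values}, by interpolating between $\alpha$ and $\sigma$ along a chain of spherical roots that are pairwise neighbors and all pair positively with $\rho(D)$. First I would record the geometric dictionary: by Theorem~\ref{thm:V} the set $V(\kmG/\kmH)$ is a polyhedral convex cone of maximal dimension, and by Definition~\ref{def:sphroots} the elements of $\Sigma(\kmG/\kmH)$ are exactly the primitive inner normals of its facets, so $\tau\mapsto V(\kmG/\kmH)\cap\tau^\perp$ is a bijection between $\Sigma(\kmG/\kmH)$ and the facets of the cone. Under this dictionary two spherical roots $\tau,\tau'$ are neighbors precisely when the corresponding facets are adjacent, i.e.\ share a codimension-two face of $V(\kmG/\kmH)$ lying on no other facet: such a face has a relative-interior point $v$ at which exactly $\tau$ and $\tau'$ vanish, which is the defining condition for $\{\tau,\tau'\}$ to be a set of neighbors.

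The key step is a connectivity statement. Since $D\in\A(\alpha)$, Proposition~\ref{prop:atmostone} gives $\langle\rho(D),\alpha\rangle = 1 > 0$; as every element of $V(\kmG/\kmH)$ pairs non-positively with $\alpha\in\Sigma(\kmG/\kmH)$, the functional $\rho(D)$ does not lie in $V(\kmG/\kmH)$. I would then invoke the standard fact of convex geometry that the facets of a polyhedral cone \emph{visible} from an external point---here the facets $\tau$ with $\langle\rho(D),\tau\rangle > 0$---form a set that is connected with respect to facet-adjacency (after quotienting by the lineality space of $V(\kmG/\kmH)$ one reduces to a pointed cone, where this is the connectivity of the visible region of the boundary, a pseudomanifold in which each ridge lies on exactly two facets). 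Writing $\Sigma_D^+ = \{\tau\in\Sigma(\kmG/\kmH) : \langle\rho(D),\tau\rangle > 0\}$, both $\alpha$ and $\sigma$ lie in $\Sigma_D^+$, so this connectivity yields a chain $\alpha = \tau_0,\tau_1,\dots,\tau_m = \sigma$ of elements of $\Sigma_D^+$ in which consecutive roots are neighbors.

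Finally I would run an induction along the chain. The base case $\tau_0 = \alpha$ satisfies $\tau_0\in S^a(\kmG/\kmH)$ and $D\in\A(\tau_0)$ by hypothesis. For the inductive step, assuming $\tau_i\in S^a(\kmG/\kmH)$ and $D\in\A(\tau_i)$, the root $\tau_{i+1}$ is a neighbor of $\tau_i$ with $\langle\rho(D),\tau_{i+1}\rangle > 0$, so Theorem~\ref{thm:values} applied with $\tau_i$ in the role of $\alpha$ gives $\tau_{i+1}\in S^a(\kmG/\kmH)$ and $D\in\A(\tau_{i+1})$. After $m$ steps this yields $\sigma = \tau_m\in S^a(\kmG/\kmH)$ and $D\in\A(\sigma)$, which is the assertion. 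The only delicate point is the convex-geometry input of the second paragraph; everything else is a direct consequence of Theorem~\ref{thm:values}. Note that once $\sigma\in S^a(\kmG/\kmH)$ is known, $D\in\A(\sigma)$ also follows from Corollary~\ref{cor:values}, so the induction really only needs to propagate membership in $S^a(\kmG/\kmH)$.
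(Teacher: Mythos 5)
Your proof is correct, and it is essentially the paper's argument viewed through the dual lens. The paper slices the cone $\Span_{\RR_{\geq0}}\Sigma(\kmG/\kmH)$ inside $\RR\otimes_\ZZ\Xi(\kmG/\kmH)$ by an affine hyperplane to obtain a polytope $A$ whose vertices are the spherical roots and whose edges encode the neighbor relation, proves by hand that the subcomplex $A_+$ of faces where $\rho(D)$ is non-negative is connected (either it is all of $\partial A$, or it is a deformation retract of $\partial A\cap\{\langle\rho(D),-\rangle\geq0\}$, which is a disc), and then runs exactly your induction on a chain of adjacent vertices via Theorem~\ref{thm:values}. You instead stay on the valuation-cone side: your dictionary identifying spherical roots with facets of $V(\kmG/\kmH)$ and neighbors with ridge-adjacency is correct (using Theorem~\ref{thm:V}, that a ridge of a polyhedral cone lies on exactly two facets, and that a codimension-$k$ face lies on at least $k$ facets for the converse), and in place of the retraction argument you invoke the beneath-beyond fact that the facets strictly visible from the external point $\rho(D)$ form a connected subgraph of the dual graph. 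Two remarks. First, your strict formulation is in one respect tighter than the paper's write-up: the induction genuinely needs $\langle\rho(D),\tau_i\rangle>0$ at every intermediate root (indeed $D\in\A(\tau_i)$ forces $\langle\rho(D),\tau_i\rangle=1$ by Proposition~\ref{prop:atmostone}), and your chain guarantees strict positivity by construction, whereas a chain of vertices of the paper's $A_+$ a priori only carries non-negative values and the strictness must be extracted from the retraction picture. Second, since the visible-facet connectivity is the entire convex-geometric content of the reduction, you should either cite it explicitly (it is the statement that the visible facets form an initial segment of a line shelling, so each attaches along a ridge; see e.g.\ Ziegler, \emph{Lectures on Polytopes}) or prove it, as the paper does for its version; your reduction to the pointed case is fine because every $\sigma\in\Sigma(\kmG/\kmH)$ vanishes on the lineality space of $V(\kmG/\kmH)$, so all pairings with $\rho(D)$ descend to the quotient.
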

\begin{proof}
Since $V(\kmG/\kmH)$ has maximal dimension in $N(\kmG/\kmH)$, the cone $C=\Span_{\RR_{\geq 0}}\Sigma$ (inside $\Xi_\RR=\RR\otimes_\ZZ\Xi(\kmG/\kmH)$) is strictly convex. Consider now the vector subspace $M=\Span_\RR C$ of $\Xi_\RR$, and extend $\RR$-linearly the functional $\rho(D)$ to $M$. Since $C$ is strictly convex, there exists an affine hyperplane $M'$ of $M$ such that $A=M'\cap C$ is a convex polytope (of maximal dimension in $M'$) whose vertices are the spherical roots of $\kmG/\kmH$, up to positive multiples. Denote by $A_+$ the union of all proper faces of $A$ where $\rho(D)$ is non-negative. Since $\alpha$ and $\sigma$ are both positive on $\rho(D)$, the polytope $A_+$ is not empty.

The main observation is that $A_+$ is connected. This holds if $\rho(D)$ is non-negative on all vertices of $A$, because then $A_+$ is the whole boundary $\partial A$, and it is also true if $\rho(D)$ is negative on some vertex of $A$, because in this case $A_+$ is a deformation retract of the intersection $\partial A\cap \{\langle\rho(D),-\rangle \geq 0\}$, which is itself homeomorphic to a disc of dimension equal to the dimension of $M'$ minus $1$. Take now the two vertices of $A$ corresponding resp.\ to $\sigma$ and $\alpha$. They are contained in $A_+$, hence they are connected by a chain of vertices of $A_+$ such that any two successive elements of the chain are adjacent vertices of $A$, i.e.\ the corresponding spherical roots are neighbors. The corollary is now a direct consequence of Theorem~\ref{thm:values}.
\end{proof}

\section{Spherical systems and Luna's axioms}\label{s:hsd}

We come to the main definition of the paper. It generalizes to the Kac-Moody case the one given in \cite[Section~2.1]{Lu01}. 

\begin{definition}
The {\em homogeneous spherical datum} of $\kmG/\kmH$ is defined as the quintuple $(S^p(\kmG/\kmH), \Sigma(\kmG/\kmH), \A(\kmG/\kmH), \Xi(\kmG/\kmH), \rho_{\kmG/\kmH}|_{\A(\kmG/\kmH)})$. We also define the {\em spherical system} of $\kmG/\kmH$ as the quadruple $(S^p(\kmG/\kmH), \Sigma(\kmG/\kmH), \A(\kmG/\kmH), \rho'_{\kmG/\kmH}\colon \A(\kmG/\kmH)\to\Hom_\ZZ(\Span_\ZZ\Sigma(\kmG/\kmH),\QQ))$, where $\rho'_{\kmG/\kmH}$ is induced by $\rho_{\kmG/\kmH}$.
\end{definition}

The homogeneous spherical datum of $\kmG/\kmH$ satisfies the same combinatorial properties observed in the finite-dimensional case in \cite{Lu01}. Following loc.cit., we first collect these properties in the separate, combinatorial Definition~\ref{def:hsd} below, and then we prove them for the homogeneous spherical datum of $\kmG/\kmH$ in Theorem~\ref{thm:consistent}.

\begin{definition}
An element $\sigma\in\Chars(T)$ is a {\em spherical root for $\kmG$} if there exists a standard spherical subgroup of finite type $\kmK$ of $\kmG$ with $\sigma\in\Sigma(\kmG/\kmK)$. The set of spherical roots for $\kmG$ is denoted by $\Sigma(\kmG)$. A spherical root $\sigma$ for $\kmG$ is {\em compatible} with a subset $S^p\subseteq S$ if there exists a standard spherical subgroup of finite type $\kmK$ of $\kmG$ with $S^p(\kmG/\kmK)=S^p$ and $\sigma\in\Sigma(\kmG/\kmK)$.
\end{definition}

Thanks to Remark~\ref{rem:locSigmaexist} and Theorem~\ref{thm:locSigma}, a spherical root $\sigma$ is compatible with $S^p$ if and only if there exists a spherical subgroup of finite type $\kmK$ of $\kmG$ with $S^p(\kmG/\kmK)=S^p$ and $\{\sigma\}=\Sigma(\kmG/\kmK)$.

\begin{definition}\label{def:hsd}
An {\em (abstract) homogeneous spherical datum for} $\kmG$ is a quintuple $(S^p, \Sigma, \A, \Xi, \rho)$, where $S^p$ is a subset of $S$, $\Sigma$ is finite subset of primitive elements of $\Xi$, which is a subgroup of $\Chars(T)$, $\A$ is a finite set, $\rho$ is a map $\rho\colon \A\to\Hom_\ZZ(\Xi,\ZZ)$, such that the following properties are satisfied:
\begin{itemize}
\item[(A1)] For all $\delta\in \A$ and $\sigma\in \Sigma$ we have $\delta(\sigma)\leq1$, and $\delta(\sigma)=1$ implies $\gamma\in\Sigma\cap S$.
\item[(A2)] For all $\alpha\in\Sigma\cap S$, the set $\A(\alpha) = \{\delta\in \A\;|\; \delta(\alpha)=1\}$ contains exactly two elements $\delta_\alpha^+$, $\delta_\alpha^-$, and they satisfy $\rho(\delta_\alpha^+)+\rho(\delta_\alpha^-) = \alpha^\vee|_{\Xi}$.
\item[(A3)] The set $\A$ is the union of all $\A(\alpha)$ for $\alpha\in\Sigma\cap S$.
\item[($\Sigma$1)] If $2\alpha\in\Sigma\cap 2S$ then $\langle \alpha^\vee, \Xi\rangle \subseteq 2\ZZ$.
\item[($\Sigma$2)] If $\alpha$ and $\beta$ are orthogonal simple roots such that $\alpha+\beta$ or $\frac12(\alpha+\beta)$ is in $\Sigma$, then $\alpha^\vee|_\Xi=\beta^\vee|_\Xi$.
\item[(S)] Any $\sigma\in \Sigma$ is compatible with $S^p$. 
\end{itemize}
An {\em (abstract) spherical system for} $\kmG$ is a quadruple $(S^p, \Sigma, \A, \rho')$, where $S^p$ is a subset of $S$, $\Sigma$ is finite subset of $\Chars(T)$, $\A$ is a finite set equipped with a map $\rho'\colon \A\to\Hom_\ZZ(\Span_\ZZ\Sigma,\ZZ)$, such that $(S^p, \Sigma,\A,\Span_\ZZ\Sigma, \rho')$ is an abstract homogeneous spherical datum.
\end{definition}

While being obviously satisfied in the finite-dimensional case, the property that $\kmH\subseteq\kmP$ for some parabolic subgroup $\kmP\subseteq\kmG$ of finite type lies at the foundation of our theory. It has a combinatorial counterpart which we give in the following.

\begin{definition}\label{def:ftsys}
An abstract homogeneous spherical datum or an abstract spherical system for $\kmG$ is of {\em finite type} if there exist a subset $A_1$ of $\A$, a subset $S_2$ of $S\smallsetminus (\Sigma\cup S^p)$, and an element
\[
\eta\in \Span_{\QQ_{>0}} \left(\rho(A_1)\cup\left\{\alpha^\vee|_\Xi\;\middle\vert\; \alpha\in S_2\right\}\right)
\]
such that $\langle \eta, \sigma \rangle >0$ for all $\sigma\in \Sigma$, and such that $S_1\cup S_2\cup S^p$ is a subset of $S$ of finite type, where
\begin{equation}\label{eqn:S1}
S_1=\left\{ \alpha\in \Sigma\cap S\;\middle\vert\; \A(\alpha)\subseteq A_1 \right\}.
\end{equation}
\end{definition}

Finally, in the following theorem we collect our main results and check that the objects associated with $\kmH$ indeed satisfy Luna's axioms.

\begin{theorem}\label{thm:consistent}
The homogeneous spherical datum (resp.\ spherical system) of $\kmG/\kmH$ is an abstract homogeneous spherical datum (resp.\ spherical system) of finite type for $\kmG$.
\end{theorem}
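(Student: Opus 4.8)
The plan is to check the axioms of Definition~\ref{def:hsd} one at a time, drawing on the structural results of the previous sections, and then to verify the finite type condition of Definition~\ref{def:ftsys}, which is where the containment $\kmH\subseteq\kmP$ is genuinely used. As a preliminary remark I would record that $\rho_{\kmG/\kmH}$ takes integral values on $\Xi(\kmG/\kmH)$, since on genuine colors it is given by orders of vanishing and on the formal colors in $S\smallsetminus S_\kmP$ it is an integral combination of such values with restricted coroots.

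Axioms (A1)--(A3), ($\Sigma$1), ($\Sigma$2) are local in nature. For (A1): given $\delta=\rho(D)$ with $D\in\A(\kmG/\kmH)$ and $\sigma\in\Sigma(\kmG/\kmH)$, Corollary~\ref{cor:valuesnotneighbors} shows $\langle\rho(D),\sigma\rangle\le 0$ unless $\sigma\in S^a(\kmG/\kmH)$ and $D\in\A(\sigma)$, in which case Proposition~\ref{prop:atmostone} gives the value $1$; hence $\delta(\sigma)\le 1$, with equality forcing $\sigma\in\Sigma\cap S$. Axiom (A2) combines the definition of $S^a(\kmG/\kmH)$, Proposition~\ref{prop:atmostone} (the two moved colors are exactly those on which $\alpha$ takes value $1$), and the type~$(a)$ formula of Proposition~\ref{prop:typeofS}; (A3) is immediate from the definition of $\A(\kmG/\kmH)$. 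For ($\Sigma$1), $2\alpha\in\Sigma\cap 2S$ means $\alpha\in S^{2a}(\kmG/\kmH)$, so $\rho(D_\alpha)=\tfrac12\alpha^\vee|_{\Xi}$ by Proposition~\ref{prop:typeofS}, and integrality yields $\langle\alpha^\vee,\Xi\rangle\subseteq 2\ZZ$. Axiom ($\Sigma$2) is precisely case~(\ref{prop:commoncolor:apa}) of Proposition~\ref{prop:commoncolor}: its hypotheses force $\alpha,\beta\in S^b(\kmG/\kmH)$ to move a common color $D$, and the type~$(b)$ formula gives $\alpha^\vee|_\Xi=\rho(D)=\beta^\vee|_\Xi$. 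For axiom (S), fix $\sigma\in\Sigma(\kmG/\kmH)$; since $V(\kmG/\kmH)$ is full-dimensional (Theorem~\ref{thm:V}), the singleton $\{\sigma\}$ is a set of neighbors, realized by any $v$ in the relative interior of the facet $V(\kmG/\kmH)\cap\sigma^\perp$. By Remark~\ref{rem:locSigmaexist} a localization in $\{\sigma\}$ exists, and by Theorem~\ref{thm:locSigma} the associated $\kmK$ satisfies $S^p(\kmG/\kmK)=S^p(\kmG/\kmH)$ and $\Sigma(\kmG/\kmK)=\Sigma(\kmG/\kmH)\cap\Cone_Y^\perp=\{\sigma\}$, which is exactly compatibility of $\sigma$ with $S^p(\kmG/\kmH)$.

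For the finite type condition I would take $A_1=\A(\kmG/\kmH)\cap\Div(P/H)^B$, the genuine colors among $\A(\kmG/\kmH)$, and $S_2=S_\kmP\smallsetminus(\Sigma(\kmG/\kmH)\cup S^p(\kmG/\kmH))$. For $\alpha\in S^a(\kmG/\kmH)\cap S_\kmP$ both colors of $\A(\alpha)$ are $L$-colors, hence lie in $A_1$, while for $\alpha\in S^a(\kmG/\kmH)\smallsetminus S_\kmP$ one of them is the formal color $\alpha\notin\Div(P/H)^B$; thus the set $S_1$ of \eqref{eqn:S1} equals $S^a(\kmG/\kmH)\cap S_\kmP$. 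Using $S^{2a}(\kmG/\kmH)\cup S^p(\kmG/\kmH)\subseteq S_\kmP$ (Proposition~\ref{prop:typeofS}) and the partition of $S$ in Proposition~\ref{prop:typeofS}, one checks $S_1\cup S_2\cup S^p(\kmG/\kmH)=S_\kmP$, which is of finite type by the choice of $\kmP$. It then remains to produce an $\eta$ in the cone $C=\Span_{\QQ_{>0}}\left(\rho(A_1)\cup\{\alpha^\vee|_\Xi:\alpha\in S_2\}\right)$ with $\langle\eta,\sigma\rangle>0$ for all $\sigma\in\Sigma(\kmG/\kmH)$.

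I expect the construction of $\eta$ to be the main obstacle. The key preliminary observation is that every Levi coroot lands in $C$: for $\alpha\in S_\kmP$ one has $\alpha^\vee|_\Xi=\rho(D_\alpha^+)+\rho(D_\alpha^-)$ with both summands in $\rho(A_1)$ when $\alpha\in S^a$, $\alpha^\vee|_\Xi$ is a listed coroot when $\alpha\in S^{2a}\cup S^b$, and $\alpha^\vee|_\Xi=0$ when $\alpha\in S^p$; consequently the restriction of any $L$-dominant coweight lies in $C$. Splitting $\Sigma(\kmG/\kmH)$ via Theorem~\ref{thm:NS} into the $L$-spherical roots $\Sigma_L(P/H)$, supported on $S_\kmP$, and those $\sigma$ with $-\sigma$ a root of $P^u$, the former are handled by an $L$-regular dominant coweight, the simple ones among the latter by the genuine colors $D_\alpha^+$ (on which $\alpha$ takes value $1$), and the non-simple ones by the fact that such a $\sigma$ is a dominant weight pairing strictly positively with a suitable Levi coroot (as in the proof of Theorem~\ref{thm:NS}). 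The delicate step is to balance coefficients so that a single $\eta\in C$ is simultaneously positive on every spherical root; this is feasible precisely because $\Span_{\QQ_{\geq0}}\Sigma(\kmG/\kmH)$ is strictly convex, equivalently because $V(\kmG/\kmH)$ is full-dimensional (Theorem~\ref{thm:V}), and I would organize it as a short feasibility argument combining these three positivity certificates.
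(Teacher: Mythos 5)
Your verification of the axioms follows the paper's own proof almost verbatim: (A1) from Corollary~\ref{cor:valuesnotneighbors} together with Proposition~\ref{prop:atmostone}, (A2) from Propositions~\ref{prop:atmostone} and \ref{prop:typeofS}, (A3) by definition, ($\Sigma$1) from Proposition~\ref{prop:typeofS} plus integrality, ($\Sigma$2) from Proposition~\ref{prop:commoncolor}. For (S) your localization argument is correct but unnecessary: taking $\kmK=\kmH$ itself already witnesses compatibility of every $\sigma\in\Sigma(\kmG/\kmH)$ with $S^p(\kmG/\kmH)$, which is why the paper dismisses (S) as obvious. Your identity $S_1\cup S_2\cup S^p(\kmG/\kmH)=S_\kmP$ is also correct (the paper only needs, and proves, the inclusion in $S_\kmP$).

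The genuine gap is in the construction of $\eta$, on two counts. First, your certificate for the non-simple $\sigma\in\Sigma(\kmG/\kmH)\smallsetminus\Sigma_L(P/H)$ is unproven: by Corollary~\ref{cor:valuesnotneighbors} and integrality of $\rho$, every $\delta\in\A(\kmG/\kmH)$ satisfies $\langle\rho(\delta),\sigma\rangle\leq 0$ for non-simple $\sigma$, so the only admissible certificates are the coroots $\alpha^\vee|_\Xi$ with $\alpha\in S_2$, i.e.\ roots of type $(2a)$ or $(b)$ in $S_\kmP$; but the proof of Theorem~\ref{thm:NS} only exhibits such a $\sigma$ as dominant with respect to a Levi subgroup $M$ of a parabolic of $L$, and gives no link between the coroots of $M$ on which $\sigma$ is positive and the types of the corresponding simple roots for $P/H$. (Also, your ``preliminary observation'' overreaches: an $L$-dominant coweight with a central component need not lie in the positive hull of the simple coroots; for $\Sigma_L(P/H)$ one should use e.g.\ $2\rho_L^\vee$.) Second, and more fundamentally, pointwise certificates plus strict convexity of $\Span_{\QQ_{\geq0}}\Sigma(\kmG/\kmH)$ do not yield joint feasibility: strict convexity produces a functional in $N(\kmG/\kmH)$ positive on all spherical roots, not one inside the prescribed cone $C$. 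Already in the plane, with generators $(1,-10)$ and $(-10,1)$ against $\sigma_1=(1,0)$, $\sigma_2=(0,1)$, each $\sigma_i$ admits a positive generator yet no element of the cone is positive on both. By duality, feasibility is equivalent to the nonexistence of a nonzero $\xi\in\Span_{\QQ_{\geq0}}\Sigma(\kmG/\kmH)$ that is $\leq0$ on all generators, and excluding this requires precisely the geometric input you never invoke. The paper obtains it from Theorem~\ref{thm:morphisms} (equivalently Lemma~\ref{lemma:pullback} with $K=P$) applied to the $P$-equivariant morphism $P/H\to P/P$: its colored subspace being $P$-equivariant forces $N(P/H)$ to be generated as a convex cone by $\rho(\Div(P/H)^B)\cup V(P/H)$; writing an interior point $x$ of $-V(P/H)$ as $x=c+v$ with $c$ in the cone generated by $\rho(\Div(P/H)^B)$ and $v\in V(P/H)$ gives $c=x+(-v)$ interior to $-V(P/H)$, and $c$ then decomposes along $\rho(A_1)$ and the $S_2$-coroots via Propositions~\ref{prop:atmostone} and \ref{prop:typeofS} (every element of $\Div(P/H)^B$ outside $\A(\kmG/\kmH)$ has image in $\QQ_{>0}\,\alpha^\vee|_\Xi$ for a type $(2a)$ or $(b)$ root $\alpha\in S_\kmP$). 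Without this step, or an equivalent replacement, your outline does not close.
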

\begin{proof}
The assertion on the spherical system follows from the one on the homogeneous spherical datum, because the required conditions on values of functionals on $\Span_\ZZ\Sigma(\kmG/\kmH)$ are implied by the same conditions holding on the bigger lattice $\Xi$.

We prove the assertion on the homogeneous spherical datum. Axioms (A1) and (A2) follow from Propositions~\ref{prop:atmostone} and~\ref{prop:typeofS} and Corollaries~\ref{cor:values} and~\ref{cor:valuesnotneighbors}, axiom (A3) holds by definition of $\A(\kmG/\kmH)$, axiom ($\Sigma$1) stems from Proposition~\ref{prop:typeofS}, axiom ($\Sigma$2) from Propositions~\ref{prop:typeofS} and~\ref{prop:commoncolor}, and axiom (S) is obvious.

To show that the homogeneous spherical datum of $\kmG/\kmH$ is of finite type, consider the $P$-equivariant morphism $\varphi\colon P/H\to P/P$. The associated $P$-equivariant colored subspace $(\Cone_\varphi, \Colemb_\varphi)$ satisfies $\Cone_\varphi = N(P/H)$, so the latter is generated as a convex cone by $\rho(\Div(P/H)^B)\cup V(P/H)$. In other words the cone generated by $\rho(\Div(P/H)^B)$ meets the relative interior of $-V(P/H)$. Choose $\eta$ in the intersection. Now define $A_1 = \Div(P/H)^B\cap \A(\kmG/\kmH)$ and $S_2$ to be the set of simple roots moving some element of $\Div(P/H)^B\smallsetminus \A(\kmG/\kmH)$.

For each $\alpha\in S_2$, the color $D_\alpha\in \Div(P/H)^B\smallsetminus \A(\kmG/\kmH)$ moved by $\alpha$ satisfies $\rho(D_\alpha)\in \QQ_{>0}\alpha^\vee|_\Xi$. Then $\eta$ satisfies the requirements of the theorem, up to replacing if necessary $A_1$ and $S_2$ with smaller subsets (i.e.\ those of the elements actually involved in the expression of $\eta$ as a linear combination of elements of $\rho(\Div(P/H)^B)$). The set $S_1$ defined as in (\ref{eqn:S1}) is a subset of $S_\kmP$, because $A_1\subseteq \Div(P/H)^B$ and if $\A(\alpha)$ contains only elements in $\Div(P/H)^B$ then $\alpha$ is in $S_\kmP$. Also $S_2$ is a subset of $S_\kmP$: indeed, if a simple root $\alpha\notin S_\kmP$ moves an element of $\Div(P/H)^B$ then it is a spherical root and the colors it moves are both in $\A(\kmG/\kmH)$. We conclude that $S_1\cup S_2\cup S^p$ is a subset of $S_\kmP$, therefore it is of finite type and the proof is complete.
\end{proof}

\section{Invariance under conjugation}\label{s:conjugation}

In this section, for any element $w\in W$ we denote by $\dot{w}$ a fixed representative of $w$ in $\kmG$.

\begin{lemma}\label{lemma:conj}
Let $R$ be a very reductive subgroup of $L$ and $w\in W$, and suppose that ${}^{\dot{w}}R$ is a very reductive subgroup of the standard Levi subgroup $M$ of $\kmQ$, for some parabolic subgroup $\kmQ\subseteq\kmG$ of finite type containing $\kmB_-$. Then $M={}^{\dot{w}} L$.
\end{lemma}
\begin{proof}
The group ${}^{\dot{w}} L$ is reductive and contains $T$, and its subgroup $N={}^{\dot{w}} L\cap \kmQ$ is parabolic. Since $N$ contains the very reductive subgroup ${}^{\dot{w}}R$ of ${}^{\dot{w}} L$, it follows that $N={}^{\dot{w}} L$. In other words ${}^{\dot{w}} L\subseteq\kmQ$, so ${}^{\dot{w}} L\subseteq M$. 

Hence ${}^{\dot{w}}R \subseteq {}^{\dot{w}}L \subseteq M$. Since ${}^{\dot{w}}R$ is very reductive in $M$ and ${}^{\dot{w}}L$ is a reductive subgroup of $M$, we have ${}^{\dot{w}} L = M$.
\end{proof}

In the next theorem we show that the homogeneous spherical datum is invariant under conjugation of $\kmH$ in $\kmG$. Here, for brevity, equality of two homogeneous spherical data is meant as in Theorem~\ref{thm:indep}. 

\begin{theorem}\label{thm:conjugation}
Let $g\in\kmG$ and suppose that ${}^g\kmH$ is a standard spherical subgroup of finite type of $\kmG$, thus contained in some parabolic $\kmQ$ of finite type containing $\kmB_-$. Then the homogeneous spherical datum of $\kmG/{}^g\kmH$ (defined with respect to $\kmQ$) is equal to the homogeneous spherical datum of $\kmG/\kmH$ (defined with respect to $\kmP$).
\end{theorem}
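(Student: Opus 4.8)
The plan is to reduce to the case where $g$ represents an element of the Weyl group $W$, to force ${}^gL$ to coincide with the standard Levi subgroup of $\kmQ$ by means of Lemma~\ref{lemma:conj}, and only then to match the two finite-dimensional pictures. For the first reduction I would write $g=b_1wb_2$ according to the Birkhoff decomposition $\kmG=\kmB^-W\kmB^-$ (see \cite{Ku02}), with $b_1,b_2\in\kmB^-$ and $w$ a representative of an element of $W$. Since $\kmB^-\subseteq\kmP$, conjugating $\kmH$ by $b_2$ keeps it inside $\kmP$ and, as observed after Definition~\ref{def:invariants}, leaves its homogeneous spherical datum unchanged; hence I may assume $g=b_1w$. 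Since $b_1\in\kmB^-\subseteq\kmQ$ and conjugation by an element of $\kmQ$ preserves $\kmQ$, from ${}^g\kmH\subseteq\kmQ$ I obtain ${}^w\kmH\subseteq\kmQ$, and the further conjugation of ${}^w\kmH$ by $b_1$ is harmless for the datum computed via $\kmQ$. Thus it suffices to compare $\kmH$ (via $\kmP$) with ${}^w\kmH$ (via $\kmQ$).

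Next I would put $\kmH$ in standard position and apply Lemma~\ref{lemma:conj}. By Theorem~\ref{thm:indep} the datum does not depend on $\kmP$, so I may take $\kmP$ smaller and conjugate $\kmH$ inside $\kmP$ until no proper parabolic of $P$ contains $H$, so that a reductive Levi subgroup $R$ of $\kmH$ is very reductive in $L$. Choosing $\kmQ$ minimal for ${}^w\kmH$ in the same way makes ${}^wR$ very reductive in the standard Levi subgroup $M$ of $\kmQ$, and Lemma~\ref{lemma:conj} then yields $M={}^wL$. As $w$ normalizes $T$ and carries the root system of $L$ onto that of $M$, after one further harmless conjugation of ${}^w\kmH$ by a representative of an element of the Weyl group of $M$ (an element of $\kmQ$) I may assume that $w$ sends $B=L\cap\kmB$ to $M\cap\kmB$ and $B^-$ to $M\cap\kmB^-$; since $\kmQ$ is standard this forces $w(S_\kmP)=S_\kmQ$, and conjugation by $w$, call it $\phi_w$, becomes an isomorphism $(T,B,B^-,L)\to(T,M\cap\kmB,M\cap\kmB^-,M)$ intertwining the $L$-variety $P/H$ with the $M$-variety $Q/{}^wH$.

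Finally I would match the invariants. The subtle point is that the two finite-dimensional models live in genuinely different parabolics, with ${}^w\kmP\neq\kmQ$ in general, so I cannot invoke Theorem~\ref{thm:indep} directly to compare them. Instead I would use the $\kmG$-equivariant isomorphism $\Psi\colon\kmG/\kmH\to\kmG/{}^w\kmH$, $x\kmH\mapsto xw^{-1}\cdot{}^w\kmH$, which is the identity on $\kmG$ and therefore identifies $\kmB$-semiinvariant rational functions of a given weight without any Weyl twist. Together with the structural identification furnished by $\phi_w$ and $M={}^wL$, this gives $\Xi(\kmG/\kmH)=\Xi(\kmG/{}^w\kmH)$ literally as sublattices of $\Chars(\kmT)$, an identification of the valuation cones and hence of $\Sigma(\kmG/\kmH)$ and of $S^p(\kmG/\kmH)$, and an abstract bijection of $\Delta(\kmG/\kmH)$ and of $\A(\kmG/\kmH)$ compatible with $\rho_{\kmG/\kmH}$ and with the property of being moved by a simple root, exactly in the sense of Theorem~\ref{thm:indep}.

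The hard part is precisely this last reconciliation. A nontrivial $w$ acts nontrivially on $\Chars(T)$, so the naive transport of the data by $\phi_w$ carries a Weyl-group twist, whereas the theorem asserts literal equality of $\Xi$, $\Sigma$ and $S^p$. The resolution I expect is that the Borel adjustment built into the hypotheses of Lemma~\ref{lemma:conj} (matching $B$ with $M\cap\kmB$) cancels this twist exactly, while the identity-equivariant map $\Psi$ certifies that the finite-dimensional computations carried out in $\kmP$ and in $\kmQ$ return the same intrinsic characters of $\kmT$; making this cancellation precise across two non-comparable parabolics is the technical heart of the argument.
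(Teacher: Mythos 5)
Your preliminary reductions coincide with the paper's: decomposing $g$ over $\kmB^-W\kmB^-$ to assume $g=w\in W$ (the paper invokes the Bruhat decomposition relative to the opposite Borel; what you call the Birkhoff decomposition mixes the two signs, a harmless mislabel), positioning a Levi subgroup of $\kmH$ very reductively in $L$, invoking Lemma~\ref{lemma:conj} to obtain $M={}^wL$, and matching $B$ with $B_M=M\cap\kmB$. But the step you yourself label ``the technical heart'' is exactly where your argument stops, and the tools you propose for it do not work as stated. The $\kmG$-equivariant map $\Psi\colon \kmG/\kmH\to\kmG/{}^w\kmH$ carries $\kmP/\kmH$ onto $\kmP w^{-1}/{}^w\kmH$, \emph{not} onto $\kmQ/{}^w\kmH$, so it does not relate the two finite-dimensional models at all; moreover the paper develops no function theory on $\kmG/\kmH$ itself (it explicitly defers any geometric structure on the big quotient), so the invariants are defined only through $P/H$ and $Q/{}^wH$, and the claim that $\Psi$ ``identifies $\kmB$-semiinvariant rational functions of a given weight'' has no meaning within this framework. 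Likewise, transporting the data by $\phi_w$ twists $\Chars(T)$ by $w$ and would compare $(\kmP,\kmH)$ with $({}^w\kmP,{}^w\kmH)$ --- but ${}^w\kmP\neq\kmQ$ in general and ${}^w\kmP$ need not even contain $\kmB^-$, so the asserted literal equalities of $\Xi$, $\Sigma$ and $S^p$ are never derived. You correctly sense that the Borel matching must cancel the twist, but you leave that cancellation as a conjecture rather than a proof.

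The paper's resolution, entirely absent from your proposal, is an induction on the length $m$ of a reduced expression $w=s_1\cdots s_m$. If $\alpha_m\in S_\kmP$ one replaces $\kmH$ by ${}^{s_m}\kmH\subseteq\kmP$, which leaves the datum unchanged and shortens the conjugator. Otherwise one first proves that $S_\kmP\cup\{\alpha_m\}$ is of finite type --- since otherwise $w_0^Mw$ would send infinitely many positive roots to negative ones, using $w_0^Mw(S_\kmP)=-S_\kmQ$ and $w(\alpha_m)<0$ --- and then, inside the corresponding finite-type parabolic $\kmP_0$ with standard Levi $L_0$, replaces $\kmH$ by $\kmH_1={}^{w_0^{L_0}w_0^L}\kmH$. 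This is a conjugation within the single parabolic $\kmP_0$, hence datum-preserving by Theorem~\ref{thm:indep} together with the remark following Definition~\ref{def:invariants}, and a root-counting argument shows that the new conjugator $ww_0^Lw_0^{L_0}$ is strictly shorter than $w$. In other words, the comparison across two non-comparable parabolics is never performed in one global step: it is decomposed into length-decreasing moves, each taking place inside one finite-type parabolic where invariance is already established. Without this (or an equivalent) mechanism your proof is incomplete at its decisive point.
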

\begin{proof}
Denote by $S_\kmQ$ the set of simple roots associated with $\kmQ$, and set $\kmK={}^g\kmH$. Up to conjugating $\kmH$ inside $\kmP$ and $\kmK$ inside $\kmQ$, and up to replacing $\kmP$ and $\kmQ$ with smaller parabolic subgroups (thanks to Theorem~\ref{thm:indep}), we can assume that $H$ has a Levi subgroup $L_H$ that is a very reductive subgroup of $L$, and that ${}^gL_H$ is a very reductive subgroup of the standard Levi subgroup $M$ of $\kmQ$. By the Bruhat decomposition, and since both $\kmP$ and $\kmQ$ contain $\kmB_-$, we can also assume that $g={\dot{w}}$ for some $w\in W$. Lemma~\ref{lemma:conj} implies that ${}^gL=M$, and up to changing the element $w$ we can also assume that ${}^{\dot{w}}B$ is the Borel subgroup $B_M=M\cap\kmB$ of $M$. This is equivalent to $w(S_\kmP)=S_\kmQ$.

Write
\begin{equation}\label{eqn:length}
w = s_1\cdots s_m
\end{equation}
where $s_k$ is the simple reflection corresponding to the simple root $\alpha_k$ for all $k\in\{1,\ldots,m\}$, and $m$ is the length of $w$. We proceed by induction on $m$, the base of the induction being the obvious case $m=0$. We underline that the condition $w(S_\kmP)=S_\kmQ$ is now part of our assumptions and must be taken into consideration while proving the induction step.

Since the expression (\ref{eqn:length}) is reduced, the root $w(\alpha_m)$ is negative. Then $\alpha_m\in S\smallsetminus S_\kmP$, otherwise $w(\alpha_m)$ would be positive because of $w(S_\kmP)=S_\kmQ$. Notice that this equality also implies that $w(\alpha_m)$ is not a root of $M$.

Let now $w_0^M$ be the longest element of the Weyl group of $M$. We have $w_0^Mw(S_\kmP)=-S_\kmQ$, and $w_0^Mw(\alpha_m)$ is negative because $w_0^M$ permutes the roots of $\kmQ$ that are not roots of $M$. It follows that the set of simple roots $S_0=S_\kmP\cup\{\alpha_m\}$ is of finite type, otherwise $w_0^Mw$ would sends infinitely many positive roots to negative ones. We denote by $\kmP_0$ the corresponding parabolic subgroup containing $\kmB_-$ of $\kmG$, and by $L_0$ the standard Levi subgroup of $\kmP_0$.

Let $w_0^L$ and $w_0^{L_0}$ be the longest elements of the Weyl groups of resp.\ $L$ and $L_0$, and consider the subgroup $\kmH_1={}^{\dot{w}_0^{L_0}\dot{w}_0^L}\kmH$. It is conjugated to $\kmK$ by a representative of the element $ww_0^Lw_0^{L_0}\in W$, and $\kmG/\kmH_1$ has the same homogeneous spherical datum of $\kmG/\kmH$ by Theorem~\ref{thm:indep} since $\kmH$ and $\kmH_1$ are conjugated in $\kmP_0$.

Our goal is to apply the induction hypothesis to the groups $\kmH_1$ and $\kmK$ and the element $ww_0^Lw_0^{L_0}\in W$, so we must check all required conditions that we had set up:
\begin{enumerate}
\item\label{proof:conjugation:contained} $\kmH_1$ is contained in a negative parabolic subgroup $\kmP_1$ of $\kmG$ of finite type (denote by $H_1\subseteq P_1$ corresponding finite dimensional quotients as usual),
\item\label{proof:conjugation:Levi} $H_1$ has a Levi subgroup that is a very reductive subgroup of the Levi subgroup $L_1$ of $P_1$ containing $T$,
\item \label{proof:conjugation:roots}$ww_0^Lw_0^{L_0}$ sends $S_{\kmP_1}$ onto $S_\kmQ$,
\item \label{proof:conjugation:length}$ww_0^Lw_0^{L_0}$ is shorter than $w$.
\end{enumerate}

The set $S_\kmP$ is a subset of the simple roots $S_0$ of $L_0$, hence $S_1=-w_0^{L_0}(S_\kmP)$ is also a subset of $S_0$. Denote by $\kmP_1$ the parabolic subgroup of finite type of $\kmG$ (contained in $\kmP_0$) associated with the set of simple roots $S_1$ and containing $\kmB_-$. We have that $L_1={}^{\dot{w}_0^{L_0}}L$ is the standard Levi subgroup of $\kmP_1$.

Set $\widetilde \kmQ=\dot{w}^{-1}\kmQ$. The intersection $\kmP\cap\widetilde\kmQ$ contains $\kmH$ and $L$, and is contained in $\kmP_0$. The intersection $\kmP\cap\widetilde\kmQ\cap L_0$ has Levi subgroup $L$, and its unipotent radical is either trivial, or has roots that are linear combinations of simple roots with non-positive coefficients and involving $\alpha_m$. Suppose the unipotent radical of $\kmP\cap\widetilde\kmQ\cap L_0$ is not trivial, and let $\alpha$ be one if its roots. Then $w(\alpha)$ is a root of $\kmQ$, not a root of $M$, so $w(\alpha)$ is a linear combination with non-positive coefficients of $S_\kmQ=w(S_\kmP)$ and $w(\alpha_m)$ (the latter with negative coefficient). This contradicts the fact that $w(\alpha_m)$ is itself negative and not in the $\ZZ$-span of $S_\kmQ$.

The consequence is that $\kmP\cap\widetilde\kmQ\cap L_0=L$, which is then also the projection of $\kmP\cap\widetilde\kmQ$ in $L_0$ along the quotient by $\kmP_0^u$. This implies that the projection of $\kmH\subset \kmP_0$ in $L_0$ is contained in (and a very reductive subgroup of) $L$. In turn, this implies that the projection of $\kmH_1\subset \kmP_0$ in $L_0$ is contained in (and a very reductive subgroup of) $L_1$.

Conditions (\ref{proof:conjugation:contained}) and (\ref{proof:conjugation:Levi}) follow, and condition (\ref{proof:conjugation:roots}) is clear by construction.

Finally, we count how many negative roots change sign under the action of $w$ and of $ww_0^Lw_0^{L_0}$. The set of negative roots of $\kmG$ is equal to two disjoint unions $R_1\cup R_2 \cup R_3=R_1'\cup R_2' \cup R_3$, where $R_1$ (resp.\ $R_1'$) is the set of negative roots of $L_1$ (resp.\ $L$), $R_2$ (resp.\ $R_2'$) is the set of negative roots of $L_0$ that are not roots of $L_1$ (resp.\ $L$), and $R_3$ is the set of negative roots that are not roots of $L_0$.

We have $R_1'=w_0^Lw_0^{L_0}(R_1)$, and $w(R_1')=ww_0^Lw_0^{L_0}(R_1)$ is the set of negative roots of $M$. Since $w_0^Lw_0^{L_0}$ permutes the set $R_3$, the element $ww_0^Lw_0^{L_0}$ changes the sign of as many roots of $R_3$ as $w$ does.

It remains to compare the behaviour of the two non-empty sets $R_2$ and $R_2'$. The elements of $w_0^{L_0}(R_2)$ are the positive roots of $L_0$ not of $L$, and these are permuted by $w_0^L$, so $w_0^Lw_0^{L_0}(R_2)=w_0^{L_0}(R_2)=-R_2'$. Since $w(\alpha_m)$ is negative and not in the $\ZZ$-span of $S_\kmQ$, while $w(S_\kmP)=S_\kmQ$, we have that $ww_0^Lw_0^{L_0}(R_2)$ is a set of negative roots, and $w(R_2')$ is a set of positive roots.

Claim (\ref{proof:conjugation:length}) follows, and the proof is complete.
\end{proof}

Theorem~\ref{thm:conjugation} enables us to give the following.

\begin{definition}\label{def:spherical}
Let $\kmH$ be a subgroup of $\kmG$. If it is conjugated to a standard spherical subgroup of finite type, then $\kmH$ is called a {\em spherical subgroup of finite type} of $\kmG$. In this case, the {\em homogeneous spherical datum} of $\kmG/\kmH$ is defined as the homogeneous spherical datum of $\kmG/\kmK$, where $\kmK$ is any standard spherical subgroup of $\kmG$ of finite type conjugated to $\kmK$.
\end{definition}

\section{Further developments}\label{s:conj}

It is natural to pose the question whether homogeneous spherical data are as relevant here as in the classical theory of spherical varieties. 

First of all, in this setting they are not entirely combinatorial objects, because axiom (S) involves the notion of a spherical root compatible with $S^p$. To make this into a purely combinatorial condition one could classify all spherical subgroups of finite type with only one spherical root. It has been done in the finite-dimensional setting (see e.g.\ \cite{Br89} and references therein), and completing this task for $\kmG$ infinite-dimensional seems to be attainable (see Example~\ref{ex:new}). It will be included in a forthcoming paper.

Then we come to the question whether the classification of spherical homogeneous spaces can be extended to our situation.

\begin{conjecture}
Mapping $\kmH\subseteq\kmG$ to the homogeneous spherical datum of $\kmG/\kmH$ induces a bijection between the set of conjugacy classes of spherical subgroups of finite type of $\kmG$ and homogeneous spherical data of finite type of $\kmG$.
\end{conjecture}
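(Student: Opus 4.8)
The plan is to reduce the bijection to the finite-dimensional Luna conjecture for reductive groups, which we take as established \cite{Lu01,Lo09,Bra09,BP11a,BP11b}, applied to the Levi subgroup $L$ of a suitable finite-type parabolic, together with a structural analysis of how the unipotent radical $P^u$ contributes. Throughout we use freely the independence of the datum from $\kmP$ (Theorem~\ref{thm:indep}) and its invariance under conjugation (Theorem~\ref{thm:conjugation}), so that the map of the statement descends to conjugacy classes; that it lands among finite-type abstract data is Theorem~\ref{thm:consistent}. Thus only injectivity and surjectivity remain.

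\emph{Injectivity.} Suppose $\kmH_1,\kmH_2$ are standard spherical subgroups of finite type with the same homogeneous spherical datum. Intersecting the two parabolics as in Section~\ref{s:indep}, we may assume both are contained in one parabolic $\kmP$ and work with the finite-dimensional quotient $P$, reducing to the claim that the $P$-homogeneous spherical datum of $P/H$ determines $H$ up to conjugacy in $P$. First I would recover the open $L$-orbit: localizing at $S_\kmP$ (Corollary~\ref{cor:Sigma_L} and Theorem~\ref{thm:locS}) shows that the classical $L$-spherical datum of the open $L$-orbit $L/K_i$ is determined by the $P$-datum, so by the finite-dimensional Luna conjecture the $K_i$ are $L$-conjugate and we may take $K_1=K_2=K$ to be a common Levi subgroup with $P/H_i\cong L\times^K P^u/H_i^u$. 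It then remains to show that the remaining data determine $H_i^u$ up to conjugacy by an element of $N_P(K)$. These are exactly the spherical roots $\sigma$ with $-\sigma$ a root of $P^u$ (Theorem~\ref{thm:NS}), together with the colors in $\A$ they move and the functionals $\rho$; localizing further at a single such $\sigma$ reduces, via Theorem~\ref{thm:locSigma}, Lemma~\ref{lemma:affine} and the rank-one models of Examples~\ref{ex:Cn} and~\ref{ex:CpCn}, to reconstructing each $K$-submodule of the Lie algebra $\fn$ of $P^u$ from its highest weight. Assembling these reconstructions shows that $H_1^u$ and $H_2^u$ coincide up to conjugacy.

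\emph{Surjectivity.} Given an abstract finite-type datum $(S^p,\Sigma,\A,\Xi,\rho)$, I would first use the finite-type witnesses $A_1$, $S_2$ and $\eta$ of Definition~\ref{def:ftsys} to fix a finite-type parabolic $\kmP$ with $S_1\cup S_2\cup S^p\subseteq S_\kmP$ and such that the roots of $\Sigma$ lying outside $\Span_\QQ S_\kmP$ are precisely those to be realized inside $P^u$; the inequality $\langle\eta,\sigma\rangle>0$ guarantees that the cone cut out by $\Sigma$ is strictly convex, hence a genuine valuation cone. Projecting the datum to $L$ yields an abstract homogeneous spherical datum for the reductive group $L$ in the sense of \cite{Lu01}, which by the finite-dimensional Luna conjecture is realized by some spherical subgroup $K\subseteq L$. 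Finally I would realize each spherical root $\sigma$ with $-\sigma$ a root of $P^u$ by prescribing an appropriate $K$-stable ideal of $\fn$, modelled on Examples~\ref{ex:Cn}, \ref{ex:toric} and~\ref{ex:CpCn}, set $H=K\ltimes H^u$ and let $\kmH$ be its preimage in $\kmP$; the localization theorems then verify that $\kmG/\kmH$ has the prescribed datum, while finite type of $S_\kmP$ guarantees $\kmH$ is of finite type.

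\emph{Main obstacle.} The essential new difficulty, absent in the reductive setting, is the interaction between the $L$-part and the $P^u$-part. Concretely, one must prove a Luna-type classification for the non-reductive pair $(P,L)$: that the data $(\Sigma,\A,\rho)$ supported on the spherical roots with $-\sigma$ a root of $P^u$, together with the already-fixed $K$, determine the subgroup $H^u\subseteq P^u$ up to conjugacy (for injectivity) and can always be realized by such a $K$-stable subgroup (for surjectivity). Since $P^u$ is far from reductive, the classical wonderful-variety and spherical-module machinery does not apply verbatim; what is needed is a structure theory of spherical quotients $P^u/H^u$ as $K$-varieties, equivalently a classification of spherical subgroups of $P$ with a single spherical root. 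This is precisely the program flagged in Section~\ref{s:conj}, and it is the crux on which the conjecture rests.
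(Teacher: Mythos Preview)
The statement you are attempting to prove is explicitly labelled a \emph{conjecture} in the paper (see Section~\ref{s:conj}); the paper offers no proof and says that the only evidence is a direct check in a handful of small affine types. So there is no ``paper's own proof'' against which to compare your attempt: you are sketching an approach to an open problem, not recovering an existing argument.

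Taken on its own terms, your outline identifies the right ingredients but contains a genuine gap already in the very first reduction step for injectivity. You write that by ``intersecting the two parabolics as in Section~\ref{s:indep}'' you may assume $\kmH_1$ and $\kmH_2$ lie in a common parabolic $\kmP$. Section~\ref{s:indep} does not say this: it shows that if a \emph{single} $\kmH$ lies in both $\kmP$ and $\kmQ$, then the datum does not depend on the choice. It does not show that two different subgroups with the same datum can be placed in the same parabolic. A priori $\kmH_1\subseteq\kmP_1$ and $\kmH_2\subseteq\kmP_2$ with $\kmP_1\cap\kmP_2$ containing neither; the finite-type condition in Definition~\ref{def:ftsys} only guarantees the \emph{existence} of suitable $S_\kmP$, not its uniqueness, and the witnesses $A_1,S_2,\eta$ for the two subgroups need not coincide. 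Proving that the datum determines a canonical (or at least common) choice of $S_\kmP$ is itself a nontrivial part of the conjecture, and without it the reduction to the finite-dimensional Luna conjecture for a single $L$ does not get off the ground.

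Beyond that, you correctly flag the heart of the matter in your final paragraph: even once a common $\kmP$ is fixed and the Levi $K$ is pinned down via the classical Luna conjecture, one still needs a classification of $K$-stable unipotent subgroups $H^u\subseteq P^u$ such that $P^u/H^u$ is spherical, governed by the ``new'' spherical roots with $-\sigma$ a root of $P^u$. Your localization-to-rank-one idea is natural, but the ``assembling'' step is where everything happens and you give no argument for it; different rank-one localizations impose constraints on different graded pieces of $\fn/\mathfrak h^u$, and it is far from clear that these patch to a unique $H^u$ (injectivity) or that an arbitrary compatible collection of such constraints is realizable (surjectivity). This is precisely the classification of rank-one spherical subgroups of finite type alluded to at the end of Section~\ref{s:conj} as work in progress, and your proposal does not supply it.
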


Evidence supporting this generalization is still limited to a direct verification for a few cases of $\kmG$, such as $\kmG$ of affine type with Cartan matrix of size $2$ or $3$.

\section{Examples}\label{s:examples}

\begin{example}\label{ex:verysolv}
Let $\kmP=\kmB_-$ and $\kmH=\kmT(\kmU_-,\kmU_-)$. Then $P/H$ is isomorphic to an affine space, where $L=T$ acts with characters equal to the opposites of the simple roots of $\kmG$, and $P^u$ acts as the full group of translations. In other words it occurs in Example~\ref{ex:toric}. Therefore $\Sigma(\kmG/\kmH)=S$, the lattice $\Xi(\kmG/\kmH)$ is equal to the root lattice, the set $\Div(P/H)^B$ contains $n=|S|$ elements $D^+_1,\ldots,D^+_n$, each moved by exactly one simple root $\alpha_i$, and
\[
\langle \rho(D^+_i),\alpha_j\rangle = \delta_{ij}.
\]
The whole set $\Delta(\kmG/\kmH)$ is equal to $\{D^+_1,D^-_1,\ldots,D^+_n,D^-_n\}$, with
\[
\langle \rho(D^-_i),\alpha_j\rangle = \langle\alpha_i^\vee,\alpha_j\rangle - \delta_{ij}.
\]
The corresponding {\em Luna diagram} for $\kmG$ e.g.\ of type $\mathsf A^{(1)}_1$ is the following:
\[
\begin{picture}(2400,1800)(-300,-600)
\put(0,0){\usebox{\leftrightbiedge}}
\multiput(0,0)(1800,0){2}{\usebox{\aone}}
\end{picture}
\]
We refer to \cite[Section~2]{BP15} for details on Luna diagrams.
\end{example}

\begin{example}
With $\kmG$ of type $\mathsf A^{(1)}_1$, let us call $\alpha_0$ and $\alpha_1$ the two simple roots. We modify a bit Example~\ref{ex:verysolv} and consider $\kmH$ to be the subgroup generated by $\kmT$ and $\kmU_{-\alpha}$ for $\alpha$ any positive real root different from $\alpha_0$. In other words $H$ contains $T$, and contains $U_{-\beta}$ if and only if $\beta$ is a positive root different from $\alpha_0$ and from $\delta=\alpha_0+\alpha_1$. We have again $\Sigma(\kmG/\kmH)=S=\{\alpha_0,\alpha_1\}$ and $\Delta(\kmG/\kmH)=\{D^+_0,D^-_0,D^+_1,D^-_1\}$, but the pairing between colors and spherical roots is the following:
\[
\begin{array}{l|cc}
\langle\rho(-),-\rangle & \alpha_0 & \alpha_1 \\
\hline
D_0^{+} &1 &-1 \\
D_0^{-} &1 &-1 \\
D_1^{+} &0 &1 \\
D_1^{-} &-2 &1
\end{array}
\]
and the Luna diagram is
\[
\begin{picture}(2400,1800)(-300,-600)
\put(0,0){\usebox{\leftrightbiedge}}
\multiput(0,0)(1800,0){2}{\usebox{\aone}}
\put(0,600){\usebox{\toe}}
\put(-1500,-150){$\alpha_0$}
\put(2000,-150){$\alpha_1$}
\end{picture}
\]
This $\kmH$ is also contained in $\kmP$ where $S_\kmP = \{ \alpha_0 \}$. The Levi subgroup $L$ has type $\mathsf A_1$, and $H\cap L$ is a maximal torus of $L$. If we call $N$ the normalizer of $H\cap L$ in $L$, then $H^u$ is stable under conjugation by $N$, so $K=NH^u$ is a disconnected subgroup of $P$ with $K^\circ = H$. Call $\kmK$ the inverse image of $K$ in $\kmP$; then $\Sigma(\kmG/\kmK)=\{2\alpha_0,\alpha_1\}$ (with $\Sigma_L(P/K)=\{2\alpha_0\}$) and $\Delta(\kmG/\kmK) = \{D_0,D^+_1,D^-_1\}$ with pairing
\[
\begin{array}{l|cc}
\langle\rho(-),-\rangle& 2\alpha_0 & \alpha_1 \\
\hline
D_0 &2 &-1  \\
D_1^{+} &0 &1 \\
D_1^{-} &-4 &1
\end{array}
\]
and Luna diagram
\[
\begin{picture}(2400,1800)(-300,-600)
\put(0,0){\usebox{\leftrightbiedge}}
\put(1800,0){\usebox{\aone}}
\put(0,0){\usebox{\aprime}}
\end{picture}
\]
\end{example}

\begin{example}\label{ex:conj}
We give an example of the situation of Theorem~\ref{thm:conjugation}. Let $\kmG$ be of type $\mathsf A^{(1)}_2$ with simple roots $\alpha_0$, $\alpha_1$ and $\alpha_1$. Define $\Sigma=\{\alpha_0, \alpha_1+\alpha_2\}$ with $\langle \rho(D^+_0),\alpha_1+\alpha_2\rangle = 0$; the corresponding Luna diagram is the following:
\[
\begin{picture}(5400,1800)(-300,-1200)
\multiput(0,0)(1800,0){2}{\usebox{\edge}}
\put(0,0){\usebox{\aone}}
\put(1800,0){\usebox{\atwo}}
\put(0,0){\thicklines\line(-1,0){900}}
\put(3600,0){\thicklines\line(1,0){900}}
\put(-900,0){\thicklines\line(0,-1){1500}}
\put(4500,0){\thicklines\line(0,-1){1500}}
\put(-900,-1500){\thicklines\line(1,0){5400}}
\put(-800,-700){$\scriptstyle 0$}
\put(1600,-900){$\scriptstyle 1$}
\put(3400,-900){$\scriptstyle 2$}
\end{picture}
\]
Setting $\Xi=\Span_\ZZ\Sigma$, these are the invariants of $\kmG/\kmH$ for a subgroup $\kmH$ in $\kmP$, where $S_\kmP=\{\alpha_1\}$ and $\kmH$ is generated by $T$ together with $\kmU_{\alpha_1}$ and $\kmU_\beta$ for $\beta$ any negative real root different from $-\alpha_2$ and $-\alpha_1-\alpha_2$. If we define other subgroups $\kmK$ and $\kmQ$ of $\kmG$ similarly as above, but exchanging the roles of $\alpha_1$ and $\alpha_2$, we obtain that $\kmH$ and $\kmK$ are conjugated in $\kmG$, and they have the same above invariants.
\end{example}

\begin{example}\label{ex:new}
We give an example of a spherical root that is ``new'', i.e.\ not appearing in the classical theory of finite-dimensional spherical varieties. For this, we choose $\kmH$ in an infinite-dimensional $\kmG$ so that $\Sigma_L(P/H)=\varnothing\neq \Sigma(P/H)$ and $\Rank\kmG/\kmH=1$. These properties are achieved imposing $H\supseteq L$, so that $P/H=P^u/H^u$, and choosing $H^u$ so that $P^u/H^u$ is an irreducible spherical $L$-module of rank $1$. Notice that this is only possible if $L$ has semisimple type $\mathsf A_n$ or $\mathsf C_n$ for some $n$, thanks to Lemma~\ref{lemma:modules}.

For example, let us consider $\kmG$ of type $\mathsf G^{(1)}_2$, with simple roots $\alpha_0,\alpha_1,\alpha_2$ numbered as follows:
\[
\begin{picture}(4200,2000)(-300,-1000)
\put(0,0){\usebox{\edge}}
\put(1800,0){\usebox{\righttriedge}}
\put(-200,-900){$\scriptstyle 0$}
\put(1600,-900){$\scriptstyle 1$}
\put(3400,-900){$\scriptstyle 2$}
\end{picture}
\]
Set $S_\kmP=\{\alpha_0,\alpha_1\}$, consider the corresponding parabolic subgroup $\kmP$, its Levi subgroup $L$, and set $\kmH=L\kmH^u$ where $\kmH^u$ is the group generated by $\kmU_\alpha$ where $\alpha$ is any positive real root with $S_\kmP$-height%
\footnote{Recall that, for $S'\subseteq S$, the $S'$-height of a positive root is defined as the usual height but where only simple roots {\em not in} $S'$ count.}
at least $2$. Then $P/H=P^u/H^u$ is isomorphic to $\CC^3$ and equal to the defining representation of $\SL(3)$ if one chooses an appropriate isomorphism $(L,L)\cong \SL(3)$. The group $P^u$ acts on $P/H$ as the full group of translations. As in Example~\ref{ex:Cn}, we have $\Sigma(P/H)=\{\omega\}$ where $\omega$ is the highest weight of $(\CC^3)^*$. The highest weight of $P^u/H^u$ as an $L$-module is $-\alpha_2$, therefore
\[
\omega=-s_0s_1s_0(\alpha_2) = \alpha_0+\alpha_1+\alpha_2
\]
is the spherical root of $\kmG/\kmH$. 
\end{example}

\begin{example}\label{ex:veryred}
Not all abstract spherical systems are of finite type. For example, the following Luna diagram
\[
\begin{picture}(2400,1800)(-300,-600)
\put(0,0){\usebox{\leftrightbiedge}}
\multiput(0,0)(1800,0){2}{\usebox{\aone}}
\put(0,600){\usebox{\toe}}
\put(1800,600){\usebox{\tow}}
\end{picture}
\]
corresponds to the spherical system $(S^p,\Sigma,\A)$ given by $S^p=\varnothing$, $\Sigma=S=\{\alpha_0,\alpha_1\}$, $\A=\{D^+_0,D^-_0,D^+_1,D^-_1\}$ with the values
\[
\begin{array}{l|cc}
\langle\rho(-),-\rangle& \alpha_0 & \alpha_1 \\
\hline
D_0^{+} &1 &-1 \\
D_0^{-} &1 &-1 \\
D_1^{+} &-1 &1 \\
D_1^{-} &-1 &1
\end{array}
\]
For this system the sets $A_1$, $S_2$ and the linear combination $\eta$ needed in Definition~\ref{def:ftsys} don't exist, because no linear combination (with non-negative coefficients) of the rows of the above table has only positive entries.
\end{example}


\begin{thebibliography}{00}

\bibitem[BB73]{BB73} A.\ Bia\l inicki-Birula, \textit{On fixed point schemes of actions of multiplicative and additive groups}, Topology {\bf 12} (1973), 99--103.

\bibitem[BiBr96]{BiBr96} F.\ Bien, M.\ Brion, \textit{Automorphisms and local rigidity of regular varieties}, Compositio Math.\ {\bf 104} (1996), no.\ 1, 1--26. 

\bibitem[Br89]{Br89} M.\ Brion, \textit{On spherical varieties of rank one} (\textit{after D.~Ahiezer, A.~Huckleberry, D.~Snow}), Group actions and invariant theory (Montreal, PQ, 1988), CMS Conf.\ Proc., 10, Amer.\ Math.\ Soc., Providence, RI, 1989, 31--41.

\bibitem[Br07]{Br07} M.\ Brion, \textit{Log homogeneous varieties}, Actas del XVI Coloquio Latinoamericano de Àlgebra, 1-39, Revista Matemática Iberoamericana, Madrid, 2007.

\bibitem[Bra09]{Bra09} P.\ Bravi, \textit{Primitive spherical systems}, Trans.\ Amer.\ Math.\ Soc.\ {\bf 365} (2013), 361--407.

\bibitem[BP14]{BP14} P.\ Bravi, G.\ Pezzini, \textit{Wonderful subgroups of reductive groups and spherical systems}, J.\ Algebra {\bf 409} (2014), 101--147.

\bibitem[BP15]{BP15} P.\ Bravi, G.\ Pezzini, \textit{The spherical systems of the wonderful reductive subgroups}, Journal of Lie Theory {\bf 25} (2015), 105--123.

\bibitem[BP16]{BP16} P.\ Bravi, G.\ Pezzini, \textit{Primitive wonderful varieties}, Math.\ Z.\ \textbf{282} (2016), no.~3-4, 1067--1096.

\bibitem[Cu09]{Cu09} S.\ Cupit-Foutou, \textit{Wonderful varieties: a geometrical realization}, arXiv:0907.2852

\bibitem[Fr11]{Fr11} W.\ Freyn, \textit{Finite and Kac-Moody symmetric spaces}, Lecture notes, 2011.

\bibitem[GN10]{GN10} D.\ Gaitsgory, D.\ Nadler, \textit{Spherical varieties and Langlands duality}, Moscow Math.\ J.\ {\bf 10} (2010), no. 1, 65--137. 

\bibitem[HG12]{HG12} E.\ Heintze, C.\ Gro\ss, \textit{Finite order automorphisms and real forms of affine Kac-Moody algebras in the smooth and algebraic category}, Mem.\ Am.\ Math.\ Soc.\ {\bf 1030}, v-vii, 66 p.\ (2012).

\bibitem[Kn91]{Kn91} F.\ Knop, \textit{The Luna-Vust theory of spherical embeddings}, Proc.\ of the Hyderabad conference on algebraic groups (1989), Manj-Prakashan (1991), 225--249.

\bibitem[Kn98]{Kn98} F.\ Knop, \textit{Some remarks on multiplicity free spaces}, Representation Theory and Algebraic Geometry (A.\ Broer and G.\ Sabidussi, eds.), NATO ASI Series C, vol. 514, Kluwer, Dortrecht, 1998, pp. 301--317.

\bibitem[Kn11]{Kn11} F.\ Knop, \textit{Automorphisms of multiplicity free Hamiltonian manifolds}, J.\ Amer.\ Math.\ Soc.\ {\bf 24} (2011), 567–-601.

\bibitem[Kn14]{Kn14} F.\ Knop, \textit{Localization of spherical varieties}, Algebra Number Theory {\bf 8} (2014), 703--728.

\bibitem[Kn16]{Kn16} F.\ Knop, \textit{Multiplicity free quasi-Hamiltonian manifolds}, arXiv:1612.03843.

\bibitem[KKLV89]{KKLV89} F.\ Knop, H.\ Kraft, D.\ Luna, Th.\ Vust, \textit{Local properties of algebraic group actions}, Algebraische Transformationsgruppen und Invariantentheorie (H.\ Kraft, P.\ Slodowy, T.\ A.\ Springer, eds.), DMV Seminar, vol.\ 13, pp.\ 77--88, Birkh\"auser, Basel-Boston-Berlin, 1989.

\bibitem[KKV89]{KKV89} F.\ Knop, H.\ Kraft, Th.\ Vust, \textit{The Picard group of a $G$-variety}, Algebraische Transformationsgruppen und Invariantentheorie (H.\ Kraft, P.\ Slodowy, T.\ A.\ Springer, eds.), DMV Seminar, vol.\ 13, pp.\ 77--88, Birkh\"auser, Basel-Boston-Berlin, 1989.

\bibitem[KVS06]{KVS06} F.\ Knop, B.\ Van~Steirteghem, \textit{Classification of smooth affine spherical varieties}, Transform.\ Groups {\bf 11} (2006), no.~3, 495--516.

\bibitem[Ku02]{Ku02} S.\ Kumar, \textit{Kac-Moody groups, their flag varieties and representation theory}, volume 204 of Progress in Mathematics. Birkh\"auser Boston Inc., Boston, MA, 2002.

\bibitem[LP14]{LP14} K.\ Langlois, A.\ Perepechko, \textit{Demazure roots and spherical varieties: the example of horizontal SL(2)-actions}, arXiv:1406.5744~.

\bibitem[Lo09]{Lo09} I.V.\ Losev, \textit{Uniqueness property for spherical homogeneous spaces}, Duke Math.\ J.\ {\bf 147} (2009), 315--343.

\bibitem[Lu97]{Lu97} D.\ Luna, \textit{Grosses cellules pour les vari\'et\'es sph\'eriques}, Algebraic groups and Lie groups, Austral.\ Math.\ Soc.\ Lect.\ Ser., 9, Cambridge Univ.\ Press, Cambridge, 1997, 267--280.

\bibitem[Lu01]{Lu01} D.\ Luna, \textit{Vari\'et\'e sph\'eriques de type A}, Publ.\ Math.\ Inst.\ Hautes \'Etudes Sci.\ {\bf 94} (2001) 161--226.

\bibitem[Oda88]{Oda88} T.\ Oda: \textit{Convex Bodies and Algebraic Geometry: An Introduction to Toric Varieties}, Springer-Verlag, New York-Berlin-Heidelberg, 1988.

\bibitem[PK83]{PK83} D.H.\ Peterson, V.G.\ Kac, \textit{Infinite flag varieties and conjugacy theorems}, Proc.\ Natl.\ Acad.\ Sci.\ USA {\bf 80} (1983), 1778--1782.

\bibitem[Pe12]{Pe12} G.\ Pezzini, \textit{On reductive automorphism groups of regular embeddings}, Transform.\ Groups {\bf 20} (2015), no.\ 1, 247--289.

\bibitem[Ro94]{Ro94} G.\ Rousseau, \textit{On forms of Kac-Moody algebras}, in ``Algebraic groups and their generalizations: Quantum and infinite-dimensional methods, Pennstate U., 1991'', Proceedings of Symposia in Pure Mathematics 56 part 2 (1994), 393--399.

\bibitem[So12]{So12} P.\ Solis, \textit{A Wonderful Embedding of the Loop Group}, arXiv:1208.1590~.

\bibitem[Su74]{Su74} H.\ Sumihiro, \textit{Equivariant completion}, J.\ Math.\ Kyoto Univ.\ {\bf} 14 (1974), 1--28.

\bibitem[T28]{T28} H.\ Tietze, \textit{\"Uber Konvexheit im kleinen und im großen und \"uber gewisse den Punkten einer Menge zugeordnete Dimensionszahlen}, Math.\ Z.\ {\bf 28} (1928), 697--707.

\bibitem[Ti11]{Ti11} D.\ A.\ Timashev, \textit{Homogeneous spaces and equivariant embeddings}, Encycl.\ Math.\ Sci., vol.\ 138,
Springer-Verlag, Berlin Heidelberg, 2011.


\end{thebibliography}
\end{document}